\newtheorem{theorem}{Theorem}[section]
\newtheorem{lemma}[theorem]{Lemma}
\newtheorem{cor}[theorem]{Corollary}
\newtheorem{definition}[theorem]{Definition}
\newtheorem{proposition}[theorem]{Proposition}
\newtheorem{remark}[theorem]{Remark}
\newtheorem{example}[theorem]{Example}
\def\pagenumber{1}
\begin{document}
\setcounter{page}{\pagenumber}
\newcommand{\T}{\mathbb{T}}
\newcommand{\R}{\mathbb{R}}
\newcommand{\Q}{\mathbb{Q}}
\newcommand{\N}{\mathbb{N}}
\newcommand{\Z}{\mathbb{Z}}
\newcommand{\tx}[1]{\quad\mbox{#1}\quad}
\def\IM{\hbox{\rm im}\hskip 2pt}
\def\RANKP{\hbox{\rm rank}}
\parindent=0pt
\def\SRA{\hskip 2pt\hbox{$\joinrel\mathrel\circ\joinrel\to$}}

\title[Quantum Exotic PDE's]{\mbox{}\\[1cm] QUANTUM EXOTIC PDE's}
\author{Agostino Pr\'astaro}
\maketitle
\vspace{-.5cm}
{\footnotesize
\begin{center}
Department SBAI - Mathematics, University of Rome ''La Sapienza'', Via A.Scarpa 16,
00161 Rome, Italy. \\
E-mail: {\tt Agostino.Prastaro@uniroma1.it; Prastaro@dmmm.uniroma1.it}
\end{center}
}

\noindent
\begin{abstract}
Following the previous works on the A. Pr\'astaro's formulation of algebraic topology of quantum (super) PDE's, it is proved that a canonical Heyting algebra ({\em integral Heyting algebra}) can be associated to any quantum PDE. This is directly related to the structure of its global solutions. This allows us to recognize a new inside in the concept of quantum logic for microworlds. Furthermore, the Pr\'astaro's geometric theory of quantum PDE's is applied to the new category of {\em quantum hypercomplex manifolds}, related to the well-known Cayley-Dickson construction for algebras. Theorems of existence for local and global solutions are obtained for (singular) PDE's in this new category of noncommutative manifolds. Finally the extension of the concept of exotic PDE's, recently introduced by A.Pr\'astaro, has been extended to quantum PDE's. Then a smooth quantum version of the quantum (generalized) Poincar\'e conjecture is given too. These results extend ones for quantum (generalized) Poincar\'e conjecture, previously given by A. Pr\'astaro.
\end{abstract}

\noindent {\bf AMS Subject Classification:} 55N22, 58J32, 57R20; 58C50; 58J42; 20H15; 32Q55; 32S20.

\vspace{.08in} \noindent \textbf{Keywords}: Integral bordisms in quantum PDE's;
Existence of local and global solutions in quantum PDE's; Conservation laws;
Crystallographic groups; Singular quantum PDE's; PDE's on quantum hypercomplex manifolds category; PDE's on quantum hypercomplex (supermanifolds) category; Quantum exotic spheres; Quantum exotic PDE's; Quantum smooth (generalized) Poincar\'e conjecture.

\section{\bf INTRODUCTION}

This paper aims to further develop the A. Pr\'astaro's geometric theory of quantum PDE's, by considering three different (even if related) subjects in this theory. The first is a way to characterize quantum PDE's by means of suitable Heyting algebras. Nowadays these algebraic structures are considered important in order to characterize quantum logics and quantum topoi. (See, e.g. \cite{HEUNEN-LANDSMAN-SPITTERS, ISHAM-BUTTERFIELD, JOHNSTONE, MACLANE-MOERDIJK}.) Really we prove that to any quantum PDE can be associated a Heyting algebra, naturally arising from the algebraic topologic structure of the PDE's and that encodes its integral bordism group.

Another aspect that we shall consider is the extension of the category $\mathfrak{Q}$ of quantum manifolds,  or $\mathfrak{Q}_S$ of quantum supermanifolds, to the ones $\mathfrak{Q}_{hyper}$ of {\em quantum hypercomplex manifolds}. These generalizations are obtained by extending a quantum algebra $A$, in the sense of A. Pr\'astaro, by means of Cayley-Dickson algebras. In this way one obtains a new category of noncommutative manifolds, that are useful in some geometric and physical applications. In fact there are some fashioned research lines, concerning classical superstrings and classical super-2-branes, where one handles with algebras belonging just to some term in the Cayley-Dickson construction. Thus it is interesting to emphasize that the Pr\'astaro's geometric theory of quantum PDE's can be directly applied also to PDE's for such quantum hypercomplex manifolds. This allows us to encode quantum micro-worlds, by a general theory that goes beyond the classical simple description of classical extended objects, and solves also the problem of their quantization.

Let us emphasize that in some previous works we have formulated a geometric theory of quantum manifolds that are noncommutative manifolds, where the fundamental algebra is a suitable associative noncommutative topological algebra, there called {\em quantum algebra}. Extensions to quantum supermanifolds and quantum-quaternionic manifolds are also considered too. Furthermore, we have built a geometric theory of quantum PDE's in these categories of noncommutative manifolds, that allows us to obtain theorems of existence of local and global solutions, and constructive methods to build such solutions too.\cite{PRA0300, PRA0400, PRA0500, PRA01, PRA1, PRA2, PRA2-3, PRA3, PRA12, PRA13, PRA14, PRA15, PRA16, PRA18-0, PRA18}.

On the other hand it is well known that the sequences $\mathbb{R}\subset\mathbb{C}\subset\mathbb{H}\subset\mathbb{O}\subset\mathbb{S}$, where $\mathbb{S}$ is the sedenionic algebra, fit in the so-called {\em Cayley-Dikson construction}, $\mathbb{R}\subset\mathbb{C}\subset\mathbb{H}\subset\mathbb{O}\subset\mathbb{S}\subset\cdots\subset\mathbb{A}_r\subset\cdots$, where $\mathbb{A}_r$ is a Cayley algebra of dimension $2^r$: $\mathbb{A}_r\cong\mathbb{R}^{2^r}$, $r\ge 0$, $\mathbb{A}_0=\mathbb{R}$. Thus we can also consider {\em quantum hypercomplex algebras} $A\bigotimes\mathbb{A}_r$, $1\le r\le\infty$, where $A$ is a quantum algebra in the sense of Pr\'astaro, obtaining the natural inclusions $A\bigotimes_{\mathbb{R}}\mathbb{A}_r\hookrightarrow A\bigotimes_{\mathbb{R}}\mathbb{A}_{r+1}$. It is important to note that the Cayley algebras $\mathbb{A}_r$ are not associative for $r\ge 3$, (even if they are exponential associative\footnote{i.e., $z^{n+m}=z^nz^m$, $\forall z\in\mathbb{A}_r$ and $n,m\in\mathbb{N}$.}) hence also the corresponding quantum hypercomplex algebras are non-associative for $r\ge 3$, despite the associativity of the quantum algebra $A$. This fact introduces some particularity in the theory of PDE's on such algebras. The purpose of this paper is just to study which new behaviours have PDE's on the category $\mathfrak{Q}_{hyper}$ of {\em quantum hypercomplex manifolds}.

Let us emphasize that a first justification to use the category of quantum (super) manifolds to formulate PDE's that encode quantum physical phenomena, arises from the fact that quantized PDE's can be identified just with quantum (super) PDE's, i.e., PDE's for such noncommutative manifolds. However, to formulate equations just in the category $\mathfrak{Q}$ (or $\mathfrak{Q}_S$) allows us to go beyond the point of view of quantization of classical systems, and capture more general nonlinear phenomena in quantum worlds, that should be impossible to characterize by some quantization process. (See Refs. \cite{PRA12, PRA13, PRA14, PRA15, PRA16, PRA18-0, PRA18}.) In fact the concept of quantum algebra (or quantum superalgebra) is the first important brick to put in order to build a theory on quantum physical phenomena. In other words it is necessary to extend the fundamental algebra of numbers, $\mathbb{R}$, to a noncommutative algebra $A$, just called {\em quantum algebra}. The general request on such a type of algebra can be obtained on the ground of the mathematical logic. (See \cite{PRA0500, PRA3}.) In fact, we have shown that the meaning of quantization of a classical theory, encoded by a PDE $E_k$, in the category of commutative manifolds, is a representation of the logic $\mathcal{L}(E_k)$ of the classic theory, into a quantum logic $\mathcal{L}_q$. More precisely, $\mathcal{L}(E_k)$ is the Boolean algebra of subsets of the set $\Omega(E_k)_c$ of solutions of $E_k$: $\mathcal{L}(E_k)\equiv \mathcal{P}(\Omega(E_k)_c)$. (The infinite dimensional manifold $\Omega(E_k)_c$ is called also the {\em classic limit of the quantum situs} of $E_k$.) Furthermore, $\mathcal{L}_q$ is an algebra $A$ of (self-adjoint) operators on a locally convex (or Hilbert) space $\mathcal{H}$: $\mathcal{L}_q\equiv A\subset L(\mathcal{H})$. Then to quantize a PDE $E_k$, means to define a map $\mathcal{L}(E_k)\to \mathcal{L}_q$, or an homomorphism of Boolean algebras $q:\mathcal{P}(\Omega(E_k)_c)\to\mathcal{P}_r(\mathcal{H})$, where $\mathcal{P}_r(\mathcal{H})$ is a Boolean algebra of projections on $\mathcal{H}$. (For details see \cite{PRA0500, PRA3}.\footnote{Compare with the meaning of quantum logic given by G. Birkoff and J. Von Neumann \cite{BIRKHOFF-NEUMANN}.}) This construction allowed us also to prove that a quantization of a classical theory, can be identified by a functor relating the category of differential equations for commutative (super)manifolds, with the category of quantum (super) PDE's. (See Refs. \cite{PRA0-100, PRA12, PRA13, PRA14, PRA15, PRA16, PRA18-0, PRA18}.)

We can also extend a quantum algebra $A$, when the particular mathematical (or physical) problem requires it useful. Then the extended algebra does not necessitate to be associative. This is, for example, the case when the extension is made by means of some algebra in the Cayley-Dickson construction, obtaining a {\em quantum-Cayley-Dickson construction}:
$$\xymatrix{\mathcal{Q}_0\ar@{^{(}->}[r]&\mathcal{Q}_1\ar@{^{(}->}[r]&\mathcal{Q}_2\ar@{^{(}->}[r]&
\cdots\ar@{^{(}->}[r]&\mathcal{Q}_r\ar@{^{(}->}[r]&\cdots}$$
where $\mathcal{Q}_r\equiv A\bigotimes_{\mathbb{R}}\mathbb{A}_r$, hence $\mathcal{Q}_0=A$, $\mathcal{Q}_1=A\bigotimes_{\mathbb{R}}\mathbb{C}$ and $\mathcal{Q}_2=A\bigotimes_{\mathbb{R}}\mathbb{H}$, etc.
The main of this paper is just to show that the Pr\'astaro's algebraic topologic theory of quantum PDE's, formulated starting from $80s$, directly applies to these non-associative quantum algebras arising in the above quantum-Cayley-Dickson construction.

Finally, the last purpose of this paper is to extend the concept of {\em exotic PDE's}, recently introduced by A. Pr\'astaro, for PDE's in the category of commutative manifolds \cite{PRA19, PRA20, PRA21, PRA21-1}, also for the ones in the category of quantum PDE's. In particular, we prove also smooth versions of quantum generalized Poincar\'e conjectures.

In the following we list the main results of this paper, assembled for sections. 2. Theorem \ref{integral-spectrum-quantum-pdes} and Theorem \ref{heyting-algebra-quantum-pde} show that to any quantum hypercomplex PDE $\hat E_k\subset\hat J^k_n(W)$, can be associated a topological spectrum ({\em integral spectrum}) and a Heyting algebra ({\em integral Heyting algebra}) encoding some algebraic topologic properties of such a PDE. This allows us to give a new constructive point of view to the actual approach to consider quantum logic in field theory by means of topoi.
3. Theorem \ref{delta-poincare-lemma-pdes-category-quantum-hypercomplex-manifolds} and Theorem \ref{criterion-formal-quantum-integrability-pdes-category-quantum-hypercomplex-manifolds} extend our formal geometric theory of PDE's from the category of quantum (super)manifolds to the ones for quantum hypercomplex manifolds. Theorem \ref{integral-bordism-groups-quantum-hypercomplex-pdes} and Theorem \ref{theorem-quantum-sedenionic-heat-equation} characterize global solutions of PDE's in the category $\mathfrak{Q}_{hyper}$, by means of suitable bordism groups. 4. Theorem \ref{main-quantum-singular1} gives an algebraic topologic characterization of singular PDE's in the category $\mathfrak{Q}_{hyper}$.  5. Here we extend to the category $\mathfrak{Q}_{hyper}$ the concept of exotic PDE's, perviously introduced by A. Pr\'astaro for PDE's in the category of commutative manifolds. Theorem \ref{bordism-groups-in-quantum-hypercomplex-exotic-pdes-stability} and Theorem \ref{integral-h-cobordism-in-Ricci-flow-pdes} give characterizations of global solutions for exotic PDE's in the category $\mathfrak{Q}_{hyper}$, that allow to classify smooth solutions starting from quantum homotopy spheres. In particular, an integral h-cobordism theorem in quantum Ricci flow PDE's is proved.

\section{Spectra in quantum PDE's}\label{spectra-quantum-pdes-section}

In this section we prove that to any quantum PDE can be canonically associated a Heyting algebra directly related to its integral bordism group. This is made since an algebraic topologic spectrum is recognized to characterize such an integral bordism group, and topologic spectra can be encoded in Heyting algebras. This result allows us to look to the meaning of quantum logic from a more general point of view, since Heyting algebras contain Boolean algebras. In order to make this paper as self-contained as possible, let us first recall some fundamental definitions and results about Heyting algebras. (There propositions are generally given without proof since they are standard. See, e.g., Refs \cite{BIRKHOFF, RUTHERFORD} for more informations about.)

\begin{definition} {\em 1)} A {\em partially ordered system}\index{partially ordered system ! order relation ! totally ordered subset} $(E,\le  )$ is a
non-empty set $E$, together with a relation $\le$   on $E $, such
that: {\em(a)} if $a\le  b$ and $b\le  c \Rightarrow  a\le  c $; {\em(b)}
$a\le a $. The relation $\le$   is called an {\em order
relation} in $E$. The notation $y\ge x$ is sometimes used in
place of $\le$.

{\em 2)} A {\em totally ordered subset} $F$ of partially ordered
system  $(E,\le  )$ is a subset of $E$ such that for every pair
$x,y\in F$ either $x\le  y$ or $y\le  x $.

{\em 3)} If $F$ is a subset of a partially ordered system  $(E,\le  )$
then an element $x$ in $E$ is said to be an {\em upper bound}
for $F$ if every $f$ in $F$ has the property $f\le  x $. An upper
bound for $F$ is said to be a {\em least upper bound} of $F$
($\sup F$) if every upper bound $y$ of $F$ has the property $x\le
y$.

{\em 4)} In a similar fashion, the terms {\em lower bound} and {\em greatest lower bound} (in $F$) may be defined.

{\em 5)} An element $x\in E$ is said to be {\em maximal} if $x\le y$
implies $y\le  x $.
\end{definition}

\begin{example} Let $X$ be a set and $\mathcal{P}(X)$ be the set of all subsets
of $X $. The couple $(\mathcal{P}(X),\subseteq )$ is a partially
ordered system where the order relation is the inclusion relation
$\subseteq$  between the sets contained in $X$. An upper bound for
a subfamily $B\subseteq  \mathcal{P}(X)$ is any set containing $\cup
B $, and $\cup B$ is the only least upper bound of $B$. Similarly
$\cap B$ is the only greatest lower bound of $B$. The only maximal
element of $\mathcal{P}(X)$ is $X$.\end{example}

\begin{definition} Let $(E,\le  )$ be a non-void partially ordered system. A
subset $B\subset E$ with the following three properties will be
called {\em admissible} with respect to a function
$f:E\rightarrow E$ such that $f(x)\ge x;$ and with respect to an
element $a\in E$:

{\em(a)} $a\in B $;

{\em(b)} $f(B)\subseteq B$;

{\em(c)} Every least upper bound of a totally ordered subset
of $B$ is in $B$.\end{definition}

\begin{theorem} {\em 1) (Hausdorff maximality theorem).}\index{Hausdorff maximality theorem} Every partially ordered system contains a maximal
totally ordered subsystem.

{\em 2) (Zorn's lemma).}\index{Zorn lemma} A
partially ordered system has a maximal element if every totally
ordered subsystem has an upper bound.

{\em 3) (Zermelo well-ordering theorem).}\index{Zermelo well-ordering theorem} Every set $E$ may be {\em well-ordered} that is a partially ordered system  $(E,\le)$ such that:

{\em(i)} $a\le b$, $b\le  a \Rightarrow a=b $;

{\em(ii)} any non-void subset of $E$ contains a lower bound for
itself.\end{theorem}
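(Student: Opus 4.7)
The plan is to establish parts $(1)$, $(2)$, $(3)$ in order, each deduced from its predecessor with the aid of the Axiom of Choice; the technical heart of the argument is $(1)$, which rests on the admissibility notion just introduced.

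For $(1)$, let $\mathcal{T}\subseteq \mathcal{P}(E)$ denote the family of totally ordered subsets of $E$, itself partially ordered by inclusion, and note that the union of any chain in $\mathcal{T}$ is again in $\mathcal{T}$ and is its least upper bound. Using the Axiom of Choice, pick for every $T\in\mathcal{T}$ that admits a proper extension in $\mathcal{T}$ an element $g(T)\in E\setminus T$ with $T\cup\{g(T)\}\in\mathcal{T}$, and set $f(T)=T\cup\{g(T)\}$; otherwise $f(T)=T$. Then $f(T)\supseteq T$. Define $M\subseteq\mathcal{T}$ as the intersection of all subsets of $\mathcal{T}$ admissible with respect to $f$ and the element $\emptyset$ in the sense of the preceding definition; this intersection is itself admissible, hence the smallest admissible subfamily. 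The crux is to show that $M$ is totally ordered by inclusion. Granted this, property (c) forces $T^{\ast}:=\bigcup M\in M$, and $f(T^{\ast})\in M$ with $T^{\ast}\subseteq f(T^{\ast})$; maximality of $T^{\ast}$ in $M$ then yields $f(T^{\ast})=T^{\ast}$, i.e.\ $T^{\ast}$ is a maximal totally ordered subsystem of $E$.

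For $(2)$, apply $(1)$ to obtain a maximal totally ordered subsystem $T_{0}\subseteq E$, which by hypothesis admits an upper bound $x\in E$. If $y\in E$ satisfies $x\le y$, then $T_{0}\cup\{y\}$ is totally ordered, so by maximality $y\in T_{0}$ and therefore $y\le x$; thus $x$ is a maximal element of $E$. For $(3)$, let $\mathcal{W}$ be the set of pairs $(A,\le_{A})$ with $A\subseteq E$ and $\le_{A}$ a well-ordering of $A$, partially ordered by $(A,\le_{A})\preceq(B,\le_{B})$ iff $A$ is an initial segment of $B$ and $\le_{B}|_{A}=\le_{A}$. Every chain in $\mathcal{W}$ has an upper bound obtained by gluing; applying $(2)$ yields a maximal $(A_{0},\le_{A_{0}})$. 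If $A_{0}\subsetneq E$, pick $e\in E\setminus A_{0}$ and extend $\le_{A_{0}}$ by declaring $e$ to dominate every element of $A_{0}$, contradicting maximality. Hence $A_{0}=E$, and the resulting well-ordering satisfies clauses (i), (ii).

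The main obstacle is the total-orderedness of $M$ in part $(1)$. I would handle it by the standard Bourbaki-style double induction: first isolate the subfamily $M_{0}\subseteq M$ of elements $T\in M$ that are \emph{extremal}, in the sense that for every $T'\in M$ either $T'\subseteq T$ or $f(T)\subseteq T'$; check that $M_{0}$ satisfies clauses (a)--(c) of admissibility, whence $M_{0}=M$ by minimality of $M$. Then run the argument once more at the level of $M$ itself to upgrade extremality to full comparability. Both steps reduce to mechanical verifications of (a)--(c) on auxiliary subfamilies, but the bookkeeping in the second step is the delicate part, since one must carefully distinguish the cases $f(T)=T$ and $f(T)\supsetneq T$.
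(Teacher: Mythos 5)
The paper itself does not prove this theorem: the paragraph immediately preceding the list of set-theoretic preliminaries states that "These propositions are generally given without proof since they are standard," with references to Birkhoff and Rutherford. What the paper does is place the definition of an \emph{admissible} subset just before this statement, which is a clear signal that the intended route to part 1) is the Bourbaki--Zermelo fixed-point argument on the smallest admissible subfamily --- and that is precisely what you do. Your reductions $(1)\Rightarrow(2)\Rightarrow(3)$ are correct and standard, and the skeleton of $(1)$ --- build $f:\mathcal{T}\to\mathcal{T}$ with AC, take $M$ the smallest admissible subfamily of $\mathcal{T}$ relative to $(f,\varnothing)$, show $M$ is a chain, conclude $T^{*}=\bigcup M\in M$ and $f(T^{*})=T^{*}$ --- is sound.

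There is, however, an organizational defect in your sketch of the chain property, which is the step you correctly flag as the crux. You propose to verify clauses (a)--(c) directly for the set $M_{0}=\{T\in M:\ \forall\,T'\in M,\ T'\subseteq T\ \text{or}\ f(T)\subseteq T'\}$. Clause (a) already fails to be a "mechanical verification": to have $\varnothing\in M_{0}$ you need $f(\varnothing)\subseteq T'$ for every nonempty $T'\in M$, and this requires its own minimality argument (that $\{T'\in M:\ T'=\varnothing\ \text{or}\ f(\varnothing)\subseteq T'\}$ is admissible). The clean organization is the usual two-lemma form: first fix a \emph{comparable} $T\in M$ (one $\subseteq$-comparable to every element of $M$), show $M_{T}:=\{T'\in M:\ T'\subseteq T\ \text{or}\ f(T)\subseteq T'\}$ is admissible and hence equal to $M$, and then show the set of comparable elements is admissible, using the first lemma to handle clause (b). Also, the announced "second pass to upgrade extremality to full comparability" is superfluous: if every $T\in M$ satisfies your extremality condition, then for any $T,T'\in M$ either $T'\subseteq T$ or $T\subseteq f(T)\subseteq T'$, so $M$ is already a chain. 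Finally, note that the paper's "partially ordered system" is only reflexive and transitive (no antisymmetry assumed), so "maximal" in part 2) means $x\le y\Rightarrow y\le x$; your argument does deliver exactly this conclusion, so it is compatible with the paper's conventions.
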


\begin{example}  The set $\mathbb{N}$ of positive integers
in the usual order is a familiar example of well-ordered
system.\end{example}

\begin{definition} A partially ordered system  $(E,\le  )$ is said to be {\em complete} if:

{\em(i)} $a\le  b$ and $b\le  a \Rightarrow  a=b $;

{\em(ii)} Every non-void subset has a least upper bound and a greatest
lower bound.
\end{definition}

\begin{proposition}[Tarski]\index{Tarski} If $(E,\le  )$ is a complete partially
ordered system , $f:E\rightarrow E$, and $x\le  y$ implies
$f(x)\le  f(y) $, then $f$ has a fixed element $x_{o , }$, i.e.,
$f(x_0)=x_0 $, and the set of all fixed elements contains its
least upper bound and its greatest lower bound.\end{proposition}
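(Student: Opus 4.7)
The plan is to adapt the standard Knaster–Tarski argument, which is especially natural given the completeness hypothesis. I would introduce the set of \emph{postfix points}
\[
P=\{x\in E : f(x)\le x\},
\]
and show that the greatest lower bound of $P$, which exists by completeness, is a fixed point of $f$. Concretely, I first note $P$ is non-empty: since $E$ itself has a least upper bound $M$ (apply completeness to $E$), and $f(M)\le M$ by the definition of $M$, we get $M\in P$. Let $x_0=\inf P$.

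The heart of the argument is a pair of inequalities. For any $x\in P$ we have $x_0\le x$, hence by monotonicity $f(x_0)\le f(x)\le x$. So $f(x_0)$ is a lower bound for $P$, which gives $f(x_0)\le x_0$; in particular $x_0\in P$. Applying $f$ once more and using monotonicity yields $f(f(x_0))\le f(x_0)$, so $f(x_0)\in P$, and therefore $x_0\le f(x_0)$. Combining these two inequalities with antisymmetry (part (i) of the definition of completeness) gives $f(x_0)=x_0$.

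For the statement about the set $F$ of all fixed points, I observe that $F\subseteq P$. Hence $\inf P\le \inf F$, while $x_0=\inf P$ belongs to $F$, forcing $\inf F\le x_0$. Thus $\inf F=x_0\in F$. The symmetric argument using the set of \emph{prefix points} $Q=\{x\in E:x\le f(x)\}$ and $x_1=\sup Q$ produces a fixed point that equals $\sup F$, so $\sup F\in F$ as well.

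The only step that requires care is checking that $f(x_0)$ really is a lower bound of $P$ (so that it can be compared with $\inf P$); this uses monotonicity together with the fact that $x\in P$ means $f(x)\le x$, not merely $f(x)\le f(x)$. Once that observation is in place the proof is essentially a two-line squeeze, and the dual argument for $\sup F$ is mechanical. I expect no further obstacle, since all necessary suprema and infima are guaranteed by completeness of $(E,\le)$.
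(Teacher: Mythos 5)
The paper does not supply a proof of this proposition: it is presented as one of a block of standard facts about lattices and Boolean algebras, with the explicit remark that "these propositions are generally given without proof since they are standard," and a pointer to references such as Birkhoff and Rutherford. So there is no in-paper argument to compare against; what you have supplied is a correct, self-contained proof in the standard Knaster--Tarski style, and it is a perfectly reasonable choice.

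A few brief checks, all of which you handle correctly. Nonemptiness of the set $P$ of postfix points is secured via the top element $M=\sup E$, which exists because $E$ itself is a non-void subset and is thus covered by the completeness hypothesis; $f(M)\le M$ then puts $M\in P$. The squeeze $f(x_0)\le x_0\le f(x_0)$ for $x_0=\inf P$ is established exactly as you say, and antisymmetry (clause (i) of the paper's definition of completeness) closes the loop. For $\inf F$ and $\sup F$: since $F\subseteq P$, $\inf P$ is a lower bound of $F$ so $\inf P\le\inf F$, while $x_0=\inf P\in F$ forces $\inf F\le x_0$; the dual argument with the prefix set $Q=\{x:x\le f(x)\}$ and $x_1=\sup Q$ gives $\sup F=x_1\in F$. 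The argument is complete and correct as written.
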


\begin{definition} An
element $x$ in a ring $R$ is said to be {\em idempotent}
 if $x^2=x $, and to be {\em nilpotent} if $x^n=0$
for some positive integer $n$. A {\em Boolean ring}\index{Boolean ring} is one in
which every element is idempotent.\end{definition}

\begin{proposition} {\em 1)} Every Boolean
ring is a commutative ring where the identity $x+x=0$ (or
equivalently $x=-x$) holds.

{\em 2)} In a Boolean ring, any
prime ideal is maximal.\end{proposition}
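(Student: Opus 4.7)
The plan is to handle the two parts separately, using only the defining identity $x^2=x$ for all $x\in R$.

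For part (1), the first step is to derive $x+x=0$ by applying idempotency to the element $x+x$ itself. On one hand $(x+x)^2=x+x$; on the other hand distributivity (needing only that $x$ commutes with itself) together with $x^2=x$ gives $(x+x)^2=4x$. Comparing the two yields $2x=0$, i.e.\ $x=-x$. Commutativity is then extracted analogously from idempotency of $x+y$: expanding $(x+y)^2=x+y$ and cancelling $x+y$ on both sides leaves $xy+yx=0$, and since $xy=-xy$ by the previous step, this collapses to $xy=yx$.

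For part (2), I would pass to the quotient $R/P$, which inherits the Boolean identity. Since $P$ is prime, $R/P$ is an integral domain, so every $\bar x\in R/P$ satisfies $\bar x(\bar x-1)=0$, forcing $\bar x\in\{0,1\}$. Hence $R/P\cong\Z/2\Z$ is a field, and $P$ is maximal.

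No step poses any genuine obstacle; the arguments are short manipulations of the idempotency law. The only point of care is the implicit unital assumption in part (2): without a unit, one instead observes that for any nonzero $\bar x\in R/P$ and any $\bar y\in R/P$, the equality $\bar x(\bar x\bar y-\bar y)=\bar x^2\bar y-\bar x\bar y=0$ in the integral domain $R/P$ forces $\bar x\bar y=\bar y$. Thus every nonzero class acts as a two-sided identity, all nonzero classes coincide, and $R/P$ again has exactly two elements.
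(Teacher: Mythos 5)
Your argument is correct. Note, however, that the paper deliberately supplies \emph{no} proof for this proposition: it is listed among the standard facts about Boolean rings/algebras that the paper recalls as background, with a pointer to the references (Birkhoff, Rutherford). So there is no in-paper proof to compare against; what you have written is precisely the standard textbook argument. Part (1) is clean: expanding $(x+x)^2=x+x$ to get $2x=0$ needs only distributivity, and expanding $(x+y)^2=x+y$ to get $xy+yx=0$, then invoking $-xy=xy$, gives commutativity. Part (2) in the unital case is the usual quotient argument: $R/P$ Boolean and an integral domain forces $\bar x(\bar x-1)=0$, hence $R/P\cong\mathbb{Z}_2$. Your care about the non-unital case is a nice touch and correctly handled: in the integral domain $R/P$ with $P\neq R$, any nonzero $\bar x$ satisfies $\bar x(\bar x\bar y-\bar y)=0$, so $\bar x$ is an identity; two nonzero elements then coincide (each being the other's identity), so $R/P\cong\mathbb{Z}_2$ again. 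One could streamline by noting the paper's later Example takes Boolean rings to have a unit throughout, but including both cases is mathematically harmless and arguably more complete.
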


\begin{example} The smallest Boolean ring with
unit is ${\mathbb{Z}}_2\equiv \{ 0,1\} $, i.e., the set of integers
modulo $2$. ${\mathbb{Z}}_2$ is actually a field. Conversely every
Boolean ring with unit which is also a field is necessarily
isomorphic to the field ${\mathbb{Z}}_2$.\footnote{Let us recall that $\mathbb{Z}_n$, has nonempty set $Z_{ero}(\mathbb{Z}_n)$ of zero divisors iff $n$ is composite, i.e., it is of the type $n=pq$. If $n$ is prime $Z_{ero}(\mathbb{Z}_n)=\varnothing$. In this last case $\mathbb{Z}_n$ is a field.} If a Boolean ring $A$ does
not contain zero divisor it is reduced to 0 or it is isomorphic to
${\mathbb{Z}}_2 $. The ring $\mathcal{P}(X)$ of subsets of a fixed set $X$
is a Boolean ring with respect to the following definitions of
multiplication and addition of arbitrary subsets of $X$: {\em (a) multiplication}: $EF=E\cap F, E,F\in \mathcal{P}(X) ) $;  {\em (b) addition}: $E+F = (E\cap CF)\cup (CE\cap F) = (E-F)\cup (F-E)
\equiv  E\triangle F$ ( $\equiv$ {\em symmetric difference}).
$X$ is the unity element and $\varnothing$ is the zero element.
The Boolean ring $\mathcal{P}(X)$ is isomorphic to the ring
${{\mathbb{Z}}_2}^X$ of the applications $X\to{\mathbb{Z}}_2$:
$h : \mathcal{P}(X) \rightarrow  {{\mathbb{Z}}_2}^X$,
 $h : A \mapsto  h(A)\equiv \chi _{A}$ where $\chi_A$ is the characteristic function of $A$.
Every Boolean ring is isomorphic to a sub-ring of the ring
${{\mathbb{Z}}_2}^X $. Every finite Boolean ring is of the form
${{\mathbb{Z}}_2}^n $.\end{example}

\begin{definition} A topological space is said to be
{\em totally disconnected} if its topology has a base
consisting of sets which are simultaneously open and closed.\end{definition}

\begin{example} A set $X$ with the discrete topology is a totally disconnected
topological space.\end{example}

\begin{theorem}[Stone representation theorem] Every Boolean ring with unit is isomorphic to the
Boolean ring of all open and closed subsets of a totally
disconnected compact Hausdorff space.\end{theorem}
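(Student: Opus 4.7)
The plan is the standard Stone construction via the spectrum of maximal (equivalently, by the proposition above, prime) ideals. Given a Boolean ring $B$ with unit, I would take $X = \mathrm{Spec}(B)$, the set of all prime ideals of $B$, and identify $X$ with the set of ring homomorphisms $B \to \mathbb{Z}_2$ (each prime ideal $\mathfrak{p}$ corresponds to the quotient map $B \to B/\mathfrak{p} \cong \mathbb{Z}_2$). For every $b \in B$, set $U_b = \{\mathfrak{p} \in X : b \notin \mathfrak{p}\}$, and equip $X$ with the topology generated by $\{U_b\}_{b \in B}$. A short Boolean computation using the identities $U_b \cap U_c = U_{bc}$, $U_b \cup U_c = U_{b+c+bc}$, $X \setminus U_b = U_{1+b}$, and $U_1 = X$, $U_0 = \varnothing$ shows that each $U_b$ is \emph{clopen}, so $X$ is automatically totally disconnected, and the map $\varphi : b \mapsto U_b$ is a homomorphism from $B$ into the Boolean ring of clopen subsets of $X$.

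Next I would verify the Hausdorff property and injectivity of $\varphi$. If $\mathfrak{p} \neq \mathfrak{q}$ are distinct prime ideals, pick $b \in \mathfrak{p} \setminus \mathfrak{q}$ (or the reverse); then $U_{1+b}$ and $U_b$ separate them. Injectivity follows from the existence of enough prime ideals: for any $b \neq 0$, the principal ideal $(1+b)$ is proper (otherwise $b(1+b)=b+b=0$ would force $b = 0$ since $b \cdot 1 = b \cdot (1+b) = 0$, contradicting $b \neq 0$ after tracing through the Boolean identities), hence by Zorn's lemma it is contained in a maximal ideal $\mathfrak{m}$, and this $\mathfrak{m}$ lies in $U_b$, so $\varphi(b) \neq \varnothing$.

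The main obstacle, and the heart of the theorem, is \textbf{compactness} of $X$, from which surjectivity of $\varphi$ then follows painlessly. To prove compactness I would show that any cover of $X$ by basic open sets $\{U_{b_\alpha}\}_{\alpha \in I}$ admits a finite subcover: the cover condition says that no prime ideal contains the set $\{b_\alpha\}$, so the ideal $J$ generated by the $b_\alpha$'s is not contained in any prime (hence in any maximal) ideal, forcing $J = B$ by Zorn. Therefore $1 \in J$, i.e.\ $1 = r_1 b_{\alpha_1} + \cdots + r_n b_{\alpha_n}$ for finitely many indices, which in turn implies $X = U_{b_{\alpha_1}} \cup \cdots \cup U_{b_{\alpha_n}}$ (since if some prime $\mathfrak{p}$ were disjoint from all of them, each $b_{\alpha_i}$ would lie in $\mathfrak{p}$, forcing $1 \in \mathfrak{p}$, absurd). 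The delicate point here is the Boolean-ring manipulation linking ideal membership of $1$ to the set-theoretic covering; this is where one uses Boolean identities critically rather than generic ring theory.

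Finally, surjectivity of $\varphi$ onto the ring of all clopen subsets of $X$. Any clopen set $C \subseteq X$ is closed in a compact space, hence compact; since $\{U_b\}$ is a basis, $C$ is a finite union $U_{b_1} \cup \cdots \cup U_{b_n} = U_{b_1 + b_2 + b_1 b_2 + \cdots}$, expressible as $\varphi(c)$ for some $c \in B$ via the Boolean join formulas used above. Together with the earlier injectivity and the fact that $\varphi$ is a ring homomorphism, this yields the desired Boolean isomorphism $B \cong \mathrm{Clopen}(X)$, completing the proof.
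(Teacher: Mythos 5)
The paper records this theorem as a standard result and supplies no proof of its own (at the start of Section~2 it notes that these lattice-theoretic propositions are ``generally given without proof since they are standard,'' citing \cite{BIRKHOFF, RUTHERFORD}), so there is no in-text argument to compare against. Your proof is the canonical Stone-spectrum construction and it is correct: the identities $U_b\cap U_c=U_{bc}$, $U_b\cup U_c=U_{b+c+bc}$, $X\setminus U_b=U_{1+b}$ do show that $\{U_b\}$ is a clopen base (hence $X$ is totally disconnected), the Hausdorff separation and the Zorn-based compactness argument are exactly right, and surjectivity onto $\mathrm{Clopen}(X)$ falls out of compactness as you say. The one place that needs repair is the parenthetical in the injectivity step: the chain ``$b\cdot 1 = b\cdot(1+b)=0$'' is not literally valid (indeed $b\cdot 1=b\neq 0$ by hypothesis, and $1\neq 1+b$ in general even when $(1+b)=B$). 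The intended argument is: if the principal ideal $(1+b)$ were all of $B$, then $1=r(1+b)$ for some $r\in B$, whence $b=b\cdot 1 = r\,b(1+b) = r\cdot 0 = 0$ using $b(1+b)=b+b^2=b+b=0$; this contradicts $b\neq 0$, so $(1+b)$ is proper, lies in a maximal ideal $\mathfrak m$, and $\mathfrak m\in U_b$. With that small fix the proof is complete.
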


\begin{definition} A {\em lattice}\index{lattice}
is a partially ordered set $L$ such that every pair $x , y$ of
elements in $L$ has a least upper bound and a greatest lower
bound, denoted by $x\vee y$ and $x\wedge y$ respectively. The
lattice $L$ has a {\em unit} if there exists an element $1$
such that $x\le  1 $, $x\in L $. The lattice $L$ has a {\em zero} if there exists an element 0 such that $0\le  x $, $x\in
L $. The lattice is called {\em distributive lattice} if
$x\wedge (y\vee z)=(x\wedge y)\vee (x\wedge z) $, $x,y,z\in L $.
The lattice is called {\em complemented lattice} if for every
$x\in L $, there exists an $x' \in L$ such that $x\vee x' =1 $,
$x\wedge x' =0 $. A lattice $L$ is said to be {\em complete
lattice} if every subset bounded above has a least upper bound,
or equivalently, every subset bounded below has a greatest lower
bound. A lattice $L$ is said to be {\em $\sigma$-complete
lattice} if this is true for denumerable subsets of $L$.\end{definition}

\begin{definition} A {\em Boolean algebra}\index{Boolean algebra} is a lattice with unit and zero
which is distributive and complemented.\end{definition}

\begin{proposition} The concepts of
Boolean algebra and Boolean ring with unit are equivalent.\end{proposition}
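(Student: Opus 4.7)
The plan is to establish a bijective correspondence between Boolean rings with unit and Boolean algebras by producing mutually inverse constructions of one structure from the other. I would carry this out in three stages: a ring-to-algebra passage, an algebra-to-ring passage, and a verification that the two passages are inverse.

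For the first passage, starting from a Boolean ring $R$ with unit, I would use the identities $x^2=x$ and $x+x=0$ recalled in the previous proposition to define $x \le y$ to mean $xy=x$, verify that this is a partial order with minimum $0$ and maximum $1$, and show that the operations
\begin{equation*}
x \wedge y := xy, \qquad x \vee y := x+y+xy
\end{equation*}
give respectively the greatest lower bound and the least upper bound with respect to this order. Distributivity of the resulting lattice reduces to ring distributivity together with $x^2=x$, and complementation is witnessed by $x' := 1+x$, for which $x \wedge x' = x+x^2 = 0$ and $x \vee x' = x+(1+x)+x(1+x) = 1$.

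For the second passage, starting from a Boolean algebra $(L, \vee, \wedge, {}', 0, 1)$, I would set
\begin{equation*}
xy := x \wedge y, \qquad x + y := (x \wedge y') \vee (x' \wedge y),
\end{equation*}
and verify that $(L, +, \cdot)$ is a commutative ring with unit $1$ in which every element is idempotent. Commutativity of $+$ and $\cdot$, idempotency, distributivity of $\cdot$ over $+$, neutrality of $0$, and the fact that every element is its own additive inverse are all short consequences of the distributive and complementation laws of $L$. The main nontrivial step is associativity of the symmetric-difference sum: I would expand $(x+y)+z$ and $x+(y+z)$ using the distributive law of $L$ and check that both reduce to the same symmetric join of pairwise meets involving the complements, after applying $a \vee a' = 1$ and $a \wedge a' = 0$.

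Finally, I would verify that the two constructions are mutually inverse. Starting from a ring, the recovered multiplication is immediate since $x \wedge y = xy$, and the recovered addition $(x \wedge y') \vee (x' \wedge y)$ simplifies through $x^2=x$ and $x+x=0$ back to $x+y$. In the other direction, the join $x+y+xy$ built from the new ring structure coincides with the original lattice join on $L$, and the order induced from the new ring matches the original lattice order. The principal obstacle in the whole argument is the associativity of symmetric difference in the algebra-to-ring direction; every other identity amounts to a one- or two-line computation in either the lattice or the ring.
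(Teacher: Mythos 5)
Your proposal is correct and follows essentially the same approach as the paper: both translate between the two structures via $xy = x\wedge y$, $x+y = (x\wedge y')\vee(x'\wedge y)$ in one direction and $x\wedge y = xy$, $x' = 1+x$ in the other. You also correctly write the join as $x\vee y = x+y+xy$, whereas the paper's sketch states $x\vee y = x+y$, which appears to be a typographical slip (it only holds when $x\wedge y = 0$).
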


\begin{proof} In fact let $B$ be a Boolean algebra and define
multiplication and addition as: $xy= x\wedge y $, $x+y = (x\wedge
y' )\vee (x' \wedge y) $. Then, it may be verified that $B$ is a
Boolean ring with $1$ as unit. On the other hand, if $B$ is a
Boolean ring with unit denoted by $1$, then if $x\le  y$ is
defined to mean $x=\lambda y$ and $x' =1+x $, then $B$ is a
Boolean algebra and $x\vee y =x+y $, $x\wedge y = xy $. \end{proof}

\begin{definition} If $B$ and $C$ are Boolean algebras and $h:B\rightarrow C $,
then $h$ is said to be a {\em Boolean algebra homomorphism},
if $h(x\wedge y) = h(x)\wedge h(y) $, $h(x\vee y) = h(x)\vee h(y)
$, $h(x' ) =h(x)'  $. If $h$ is one-to-one, it is called an {\em isomorphism Boolean algebra}. If $h$ is an isomorphism and
$h(B)=C $, then we say that $B$ is {\em isomorphic} to $C $.\end{definition}

\begin{example} The following are examples of Boolean algebras:

{\em 1)} ${\mathbb{Z}}_2\equiv \{ 0,1\}$.

{\em 2)} $\mathcal{P}(X) $, where $\le$   is
taken to be the inclusion, and $\wedge$ and $\vee$  are taken as
intersection and union respectively.\end{example}

\begin{theorem} Every Boolean algebra is isomorphic with the Boolean algebra of
all open and closed subsets of a totally disconnected compact
Hausdorff space.\end{theorem}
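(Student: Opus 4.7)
The plan is to reduce the statement to the Stone representation theorem for Boolean rings with unit, which has already been invoked in the excerpt, by combining it with the equivalence between Boolean algebras and Boolean rings with unit proved just above. Since both tools are available, no genuinely new content is needed; what must be done is to check that the ring-theoretic isomorphism produced by the first theorem is automatically an isomorphism of Boolean algebras.

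First I would take a Boolean algebra $B$ and convert it to its associated Boolean ring with unit $R_B$ via the dictionary of the preceding proposition, namely $xy = x\wedge y$ and $x+y = (x\wedge y')\vee(x'\wedge y)$, with multiplicative unit the top element $1$. By the Stone representation theorem for Boolean rings with unit there exist a totally disconnected compact Hausdorff space $X$ and a ring isomorphism $\varphi: R_B \to \mathrm{Clop}(X)$ onto the Boolean ring of clopen subsets of $X$, equipped with symmetric difference as addition and intersection as multiplication.

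Second I would show that $\varphi$ preserves the full Boolean-algebra structure. The inverse dictionary expresses the lattice operations in ring-theoretic terms as $x\wedge y = xy$, $x\vee y = x+y+xy$ and $x' = 1+x$. Applying $\varphi$ and using that it preserves sums, products and the unit, one obtains $\varphi(x\wedge y) = \varphi(x)\cap\varphi(y)$, $\varphi(x\vee y) = \varphi(x)\cup\varphi(y)$ (via the set-theoretic identity $A\cup B = (A\triangle B)\triangle(A\cap B)$), $\varphi(x') = X\setminus\varphi(x)$, and of course $\varphi(0)=\varnothing$, $\varphi(1)=X$. Hence $\varphi$ intertwines $\wedge$, $\vee$, $(\cdot)'$, $0$ and $1$ with $\cap$, $\cup$, complementation, $\varnothing$ and $X$, so $B\cong \mathrm{Clop}(X)$ as Boolean algebras.

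The main obstacle is more bookkeeping than substance: one must be careful that the dictionary in both directions is consistent and that the set-theoretic join corresponds to the ring expression $x+y+xy$ on clopens. The deep content — compactness, Hausdorffness and total disconnectedness of the representing space, together with the existence of enough prime/maximal ideals (equivalently, enough homomorphisms $B\to\mathbb{Z}_2$) to separate elements of $B$ — is already packaged inside the Boolean-ring version of the Stone theorem cited above. If a self-contained argument were desired, one would instead construct $X$ directly as the set of Boolean-algebra homomorphisms $\chi : B\to\mathbb{Z}_2$, topologized by the clopen base $U_a = \{\chi : \chi(a)=1\}$ for $a\in B$, verify compactness via Tychonoff applied to $\mathbb{Z}_2^{B}$ together with closedness of the homomorphism condition, and check injectivity of $a\mapsto U_a$ using Zorn's lemma (available via the Hausdorff maximality theorem cited earlier) to extend proper filters to ultrafilters; but this essentially reproduces the proof of the Boolean-ring version already assumed.
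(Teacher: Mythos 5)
Your proposal is correct, and it is precisely the route the paper sets up: the Stone representation theorem for Boolean rings with unit is stated two items earlier, and the equivalence between Boolean algebras and Boolean rings with unit is the proposition immediately preceding, so the paper leaves this theorem without proof as the evident corollary of those two. One small point worth flagging: in its proof of the equivalence proposition the paper writes the inverse dictionary as $x\vee y = x+y$, which is a slip (it only holds when $x\wedge y=0$); your formula $x\vee y = x+y+xy$ is the correct one and is what makes the transfer of $\vee$ across the ring isomorphism go through.
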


\begin{definition} {\em 1)} A {\em projection} in a vector space $V$ means
a linear operator $E\in L(V)$ with $E^2=E $.

{\em 2)} The {\em intersection} $A\wedge B$ and the {\em union}
 $A\vee B$ of two commutating projections $A$ and $B$ in $V$ are
projections:

{\em(a) (intersection)} $A\wedge B=A\circ B$;

{\em(b) (union)} $A\vee B=A+B-(A\circ B) $.

The ranges of the intersection and union of two commutating projections are given by:
 $(A\wedge B)(V)=A(V)\cap B(V) $; $(A\vee B)(V)=A(V)\oplus B(V) =
S\bar p(A(V),B(V))$, where $S\bar p(A(V),B(V))$ means the closed
linear manifold spanned by the sets $A(V)$ and $B(V)$.

{\em 3)} The {\em natural ordering} $A\le  B$ between two commutating
projections $A$ and $B$ has the geometrical significance that
$A\le  B$ is equivalent to $A(V)\subseteq B(V)$.

{\em 4)} A {\em Boolean algebra of projections}\index{Boolean algebra of projections} in $V$ is a set of projections
in $V$ which is a Boolean algebra under the operations $A\vee B$
and $A\wedge B$ which has for its zero and unit elements the
operators 0 and $id_{V}\equiv 1\in L(V)$.

{\em 5)} The notions of
abstract $\sigma$-completeness, and completeness can be extended.
\end{definition}

Boolean algebras can be considered particular cases of more general algebras (Heyting algebras) that play important roles in Algebraic Topology.
\begin{definition}\label{heyting-algebra}

A {\em Heyting algebra} $H$ is a bounded lattice such that for all $a,b\in H$ there is a greatest element $x\in H$ such that $a\wedge x\le b$. This element $x$ is called the {\em relative pseudo-complement} of $a$ with respect to $b$, and denoted $x\equiv a\to b$. The largest (resp. smallest) element in $H$ is denoted by $1$ (resp. $0$).

$\bullet$\hskip 3pt The {\em pseudo-complement} of $x\in H$ is the element $\neg x=x\to 0$. \footnote{One has $a\wedge \neg a=0$, and $\neg a$ is the largest element having this property. For a Boolean algebra, $\neg a$ is a true complement, but for a Heyting algebra this is not generally assured. In fact, in general $a\vee\neg a\not= 1$.}

$\bullet$\hskip 3pt A {\em complete Heyting algebra} is a Heyting algebra that is a complete lattice.\footnote{Complete Heyting algebras are also called {\em frames}, or {\em locales}, or {\em complete Brouwerian lattices}. In a frame the meet distributes over infinite joins: $a\, \wedge\, \bigvee b_i=\bigvee(a\wedge\, b_i)$.}

$\bullet$\hskip 3pt A {\em subalgebra} of a Heyting algebra  $H$ is a subset $K\subset H$, containing $0$ and $1$ and closed under the operations $\wedge$, $\vee$ and $\to$.\footnote{Then it follows that $K$ is closed also under $\neg$, hence $K$ is necessarily a Heyting algebra under the same operations of $H$.}

$\bullet$\hskip 3pt If there exists an element $a\in H$, such that, $\neg a=a$, i.e., negation has a fixed point, then $H=\{a\}$.
\end{definition}

\begin{proposition}
The following propositions are equivalent.

{\em(i)} $H$ is a Heyting algebra.

{\em(ii) (Lattice theoretic definition-a).}  $H$ is a bounded lattice and the mappings $f_a:H\to H$, $f_a(x)=a\wedge x$, $a\in H$, are the lower adjoint of a monotone Galois connection.\footnote{Then, the relative upper adjoints $g_a$ are given by $g_a(x)=a\to x$.}

{\em(iii) (Lattice theoretic definition-b).}  $H$ is a residual lattice whose monoid operation is $\wedge$.\footnote{The monoid unit must be top element $1$. Commutativity of this monoid implies that the two residuals coincide as $a\to b$.}

{\em(iv) (Bounded lattice with an implication).} Let $H$ be a bounded lattice with largest and smallest elements $1$ and $0$ respectively, and a binary operation $\to$. such that the following hold:

{\em 1.} $a\to a=1$.

{\em 2.} $a\wedge (a\to b)=a\wedge b$.

{\em 3.} $b\wedge (a\to b)=a\wedge b$.

{\em 4.} $a\to(b\wedge c)=(a\to b)\wedge(a\to c)$.

{\em(v) (Axioms of intuitionistic propositional logic).}  $H$ is a set with three binary operations $\to$, $\wedge$ and $\vee$, and two distinguished elements $0$ and $1$, such that the following hold for any $x, y, z\in H$.\footnote{The relation $\le$ is defined by the condition $a\le b$ when $a\to b=1$. Furthermore, $\neg x=x\to 0$.}

{\em 1.} If $x\to y=1$ and $y\to x=1$ then $x=y$.

{\em 2.} If $1\to y=1$ then $y=1$.

{\em 3.} $x\to (y\to x)=1$.

{\em 4.} $(x\to (y\to z))\to((x\to y)\to(x\to z))=1$.

{\em 5.} $x\wedge y\to x=1$.

{\em 6.} $x\wedge y\to y=1$.

{\em 7.} $x\to(y\to(x\wedge y))=1$.

{\em 8.} $x\to x\vee y=1$.

{\em 9.} $y\to x\vee y=1$.

{\em 10.} $(x\to z)\to((y\to z)\to(x\vee y\to z))=1$.

{\em 11.} $0\to x=1$.
\end{proposition}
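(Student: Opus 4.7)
The plan is to prove the cycle $(\mathrm{i})\Rightarrow(\mathrm{ii})\Rightarrow(\mathrm{iii})\Rightarrow(\mathrm{iv})\Rightarrow(\mathrm{v})\Rightarrow(\mathrm{i})$. The one observation that makes most of these essentially automatic is the adjunction
\[ a\wedge x\le b\;\Longleftrightarrow\; x\le a\to b, \]
which is nothing but a rephrasing of the defining property of the relative pseudo-complement in the Heyting algebra definition. Granted this, (ii) is the statement that each $f_a(x)=a\wedge x$ admits an upper adjoint (namely $g_a(y)=a\to y$), and (iii) is the same fact translated into the language of residuated lattices, using that $\wedge$ is a commutative, associative, idempotent operation with unit the top element $1$, so that its unique residual is $\to$.

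For $(\mathrm{ii})\Rightarrow(\mathrm{iv})$ I would set $a\to b:=g_a(b)$ and verify the four axioms directly from the adjunction: axiom 1 from $a\wedge x\le a$; axioms 2 and 3 from the counit/unit inequalities $a\wedge(a\to b)\le b$ and $b\le a\to b$; and axiom 4 from the standard fact that an upper adjoint preserves meets. The step $(\mathrm{iv})\Rightarrow(\mathrm{v})$ is where the real work lies: the eleven Hilbert-style axioms of intuitionistic propositional calculus must each be rederived from the four lattice identities. The main technical device is to prove first that $x\to y=1$ is equivalent to $x\le y$, which follows from axioms~1 and 2 of (iv). With this translation in hand, the axioms 3, 5, 6, 8, 9, 11 of (v) reduce to obvious lattice inequalities, and 1, 2, 7 reflect antisymmetry, the maximality of $1$, and the universal property of the meet; axiom 10 requires the distributivity $a\wedge(x\vee y)=(a\wedge x)\vee(a\wedge y)$, which is automatic since $f_a$ is a left adjoint and so preserves joins.

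The main obstacle is axiom 4 of (v), the $K$-combinator identity $(x\to(y\to z))\to((x\to y)\to(x\to z))=1$. Reformulated through the order it becomes $x\wedge(x\to y)\wedge(x\to(y\to z))\le z$, which I would obtain by two applications of axiom 2 of (iv): the first collapses $x\wedge(x\to y)$ to $x\wedge y$, and the second collapses $y\wedge(y\to z)$ to $y\wedge z\le z$. Finally, $(\mathrm{v})\Rightarrow(\mathrm{i})$ is short: define $x\le y:\Leftrightarrow x\to y=1$, verify the lattice structure from axioms 5--10, and observe that axiom 2 of (iv), recovered in this setting, exhibits $a\to b$ as the greatest $x$ with $a\wedge x\le b$, which is precisely the Heyting algebra condition of (i), closing the circle.
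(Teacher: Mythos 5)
The paper gives this proposition without proof (the preamble to Section~2 explicitly says these propositions are standard and stated without proof), so there is no paper argument to compare against; what you have written is the standard textbook route, and the overall plan --- a cycle $(\mathrm{i})\Rightarrow(\mathrm{ii})\Rightarrow(\mathrm{iii})\Rightarrow(\mathrm{iv})\Rightarrow(\mathrm{v})\Rightarrow(\mathrm{i})$ driven by the adjunction $a\wedge x\le b\Leftrightarrow x\le a\to b$ --- is sound.

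A few points worth tightening. First, the paper's axiom~3 in (iv), as printed, reads $b\wedge(a\to b)=a\wedge b$; that identity is \emph{false} in a Heyting algebra (take $a=0$, $b=1$: the left side is $1$, the right is $0$). The correct axiom, which your proof tacitly uses via the unit inequality $b\le a\to b$, is $b\wedge(a\to b)=b$. You should flag this typo rather than silently repair it, since as stated the equivalence with (i) would not hold. Second, in the step $(\mathrm{iv})\Rightarrow(\mathrm{v})$ you attribute the equivalence $x\to y=1\Leftrightarrow x\le y$ to ``axioms~1 and~2 of (iv)''; in fact the direction $x\le y\Rightarrow x\to y=1$ needs axiom~4 (write $x\to y=x\to(x\wedge y)=(x\to x)\wedge(x\to y)$) as well as axiom~1, while only the converse uses axiom~2. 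Third, and most importantly for rigor, when handling axiom~10 you justify distributivity by saying $f_a$ is a left adjoint and so preserves joins --- but in the step $(\mathrm{iv})\Rightarrow(\mathrm{v})$ you are not entitled to (ii); you must first re-derive the adjunction $a\wedge x\le b\Leftrightarrow x\le a\to b$ from the four identities of (iv) (monotonicity of $a\to-$ from axiom~4, then axiom~3 for one direction and axiom~2 for the other). This is routine, but as written the argument appears circular. With these repairs --- which are all local --- the proof is correct.
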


\begin{example}
Every Boolean algebra is a Heyting algebra with $p\to q\equiv \neg p\vee q$.
\end{example}

\begin{example}
Every totally ordered set that is a bounded lattice is also a Heyting algebra with $\to$ defined in {\em(\ref{arrow-definition-totally-ordered-set})}.
\begin{equation}\label{arrow-definition-totally-ordered-set}
    p\to q\equiv\left\{
\begin{array}{l}
  q\, ,\, p>q\\
  1\, ,\, \hbox{\rm otherwise.}
\end{array}\right.
\end{equation}

For example the set $H\equiv\{0,\frac{1}{2},1\}$, with $\to$ defined in {\em(\ref{arrow-definition-totally-ordered-set})} is a Heyting algebra that is not a Boolean algebra. It is important to emphasize that in this Heyting algebra does not hold the law of excluded middle. In fact one has: $\frac{1}{2}\vee\neg\frac{1}{2}=\frac{1}{2}\vee(\frac{1}{2}\to 0)=\frac{1}{2}\vee 0=\frac{1}{2}$.
\end{example}

\begin{example}[Topological Heyting algebra]\label{topological-heyting-algebra}
Let $(X,\mathcal{T})$ be a topological space, where $\mathcal{T}$ is its sets of open sets. $\mathcal{T}$ has a natural structure of lattice that is a complete Heyting algebra with binary operations given in {\em(\ref{binary-operations-topological-heyting-algebra})}.\footnote{Complete Heyting algebras are seen in categorical topology as generalized topological spaces.}

\begin{equation}\label{binary-operations-topological-heyting-algebra}
\left\{\begin{array}{l}
 A\to B\equiv(\complement A\bigcup B)^{\circ}=\complement(\overline{A\setminus B})\\
  A\wedge B=A\cap B\\
 A\vee B=A\cup B \\
\end{array}\right\}\hskip 3pt A\le B\,  \Leftrightarrow\, A\subseteq B.
\end{equation}
\end{example}

\begin{example}[Topological Heyting algebras induced from partially ordered sets]
Let $(X,\le)$ be a partially ordered set. Then the increasing sets (resp. decreasing sets) form a topology $\mathcal{T}^{\uparrow}$ (resp. $\mathcal{T}^{\downarrow}$) on $X$, hence are identified topological Heyting algebras, say $H^{\uparrow}$ (resp. $H^{\downarrow}$). In such algebras infinite distributive laws hold.
\end{example}

\begin{proposition}[Properties of Heyting algebra]

{\em 1) (Provable identities).} To prove true a formula $F(A_1,\cdots,A_n)$ of the intuitionistic propositional calculus, by means $\wedge$, $\vee$, $\neg$, $\to$ and the constants $0$ and $1$, is equivalent to state $F(a_1,\cdots,a_n)=1$ for any $a_1,\cdots,a_n\in H$, where $H$ is a Heyting algebra generated by $n$ variables.

{\em 2) (Distributivity).} Heyting algebras are always distributive, i.e., relations {\em(\ref{distributivity-relations-heyting-algebras})} hold.

\begin{equation}\label{distributivity-relations-heyting-algebras}
    \left\{\begin{array}{l}
     a\wedge(b\vee c)=(a\wedge b)\vee(a\wedge c)\\
     a\vee(b\wedge c)=(a\vee b)\wedge(a\vee c).\\
   \end{array}
    \right.
\end{equation}

$\bullet$\hskip 3pt Furthermore in complete Heyting algebras one has relation given in {\em(\ref{distributivity-relations-complete-heyting-algebras})}
\begin{equation}\label{distributivity-relations-complete-heyting-algebras}
x\wedge(\bigwedge Y)=\bigwedge\{ x\wedge y\, :\, y\in Y\}\, \forall x\in H,\, Y\subset H.
\end{equation}
$\bullet$\hskip 3pt Vice versa, any complete lattice satisfying the infinite distributive law {\em(\ref{distributivity-relations-complete-heyting-algebras})} is a complete Heyting algebra, with relative pseudo-complement operation defined by $a\to b\equiv \bigwedge\{ c\, |\, a\wedge c\le b\}$.
\end{proposition}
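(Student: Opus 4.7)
The plan is to address the three items in sequence, exploiting throughout the equivalent characterizations (i)--(v) of Heyting algebras established in the preceding proposition. \emph{For item (1)}, the ``provable $\Rightarrow$ identically $1$'' direction is a routine induction on the length of a derivation: each of the eleven axiom schemes in (v) is checked to evaluate to $1$ in an arbitrary Heyting algebra (most efficiently via the Galois connection of (ii)), and modus ponens is preserved since $a=1$ together with $a\to b=1$ yield $1=a\wedge(a\to b)=a\wedge b=b$ by item 2 of (iv). The converse (completeness) is the principal obstacle and requires constructing the \emph{Lindenbaum--Tarski algebra} $H_n$ consisting of intuitionistic propositional formulas in $n$ variables modulo the congruence $F\sim G\Leftrightarrow\,\vdash F\leftrightarrow G$; one verifies that the induced operations make $H_n$ into a Heyting algebra freely generated by the classes of the propositional variables, so that if $F(a_1,\dots,a_n)=1$ holds in \emph{every} Heyting algebra, it holds in particular in $H_n$ on the generators, which forces $\vdash F$.

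\emph{For item (2)}, the first identity in (\ref{distributivity-relations-heyting-algebras}) follows immediately from the Galois connection characterization (ii): $f_a(x)=a\wedge x$ is a lower adjoint and therefore preserves all existing joins, so $a\wedge(b\vee c)=(a\wedge b)\vee(a\wedge c)$. The dual identity $a\vee(b\wedge c)=(a\vee b)\wedge(a\vee c)$ is then a standard lattice-theoretic consequence of the first via absorption, so no further use of the Heyting structure is required.

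\emph{For item (3)}, the forward direction of (\ref{distributivity-relations-complete-heyting-algebras}) proceeds exactly as in (2), since the left adjoint $f_a$ now preserves \emph{all} joins, which exist in the ambient complete lattice. The converse is the deepest point and the main obstacle: given a complete lattice $L$ satisfying the infinite join-distributive law, one defines $a\to b\equiv\bigvee\{c\in L\mid a\wedge c\le b\}$ and must verify the adjunction $a\wedge x\le b\Leftrightarrow x\le a\to b$. The direction ``$\Leftarrow$'' reduces to $a\wedge(a\to b)\le b$, which uses the infinite distributivity crucially:
\begin{equation*}
a\wedge(a\to b)=a\wedge\bigvee\{c:a\wedge c\le b\}=\bigvee\{a\wedge c:a\wedge c\le b\}\le b,
\end{equation*}
while ``$\Rightarrow$'' is immediate from the very definition of $a\to b$ as a join. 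This exhibits $L$ as a complete Heyting algebra and closes the equivalence.
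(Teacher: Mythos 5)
The paper gives no proof of this proposition: at the start of the section it states that ``propositions are generally given without proof since they are standard,'' so there is no in-paper argument to compare against. Your proof is correct and is the standard treatment: item (1) is soundness (induction on derivations, checking the axiom schemes plus modus ponens via the absorption law $a\wedge(a\to b)=a\wedge b$) together with completeness via the Lindenbaum--Tarski free Heyting algebra --- which, incidentally, the paper itself introduces a few paragraphs later as the ``Heyting algebra of propositional formulas in $n$ variables up to intuitionistic equivalence''; item (2) is the fact that a lower adjoint preserves finite joins, plus the classical lattice fact that each finite distributive law implies the other; item (3) upgrades (2) to arbitrary joins and proves the converse by verifying the adjunction.

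One point worth flagging explicitly rather than silently correcting: the paper's statement contains two typos. Equation (\ref{distributivity-relations-complete-heyting-algebras}) as printed, $x\wedge\bigwedge Y=\bigwedge\{x\wedge y\}$, holds trivially in \emph{every} complete lattice and does not characterize frames; the characteristic law is $x\wedge\bigvee Y=\bigvee\{x\wedge y\}$, which is what your argument actually uses. Likewise $a\to b\equiv\bigwedge\{c\mid a\wedge c\le b\}$ always evaluates to $0$ (since $c=0$ is in the set); the correct definition is $a\to b\equiv\bigvee\{c\mid a\wedge c\le b\}$, which is again what you write. Your verification $a\wedge(a\to b)=\bigvee\{a\wedge c:a\wedge c\le b\}\le b$ is exactly right and is the crux of the converse in (3). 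A short remark that you are reading $\bigvee$ for $\bigwedge$ in both places would make the argument airtight against the text as printed.
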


\begin{definition}
Two {\em complements} elements $x,y\in H$ of a Heyting algebra $H$ are characterized by the following  conditions: $x\wedge y=0$ and $x\vee y=1$. If $x$ admits a complement, we say that it is {\em complemented}.
\end{definition}

\begin{proposition}[Properties of complement elements in Heyting algebra]
{\em 1)} If there exists a complement $y$ of $x\in H$, it is unique and one has $y=\neg x$.

{\em 2)} If $x\in H$ is complemented, then so is $\neg x$, and both are to each other complement.\footnote{Even if $x$ is not complemented, $\neg x$ can be complemented, with complement different from $x$.}

{\em 3)} In any Heyting algebra, $0$ and $1$ are complements to each other.

\end{proposition}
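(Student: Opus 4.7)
The plan is to prove the three parts in order, exploiting distributivity (which holds in any Heyting algebra by the preceding proposition) and the universal property of $\neg x$ as the largest element $z$ with $x\wedge z=0$.

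For part (1), suppose $y$ is a complement of $x$, so $x\wedge y=0$ and $x\vee y=1$. First, since $x\wedge y=0$, the defining property of $\neg x=x\to 0$ as the greatest element whose meet with $x$ is $0$ gives $y\le\neg x$. For the reverse inequality, I would compute using $1=x\vee y$ and distributivity:
\begin{equation*}
\neg x=\neg x\wedge 1=\neg x\wedge(x\vee y)=(\neg x\wedge x)\vee(\neg x\wedge y)=0\vee(\neg x\wedge y)=\neg x\wedge y,
\end{equation*}
so $\neg x\le y$. Hence $y=\neg x$, which simultaneously establishes uniqueness.

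For part (2), I would observe that the relations $x\wedge y=0$ and $x\vee y=1$ defining complementarity are symmetric in $x$ and $y$. Thus if $x$ is complemented, then by part (1) its unique complement is $\neg x$, and the symmetric pair of identities $\neg x\wedge x=0$, $\neg x\vee x=1$ exhibits $x$ as a complement of $\neg x$. Applying part (1) to $\neg x$ yields $\neg\neg x=x$, so $\neg x$ is itself complemented, with each being the complement of the other. Part (3) is immediate from $0\wedge 1=0$ and $0\vee 1=1$, which hold in any bounded lattice.

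The only genuinely non-trivial step is the distributivity computation in part (1), and it is routine once we grant the distributivity clause of the preceding proposition; everything else reduces to invoking the universal property of $\neg x$ and the symmetry of the complementarity conditions. No real obstacle is expected.
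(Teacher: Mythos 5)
Your proof is correct. The paper states this proposition without proof (it is among the background facts on Heyting algebras that the author declares ``standard'' and does not argue), so there is no paper argument to compare against; your argument is the standard one, using the universal property of $\neg x = x\to 0$ for the inequality $y\le\neg x$ and distributivity for $\neg x\le y$, with parts (2) and (3) following by symmetry of the complementarity conditions and the bounded-lattice identities $0\wedge 1=0$, $0\vee 1=1$.
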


\begin{definition}
A {\em regular} element $x$ of a Heyting algebra $H$ is characterized by either of the following equivalent conditions.\footnote{The equivalence of conditions (i) and (ii) follows from the fact that $\neg\neg\neg x=\neg x$.}

{\em (i)} $x=\neg\neg x$.

{\em (ii)} $x=\neg y$ for some $y\in H$.
\end{definition}

\begin{proposition}[Properties of regular elements in Heyting algebra]
{\em 1)} Any complemented element of a Heyting algebra is regular. (The converse is not true in general.) Therefore, $0$ and $1$ are regular elements.

{\em 2)} For any Heyting algebra $H$ the following conditions are equivalent.

{\em (i)} $H$ is a Boolean algebra.

{\em (ii)} Every $x\in H$ is regular.

{\em (iii)} Every $x\in H$ is complemented.

{\em 3)} Any Heyting algebra, $H$, contains two Boolean algebras $H_{reg}$ and $H_{comp}$ made respectively by the regular and complemented elements. In both Boolean algebras the operations $\wedge$, $\neg$ and $\to$ are the same than in $H$. For $H_{comp}$ also the operation $\vee$ is the same than in $H$, so that this Boolean algebra is a sub-algebra of $H$. Instead, for $H_{reg}$ we get the following different ''or'' operation: $x\vee_{reg}y=\neg(\vee x\wedge\neg y)$.

{\em 4) (The De Morgan laws in a Heyting algebra).} In any Heyting algebra one has:
\begin{equation}\label{de-morgan-laws-heyting-algebra}
    \left\{
   \begin{array}{ll}
 \hbox{\rm(regular De Morgan law):}&    \neg (x\vee y)=\neg x\wedge \neg y\\
     \hbox{\rm(weak De Morgan law):}&    \neg (x\wedge y)=\neg\neg(\neg x\vee \neg y).\\
   \end{array}
    \right\}_{\forall x,y\in G}.
\end{equation}
{\em 5)} The following propositions are equivalent for all Heyting algebras $H$.

{\em 1.} $H$ satisfies both De Morgan laws.

{\em 2.} $\neg(x\wedge y)=\neg x\vee \neg y$, $\forall x,y\in H$.

{\em 3.} $\neg(x\wedge y)=\neg x\vee \neg y$, for all regular $x,y\in H$.

{\em 4.} $\neg\neg(x\vee y)=\neg\neg x\vee\neg\neg y$, $\forall x,y\in H$.

{\em 5.} $\neg\neg(x\vee y)=x\vee y$, for all regular $x,y\in H$.

{\em 6.} $\neg(\neg x\wedge \neg y)=x\vee y$, for all regular $x,y\in H$.

{\em 7.} $\neg x\vee\neg\neg x=1$, $\forall x\in H$.
\end{proposition}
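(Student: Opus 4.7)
The plan is to derive all five parts from the single unifying observation that double negation $\neg\neg\colon H\to H$ is a \emph{nucleus}: it is monotone, inflationary ($x\le\neg\neg x$), idempotent (a consequence of $\neg\neg\neg x=\neg x$), and preserves finite meets ($\neg\neg(x\wedge y)=\neg\neg x\wedge\neg\neg y$). As a preliminary step I would record these identities together with the useful consequence $\neg\neg(x\to y)=x\to\neg\neg y$ and the auxiliary formula $\neg(x\to y)=\neg\neg x\wedge\neg y$; each follows by repeated use of the defining Galois adjunction $a\wedge(-)\dashv(a\to-)$ evaluated at $b=0$, together with the distributivity already established for Heyting algebras.

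For part (1), if $x$ admits a complement $y$ then $x\wedge y=0$ forces $y\le\neg x$ by adjunction, hence $1=x\vee y\le x\vee\neg x$, so the law of excluded middle holds at $x$. Distributivity then gives $\neg\neg x=\neg\neg x\wedge(x\vee\neg x)=(\neg\neg x\wedge x)\vee(\neg\neg x\wedge\neg x)=x\vee 0=x$, proving regularity; $0$ and $1$ are trivially regular since $\neg 0=1$ and $\neg 1=0$. Part (2) is then immediate: (i)$\Rightarrow$(iii) is the Boolean definition, (iii)$\Rightarrow$(ii) is (1), and (ii)$\Rightarrow$(i) uses $\neg(x\vee\neg x)=\neg x\wedge\neg\neg x=0$ (so $\neg\neg(x\vee\neg x)=1$) and the hypothesis of regularity to conclude $x\vee\neg x=1$. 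For part (3), closure of $H_{reg}$ under $\wedge$, $\neg$, and $\to$ is exactly the content of the preliminary identities, and $x\vee_{reg}y\equiv\neg(\neg x\wedge\neg y)$ manifestly lies in $H_{reg}$; inside $H_{reg}$ the element $\neg x$ is a genuine complement for $x$, so by (2) the algebra $H_{reg}$ is Boolean. For $H_{comp}$ I would exhibit concrete complements, namely $(x\wedge y)'=x'\vee y'$ and $(x\vee y)'=x'\wedge y'$ by distributivity, together with $x\to y=x'\vee y$ for complemented $x,y$ (since $x\wedge(x'\vee y)=x\wedge y\le y$ gives one inequality and cutting by $1=x\vee x'$ gives the other); thus $H_{comp}$ is closed under all four operations of $H$.

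For part (4), the regular De Morgan law is immediate from the adjunction: $z\le\neg(x\vee y)$ iff $z\wedge(x\vee y)=0$ iff $z\wedge x=0$ and $z\wedge y=0$ iff $z\le\neg x\wedge\neg y$. The weak law follows by applying the regular law to the pair $\neg x,\neg y$ to obtain $\neg(\neg x\vee\neg y)=\neg\neg x\wedge\neg\neg y=\neg\neg(x\wedge y)$ and then taking $\neg$ of both sides, using $\neg\neg\neg a=\neg a$ to simplify the right side to $\neg(x\wedge y)$. Part (5) I would prove by a cyclic chain among the seven conditions, with the main mechanism being that a pointwise identity restricted to regular elements can be lifted to all of $H$ by substituting $\neg x,\neg y$ (which are automatically regular since $\neg\neg\neg a=\neg a$) and then applying $\neg\neg$ to both sides, the converse specialization being trivial. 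The main obstacle will be this part (5): the sheer number of conditions and the fine distinction between global and regular-only statements make the bookkeeping delicate, and I plan to tame it by first observing that $\neg\neg$ is a surjective lattice homomorphism onto the Boolean algebra $H_{reg}$, so all seven conditions reduce to requiring that a single classical identity inside $H_{reg}$ lift back through $\neg\neg$; after this reduction a short cycle such as $1\Rightarrow 2\Rightarrow 3\Rightarrow 4\Rightarrow 5\Rightarrow 6\Rightarrow 7\Rightarrow 1$ can be closed entry by entry using only the preliminary identities.
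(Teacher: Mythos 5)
The paper states this proposition without proof (it is explicitly flagged, at the start of Section 2, as one of the standard facts about lattices and Heyting algebras ``generally given without proof''), so there is no paper argument to compare against; your proposal must stand on its own.

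On its own terms the proposal is sound for parts (1)--(4): organizing everything around the nucleus properties of $\neg\neg$ (monotone, inflationary, idempotent, $\wedge$-preserving) is exactly the right unifying device, and the individual derivations you give are correct. You also silently correct a typo in the statement --- the join on $H_{reg}$ should read $x\vee_{reg}y=\neg(\neg x\wedge\neg y)$, which is what you use.

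The one place you should tighten the argument is part (5). Your guiding remark, that $\neg\neg$ is a surjective lattice homomorphism onto $H_{reg}$ so that ``all seven conditions reduce to requiring that a single classical identity inside $H_{reg}$ lift back through $\neg\neg$,'' is misleading as stated: $H_{reg}$ is always Boolean, so every classical identity holds there unconditionally, and if the seven conditions genuinely reduced to internal identities in $H_{reg}$ they would all be tautologies --- but condition~7, the Stone condition $\neg x\vee\neg\neg x=1$, is certainly not a tautology in Heyting algebras. What is really at stake is that $\neg\neg$ preserves the join of $H_{reg}$ (namely $\vee_{reg}$) but not, in general, the join $\vee$ of the ambient $H$; conditions 4--6 assert precisely that $\vee$ restricted to regular elements agrees with $\vee_{reg}$, and conditions 1--3 and 7 re-express this through De Morgan and weak excluded middle. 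Once this is said correctly, your proposed cycle $1\Rightarrow 2\Rightarrow 3\Rightarrow 4\Rightarrow 5\Rightarrow 6\Rightarrow 7\Rightarrow 1$ does close, but the nontrivial hinge is $7\Rightarrow 5$ (or $7\Rightarrow 2$), which needs a genuine argument: from $x\vee\neg x=1=y\vee\neg y$ (for $x,y$ regular) distribute the meet to get
\[
1=(x\wedge y)\vee(x\wedge\neg y)\vee(\neg x\wedge y)\vee(\neg x\wedge\neg y),
\]
meet with $z=\neg\neg(x\vee y)$, kill the last disjunct via $z\wedge\neg(x\vee y)=0$, and conclude $z\le x\vee y$. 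This is the step that actually uses the interaction between the two joins, and it is worth making explicit rather than leaving inside the phrase ``entry by entry using only the preliminary identities.''
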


\begin{definition}
A {\em morphism} $f:H_1\to H_2$ between Heyting algebras is characterized by the following equivalent relations.

{\em (i)} $f(0)=0$.

{\em (ii)} $f(1)=1$.

{\em (iii)} $f(x\wedge y)=f(x)\wedge f(y)$.

{\em (iv)} $f(x\vee y)=f(x)\vee f(y)$.

{\em (v)} $f(x\to y)=f(x)\to f(y)$.

{\em (vi)} $f(\neg x)=\neg f(x)$.
\end{definition}

\begin{proposition}[Properties of Heyting morphisms]

{\em 1.} Let $H_1$ and $H_2$ be structure with operations $\to$, $\wedge$, $\vee$ (and possibly $\neg$) and constants $0$ and $1$. Let $f:H_1\to H_2$ be a surjective mapping satisfying properties {\em(i)--(vi)} in above definition. Then if $H_1$ is a Heyting algebra so is $H_2$ too.

{\em 2.} Heyting algebras form a category $\mathfrak{C}heyting$.\footnote{Complete Heyting algebras form three different categories having all the same objects, but having different morphism. In Tab. \ref{complete-heyting-algebra-categories} are resumed these categories. The category $\mathfrak{T}op$ of topologic spaces admits a representation in the category $\mathfrak{L}ocales$ of locales. However, many important theorems in point-set topology require axiom of choice, that has not an analogue in $\mathfrak{L}ocales$. Therefore, not all propositions in $\mathfrak{T}op$ can be translated in $\mathfrak{L}ocales$.}

\begin{table}[t]\centering
\caption{Complete Heyting algebras categories.}
\label{complete-heyting-algebra-categories}
\scalebox{0.8}{$\begin{tabular}{|l|l|l|}
\hline
{\rm{\footnotesize Symbol}}&{\rm{\footnotesize Definition}}&{\rm{\footnotesize Note}}\\
\hline
{\rm{\footnotesize $\mathfrak{C}heyting$}}&{\rm{\footnotesize $Ob(\mathfrak{C}heyting)=\{\hbox{complete Heyting algebras}\}$}}&\\
&{\rm{\footnotesize $Hom(\mathfrak{C}heyting)=\{\hbox{homomorphism of complete Heyting algebras}\}$}}&{\rm{\footnotesize }}\\
\hline
{\rm{\footnotesize $\mathfrak{F}rames$}}&{\rm{\footnotesize $Ob(\mathfrak{F}rames)=\{\hbox{\footnotesize lattices $L$, where every subset $(a_i)\subset L$ has a supremum $\bigvee a_i$}$}}&\\
&{\rm{\footnotesize such that $b\wedge(\bigvee a_i)=\bigvee (a_i\wedge b)\}$.}}&\\
&{\rm{\footnotesize $ Hom(\mathfrak{F}rames)=\{\hbox{\footnotesize lattices homomorphisms respecting arbitrary suprema}\}$}}&\\
\hline
{\rm{\footnotesize $ \mathfrak{L}ocales$}}&{\rm{\footnotesize $\mathfrak{L}ocales=(\mathfrak{F}rames)^{(op)}$}}&{\rm{\footnotesize $ \hbox{\footnotesize There exists}$}}\\
&&{\rm{\footnotesize $ \hbox{\footnotesize a natural functor}$}}\\
&&{\rm{\footnotesize $\mathfrak{T}op\to \mathfrak{L}ocales$}}\\
\hline
\multicolumn{3}{l}{\rm{\footnotesize $\mathfrak{T}op$ denotes the category of topological spaces. The axiom of choice is required for many important theorems}}\\
\multicolumn{3}{l}{\rm{\footnotesize  in point-set topology, but it does not exist for locales.}}\\
\end{tabular}$}
\end{table}

{\em 3.} Let $K\subset H$ be a subalgebra of a Heyting algebra $H$. Then the inclusion $i:K\hookrightarrow H$ is a morphism.

{\em 4.} One has a canonical morphism $H\to H_{reg}$, given by $x\mapsto \neg\neg x$.\footnote{Note that the composition $H\to H_{reg}\hookrightarrow H$ is not in general a morphism, since the join operation of $H_{reg}$ can be different from that of $H$.}
\end{proposition}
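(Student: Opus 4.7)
The plan is to treat the four parts in order of difficulty, with part 4 being the principal technical obstacle and the other three essentially bookkeeping.

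For part 1, I would use the Axioms-of-intuitionistic-propositional-logic characterization (item (v) of the earlier proposition), since those axioms are purely equational. Given any $a_2,b_2,\dots\in H_2$, surjectivity lets me choose preimages $a_1,b_1,\dots\in H_1$. Each Heyting-algebra axiom in $H_1$ is of the form $t(a_1,b_1,\dots)=1$ (or $t(a_1,\dots)=s(a_1,\dots)$), and conditions (i)--(vi) say $f$ commutes with every operation and sends $0$ to $0$, $1$ to $1$. Applying $f$ to both sides of the axiom in $H_1$ and using the preservation identities therefore yields the same axiom in $H_2$. The detection/closure axiom ``if $x\to y=1$ and $y\to x=1$ then $x=y$'' also transfers: if $a_2\to b_2=1$ and $b_2\to a_2=1$, pick preimages and use the characterization $a\le b\Leftrightarrow a\to b=1$; surjectivity combined with the algebraic manipulations forces $a_2=b_2$. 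So the Heyting-algebra structure descends along $f$.

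For part 2, I only need to verify that the identity map $\textrm{id}_H$ trivially satisfies (i)--(vi), and that composition of two morphisms preserves each operation (immediate, since $(g\circ f)(x\wedge y)=g(f(x)\wedge f(y))=g(f(x))\wedge g(f(y))$ and similarly for the other operations and constants); associativity of composition and the unit laws are inherited from the category of sets. For part 3, the inclusion $i:K\hookrightarrow H$ is set-theoretically the identity on $K$; since $K$ is by definition closed under $\wedge,\vee,\to$ and contains $0,1$, each of conditions (i)--(vi) holds automatically (and $\neg$ is handled by $\neg x=x\to 0$, both computed inside $K$ and $H$ giving the same result).

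Part 4 is the only substantive piece. The task is to prove that $j:H\to H_{reg}$, $j(x)=\neg\neg x$, satisfies the six preservation laws, keeping in mind that the join in $H_{reg}$ is the modified operation $x\vee_{reg}y=\neg(\neg x\wedge \neg y)$ while $\wedge$, $\neg$, $\to$ agree with those of $H$. The constants are handled by $\neg\neg 0=\neg 1=0$ and $\neg\neg 1=\neg 0=1$. Preservation of $\wedge$ reduces to the standard Heyting identity $\neg\neg(x\wedge y)=\neg\neg x\wedge\neg\neg y$, which follows from the fact that $\neg\neg$ is a nucleus, itself provable from the monotone Galois connection characterization (ii). For $\vee$, I would combine the regular De Morgan law from (\ref{de-morgan-laws-heyting-algebra}) with the triple-negation identity $\neg\neg\neg x=\neg x$:
\[
\neg\neg x\vee_{reg}\neg\neg y=\neg(\neg\neg\neg x\wedge\neg\neg\neg y)=\neg(\neg x\wedge\neg y)=\neg\neg(x\vee y).
\]
For $\to$, I would verify $\neg\neg(x\to y)=\neg\neg x\to\neg\neg y$, again a standard consequence of the nuclearity of $\neg\neg$; the proof reduces by Galois adjunction to checking that $\neg\neg x\wedge(x\to y)\le\neg\neg y$, which follows by applying $\neg\neg$ monotonically to $x\wedge(x\to y)\le y$ (axiom 2 of the bounded-lattice-with-implication characterization) together with $\neg\neg(x\to y)\le x\to y$ failing in general but being replaced by the argument via the upper adjoint. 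Finally $\neg$ is preserved because $\neg\neg(\neg x)=\neg(\neg\neg x)$ is exactly triple negation. The main obstacle is this last set of identities: they are known but require setting up $\neg\neg$ as a nucleus before the calculations become short, and one must be careful that $\vee$ maps to $\vee_{reg}$ and not to the inherited $\vee$ of $H$, otherwise (vi) would actually fail in general.
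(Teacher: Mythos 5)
The paper marks this proposition as standard and gives no proof, so your task here was to supply one from scratch; your overall decomposition and choice of tools are reasonable, but two of the steps as written contain gaps.

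For Part 1, leaning on characterization (v) is the wrong choice precisely because you claim it is ``purely equational'' when it is not: axioms 1 and 2 of (v) are conditional, not equational. Surjectivity of $f$ together with operation-preservation transfers \emph{equations} from $H_1$ to $H_2$, but it does not transfer Horn clauses of the form ``if $t=1$ then $s=1$'', because from $f(a_1\to b_1)=a_2\to b_2=1$ you cannot conclude $a_1\to b_1=1$ unless $f$ is injective. Your closing sentence ``surjectivity combined with the algebraic manipulations forces $a_2=b_2$'' is exactly the step that needs an argument, and none is given. The clean fix is to use characterization (iv) instead: that $H_2$ is a bounded lattice is equational (idempotence, commutativity, associativity, absorption, and the unit/zero laws all transfer), and the four axioms of (iv) are equational. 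Antisymmetry of the induced order then follows \emph{inside} $H_2$ from the lattice axioms, and axioms 1 and 2 of (v) become derivable consequences rather than extra hypotheses to transfer.

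For Part 4, the constants, $\wedge$, $\vee$, and $\neg$ cases are all correct (the $\vee$ computation via the regular De Morgan law and $\neg\neg\neg=\neg$ is exactly right). But your treatment of $\to$ is garbled. The sentence beginning ``together with $\neg\neg(x\to y)\le x\to y$ failing in general but being replaced by the argument via the upper adjoint'' does not parse as mathematics. What is actually needed is the identity $\neg\neg(x\to y)=\neg\neg x\to\neg\neg y$, and your proposed one-sided Galois reduction only gives $x\to y\le\neg\neg x\to\neg\neg y$ (hence $\neg\neg(x\to y)\le\neg\neg x\to\neg\neg y$, since the right side is regular). The reverse inequality requires a separate argument: use $\neg(x\to y)=\neg\neg x\wedge\neg y$ (which one verifies from $\neg\neg x\wedge(x\to y)\le\neg\neg y$ and $\neg\neg y\wedge\neg y=0$), and then check $(\neg\neg x\to\neg\neg y)\wedge\neg\neg x\wedge\neg y\le\neg\neg y\wedge\neg y=0$, which by the adjunction gives $\neg\neg x\to\neg\neg y\le\neg(\neg\neg x\wedge\neg y)=\neg\neg(x\to y)$. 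With these two repairs the proof is complete; Parts 2 and 3 are fine as stated.
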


\begin{definition}
A {\em filter} on a Heyting algebra $H$ is a subset $F\subseteq H$ such tha the following conditions hold.

{\em(i)} $1\in F$.

{\em(ii)} $x,y\in F$ then $x\wedge y\in F$.

{\em(iii)} $x\in F$, $y\in H$, $x\leq y$ then $y\in F$.
\end{definition}

\begin{proposition}[Properties of filter on Heyting algebra]

{\em 1)} The intersection of filters on a Heyting algebra $H$ is again a filter.

{\em 2)} We call {\em filter generated by a subset} $S\subseteq H$ of a Heyting algebra $H$, the smallest filter $F$ on $H$ containing $S$.  If $S=\varnothing$ then $F=\{1\}$. If $S\not=\varnothing$ then $F=\{x\in H\, |\, y_1\wedge y_2\wedge\cdots\wedge y_n\leq x\, y_i\in S\}$.

{\em 3)} If $F$ is a filter on a Heyting algebra $H$, there is a Heyting algebra $H/F$, called {\em quotient} of $H$ by $F$, such that $p_F:H\to H/F$ is a morphism. More precisely $H/F=H/\thicksim$, where $\thicksim$ is the equivalence relation {\em(\ref{equivalence-relation-induced-filter-on-heyting-algebra})} in $H$ induced by $F$.
\begin{equation}\label{equivalence-relation-induced-filter-on-heyting-algebra}
    x\thicksim y \Leftrightarrow \left\{
    \begin{array}{l}
      x\to y\in F\\
      y\to x\in F.\\
    \end{array}
    \right.
\end{equation}
{\em 4) (Universal property).} Let $\subseteq H$ be a subset of a Heyting algebra $H$, and let $F$ the corresponding filter generated by $S$. Given any morphism $f:H\to H'$ of Heyting algebras, such that $f(y)=1$, $\forall y\in S$, there exists a unique morphism $f':H/F\to H'$, such that the diagram {\em(\ref{commutative-diagram-universal-relation-induced-subset-tfilter-on-heyting-algebra})} is commutative.
\begin{equation}\label{commutative-diagram-universal-relation-induced-subset-tfilter-on-heyting-algebra}
    \xymatrix{H\ar[d]_{p_F}\ar[r]^{f}&H'\\
    H/F\ar[ur]_{f'}&}
\end{equation}
{\em 5)} The {\em kernel} of a morphism $f:H_1\to H_2$ of Heyting algebras is the filter $\ker f\equiv f^{-1}(\{1\}) \subseteq H_1$. The morphism $f': H_1/(\ker f)\to H_2$, for universal property, is an isomorphism: $H_1/(\ker f)\cong f(H_1)\triangleleft H_2$.
\end{proposition}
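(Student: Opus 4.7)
The plan is to verify the five parts in sequence, with (3) being the core technical step and (4)--(5) following by standard diagram chases built on top of it.

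For (1) I would check directly that the three filter axioms are preserved under arbitrary intersection: condition (i) holds because $1$ lies in every filter; (ii) and (iii) are immediate because membership in $\bigcap_i F_i$ is tested factor-wise. For (2), the candidate set $F = \{x \in H : y_1 \wedge \dots \wedge y_n \le x \text{ for some } y_i \in S\}$ is easily checked to be a filter containing $S$, while any filter containing $S$ must contain $F$ by repeated use of axioms (ii) and (iii), giving minimality.

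The bulk of the work is in (3). I would first show that $\sim$ in (\ref{equivalence-relation-induced-filter-on-heyting-algebra}) is an equivalence relation: reflexivity uses $a \to a = 1 \in F$; symmetry is built into the definition; transitivity follows from the Heyting inequality $(x \to y) \wedge (y \to z) \le x \to z$ together with closure of $F$ under $\wedge$ and upward closure. The main step is then to verify that $\sim$ is a \emph{congruence} with respect to $\wedge$, $\vee$, $\to$, and $\neg$. For each binary operation $\star$, I would reduce the claim ``$x \sim x',\ y \sim y' \Rightarrow x \star y \sim x' \star y'$'' to identities of the form $(x \to x') \wedge (y \to y') \le (x \star y) \to (x' \star y')$, valid in every Heyting algebra; membership of the right-hand side in $F$ then follows by upward closure. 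The subtler case is $\to$, where the antecedent is contravariant and one needs $(x' \to x) \wedge (y \to y') \le (x \to y) \to (x' \to y')$, and analogously $(x' \to x) \le \neg x \to \neg x'$ for $\neg$. Once these are in place, the operations descend to $H/\sim$, the lattice and Heyting-implication axioms transfer representative-wise from $H$, and $p_F$ is a morphism by construction.

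For (4), I would first observe that any morphism $f:H \to H'$ with $f(y) = 1$ for $y \in S$ satisfies $f(z) = 1$ for all $z \in F$, because $f$ preserves $\wedge$ and $\le$. Hence if $x \sim y$ then $f(x) \to f(y) = f(x \to y) = 1$ and symmetrically $f(y) \to f(x) = 1$, so $f(x) = f(y)$ by antisymmetry in $H'$. This produces a well-defined $f':H/F \to H'$ factoring $f$; it is a Heyting morphism because $p_F$ is a surjective Heyting morphism, and it is unique since $p_F$ is surjective. For (5), the filter axioms for $\ker f = f^{-1}(\{1\})$ are immediate from the Heyting-morphism properties of $f$, and applying (4) with $S = \ker f$ produces $f':H/\ker f \to H'$; its injectivity follows from the reverse implication, since $f(x) = f(y)$ forces both $x \to y$ and $y \to x$ into $\ker f$, hence $x \sim y$. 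I expect the main obstacle to be the congruence step for $\to$ and $\neg$, whose Heyting-algebraic identities are less immediate than those for $\wedge$ and $\vee$; everything else reduces to axiom checking or to the universal property just established.
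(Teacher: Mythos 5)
The paper does not actually prove this proposition: in the preamble to Section 2 the author explicitly states that the propositions on Heyting algebras ``are generally given without proof since they are standard,'' citing Birkhoff and Rutherford as references. So there is no paper proof to compare against. Your outline is a correct and standard argument filling that gap. In particular, the congruence inequalities you invoke in step (3) are all valid: transitivity reduces to $(x\to y)\wedge(y\to z)\le x\to z$, and the contravariant inequality $(x'\to x)\wedge(y\to y')\le (x\to y)\to(x'\to y')$ follows by two applications of $a\wedge(a\to b)\le b$; the $\neg$ case is the special instance $y=y'=0$. The universal-property and first-isomorphism arguments in (4)--(5) are the routine diagram chase you describe, using surjectivity of $p_F$ for uniqueness and the two-sided condition $x\to y,\ y\to x\in\ker f$ for injectivity.
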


\begin{theorem}[Heyting algebra of propositional formulas in $n$ variables up to intuitionistic equivalence]
To the set of propositional formulas in the variables $A_1,\dots,A_n$, one can canonically associate a Heyting algebra. {\em(Similar properties hold also for any set of variables $\{A_i\}_{i\in I}$ that are conditioned to some theory $T$.)}
\end{theorem}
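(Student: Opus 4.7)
The plan is to construct the classical Lindenbaum--Tarski algebra for intuitionistic propositional logic (IPL) and verify that it satisfies the axioms already listed in the characterization $(v)$ of the proposition preceding the theorem. First, I would let $\mathcal{F}_n$ denote the set of well-formed propositional formulas built from the variables $A_1,\dots,A_n$ and the constants $0,1$ using the connectives $\wedge,\vee,\to,\neg$. On $\mathcal{F}_n$ define the binary relation $F\sim G$ iff both $F\to G$ and $G\to F$ are derivable in IPL. Using the rules of IPL (identity $F\to F$, modus ponens, and the hypothetical syllogism $\big(F\to G\big)\wedge\big(G\to H\big)\vdash F\to H$), one checks that $\sim$ is an equivalence relation.

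Next, one must show that $\sim$ is a \emph{congruence} with respect to every connective: if $F\sim F'$ and $G\sim G'$ then $F\star G\sim F'\star G'$ for $\star\in\{\wedge,\vee,\to\}$ and $\neg F\sim \neg F'$. Each of these is a routine derivation in IPL. Therefore the quotient set
\[
H_n\;\equiv\;\mathcal{F}_n/\!\sim
\]
inherits well-defined operations $[F]\wedge[G]=[F\wedge G]$, $[F]\vee[G]=[F\vee G]$, $[F]\to[G]=[F\to G]$, $\neg[F]=[\neg F]$, together with distinguished elements $0=[0]$ and $1=[1]$. Define the order by $[F]\le[G]\Leftrightarrow\, F\to G$ is IPL-derivable; equivalently $[F]\to[G]=[1]$.

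The core verification amounts to checking the eleven axioms of item $(v)$ of the characterizing proposition of Heyting algebras. Each of these axioms is, by design, an IPL theorem: for instance axiom~4 is the $K$-combinator $(x\to(y\to z))\to((x\to y)\to(x\to z))$, axiom~10 is the elimination rule for disjunction, and axiom~11 is \emph{ex falso quodlibet}. Since all these formulas are derivable in IPL, their classes equal $[1]$ in $H_n$. The antisymmetry axiom 1, $[x]\to[y]=[1]$ and $[y]\to[x]=[1]\Rightarrow [x]=[y]$, is precisely the definition of $\sim$. Thus $H_n$ is a Heyting algebra in the sense of Definition \ref{heyting-algebra}. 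The main obstacle is bookkeeping: one must know (or invoke) a sufficiently large collection of intuitionistic derivations to cover every axiom, and one must be careful not to use any classically valid formula such as $x\vee\neg x$ (which fails in a generic Heyting algebra).

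Finally, for the parenthetical extension to a set of variables $\{A_i\}_{i\in I}$ subject to a theory $T$, one repeats the construction on the set $\mathcal{F}_I$ of formulas in those variables, replacing the relation $F\sim G$ by the stronger relation ``$T\vdash F\to G$ and $T\vdash G\to F$''. All the congruence arguments go through verbatim because the extra hypotheses in $T$ are preserved through deductions, so the quotient $H_{I,T}\equiv \mathcal{F}_I/\!\sim_T$ is again a Heyting algebra, canonically associated to the pair $(\{A_i\}_{i\in I},T)$. The universal property stated in the filter proposition above yields the canonicity: any Heyting algebra valuation of $\{A_i\}_{i\in I}$ validating $T$ factors uniquely through $H_{I,T}$.
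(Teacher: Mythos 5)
Your construction is the standard Lindenbaum--Tarski algebra for intuitionistic propositional logic, which is exactly the approach the paper takes (the paper defines a preorder by intuitionistic local consequence, quotients by the induced equivalence, and asserts the result is a Heyting algebra). Your write-up simply supplies the routine congruence and axiom verifications that the paper leaves implicit.
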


\begin{proof}
Let us introduce in the set $L$ of propositional formulas in the variables $A_1,\dots,A_n$, a preorder $\leq$ defined by $F\leq G$ if $G$ is an (intuitionistic) local consequence of $F$. This preorder induces an equivalence relation $\thicksim$ in $L$: $ F\thicksim G \Leftrightarrow F\leq G,\,  G\leq F$. Then $H_0\equiv L/\thicksim$ is a Heyting algebra. Furthermore, the preorder 0n $L$ induces on $H_0$ an order relation $\leq$.
\end{proof}

\begin{theorem}[Spectra in Algebraic Topology and Heyting algebras]\label{spectra-algebraic-topology-heyting-algebras}
 Spectra in algebraic topology identify Heyting algebras.
\end{theorem}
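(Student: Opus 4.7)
The plan is to build, for every spectrum $\mathbf{E}=\{E_n,\sigma_n\}_{n\ge 0}$ in algebraic topology (with structure maps $\sigma_n\colon \Sigma E_n\to E_{n+1}$), a canonical Heyting algebra $H(\mathbf{E})$ that is functorial in $\mathbf{E}$. The idea is to promote the topologies of the levels to Heyting algebras via Example \ref{topological-heyting-algebra}, to assemble them along the adjoints of the $\sigma_n$, and then to pass to a limit inside the category $\mathfrak{C}heyting$.

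First I would set $H_n\equiv\mathcal{T}(E_n)$, the frame of open sets of $E_n$, which by Example \ref{topological-heyting-algebra} is a complete Heyting algebra under $A\wedge B=A\cap B$, $A\vee B=A\cup B$, and $A\to B=(\complement A\cup B)^{\circ}$. The adjoints $\widetilde\sigma_n\colon E_n\to \Omega E_{n+1}$ of the structure maps are continuous, so the preimage maps $\widetilde\sigma_n^{-1}$ furnish an inverse system in $\mathfrak{F}rames$, and $H(\mathbf{E})\equiv\varprojlim_n H_n$ is then a complete Heyting algebra. Functoriality in $\mathbf{E}$ follows because a map of spectra induces at each level a continuous map, hence a frame morphism between the $H_n$, compatible with the system. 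Viewed through the equivalence $\mathfrak{L}ocales=\mathfrak{F}rames^{op}$ of Tab.~\ref{complete-heyting-algebra-categories}, the construction $\mathbf{E}\mapsto H(\mathbf{E})$ is then the object part of a functor from the category of spectra to $\mathfrak{C}heyting$; this is the precise sense in which a spectrum ``identifies'' a Heyting algebra.

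The main obstacle is that the Heyting implication $\to$ is not, in general, preserved by the preimage of a continuous map: for $f\colon X\to Y$ and $A,B\in\mathcal{T}(Y)$, the identity $f^{-1}((\complement A\cup B)^{\circ})=(\complement f^{-1}(A)\cup f^{-1}(B))^{\circ}$ can fail unless $f$ is open. This is precisely why the intermediate construction must live in $\mathfrak{F}rames$ rather than directly in $\mathfrak{C}heyting$: frame morphisms are only required to preserve arbitrary joins and finite meets, and the implication is then reconstructed inside the limit via the infinite distributive law~(\ref{distributivity-relations-complete-heyting-algebras}) together with the characterization $a\to b=\bigvee\{c\,|\,a\wedge c\le b\}$. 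Handling this recovery step, and checking that the resulting $\to$ in $H(\mathbf{E})$ is independent of any cofinal subsystem chosen in $\{H_n\}$, is where the real work lies; once this is settled, one may further quotient by the filter of stably trivial classes so that $H(\mathbf{E})$ depends only on the stable homotopy type of $\mathbf{E}$.
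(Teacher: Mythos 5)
The construction you sketch does not close up into an inverse system, and this is a fatal gap. You set $H_n = \mathcal{T}(E_n)$ and propose to use the preimage maps $\widetilde\sigma_n^{-1}$ coming from the adjoints $\widetilde\sigma_n\colon E_n\to\Omega E_{n+1}$. But $\widetilde\sigma_n^{-1}$ is a frame morphism $\mathcal{T}(\Omega E_{n+1})\to\mathcal{T}(E_n)$, not $\mathcal{T}(E_{n+1})\to\mathcal{T}(E_n)$, and the space $\Omega E_{n+1}$ is not $E_{n+1}$: there is no natural continuous map $\Omega E_{n+1}\to E_{n+1}$ (evaluation at a fixed loop parameter lands at the basepoint) and no natural frame morphism relating $\mathcal{T}(\Omega E_{n+1})$ to $\mathcal{T}(E_{n+1})$. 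So the maps you have do not chain, and $\varprojlim_n H_n$ is simply undefined as you set it up. There is a second, more conceptual, problem even if the system could somehow be patched: $\mathcal{T}(E_n)$ is a point-set invariant of a particular topological model of $E_n$, so the limit would be wildly sensitive to the chosen CW or simplicial presentation and would carry no stable-homotopical content. Your closing remark about quotienting by ``stably trivial classes'' is where all the actual work would have to go, and it is left unspecified.

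The paper's proof is on entirely different ground and does not run into these issues. It invokes the Bousfield lattice: Bousfield equivalence classes $\langle E\rangle$ of spectra form a set by Ohkawa's theorem, a complete lattice with join the wedge by Hovey--Palmieri, and the distributive sublattice $DL$ of classes satisfying $\langle E\wedge E\rangle=\langle E\rangle$ is a frame, hence a complete Heyting algebra, with implication $a\Rightarrow b$ the greatest $x$ with $a\wedge x\le b$. This is a genuine \emph{stable} invariant (Bousfield classes only see the stable homotopy type), whereas the frame-of-opens construction is an unstable, even non-homotopical, invariant. If you want to repair your idea, you would have to replace $\mathcal{T}(E_n)$ by something homotopy-invariant and replace the structure maps by something that actually gives an inverse system; by then you will essentially have been forced to rediscover the Bousfield-class construction.
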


\begin{proof}
We say that the spectrum $X$ is {\em acyclic} with respect to certain theory $E$ if $E\wedge X$ is contractible: $E\wedge X\simeq pt$. Two spectra are {\em Bousfield equivalent} if they have the same acyclic spectra:
$$E\thicksim F \Leftrightarrow \forall X:\, E\wedge X\simeq pt\, \Leftrightarrow F\wedge X\simeq pt. $$
Let us denote by $<E>$ the Bousfield class of the spectrum $E$. One has the following lemmas.

\begin{lemma}[\cite{OHKAWA}]
Bousfield classes form a set $B$ of cardinality at most $\beth_2>\mathfrak{c}$.\footnote{$\beth_2$ is the cardinality of all subsets $\mathcal{P}(\mathbb{R})$ of $\mathbb{R}$ and it is greater than the cardinality $\mathfrak{c}$ of the continuum, i.e., that of $\mathbb{R}$. (Recall that $\beth_i=2^{\beth_{i-1}}$, and $\beth_0=\aleph_0$.)}
\end{lemma}

\begin{lemma}[\cite{HOVEY-PALMIERI}]
One can define partial ordering in the set of Bousfield classes by
$$<E>\, \geq\, <F> \, \Leftrightarrow\, \forall X\, E\wedge X\simeq pt \, \Rightarrow\, F\wedge X\simeq\, pt.$$
The set of Bousfield classes form a complete lattice. The join is given by the wedge $\vee$. The smallest element is $<pt>$ and the largest element is $<E(S^0)>$. The meet $\curlywedge$ is given by the join of all lower bounds.\footnote{But, this meet does not distribute over infinite joins and the smash does. So it is interesting to consider a structure where the meet coincide with the smash.}  Let us denote by $DL\subset B$ the subset of the Bousfield lattice such that $<E>\wedge<E>\equiv<E\wedge E>=<E>$. $DL$ is a distributive lattice, ({\em distributive Bousfield lattice}), and it is just a complete Heyting algebra.\footnote{This is endowed with three operations: $wedge$, $\vee$ and $\Rightarrow$, where $a\Rightarrow b$ is the greatest $x$ such that $a\wedge x\le b$. Therefore $DL$ is a frame. (See Definition \ref{heyting-algebra} and \cite{BOUSFIELD}.)}

The inclusion $DL\hookrightarrow B$ preserves joins but does not preserve meets. Furthermore, there is a retraction $r:B\to DL$, defined by $$r<X>=\bigvee\{<Y>\in DL\, |\, <Y>\, \le\, <X>\}.$$ $r$ can be considered the right adjoint of the functor $i$, by considering any partially ordered set a category with a unique map from $x$ to $y$  iff $x\le y$.\footnote{A functor between complete lattices preserves colimits iff it preserves arbitray joins. For maps $f:A\to B$, $g:B\to A$, between partially ordered sets $A$ and $B$, $g$ is right adjoint to $f$ iff $f(x)\le y$ is equivalent to $x\le g(y)$.} $r$ preserves smash product: $r(<X>\, \wedge\, <Y>)=r<X>\, \wedge\, r<Y>$.
\end{lemma}

After the above two lemma the proof of the theorem is done.
\end{proof}

\begin{example}
All ring spectra and all finite spectra are in $DL$.
\end{example}

\begin{proposition}
The set of $<E>\in DL$ such that the pseudocomplement $<E>\to <pt>$ \footnote{That is the greatest $<F>$ such that $<E>\wedge<F>\leq pt$, i.e., $<E\wedge F>\wedge X\simeq\, pt\, \Rightarrow\, pt\wedge X\simeq\, pt$.} is really the complement is a Boolean algebra $BA\subset DL$.

Furthermore into $BA$ is contained a Boolean algebra $FBA$ isomorphic to the Boolean algebra of finite and co-finite subsets of $\mathbb{N}$; this is just the subalgebra of all {\em finite $p$-local spectra}.
\end{proposition}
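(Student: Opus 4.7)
The first assertion follows at once from the general property of Heyting algebras already stated in the excerpt (the proposition on properties of complement elements in a Heyting algebra): in any Heyting algebra $H$, the subset $H_{comp}$ of complemented elements forms a Boolean sub-algebra. Applying this to the complete Heyting algebra $DL$ established in the previous theorem, we obtain immediately that $BA\equiv DL_{comp}$ is a Boolean algebra, with meet $\wedge$, complement $\neg$, and implication $\to$ inherited from $DL$. The only thing to check is that the hypothesis $<\!E\!>\wedge <\!F\!>\,=\,<\!pt\!>$ together with $<\!E\!>\vee <\!F\!>\,=\,<\!E(S^{0})\!>$ coincides with the condition that the pseudocomplement $<\!E\!>\to <\!pt\!>$ is a true complement, which is just Definition \ref{heyting-algebra} applied to $DL$.

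For the second assertion, the plan is to locate, inside $BA$, the chain of Bousfield classes coming from finite $p$-local spectra, and then take the sub-Boolean algebra they generate. I would first invoke the Hopkins--Smith thick subcategory theorem: every non-contractible finite $p$-local spectrum $X$ has a well-defined chromatic type $n\in\mathbb{N}$, and $<\!X\!>$ depends only on this type. Writing $<\!F_{n}\!>$ for the common Bousfield class of any type-$n$ finite $p$-local spectrum, the chromatic filtration produces a strictly descending chain
$$<\!S^{0}_{(p)}\!>\,=\,<\!F_{0}\!>\,>\,<\!F_{1}\!>\,>\,<\!F_{2}\!>\,>\,\cdots\,>\,<\!pt\!>$$
in $DL$. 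Next I would verify that each $<\!F_{n}\!>$ lies in $BA$, using the classical Bousfield--Ravenel identification of its complement with $<\!L_{n-1}S^{0}\!>$, so that $<\!F_{n}\!>\vee <\!L_{n-1}S^{0}\!>\,=\,<\!S^{0}_{(p)}\!>$ and $<\!F_{n}\!>\wedge <\!L_{n-1}S^{0}\!>\,=\,<\!pt\!>$.

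Having placed the whole chain inside $BA$, I would define $FBA$ to be the Boolean sub-algebra of $BA$ generated by $\{<\!F_{n}\!>:n\geq 0\}$. Setting $\alpha_{i}\equiv <\!F_{i}\!>\wedge \neg <\!F_{i+1}\!>$ for each $i\geq 0$, closure under the finite Boolean operations $\wedge,\vee,\neg$ forces every element of $FBA$ to be either a finite join $\bigvee_{i\in I}\alpha_{i}$ with $I\subset\mathbb{N}$ finite, or the complement of such. The assignment $\alpha_{i}\mapsto\{i\}$ then extends uniquely to a map $\Phi:FBA\to\mathcal{P}_{fc}(\mathbb{N})$, where $\mathcal{P}_{fc}(\mathbb{N})$ denotes the Boolean algebra of finite and cofinite subsets of $\mathbb{N}$; under $\Phi$ one has $<\!F_{n}\!>\mapsto\{n,n+1,\ldots\}$ and $\neg <\!F_{n}\!>\mapsto\{0,1,\ldots,n-1\}$. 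The map $\Phi$ is plainly surjective and a Boolean morphism by construction.

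The main obstacle will be injectivity of $\Phi$: this amounts to showing that no non-trivial Boolean relation holds among the $\alpha_{i}$, equivalently that the chromatic chain strictly decreases at every level with $\bigwedge_{n}<\!F_{n}\!>\,=\,<\!pt\!>$. This is a deep input, resting on the Mitchell--Hopkins--Smith construction of a non-trivial type-$n$ finite $p$-local spectrum for every $n$, together with the fact that a spectrum smashing to a point against every $F_{n}$ is contractible (the chromatic convergence statement at the level of Bousfield classes). Once these ingredients are in place, $\Phi$ is an isomorphism, and $FBA\cong\mathcal{P}_{fc}(\mathbb{N})$ as claimed, exhibiting $FBA$ as the sub-algebra of $BA$ cut out by the finite $p$-local spectra.
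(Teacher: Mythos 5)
The paper does not prove this proposition -- the section preamble declares such propositions "standard" and cites Bousfield, Ohkawa, and Hovey--Palmieri -- so there is no argument in the text to compare yours against. Your first paragraph is correct as it stands: $BA\equiv DL_{comp}$ is a Boolean subalgebra of $DL$ by the general Heyting-algebra fact on complemented elements already quoted, nothing more is needed.

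The second part has the right skeleton (thick subcategory theorem, strictly descending chromatic chain, blocks $\alpha_i$, map $\Phi$ to finite/cofinite subsets), but it contains one genuine error and one unnecessary dependency. The error: the complement of $<F_n>$ in $DL$ is $<L^f_{n-1}S^0>$, the \emph{finite} localization, not $<L_{n-1}S^0>$. Complementedness comes from the cofiber sequence $C^f_{n-1}S^0\to S^0\to L^f_{n-1}S^0$ together with $<C^f_{n-1}S^0>=<F_n>$; smashing this against an arbitrary $X$ yields both $<F_n>\wedge<L^f_{n-1}S^0>=<pt>$ and $<F_n>\vee<L^f_{n-1}S^0>=<S^0_{(p)}>$. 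Identifying $L^f_{n-1}$ with $L_{n-1}$ is precisely the telescope conjecture, which you do not need and which fails above height one; since complements in the distributive lattice $DL$ are unique, $<L_{n-1}S^0>$ is then simply not a complement of $<F_n>$ for $n\geq 3$. The unnecessary dependency: you never need $\bigwedge_n <F_n>=<pt>$. Because $FBA$ is by definition \emph{generated} by the chain, every element of $FBA$ is a finite Boolean combination of the $<F_n>$, hence a finite join of the pairwise disjoint $\alpha_i$ or the complement of such. Injectivity of $\Phi$ then reduces to $\alpha_i\neq <pt>$ for every $i$, i.e., to strict descent of the chain, and this is exactly what the existence of type-$n$ finite $p$-local spectra for all $n$ (Mitchell) together with class invariance from the thick subcategory theorem supplies. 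The infinite meet never enters a finitary computation, so there is no reason to appeal to the much more delicate chromatic-convergence-type statement at the level of the Bousfield lattice. With those two repairs your argument is sound and appears to be the standard proof the paper is implicitly invoking.
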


\begin{theorem}[Integral spectrum of quantum PDEs]\label{integral-spectrum-quantum-pdes}

 {\em 1)} Let $\hat E_k\subset
\hat J^k_n(W)$ be a PDE in the category $\mathfrak{Q}$ of quantum manifolds \cite{PRA2, PRA3, PRA12, PRA13, PRA14, PRA15, PRA16}. Then there is a spectrum $\{\Xi_s\}$ {\em(singular integral spectrum of quantum PDEs)}, such that
$\Omega_{p,s}^{\hat E_k}=\mathop{\lim}\limits_{r\to\infty}\pi_{p+r}(\hat E^+_k\wedge
\Xi_r)$,
$\Omega^{p,s}_{\hat E_k}=\mathop{\lim}\limits_{r\to\infty}[S^r\hat E^+_k,\Xi_{p+r}]$,
$p\in\{0,1,\dots,n-1\}$.

{\em 2)} There exists a spectral
sequence $\{\hat E^r_{p,q}\}$, (resp. $\{\hat E_r^{p,q}\}$), with
$\hat E^2_{p,q}=H_p(\hat E_k,E_q(*))$, (resp. $\hat E_2^{p,q}=H^p(\hat E_k,E^q(*))$),
converging to $\Omega^{\hat E_k}_{\bullet,s}$, (resp.
$\Omega_{\hat E_k}^{\bullet,s}$). We call the spectral sequences
$\{\hat E^r_{p,q}\}$ and $\{\hat E_r^{p,q}\}$ the {\em integral singular
spectral sequences}\index{integral singular spectral sequences} of $\hat E_k$.
\end{theorem}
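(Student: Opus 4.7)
The plan is to reproduce, in the quantum PDE setting of Pr\'astaro, the classical Pontryagin--Thom picture that identifies bordism groups with stable homotopy of Thom spectra, and then to feed the resulting (generalized) homology theory into an Atiyah--Hirzebruch spectral sequence. For part 1), I would first recall that $\Omega^{\hat E_k}_{p,s}$ is defined from singular integral $(p+1)$-chains in $\hat E_k\subset\hat J^k_n(W)$ modulo boundaries, so the assignment $\hat E_k\mapsto \Omega^{\hat E_k}_{p,s}$ is a functor from pointed spaces (or from the appropriate category of quantum PDE's, viewed through their carrier spaces) to abelian groups which satisfies the Eilenberg--Steenrod axioms except dimension. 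This is the key functorial input: once one checks homotopy invariance, long exact sequence of a pair, and wedge/excision for the quantum singular integral bordism functor, Brown representability (or the explicit Pontryagin--Thom construction) produces a spectrum $\{\Xi_s\}$ with
\[
\Omega^{\hat E_k}_{p,s}\;\cong\;\mathop{\lim}\limits_{r\to\infty}\pi_{p+r}(\hat E^+_k\wedge\Xi_r),
\]
and dually the cobordism groups appear as the corresponding stable $[-,\Xi_{\bullet}]$ functor. The construction of $\Xi_r$ itself mimics a Thom spectrum: I would take classifying objects for the structure carried by the normal bundle of a $(p+1)$-admissible integral manifold inside $\hat E_k$, apply the quantum version of the tubular neighborhood theorem available in Pr\'astaro's framework, and define $\Xi_r$ as the Thom space of the universal $r$-dimensional such bundle, with structure maps $S\Xi_r\to\Xi_{r+1}$ coming from stabilization.

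For part 2), once $\Omega^{\hat E_k}_{\bullet,s}$ is represented by a spectrum, it is automatically a generalized homology theory $E_\bullet(-)$ on pointed spaces with coefficients $E_q(*)=\Omega^{pt}_{q,s}=\pi_q(\Xi_{\bullet})$ (and dually for cohomology). The Atiyah--Hirzebruch machine then gives spectral sequences
\[
\hat E^2_{p,q}=H_p(\hat E_k,E_q(*))\;\Longrightarrow\;\Omega^{\hat E_k}_{p+q,s},
\qquad
\hat E_2^{p,q}=H^p(\hat E_k,E^q(*))\;\Longrightarrow\;\Omega_{\hat E_k}^{p+q,s},
\]
constructed in the standard way from the skeletal filtration of $\hat E_k$ (using a CW-type decomposition transported from the underlying topological space of $\hat E_k$). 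Convergence is strong for PDEs whose underlying topological space is of finite type, and conditional in general.

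The main obstacle, and the step I would linger on, is the quantum Pontryagin--Thom step itself: the ``integral manifolds'' classified by $\Omega^{\hat E_k}_{p,s}$ live in a noncommutative geometric category $\mathfrak{Q}$ (or $\mathfrak{Q}_{hyper}$), so the classical transversality, tubular neighborhood, and collapse arguments must be transported to that category. Here I would rely on the quantum $\delta$-Poincar\'e lemma and the quantum formal integrability criterion that will be proved later in the paper (Theorems \ref{delta-poincare-lemma-pdes-category-quantum-hypercomplex-manifolds} and \ref{criterion-formal-quantum-integrability-pdes-category-quantum-hypercomplex-manifolds}), which provide the noncommutative analogs of the smooth ingredients needed for the Thom construction. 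The link between Heyting/Bousfield structure and this spectrum, emphasized by Theorem \ref{spectra-algebraic-topology-heyting-algebras}, is then essentially a bookkeeping step: the Bousfield class $\langle\Xi_\bullet\rangle\in DL$ gives the integral Heyting algebra announced in the introduction, confirming the compatibility with the frame structure on $DL$ used in the subsequent Theorem \ref{heyting-algebra-quantum-pde}.
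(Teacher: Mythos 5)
Your overall route is the same one the paper relies on: extend the Pontryagin--Thom identification of singular integral bordism groups with stable homotopy of a Thom-type spectrum, and then feed the resulting generalized (co)homology theory into the Atiyah--Hirzebruch spectral sequence. The paper's own ``proof'' is a single sentence citing the commutative case in \cite{PRA1} and asserting that the construction transports to the quantum category, so what you have done is flesh out exactly the argument the paper is gesturing at; the overall structure (Brown representability or explicit Thom collapse for part 1, skeletal filtration and AHSS for part 2) is what Pr\'astaro does in \cite{PRA1} and tacitly extends here.

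One point you should fix, though. You single out Theorems \ref{delta-poincare-lemma-pdes-category-quantum-hypercomplex-manifolds} and \ref{criterion-formal-quantum-integrability-pdes-category-quantum-hypercomplex-manifolds} as supplying the noncommutative analogues of transversality and tubular neighborhoods, but those are the quantum $\delta$-Poincar\'e lemma and the formal quantum-integrability criterion, i.e., statements about symbol acyclicity and prolongation surjectivity; they produce \emph{local solutions} of $\hat E_k$, not the geometric apparatus (quantum implicit function theorem, quantum tubular neighborhood, admissibility of integral submanifolds) that the Thom collapse actually requires. That apparatus is already in \cite{PRA2, PRA3} (and is what the paper's proof silently leans on by citing \cite{PRA1} as a ``natural extension''); moreover, the two theorems you name appear only in Section 3, after the statement you are proving, so invoking them here would be an ordering problem even if they were the right tool. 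The Bousfield/Heyting remark at the end is also extraneous to Theorem \ref{integral-spectrum-quantum-pdes}: the passage from the spectrum $\{\Xi_s\}$ to the distributive Bousfield lattice is what Theorem \ref{heyting-algebra-quantum-pde} does afterwards, so it should not be part of this proof. With the dependency replaced by the correct references to the quantum tubular neighborhood and admissibility machinery of \cite{PRA2, PRA3}, your sketch matches the intended argument.
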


\begin{proof} The proof follows directly from natural extensions of analogous ones for PDE's in the category of commutative manifolds. See \cite{PRA1}.\end{proof}

From above results we are able to associate Heyting algebras to any PDE considered in the category of quantum PDE's. In fact we have the following.
\begin{theorem}[Heyting algebra of a quantum PDE]\label{heyting-algebra-quantum-pde}
Let $\hat E_k\subset\hat J^k_n(W)$ be a PDE in the category $\mathfrak{Q}$ of quantum manifolds. Then $\hat E_k$ identifies a Heyting algebra, $H(\hat E_k)$, that we call {\em spectral Heyting algebra} of $\hat E_k$.
\end{theorem}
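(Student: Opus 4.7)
The plan is to combine the two prior results of this section: Theorem \ref{integral-spectrum-quantum-pdes}, which attaches to any quantum PDE a well-defined integral spectrum $\{\Xi_s\}$ representing its integral (co)bordism functors, and Theorem \ref{spectra-algebraic-topology-heyting-algebras}, which shows that spectra in algebraic topology canonically produce Heyting algebras through Bousfield equivalence classes. The spectral Heyting algebra $H(\hat E_k)$ will then be obtained by following the spectrum of $\hat E_k$ through the Bousfield-class machinery and landing inside the distributive Bousfield lattice $DL$, which is a complete Heyting algebra (in fact a frame).

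First I would invoke Theorem \ref{integral-spectrum-quantum-pdes} to attach to $\hat E_k$ the integral singular spectrum $\{\Xi_s\}$, and form from it a single spectrum $\Xi(\hat E_k)$ in the usual stable-homotopy sense (e.g.\ by taking the $\Omega$-spectrum associated to $\{\Xi_s\}$ or its telescopic colimit). Next, take its Bousfield class $\langle \Xi(\hat E_k)\rangle\in B$. By the first lemma in the proof of Theorem \ref{spectra-algebraic-topology-heyting-algebras}, $B$ is a set, and by the second lemma it carries a complete lattice structure whose distributive sub-object $DL$ is a complete Heyting algebra via the retraction $r:B\to DL$, $r\langle X\rangle=\bigvee\{\langle Y\rangle\in DL\mid \langle Y\rangle\le\langle X\rangle\}$. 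I then define $H(\hat E_k)$ to be the Heyting subalgebra of $DL$ generated by $r\langle \Xi(\hat E_k)\rangle$, i.e.\ the smallest subset of $DL$ containing this element and closed under $\wedge$, $\vee$ and the relative pseudo-complement $\to$. Closure under these operations is automatic inside $DL$ since $DL$ is a complete Heyting algebra, so $H(\hat E_k)$ inherits the Heyting structure.

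For well-definedness I would observe that $\Xi(\hat E_k)$ is intrinsic to $\hat E_k$ (it represents the integral bordism groups, which depend only on the PDE), and Bousfield equivalence and the retraction $r$ are functorial under equivalences of spectra; therefore $H(\hat E_k)$ does not depend on the particular model of the spectrum chosen. I would also point out, following the example after Theorem \ref{spectra-algebraic-topology-heyting-algebras}, that typical quantum PDEs give rise to ring-like spectra or finite spectra, so $\langle \Xi(\hat E_k)\rangle$ already lies in $DL$ and the retraction acts trivially; this makes the definition natural in most practically relevant cases.

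The main obstacle I expect is the idempotency issue: to avoid passing through the retraction $r$ (which could collapse information) one wants $\langle \Xi(\hat E_k)\rangle\wedge\langle \Xi(\hat E_k)\rangle=\langle \Xi(\hat E_k)\rangle$, i.e.\ the integral spectrum is smash-idempotent, so that it lies in $DL$ directly. This is what needs to be checked carefully from the construction of $\{\Xi_s\}$ in \cite{PRA1}, using the multiplicative structure coming from the concatenation of integral bordism classes. Once this is established (or $r$ is simply accepted as the canonical bridge), the rest of the argument is a direct appeal to the two theorems already proved, and the assertion follows.
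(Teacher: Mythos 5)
Your proposal takes essentially the same route as the paper: the paper's proof is a single sentence invoking Theorem~\ref{integral-spectrum-quantum-pdes} to attach the integral spectrum $\{\Xi_s\}$ to $\hat E_k$ and Theorem~\ref{spectra-algebraic-topology-heyting-algebras} (the Bousfield lattice $DL$ being a complete Heyting algebra) to extract a Heyting algebra from it. Your additional care about assembling $\{\Xi_s\}$ into a single spectrum, passing to its Bousfield class, using the retraction $r:B\to DL$, and checking smash-idempotency simply fills in details that the paper leaves implicit.
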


\begin{proof}
After Theorem \ref{spectra-algebraic-topology-heyting-algebras} and Theorem \ref{integral-spectrum-quantum-pdes} we can conclude that to the spectrum $\Xi_s$ on can associate a Heyting algebra that is just $H(\hat E_k)$.
\end{proof}

\begin{remark}
Compare this new result with reinterpretations of quantum theory by using topoi. (See, e.g. Refs. \cite{BANASCHEWSKI-MULVEY, ISHAM-BUTTERFIELD, HEUNEN-LANDSMAN-SPITTERS}. Recall that subobjects of any object in a topos form a Heyting algebra. Furthermore, it is possible identify a topos for any non-abelian $C^*$-algebra $B$, as the topos of covariant functors over the category $\mathcal{C}$ of abelian subalgebras of $B$. $C$ induces an internal abelian $C^*$-algebra $C$ in this topos of functors. The Gel'fand spectrum of $C$ is the spectral presheaf.)\footnote{For general informations on topos theory see, e.g., the following \cite{BANASCHEWSKI-MULVEY, JOHNSTONE, MACLANE, MACLANE-MOERDIJK, STONE}.}
\end{remark}
\begin{remark}
Direct extensions of Theorem \ref{integral-spectrum-quantum-pdes} and Theorem \ref{heyting-algebra-quantum-pde} for PDE's, in the category $\mathfrak{Q}_S$ of quantum super manifolds, hold too.
\end{remark}

\section{Quantum hypercomplex PDE's}\label{quantum-hypercomplex-pdes-section}

Following our previous works devoted to quantum PDE's, we consider, now, the category of quantum hypercoplex manifolds $\mathfrak{Q}_{hyper}$ and we recaste our theory of quantum PDE's, considering PDE's in $\mathfrak{Q}_{hyper}$ emphasizing the geometric new mathematical structures and the characterizations that these generate. Then theorems for existence of local and global solutions are obtained for such quantum PDE's that extend analogous previous results for quantum PDE's.

For definitions and fundamental results on quantum manifolds see \cite{PRA12, PRA13, PRA14, PRA15, PRA16, PRA18-0, PRA18}. Let in the following, introduce the new definitions related to quantum hypercomplex manifolds.

\begin{definition}
A {\em quantum hypercomplex $r$-algebra}, $0\le r\in\mathbb{N}$, is the extension $\mathcal{Q}_r\equiv B\bigotimes_{\mathbb{R}}\mathbb{A}_r$, where $B$ is a quantum algebra in the sense of \cite{PRA12, PRA13, PRA14, PRA15, PRA16, PRA18-0, PRA18} and $\mathbb{A}_r$, is an $\mathbb{R}$-algebra in the Cayley-Dikson construction. (We assume that $B$ is a quantum $\mathbb{K}$-algebra, with $\mathbb{K}\equiv\mathbb{R}$ or $\mathbb{K}\equiv\mathbb{C}$.) We call any $q\in\mathcal{Q}_r$ a {\em quantum hypercomplex number}.\footnote{Let us recall that the {\em Cayley-Dikson algebra} $\mathbb{A}_r\cong \mathbb{R}^{2^r}$ is an $\mathbb{R}$-algebra structure on the $\mathbb{R}$-vector space $\mathbb{R}^{2^r}=\mathbb{R}^{2^{r-1}}\times\mathbb{R}^{2^{r-1}}$, given inductively by the formula $(a_1,a_2)(b_1,a_2)=(a_1b_1-\bar b_2a_2,b_2a_1+a_2\bar b_1)$, where $\bar a=(\bar a_1,-a_2)\in\mathbb{R}^{2^r}=\mathbb{R}^{2^{r-1}}\times\mathbb{R}^{2^{r-1}}$, with $\mathbb{A}_0=\mathbb{R}$ and $\mathbb{A}_1=\mathbb{C}$, $\mathbb{A}_2=\mathbb{H}$, $\mathbb{A}_3=\mathbb{O}$. The algebras $\mathbb{A}_r$, $0\le r\le 3$ are called the {\em classical Cayley-Dickson algebras}. For classical Cayley-Dickson algebras holds the {\em Hurewitz's theorem}: $||ab||=||a||\, ||b||$, $\forall a,b\in\mathbb{A}_r$, $r=0,1,2,3$, where $||.||$ denotes the euclidean norm in $\mathbb{R}^{2^r}$. For $r\ge 4$ does not necessitate the product preserves the norm. In fact, for $r\ge 4$ one has Cayley-Dikson algebras $\mathbb{A}_r$ with nonempty set $Z_{ero}(\mathbb{A}_r)$, of zero divisors.}
\end{definition}
\begin{proposition}[Properties of quantum hypercomplex algebras]
{\em 1)} $\mathcal{Q}_r$ is a quantum vector space of dimension $2^r$ with respect to the quantum algebra $B$. Therefore, $\mathcal{Q}_r$ is a metrizable, complete, Hausdorff, locally convex topological $\mathbb{K}$-vector space

{\em 2)} $\mathcal{Q}_r$ has a natural structure of $\mathbb{K}$-algebra.  Furthermore one has the following properties

{\em(i)} $\mathcal{Q}_r$ is also a ring with unit $e$;

{\em(ii)} $\epsilon:\mathbb{K}\to Z(\mathcal{Q}_r)\subset \mathcal{Q}_r$ is a ring homomorphism, where $Z\equiv Z(\mathcal{Q}_r)$ is the centre of $\mathcal{Q}_r$;

{\em(iii)} $c:\mathcal{Q}_r\to\mathbb{K}$ is a $\mathbb{K}$-linear morphism, with $c(e)=1$, $e=$unit of $\mathcal{Q}_r$. For any $a\in \mathcal{Q}_r$ we call $a_C\equiv c(a)\in\mathbb{K}$ the {\em classic limit} of $a$;

{\em(iv)} $\mathcal{Q}_r$ is a non-associative $\mathbb{K}$-algebra, for $r\ge 3$. We say that $\mathcal{Q}_r=B\bigotimes_{\mathbb{R}}\mathbb{A}_r$ is {\em$m$-associative} if $\mathbb{A}_r$ is $m$-associative, i.e., there exists an $m$-dimensional subspace $V\subset\mathbb{A}_r$ such that $(y.x).z=y.(x.z)$, for all $y,\, z\in\mathbb{A}_r$ and $x\in S$. This means that one has also $((a\otimes y).(b\otimes x)).(c\otimes z)=(a\otimes y).((b\otimes x).(c\otimes z))$, $\forall a,\, b\, c\, \in B$.

$\mathcal{Q}_3$ is an {\em alternative $\mathbb{K}$-algebra}, i.e., $a^2b=a(ab)$ and $ab^2=(ab)b$, $\forall a,\, b\in \mathcal{Q}_3$.

{\em(v)} $\mathcal{Q}_r=\mathbb{K}\bigotimes_{\mathbb{R}}\mathbb{A}_r$, is a {\em flexible $\mathbb{K}$-algebra}, for $r\ge 4$, i.e., $a(ba)=(ab)a$, $\forall a,\, b\in \mathcal{Q}_r$.

{\em(vi)} One has the following implications for the quantum hypercomplex algebras $\mathcal{Q}_r$: $Associative\, \Rightarrow\, Alternative\, \Rightarrow\, Flexible$, but the backwards implications are not true. The canonical product in $\mathcal{Q}_r$ is nonassociative for $r\ge 3$, since $\mathbb{A}_r$ is a nonassociative algebra for $r\ge 3$.

{\em 3)} One has the short exact sequences reported in {\em(\ref{split-short-exact-sequence-quantum-ypercomplex-algebra})}.
\begin{equation}\label{split-short-exact-sequence-quantum-ypercomplex-algebra}
\left\{ \begin{array}{l}
   \xymatrix{0\ar[r]&B\ar[r]^a&\mathcal{Q}_r\ar[r]^(.4)b&\mathcal{Q}_r/B\ar[r]&0}\\
 \xymatrix{0\ar[r]&\mathbb{A}_r\ar[r]^c&\mathcal{Q}_r\ar[r]^(.4)d&\mathcal{Q}_r/\mathbb{A}_r\ar[r]&0}.\\
 \end{array}\right.
\end{equation}
This means that any quantum hypercomplex algebra $\mathcal{Q}_r$ is an extension of a quantum algebra $B$ and a Cayley-Dickson algebra $\mathbb{A}_r$, where both can be considered subalgebras of $\mathcal{Q}_r$, and one has the canonical isomorphisms reported in {\em(\ref{quantum-hypercomplex-algebras-isomorphisms})}.
\begin{equation}\label{quantum-hypercomplex-algebras-isomorphisms}
   \mathcal{Q}_r\cong (B\bigotimes 1).(e_B\bigotimes\mathbb{A}_r)\cong(e_B\bigotimes\mathbb{A}_r).(B\bigotimes 1).
\end{equation}

{\em 4)} The {\em nucleus} $N_{ucleus}(\mathcal{Q}_r)$ of $\mathcal{Q}_r$, i.e., the set of elements in $\mathcal{Q}_r$ which associates with every pair of elements $a,\, b\in \mathcal{Q}_r$, is an associative subalgebra of $\mathcal{Q}_r$, containing the center $Z(\mathcal{Q}_r)$ that is a commutative associative subalgebra of $\mathcal{Q}_r$.
\end{proposition}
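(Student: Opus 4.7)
The plan is to establish each of the four parts separately, reducing everywhere to known properties of either the quantum algebra $B$ or the Cayley--Dickson algebra $\mathbb{A}_r$, and combining them through the tensor structure $\mathcal{Q}_r = B\bigotimes_{\mathbb{R}}\mathbb{A}_r$. For part (1), I would identify $\mathcal{Q}_r$ with a direct sum of $2^r$ copies of $B$ via any $\mathbb{R}$-basis of $\mathbb{A}_r \cong \mathbb{R}^{2^r}$. Since finite direct sums of metrizable, complete, Hausdorff, locally convex topological $\mathbb{K}$-vector spaces inherit each of these properties, every linear/topological assertion about $\mathcal{Q}_r$ follows at once from the corresponding property assumed for $B$, and the $B$-module dimension is $2^r$ by construction.

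For part (2) I would define multiplication first on simple tensors by $(a\otimes x)(b\otimes y) = (ab)\otimes(xy)$ and extend bilinearly, with unit $e := e_B\otimes 1_{\mathbb{A}_r}$. The embedding $\epsilon:\mathbb{K}\to\mathcal{Q}_r$ lands in the centre because $1_{\mathbb{A}_r}\in Z(\mathbb{A}_r)$ and $\mathbb{K}\subseteq Z(B)$, and the classical limit $c$ is the tensor product of the classical limit of $B$ with the projection $\mathbb{A}_r\to\mathbb{R}$ onto the real part. Items (iv)--(vi) are the delicate part: using trilinearity of the associator together with associativity of $B$, one rewrites the $\mathcal{Q}_r$-associator of simple tensors as $a(bc)\otimes [x,y,z]$, so that the vanishing of residual associators in $\mathbb{A}_r$ (alternativity for $r=3$, flexibility for $r\ge 4$) is what drives the corresponding identity in $\mathcal{Q}_r$. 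In the flexibility statement of (v) the specialization $B=\mathbb{K}$ forces the scalar factors to commute, which is precisely what allows the linearized flexibility identity to close up; for a general quantum $B$ only the weaker $m$-associativity of (iv) survives, restricted to the subspace $V\subseteq\mathbb{A}_r$ on which the $\mathbb{A}_r$-associator vanishes for arbitrary inputs.

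For part (3), the inclusions $b\mapsto b\otimes 1_{\mathbb{A}_r}$ and $x\mapsto e_B\otimes x$ realise $B$ and $\mathbb{A}_r$ as subalgebras of $\mathcal{Q}_r$, and taking the corresponding quotients yields the two short exact sequences of (\ref{split-short-exact-sequence-quantum-ypercomplex-algebra}); the isomorphisms (\ref{quantum-hypercomplex-algebras-isomorphisms}) follow from the identities $(b\otimes 1_{\mathbb{A}_r})(e_B\otimes x) = b\otimes x = (e_B\otimes x)(b\otimes 1_{\mathbb{A}_r})$, using centrality of $e_B$ and $1_{\mathbb{A}_r}$. For part (4), that $N_{ucleus}(\mathcal{Q}_r)$ is a linear subspace closed under multiplication is the standard associator-vanishing argument valid in any nonassociative algebra; associativity of any triple product drawn from $N_{ucleus}(\mathcal{Q}_r)$ is built into its definition, and the containment $Z(\mathcal{Q}_r)\subseteq N_{ucleus}(\mathcal{Q}_r)$ together with commutativity and associativity of $Z(\mathcal{Q}_r)$ is immediate from the definitions.

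The main obstacle will be transferring (iv)--(v) cleanly across the tensor product: the flexibility and alternativity identities in $\mathbb{A}_r$ are quadratic in one variable, so their passage to $\mathcal{Q}_r$ requires linearization to a trilinear identity, and the resulting symmetry in two arguments has to be matched against the noncommutativity of $B$. The cross terms have the form $(a_i a_k - a_k a_i)\,b_j \otimes [x_i,x_k,y_j]$, which vanish when the scalar algebra is commutative but not otherwise; this mismatch is precisely why (v) is restricted to $B=\mathbb{K}$, and why in the general setting of (iv) the author claims only $m$-associativity on a preferred subspace $V$ of $\mathbb{A}_r$.
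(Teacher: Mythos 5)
Your proof follows the same basic structure as the paper's: identify $\mathcal{Q}_r\cong B^{2^r}$ via a basis of $\mathbb{A}_r$ for part (1), define the product on simple tensors with unit $e_B\otimes 1$ for part (2), use the canonical inclusions/projections for part (3), and reduce part (4) to definitions. Where you go beyond the paper is in parts (iv)--(vi): the paper dismisses these with the remark that ``the properties listed above follow directly from analogous ones for the Cayley--Dickson algebras $\mathbb{A}_r$,'' whereas you supply the actual mechanism, namely the key lemma that associativity of $B$ collapses the $\mathcal{Q}_r$-associator of simple tensors to $a(bc)\otimes[x,y,z]$, and then you explain by linearization why the flexibility claim (v) is stated only for $B=\mathbb{K}$. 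This is a genuine and valuable filling-in of a step the paper leaves to the reader, and it correctly isolates the role of commutativity of the scalar algebra in closing up the linearized identity.

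One small correction: the cross terms coming from linearized flexibility should read $(a_i b_j a_k - a_k b_j a_i)\otimes[x_i,y_j,x_k]$, not $(a_i a_k - a_k a_i)\,b_j\otimes[x_i,x_k,y_j]$. You have both transposed the middle slot of the associator (it must remain $y_j$, since flexibility fixes the middle argument) and factored $b_j$ out of the scalar product, which is illegitimate when $B$ is noncommutative --- indeed it is precisely the middle position of $b_j$ that prevents the cancellation. The conclusion you draw (vanishing requires commutativity of the scalar algebra) is unchanged, but the written formula does not support it as stated.
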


\begin{proof}
1) In fact for any basis $\{e_p\}_{1\le p\le 2r}$, one has an $\mathbb{R}$-isomorphism $\mathbb{A}_r\cong\mathbb{R}^{2^r}$. This induces an isomorphism $\mathcal{Q}_r\cong A^{2^r}$. This means that $\{1\otimes e_p\}_{1\le p\le 2r}$, $1\in A$, is a $A$-basis for $\mathcal{Q}_r$, hence any vector $q\in\mathcal{Q}_r$, i.e., any quantum hypercomplex $r$-number, admits the linear representation given in (\ref{linear-representation-quantum-hypercomplex-r-number}).
\begin{equation}\label{linear-representation-quantum-hypercomplex-r-number}
    q=\sum_{1\le p\le 2r}a^p(1\otimes e_p),\, a^p\in A.
\end{equation}
In fact, any $q\in\mathcal{Q}_r$ can be written in the form $q=\sum_{i\in I}a^i\otimes b_i$, with $a^i\in B$ and $b_i\in \mathbb{A}_r$. Representing each $b_i$ in a basis $\{e_p\}_{1\le p\le 2r}$ of $\mathbb{A}_r$, we get:

$$
\left\{
\begin{array}{ll}
  q&=\sum_{i\in I}a^i\otimes b_i \\
  \\
  & =\sum_{i\in I}a^i\otimes(\sum_{1\le p\le 2^r}b_i^p e_p),\, b_i^p\in\mathbb{R},\, e_p\in\mathbb{A}_r\\
  \\
  & =\sum_{i\in I; 1\le p\le 2^r}a^ib_i^p\otimes e_p\\
  \\
  & =\sum_{1\le p\le 2^r}c^p\otimes (1\otimes e_p),\, c^p\equiv \sum_{i\in I}a^ib_i^p\in B,\, 1\otimes e_p\in\mathcal{Q}_r.\\
\end{array}
\right.
$$

2) The product in $\mathcal{Q}_r$ is given by (\ref{product-in-quantum-hypercomplex-r-numbers}).
\begin{equation}\label{product-in-quantum-hypercomplex-r-numbers}
\left\{
\begin{array}{ll}
   a.b&=(\sum_{i\in I}a^i\otimes \bar a_i ).(\sum_{j\in J}b^j\otimes \bar b_j ),\, a^i,b^j\in B,\, \bar a_i,\bar b_j\in\mathbb{A}_r\\
   \\
   &=\sum_{(i,j)\in\in I\times J}a^ib^j\otimes\bar a_i\bar b_j\\
   \\
   &=\sum_{(i,j)\in\in I\times J}c^{ij}\otimes d_{ij},\, c^{ij}\equiv a^ib^j\in B,\, d_{ij}\equiv\bar a_i\bar b_j\in\mathbb{A}_r.\\
   \end{array}
   \right.
\end{equation}
By using the linear representations of $a,\, b\in\mathcal{Q}_r$, with respect to a $A$-basis $\{1\otimes e_p\}_{1\le p\le 2r}$, we get the expression (\ref{product-in-quantum-hypercomplex-r-numbers-by-a-components}).\footnote{The $2^{3r}$ numbers $\gamma^k_{pq}\equiv(e_pe_q)^k\in\mathbb{R}$, are called {\em multiplication constants} of $\mathcal{Q}_r$.}
\begin{equation}\label{product-in-quantum-hypercomplex-r-numbers-by-a-components}
\left\{
\begin{array}{ll}
   a.b&=(\sum_{1\le p\le 2^r}a^p(1\otimes e_p).(\sum_{1\le q\le 2^r}b^q(1\otimes e_q),\, a^p,\, b^q\in B\\
   \\
   &=\sum_{1\le p,q\le 2^r}a^pb^q(1\otimes e_pe_q)\\
   \\
   &=\sum_{1\le k\le 2^r}c^{k}(1\otimes e_{k})\\
   \\
   &c^{k}\equiv \sum_{1\le p,q\le 2^r}a^pb^q(e_pe_q)^k\in B,\, (e_pe_q)^k\in\mathbb{R}\\
   \\
   &e_pe_q=\sum_{1\le k\le 2^r}(e_pe_q)^ke_k\in\mathbb{A}_r.\\
    \end{array}
   \right.
\end{equation}
Thus this product is $\mathbb{R}$-bilinear and unital with
unit $e\in\mathcal{Q}_r$ given by $e=e_B\otimes 1$, where $e_B$ is the unity in $B$ and $1$ is the unity in $\mathbb{A}_r$. Then, the properties listed above, follow directly from analogous ones for the Cayley-Dickson algebras $\mathbb{A}_r$.

3) In fact the mappings $a$ and $c$ in (\ref{split-short-exact-sequence-quantum-ypercomplex-algebra}) are the canonical inclusions $b\mapsto b\otimes1$ and $a\mapsto e_B\otimes a$ respectively, for $b\in B$ and $a\in\mathbb{A}_r$. Furthermore, the mappings $b$ and $d$ in (\ref{split-short-exact-sequence-quantum-ypercomplex-algebra}) are the canonical projections.

4) These properties directly follow from definitions. Let us emphasize only that $Z(\mathcal{Q}_r)=\{a\in N_{ucleus}(\mathcal{Q}_r)\, |\, ab=ba,\, \forall b\in \mathcal{Q}_r\}$.
\end{proof}

\begin{definition}
A {\em quantum hypercomplex vector space} of dimension $(m_0,\dots,m_s)\in\mathbb{N}^s$, built on the quantum hypercomplex algebra $A\equiv \mathcal{Q}_0\times\dots\times \mathcal{Q}_s$, is a locally convex topological $\mathbb{K}$-vector space $E$ isomorphic to $\mathcal{Q}_0^{m_0}\times\dots \mathcal{Q}_s^{m_s}$.
\end{definition}

\begin{definition}
A {\em quantum hypercomplex manifold} of dimension $(m_0,\dots,m_s)$ over a quantum algebra $A\equiv \mathcal{Q}_0\times\dots\times \mathcal{Q}_s$ of class $Q^k_w$, $0\le k\le \infty,\omega$, is a locally convex manifold $M$ modelled on $E$ and with a $Q^k_w$-atlas of local coordinate mappings, i.e., the transaction functions $f:U\subset E\to U'\subset E$ define a pseudogroup of local $Q^k_w$-homeomorphisms on $E$, where $Q^k_w$ means $C^k_w$, i.e., weak differentiability \cite{KELLER, PRA0500, PRA2, PRA3}, and derivatives $Z$-linaires, with $Z\equiv Z(A)$ the centre of $A$. So for each open coordinate set $U\subset M$ we have a set of $m_0+\dots+m_s$ coordinate functions $x^A:U\to A$, ({\em quantum hypercomplex coordinates}).
\end{definition}

\begin{definition}
The {\em tangent space} $T_pM$ at $p\in M$, where $M$ is a quantum hypercomplex manifold of dimension $(m_0,\dots,m_s)$, over a quantum hypercomplex algebra $A\equiv \mathcal{Q}_0\times\dots\times \mathcal{Q}_s$ of class $Q^k_w$, $k>0$, is the vector space of the equivalence classes $v\equiv[f]$ of $C^1_w$ (or equivalently $C^1$) curves $f:I\to M$, $I\equiv$ open neighborhood of $0\in \mathbb{R}$, $f(0)=p$; two curves $f$, $f'$ are equivalent if for each (equivalently, for some) coordinate system $\mu$ around $p$ the functions $\mu\circ f$, $\mu\circ f':I\to \mathcal{Q}_0^{m_0}\times\dots\times \mathcal{Q}_s^{m_s}$ have the same derivative at $0\in\mathbb{R}$.
\end{definition}

\begin{remark}
Then, derived tangent spaces associated to a quantum hypercomplex manifold $M$ can be naturally defined similarly to what made for quantum manifolds. (For details see \cite{PRA0500, PRA2, PRA3, PRA12, PRA13, PRA14, PRA15, PRA16, PRA18-0, PRA18}.)
\end{remark}

\begin{definition}
We say that a quantum hypercomplex manifold of dimension $(m_0,\dots,m_s)$ is {\em classic regular} if it admits a projection $c:M\to M_C$ on a $n$-dimensional manifold $M_C$. We will call $M_C$ the {\em classic limit} of $M$ and in order to emphasize this structure we say that the dimension of $M$ is $(n\downarrow m_0,\dots,m_s)$.
\end{definition}

\begin{definition}
The category $\mathfrak{Q}_{hyper}$ of quantum hypercomplex manifolds, is made by $Ob(\mathfrak{Q}_{hyper})$ that contains all quantum hypercomplex manifolds, and morphisms $Hom(\mathfrak{Q}_{hyper})$ are mappings of class $Q^k_w$ between quantum hypercomplex manifolds. In order that $Hom_{\mathfrak{Q}_{hyper}}(M,N)$ should be non empty, it is necessary that, whether $M$, (resp. $N$), is modeled on the quantum hypercomplex algebra $A$, (resp. $A'$), and $A'$ should be also a $Z(A)$-module, where $Z\equiv Z(A)$ is the centre of $A$.
\end{definition}

\begin{remark}
Let $\pi:W\to M$ be a fiber bundle in the category $\mathfrak{Q}_{hyper}$, such that $M$ is of dimension $m$ on $A$ and $W$ of dimension $(m,s)$ on $B\equiv A\times E$, where $E$ is also a $Z(A)$-module.\footnote{In the following we shall consider fiber bundles of this type.} Then we can define the $k$-order jet-derivative space for sections of $\pi$, $J\hat{\it D}^k(W)$ as an object in $\mathfrak{Q}_{hyper}$, similarly to what made for jet-derivaive spaces in the category $\mathfrak{Q}$. So in the following we will formally resume the definition of PDE's in the category $\mathfrak{Q}_{hyper}$, by adapting the language for analogous geometric structures just previously considered in the category $\mathfrak{Q}$. (See Refs. \cite{PRA0500, PRA2, PRA3, PRA12, PRA13, PRA14, PRA15, PRA16, PRA18-0, PRA18}.)
\end{remark}

A {\em quantum PDE} (QPDE) of order $k$ on the fibre bundle $\pi:W\to M$, defined in the category of quantum hypercomplex manifolds, is a subset $\hat E_k\subset J\hat{\it D}^k(W)$ of the jet-quantum derivative space $J\hat{\it D}^k(W)$ over $M$. We can formally extend the geometric theory for quantum PDEs, previously considered in \cite{PRA0500, PRA2}, to PDE's in the category $\mathfrak{Q}_{hyper}$, since the intrinsic formulation therein is not influenced by the non-associativity of the underlying quantum hyoercomplex algebra.\footnote{This is, instead, important in the local writing and meaning of PDE's.} In the following we shall emphasize some important definitions and results about. A QPDE $\hat E_k$ is {\em quantum regular} if the $r$-quantum prolongations $\hat E_{k+r}\equiv J\hat{\it D}^r(\hat E_k)\cap J\hat{\it D}^{k+r}(W)$ are subbundles of $\pi_{k+r,k+r-1}:J\hat{\it D}^{k+r}(W)\to J\hat{\it D}^{k+r-1}(W)$, $\forall r\ge 0$. Furthermore, we say that $\hat E_k$ is {\em formally quantum integrable} if $\hat E_k$ is quantum regular and if the mappings $\hat E_{k+r+1}\to \hat E_{k+r}$, $\forall r\ge 0$, and $\pi_{k,0}:\hat E_k\to W$ are surjective. The {\em quantum symbol} $\dot g_{k+r}$ of $\hat E_{k+r}$ is a family of $Z\equiv Z(A)$-modules over $\hat E_k$ characterized by means of the following short exact sequence of $Z$-modules: $0\to\pi^*_{k+r}\dot g_{k+r}\to vT\hat E_{k+r}\to\pi^*_{k+r,k+r-1}vT\hat E_{k+r-1}$. Then one has the following complex of $Z$-modules over $\hat E_k$ ({\em$\delta$-quantum complex}):
$$\begin{array}{l}
    \xymatrix{0\ar[r]&\dot g_m\ar[r]^(0.3){\delta}&Hom_Z(TM;\dot g_{m-1})\ar[r]^(0.5){\delta}&Hom_Z(\dot\Lambda ^2_0M;\dot g_{m-2})\ar[r]^(0.7){\delta}\ar[r]&\cdots} \\
\xymatrix{\cdots\ar[r]^(0.25){\delta}&Hom_Z(\dot\Lambda^{m-k}_0M;\dot g_{k})\ar[r]^(0.45){\delta}&\delta(Hom_Z(\dot\Lambda ^{m-k}_0M;\dot g_{k}))\ar[r]& 0}
  \end{array}$$
where $\dot\Lambda ^{s}_0M$ is the skewsymmetric subbundle of $\dot T^r_0M\equiv TM\otimes_{Z}\dots_r\dots\otimes_{Z}TM$. We call {\em Spencer quantumcohomology} of $\hat E_k$ the homology of such complex. We denote by $\{H_q^{m-j,j}\}_{q\in\hat E_k}$ the homology at $(Hom_Z(\dot\Lambda ^{j}_0M;\dot g_{m-j}))_q$. We say that $\hat E_k$ is {\em $r$-quantumacyclic} if $H_q^{m,j}=0$, $m\ge k$, $0\le j\le r$, $\forall q\in\hat E_k$. We say that $\hat E_k$ is {\em quantuminvolutive} if $H_q^{m,j}=0$, $m\ge k$, $j\ge 0$. We say that $\hat E_k$ is {\em $\delta$-regular} if there exists an integer $\kappa_0\ge \kappa$, such that $\dot g_{\kappa_0}$ is quantum involutive or $2$-quantumacyclic.

\begin{theorem}[$\delta$-Poincar\'e lemma for quantum PDE's in the category  $\mathfrak{Q}_{hyper}$]\label{delta-poincare-lemma-pdes-category-quantum-hypercomplex-manifolds}
Let $\hat E_k\subset J\hat{\it D}^k(W)$ be a quantum regular QPDE. If $Z$ is a Noetherian $\mathbb{K}$-algebra, then $\hat E_k$ is a $\delta$-regular QPDE.
\end{theorem}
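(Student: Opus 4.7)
The plan is to reduce the claim to the Noetherian property of the centre $Z=Z(A)$, by exploiting the fact that, although $\mathcal{Q}_r$ is non-associative for $r\ge 3$, the quantum symbols $\dot g_{k+r}$ and the entire $\delta$-quantum complex live in the category of $Z$-modules. As a first step I would verify that quantum regularity of $\hat E_k$ implies that each $\dot g_{k+r}$ is a finitely generated $Z$-module, using the short exact sequence $0\to\pi^*_{k+r}\dot g_{k+r}\to vT\hat E_{k+r}\to\pi^*_{k+r,k+r-1}vT\hat E_{k+r-1}$ in the definition of the symbol, together with the fact that in the quantum hypercomplex category differentials of $Q^k_w$-morphisms are by definition $Z$-linear.

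Next I would organise the whole family $\{\dot g_{k+r}\}_{r\ge 0}$ into a single graded module $G_\bullet=\bigoplus_{r\ge 0}\dot g_{k+r}$ over the symmetric $Z$-algebra $S_Z(TM)$ of the tangent bundle, in such a way that the $\delta$-operator is dual to multiplication by covectors and hence becomes a Koszul-type differential on $\mathrm{Hom}_Z(\dot\Lambda^{\bullet}_0 M;\dot g_{\bullet})$. Because $Z$ is Noetherian, the Hilbert basis theorem guarantees that the graded polynomial algebra $Z[t_1,\dots,t_m]$, with $m=\dim M$, is Noetherian too, so any $Z$-submodule of a finitely generated graded $S_Z(TM)$-module is again finitely generated. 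This brings $G_\bullet$ into the framework of finitely generated graded modules over a Noetherian polynomial algebra, where all the classical homological tools are available.

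With $G_\bullet$ in that framework, the final step is to invoke the Spencer--Goldschmidt--Quillen version of the $\delta$-Poincar\'e lemma, already transposed to the quantum category in \cite{PRA0500, PRA2, PRA3}, which asserts that the Koszul/Spencer cohomology of such a module vanishes in sufficiently high bi-degree. Translating back to the present language, this produces an integer $\kappa_0\ge k$ such that $H_q^{m,j}=0$ for all $m\ge\kappa_0$ and all $j$ in the relevant range: either for all $j\ge 0$, giving quantum involutivity of $\dot g_{\kappa_0}$, or at least for $j=0,1,2$, giving $2$-quantumacyclicity. Either conclusion is exactly the $\delta$-regularity to be proved.

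The step I expect to be the main obstacle is the first one: checking carefully that the non-associativity of $A=\mathcal{Q}_r$ for $r\ge 3$ does not ruin the $Z$-module structure of the symbols, of the vertical tangent spaces $vT\hat E_{k+r}$, and of the $\delta$-operator. The saving fact is that the centre $Z=Z(\mathcal{Q}_r)$ is, by the properties of quantum hypercomplex algebras established earlier, a commutative associative subalgebra, and derivatives in $\mathfrak{Q}_{hyper}$ are required to be $Z$-linear; thus the Cayley--Dickson non-associativity enters only in the ambient algebra $A$ and not in the symbolic machinery. Once this point is made rigorous, the rest of the argument is a direct adaptation of the already-established associative quantum case, and no genuinely new obstruction arises from the hypercomplex extension.
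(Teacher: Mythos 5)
Your argument is essentially the paper's own, only unpacked: the paper disposes of the theorem by observing that the proof from the associative quantum case \cite{PRA0500, PRA2} is intrinsic and transfers unchanged because the Cayley--Dickson non-associativity never touches the $Z$-linear symbolic machinery, and your Koszul/Hilbert-basis sketch is precisely what that transferred proof looks like when made explicit. Both arguments pivot on the same key observation — that the $\delta$-complex and the symbols live entirely in the category of $Z$-modules, with $Z=Z(\mathcal{Q}_r)$ a commutative associative Noetherian subalgebra — so there is no substantive divergence in route, only in the level of detail spelled out.
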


\begin{proof}
The proof can be copied by the analogous theorem formulated in the category of $\mathfrak{Q}$ \cite{PRA0500, PRA2}. In fact that proof is given there in intrinsic way, thus it does not depend on the particular coordinates representation. Really, from the formal point of view the difference between a quantum manifold $M$ of dimension $m$ over a quantum algebra $B$, and a quantum hypercomplex manifold $N$ of dimension $m$ over a quantum hypercomplex algebra $\mathcal{Q}_r=B\otimes_{\mathbb{R}}\mathbb{A}_r$, is that the quantum coordinates $x^A$ on $M$ take values in the associative algebra $B$ and the quantum coordinates $y^A$ on $N$ take values in the algebra $\mathcal{Q}_r$, that is not associative for $r\ge 3$. But the eventual non-associativity does not influence the geometric intrinsic formal properties of (nonlinear) PDE's built in the category $\mathfrak{Q}_{hyper}$. (For complementary informations on nonassociative algebras see, e.g., \cite{SCHAFER}.)
\end{proof}

\begin{theorem}[Criterion of formal quantum integrability in the category  $\mathfrak{Q}_{hyper}$]\label{criterion-formal-quantum-integrability-pdes-category-quantum-hypercomplex-manifolds}
Let $\hat E_k\subset J\hat{\it D}^k(W)$ be a quantum regular, $\delta$-regular QPDE. Then if $\dot g_{k+r+1}$ is a bundle of $Z$-modules over $\hat E_k$, and $\hat E_{k+r+1}\to\hat E_{k+r}$ is surjective for $0\le r\le m$, then $\hat E_k$ is formally quantum-integrable.
\end{theorem}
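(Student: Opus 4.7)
The plan is to follow the classical Goldschmidt-Spencer strategy, adapted to the quantum hypercomplex setting, leveraging the fact (already exploited in the proof of Theorem \ref{delta-poincare-lemma-pdes-category-quantum-hypercomplex-manifolds}) that the intrinsic formulation of the Spencer complex and the prolongation functors is insensitive to the non-associativity of $\mathcal{Q}_r$. What we need to prove is that, under the given hypotheses, the prolongation maps $\hat E_{k+s+1}\to \hat E_{k+s}$ are surjective for \emph{all} $s\ge 0$, not only for $0\le s\le m$. Once this is established, formal quantum integrability follows from the definition, together with quantum regularity which is part of the hypothesis.

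First I would fix an integer $r$ with $r\ge m$ and assume inductively that $\hat E_{k+s+1}\to \hat E_{k+s}$ is surjective for $0\le s\le r$, and that each $\dot g_{k+s+1}$ is a $Z$-module bundle over $\hat E_k$ up to that range. Over a point $q\in \hat E_{k+r}$ one considers the standard commutative diagram of $Z$-modules whose rows are pieces of the $\delta$-quantum complex for $\dot g_{k+r+1}$ and $\dot g_{k+r}$, and whose columns link the symbol sequences to the prolongations $\hat E_{k+r+2}\to \hat E_{k+r+1}\to \hat E_{k+r}$. The obstruction to lifting a point in $\hat E_{k+r+1}$ to $\hat E_{k+r+2}$ lives in a Spencer cohomology group $H^{m-j,j+1}_q$ of the symbol for a suitable bidegree.

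At this stage $\delta$-regularity, furnished by Theorem \ref{delta-poincare-lemma-pdes-category-quantum-hypercomplex-manifolds} (which applies because $Z$ is a Noetherian $\mathbb{K}$-algebra and the symbol modules form bundles of $Z$-modules), kicks in: for $r$ large enough the relevant Spencer cohomology groups of $\dot g_{k+r}$ vanish, so the lifting obstruction is killed and the next prolongation $\hat E_{k+r+2}\to \hat E_{k+r+1}$ is surjective. Combined with the inductive hypothesis for $0\le s\le m$, the standard diagram chase (in the $Z$-linear category, with $\delta$ a $Z$-module morphism) propagates surjectivity indefinitely. The quantum regularity assumption ensures that the resulting surjections are morphisms of $Z$-module bundles, not merely set-theoretic surjections, and that $\dot g_{k+r+1}$ remains a bundle for every $r$.

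The main obstacle, which is really the point that needed the preliminary work, is justifying that all the $Z$-linear constructions and the diagram chase pass through in the non-associative category $\mathfrak{Q}_{hyper}$. Since the Spencer operator $\delta$, the jet-derivative spaces $J\hat{\it D}^{k+r}(W)$ and the symbol modules $\dot g_{k+r}$ are defined in an intrinsic $Z$-linear fashion (with $Z=Z(A)$ associative and commutative even when $A$ is non-associative), the diagram chase is purely an argument in the abelian category of $Z$-modules, and the non-associativity of $\mathcal{Q}_r$ enters only through the choice of local coordinates, which the intrinsic argument avoids. Therefore the classical proof given in \cite{PRA0500, PRA2} transcribes verbatim, finishing the argument.
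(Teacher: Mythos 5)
Your proposal is correct and takes essentially the same approach as the paper: the paper's proof simply cites the analogous result in the category $\mathfrak{Q}$ from \cite{PRA0500, PRA2} and remarks that the intrinsic $Z$-linear Goldschmidt--Spencer argument is unaffected by the non-associativity of the underlying hypercomplex algebra. Your write-up usefully unpacks the cited diagram-chase and the role of vanishing Spencer cohomology, but the key observation, that everything lives in the abelian category of $Z(A)$-modules with $Z(A)$ associative and commutative, is identical to the paper's; the only minor redundancy is that you invoke Theorem \ref{delta-poincare-lemma-pdes-category-quantum-hypercomplex-manifolds} to furnish $\delta$-regularity when it is already part of the hypothesis of Theorem \ref{criterion-formal-quantum-integrability-pdes-category-quantum-hypercomplex-manifolds}.
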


\begin{proof}
The proof can be copied by the analogous theorem formulated in the category of $\mathfrak{Q}$ \cite{PRA0500, PRA2}. (Considerations similar to the proof in Theorem \ref{delta-poincare-lemma-pdes-category-quantum-hypercomplex-manifolds} can be made here too.
\end{proof}

An {\em initial condition} for QPDE $\hat E_k\subset J\hat{\it D}^k(W)$ is a point $q\in\hat E_k$. A {\em solution} of $\hat E_k$ passing for the initial condition $q$ is a $m$-dimensional quantum hypercomplex manifold $N\subset \hat E_k$ such that $q\in N$ and such that $N$ can be represented in a neighborood of any of its points $q'\in N$, except for a nowhere dense subset $\Sigma(N)\subset N$ of dimension $\le m-1$, as image of the $k$-derivative $D^ks$ of some $Q^k_w$-section $s$ of $\pi:W\to M$. We call $\Sigma(N)$ the set of {\em singular points} (of Thom-Bordman type) of $N$. If $\Sigma(N)\not=\varnothing$ we say that $N$ is a {\em regular solution} of $\hat E_k\subset J\hat{\it D}^k(W)$. Furthermore, let us denote by $\hat J^k_m(W)$ the $k$-jet of $m$-dimensional quantum manifolds (over $A$) contained into $W$. One has the natural embeddings $\hat E_k\subset J\hat{\it D}^k(W)\subset \hat J^k_m(W) $. Then, with respect to the embedding $\hat E_k\subset \hat J^k_m(W) $ we can consider solutions of $\hat E_k$ as $m$-dimensional (over $A$) quantum hypercomplex manifolds $V\subset\hat E_k$ such that $V$ can be represented in the neighborhood of any of its points $q'\in V$, except for a nowhere dense subset $\Sigma(V)\subset V$, of dimension $\le m-1$, as $N^{(k)}$, where $N^{(k)}$ is the $k$-quantum prolongation of a $m$-dimensional (over $A$) quantum hypercomplex manifold $N\subset W$. In the case that $\Sigma(V)=\varnothing$, we say that $V$ is a {\em regular solution} of $\hat E_k\subset \hat J^k_m(W)$. Of course, solutions $V$ of $\hat E_k\subset \hat J^k_m(W)$, even if regular ones, are not, in general diffeomorphic to their projections $\pi_k(V)\subset M$, hence are not representable by means of sections of $\pi:W\to M$.

Therefore, from Theorem \ref{delta-poincare-lemma-pdes-category-quantum-hypercomplex-manifolds} and Theorem \ref{criterion-formal-quantum-integrability-pdes-category-quantum-hypercomplex-manifolds} we are able to obtain existence theorems of local solutions. Now, in order to study the structure of global solutions it is necessary to consider the integral bordism groups of QPDEs. In \cite{PRA0500, PRA2} we extended to QPDEs our previous results on the determination of integral bordism groups of PDEs \cite{PRA01, PRA1, PRA2}.\footnote{See also Refs. \cite{PRA4, PRA5, PRA6, PRA7, PRA8, PRA9, PRA10, PRA11}.} Let us denote by $\Omega^{\hat E_k}_p$, $0\le p\le m-1$, the integral bordism groups of a QPDE $\hat E_k\subset \hat J^k_m(W)$ for closed integral quantum hypercomplex submanifolds of dimension $p$, over a quantum hypercomplex algebra $A$, of $\hat E_k$. The structure of smooth global solutions of $\hat E_k$ are described by the integral bordism group $\Omega_{m-1}^{\hat E_\infty}$ corresponding to the $\infty$-quantumprolongation
$\hat E_\infty$ of $\hat E_k$. Beside the groups $\Omega^{\hat E_k}_p$, $0\le p\le m-1$, we can also introduce the {\em integral singular $p$-bordism groups} ${}^B\Omega^{\hat E_k}_{p,s}$, $0\le p\le m-1$, where $B$ is a quantum hypercomplex algebra. Then one can prove that ${}^B\Omega^{\hat E_k}_{p,s}\cong \Omega^{\hat E_k}_{p,s}\otimes_\mathbb{K}B$, where $\Omega^{\hat E_k}_{p,s}$ are the integral singular bordism groups for $B=\mathbb{K}$. Furthermore, the equivalence classes in the groups ${}^B\Omega^{\hat E_k}_{p,s}$ are characterized by means of suitable characteristic numbers (belonging to $B$), similarly to what happens for PDEs in the category of commutative manifolds and quantum (super) manifolds. (For details see \cite{PRA0500, PRA2, PRA3, PRA12, PRA13, PRA14, PRA15, PRA16, PRA18-0, PRA18}.)

\begin{example}[Quantum quaternionic manifolds]\label{quantum-quaternionic-manifolds}
Since the category of $\mathfrak{Q}_r$, for $0\le r\le 1$, coincides with the category of quantum manifolds over $\mathbb{K}=\mathbb{R},\, \mathbb{C}$, that we have just explicitly considered in some our previous works, let us first concentrate our attention on the category $\mathfrak{Q}_2$, i.e., with the category of quantum quaternionic manifolds. Really, also an algebraic topology of PDE's in this category has been considered by us in some previous works (see \cite{PRA3} and references therein). However, may be useful to recall here these resultsin some details.

Let us first recall some algebraic definitions and results on quaternionic and Cayley algebra \cite{{BOURBAKI}}. Let $R$ be a commutative ring. Let $\alpha,\beta\in R$, $(e_1,e_2)$ the canonical basis of the $R$-module $R^2$. We say {\em quadratic algebra of type} $(\alpha,\beta)$ over $R$ the $R$-module $R^2$ endowed with the structure of algebra defined by means of the following multiplication:
\begin{equation}\label{multiplication-quadratic-algebra-type-alpha-beta}
    e_1^2=e_1, e_1e_2=e_2e_1=e_2, e_2^2=\alpha e_1+\beta e_2.
\end{equation}

  Any $R$-algebra $E$, isomorphic to a quadratic algebra is called a quadratic algebra too. (Any $R$-algebra $E$ that admits a basis of two elements (one being the identity) is a quadratic algebra.) Then the basis is called a {\em basis of type} $(\alpha,\beta)$. A quadratic algebra $E$ is associative and commutative. Let $E$ be a quadratic $R$-algebra, $e$ its unit. Let $u\in E$ and $T(u)$ the trace of the endomorphism $x\mapsto ux$ of the free $R$-module $E$. Then the application $s:E\to E$, $s(u)=T(u).e-u$, is an endomorphism of the $R$-algebra $E$ and one has $s^2(u)=u$, $\forall u\in E$. A {\em Cayley algebra} on $R$ is a couple $(E,s)$, where $E$ is a $R$-algebra, with unit $e\in E$, and $s$ is a skewendomorphism of $E$ such that: {\em(a)} $u+\bar u\in Re$, {\em(b)} $u.\bar u\in R e$, with $\bar u\equiv s(u)$, $\forall u\in E$. $s$ is called {\em conjugation} of the Cayley algebra $E$ and $s(u)\equiv\bar u$ is the {\em conjugated} of $u$. From the condition {\em(a)} it follows that $u\bar u=\bar u u$. One defines {\em Cayley trace} and {\em Cayley norm} respectively the following maps: $T:E\to Re$, $u\mapsto T(u)=u+\bar u$; $N:E\to Re$, $u\mapsto N(u)=u.\bar u$. One has the following properties:

  {\em(1)} $\bar e=e$;

  {\em(2)} $s(u+s(u))=u+s(u)\Rightarrow s(u)+s^2(u)=u+s(u)\Rightarrow s^2(u)=u\Rightarrow s^2=id_E$;

  {\em(3)} $T(\bar u)=T(u)$;

 {\em(4)} $N(\bar u)=N(u)$;

 {\em(5)} $(u-u)(u-\bar u)=0\Rightarrow u^2-T(u).u+N(u)=0$;

 {\em(6)} Let $E$ be a $R$-algebra and let $s,s'$ be skewendomorphisms of $E$ such that $(E,s)$ and $(E,s')$ are Cayley algebras. If $E$ admits a basis containing $e$, one has $s=s'$;

 {\em(7)} $\overline{u+v}=\bar u+\bar v$; $\overline{\alpha u}=\alpha\bar u$; $\overline{u.v}=\bar v.\bar u$, $\forall\alpha\in R$, $\forall u,v\in E$;

 {\em(8)} $T(e)=2e$; $N(e)=e$;

 {\em(9)} $T(uv)=T(vu)$;

 {\em(10)} $T(v\bar u)=T(u\bar v)=N(u+v)-N(u)-N(v)=T(u)T(v)-T(uv)$;

 {\em(11)} $N(\alpha u)=\alpha^2N(u)$;

 {\em(12)} $(T(u))^2-T(u^2)=2N(u)$;

 {\em(13)} $T$ is a linear form on $E$ and $N$ is a quadratic form on $E$.

$\bullet$\hskip 2pt{\em(Cayley extension of a Cayley algebra $(E,s)$ defined by an element $\gamma\in R$).}
Let $(E,s)$ be a Cayley algebra and let $\gamma\in R$. Let $F$ be the $R$-algebra with underlying module $E\times E$ and with multiplication $(x,y)(x',y')=(xx'+\gamma\bar y'y,y\bar x'+y'x)$. Then $ (e,0)$ is the unit of $ F$ and $ E\times\{0\}$ is a subalgebra of $ F$ isomorphic to $ E$ that can be identified with $ E$. Let $ t$ be the permutation of $ F$ defined by $ t(x,y)=(\bar x,-y)$, $ \forall x, y\in E$. Then the couple $ (F,t)$ is a Cayley algebra over $ R$. Set $ j=(0,e)$. So we can write $ (x,y)=(x,0)(e,0)+(0,y)(0,e)=xe+yj$. One has $ yj=j\bar y$, $ x(yj)=(yx)j-(xj)y=(x\bar y)j$, $ (xj)(yj)=\bar y xe$, $ j^2=e$. Furthermore, one has $ T_F(xe+yj)=T(x)$, $ N_F(xe+yj)=N(x)-\gamma N(y)$. $ F$ is associative iff $ E$ is associative and commutative.

As a particular case one has: If $ E=R$ (hence $ s=id_R$), the Cayley extension of $ (R.id_R)$ by an element $ \gamma\in R$ is a {\em quadratic $ R$-algebra} with basis $ (e,j)$ with $ j^2=\gamma e$.

Another particular case is the following. Let $ E$ be a quadratic algebra of type $ (\alpha,\beta)$ such that the underlying module is $ R^2$ with multiplication rule given by means of {\em(\ref{multiplication-quadratic-algebra-type-alpha-beta})} for the canonical basis. Let the conjugation $ s$ be the conjugation in $ E$. Then for any $ \gamma\in R$, the Cayley extension $ F$ of $ (E,s)$ by means of $ \gamma$ is called {\em quaternionic algebra of type} $ (\alpha, \beta,\gamma)$. (This is an associative algebra.) The underlying module is $ R^4$. Let us denote by $ (0,i,j,k)$ the canonical basis of $ R^4$. Then the corresponding multiplication rule is given in Tab. \ref{multiplication-table-trace-norm-formulas-a}. (In the same table it are also reported the trace and norm formulas.)

\begin{table}
\caption{Multiplication table, trace and norm formulas for quaternionic algebra $F\cong R^4$ of type $(\alpha,\beta,\gamma)$ with basis $(0,i,j,k)$.}
\label{multiplication-table-trace-norm-formulas-a}
\begin{tabular}{|l|l|l|l|l|}
\hline
{\rm{\footnotesize }}&{\rm{\footnotesize \hfil$ i$\hfil}}&{\rm{\footnotesize \hfil$ j$\hfil}}&{\rm{\footnotesize \hfil$ k$\hfil}}&{\rm{\footnotesize trace and norm formulas}}\\
\hline
{\rm{\footnotesize \hfil $ i$\hfil}}&{\rm{\footnotesize \hfil $ \alpha e+\beta i$\hfil}}&{\rm{\footnotesize \hfil $ k$\hfil}}&{\rm{\footnotesize \hfil $ \alpha j+\beta\kappa$\hfil}}&{\rm{\footnotesize \hfil $ T_F(u)=2\rho+\beta\xi$\hfil}}\\
\hline
{\rm{\footnotesize \hfil $ j$\hfil}}&{\rm{\footnotesize \hfil $ \beta j-\kappa$\hfil}}&{\rm{\footnotesize \hfil $ \gamma e$\hfil}}&{\rm{\footnotesize \hfil $ \beta\gamma-\gamma i$\hfil}}&{\rm{\footnotesize \hfil $ N_F(u)=\rho^2+\beta\rho\xi-\alpha\xi^2-\gamma(\eta^2+\beta\eta\zeta-\alpha\zeta^2)$\hfil}}\\
\hline
{\rm{\footnotesize \hfil $ k$\hfil}}&{\rm{\footnotesize \hfil $ -\alpha j$\hfil}}&{\rm{\footnotesize \hfil $ \gamma i$\hfil}}&{\rm{\footnotesize \hfil $ -\alpha\gamma e$\hfil}}&{\rm{\footnotesize \hfil $ N_F(uv)=N_F(u)N_F(v)$\hfil}}\\
\hline
\multicolumn{5}{l}{\rm{\footnotesize $ u=\rho e+\xi i+\eta j+\zeta\kappa,\quad \rho,\xi,\eta,\zeta\in\mathbb{K};\quad
\bar u=(\rho+\beta\xi)e-\xi i-\eta j-\zeta\kappa$.}}\\
\multicolumn{5}{l}{\rm{\footnotesize $R$ commutative ring. $e$ unit of the quadratic algebra $(E,s)$ of type $(\alpha,\beta)$.}}\\

\end{tabular}
\end{table}

An $ R$-algebra isomorphic to a quaternionic algebra is called a {\em quaternionic algebra} of type $(\alpha,\beta,\gamma)$, if it has a basis with multiplication table given in Tab. \ref{multiplication-table-trace-norm-formulas-a}.

If $\beta=0$ we say that the quaternionic algebra is of type $(\alpha,\gamma)$. The corresponding multiplication table is given in Tab. \ref{multiplication-table-trace-norm-formulas-b}.

\begin{table}
\caption{Multiplication table, trace and norm formulas for quaternionic algebra $F\cong R^4$ of type $(\alpha,\gamma)$ with basis $(0,i,j,k)$.}
\label{multiplication-table-trace-norm-formulas-b}
\begin{tabular}{|l|l|l|l|l|}
\hline
{\rm{\footnotesize }}&{\rm{\footnotesize \hfil$ i$\hfil}}&{\rm{\footnotesize \hfil$ j$\hfil}}&{\rm{\footnotesize \hfil$ k$\hfil}}&{\rm{\footnotesize trace and norm formulas}}\\
\hline
{\rm{\footnotesize \hfil $ i$\hfil}}&{\rm{\footnotesize \hfil $\alpha\, e$\hfil}}&{\rm{\footnotesize \hfil $\kappa $\hfil}}&{\rm{\footnotesize \hfil $ \alpha j$\hfil}}&{\rm{\footnotesize \hfil $ T_F(u)=2\rho$\hfil}}\\
\hline
{\rm{\footnotesize \hfil $ j$\hfil}}&{\rm{\footnotesize \hfil $ -\kappa$\hfil}}&{\rm{\footnotesize \hfil $ \gamma e$\hfil}}&{\rm{\footnotesize \hfil $ -\gamma i$\hfil}}&{\rm{\footnotesize \hfil $ N_F(u)=\rho^2-\alpha\xi^2-\gamma\eta^2+\alpha\zeta^2$\hfil}}\\
\hline
{\rm{\footnotesize \hfil $ k$\hfil}}&{\rm{\footnotesize \hfil $-\alpha\, j$\hfil}}&{\rm{\footnotesize \hfil $ \gamma\,  i$\hfil}}&{\rm{\footnotesize \hfil $  -\alpha\, \gamma\,  e$\hfil}}&\\
\hline
\multicolumn{5}{l}{\rm{\footnotesize $ u=\rho e+\xi i+\eta j+\zeta\kappa,\quad \rho,\xi,\eta,\zeta\in\mathbb{K};\quad
\bar u=\rho e-\xi i-\eta j-\zeta\kappa$.}}\\
\end{tabular}
\end{table}

In particular if $ R=\mathbb{K}=\mathbb{R}$, $ \alpha=\gamma=-1$, $ \beta=0$, $ F$ is called the {\em Hamiltonian quaternionic algebra} and is denoted by $ \mathbb{H}$. In this case $ N(u)\not=0$, hence $ u$ admits an inverse $ u^{-1}=N(u)^{-1}\bar u$ in $ \mathbb{H}$, therefore $ \mathbb{H}$ is a noncommutative corp. Any finte $ \mathbb{R}$-algebra that is also a corp (noncommutative) is isomorphic to $ \mathbb{H}$. Any quaternion $ q\in \mathbb{H}$ can be represented by $ q=\rho e+\xi i+\eta j+\zeta k$, where $ i,j,k$ are linearly independent symbols that satisfy the following multiplication rules: $ ij=k=-ji$, $ jk=i=-kj$, $ ki=j=-ik$, $ i^2=j^2=k^2=-1$. One has the following $ \mathbb{R}$-algebras homomorphism: $ A:\mathbb{H}\to M(2;\mathbb{C})$, $ q\mapsto
\left(
  \begin{array}{cc}
    a+bi & c+di \\
    -c+di & a-bi \\
  \end{array}
\right)$,
where $ i$ is the imaginary unity of $ \mathbb{C}$. The matrices $ \sigma_x\equiv -iA(k)$, $ \sigma_y\equiv -iA(j)$, $ \sigma_z\equiv -iA(i)$, where $ A(i)=\left(
  \begin{array}{cc}
    i & 0 \\
    0 & -i \\
  \end{array}
\right)$, $ A(j)=\left(
  \begin{array}{cc}
    0 & 1 \\
    -1 & 0 \\
  \end{array}
\right)$, $ A(k)=\left(
  \begin{array}{cc}
    0 & i \\
    i & 0 \\
  \end{array}
\right)$, are called {\em Pauli matrices} and satisfy $ \sigma_x^2=\sigma_y^2=\sigma_z^2=1$, $ \sigma_x\sigma_y=-\sigma_y=\sigma_x=i\sigma_z$. The set $ \mathbb{H}_1\equiv N_\mathbb{H}^{-1}(1)$ of quaternions of norm $ 1$ is isomorphic to the group $ SU(2)$: $ \mathbb{H}_1\cong SU(2)$. The {\em  $ n$-dimensional quaternionic space} $\mathbb{H}^n$ has a canonical basis $ \{e_k\}_{1\le k\le n}$, $ e_k\in\mathbb{H}$, and any $ v\in\mathbb{H}^n$ can be represented in the form $ v=\sum_{1\le k\le n}q^ke_k$, $ q^k\in\mathbb{H}$, ($ q^k\equiv ${\em  quaternionic components}). As any quaternionic number $ q$ admits the following representation $ q=x+yj=x+j\bar y$, with $ x=\rho e+\xi i$, $ y=\eta +\zeta i$, where $ x$ and $ y$ can be considered complex numbers, then one has the following isomoprphism $ \mathbb{H}^n\cong\mathbb{C}^{2n}$, $ (q^k)\mapsto(x^k,y^k)$, where $ \mathbb{C}^{2n}$ has the following basis $ (e_1,\dots,e_n,je_1,\dots,je_n)$. We write $ \dim_\mathbb{H}\mathbb{H}^n=n$, $ \dim_\mathbb{C}\mathbb{H}^n=2n$. By using different quaternionic bases in $ \mathbb{H}^n$ one has that the quaternionic components of any vector $ v\in\mathbb{H}^n$ transform by means of the following rule $ q'^{k}=\sum_{1\le i\le n}q^i\lambda^k_i$, $ (\lambda^k_i)\in GL(n,\mathbb{H})$. Furthermore, the corresponding complex components transform in the following way:
$$ \left\{ x'^h=x^la^h_l-y^l\bar b^h_l,\quad y'^h=x^lb^h_l+y^l\bar a^h_l\right\}, \quad \lambda^h_l=a^h_l+b^h_lj.$$
Then one has a group-homomorphism $ GL(n,\mathbb{H})\mathop{\to}\limits^{c}GL(2n,\mathbb{C})$, such that if $ \Lambda=A+Bj\in GL(n,\mathbb{H})$, then $ c(\Lambda)=\left(\begin{array}{cc}
                                                       A & B \\
                                                       -\bar B &  \bar A \\
                                                     \end{array}\right)$.
On $ \mathbb{H}^n$ there is a canonical quadratic form $ |v|^2=\sum_{1\le k\le n}|q^k|^2=\sum_{1\le k\le n}q^k\bar q^k=\sum_{1\le k\le n}(|x^k|^2+|y^k|^2)\in\mathbb{R}$, where $ v=q^ke_k$, $ q^k\in\mathbb{H}$, $ q^k=x^k+y^kj$, $ x^k,y^k\in\mathbb{C}$. So such a quadratic form coincides with the ordinary norm of the vector space $ \mathbb{C}^{2n}$. Furthermore one has on $ \mathbb{H}^n$ the following form $ <v_1,v_2>_\mathbb{H}=\sum_{1\le k\le n}q_1^k\bar q_2^k\in\mathbb{H}$, $ v_i=\sum_{1\le k\le n}q_i^ke_k$, $ i=1,2$. The quaternionic transformations of $ \mathbb{H}^n$, that conserve above form a group $ Sp(n)\subset GL(n,\mathbb{H})$. As we can write $ <v_1,v_2>_\mathbb{H}$ in the following way:

$$\scalebox{0.8}{$ <v_1,v_2>_\mathbb{H}=\left\{
\begin{array}{l}
\sum_{1\le k\le n}(x^k_1\bar x^k_2+y_1^k\bar y_2^k)=<v_1,v_2>_\mathbb{C}=\hbox{\rm hermitiam form in} \mathbb{C}^{2n}\\
\sum_{1\le k\le n}(y^k_1 x^k_2-x_1^k y_2^k)j=\sigma(v_1,v_2)_\mathbb{C}=\hbox{\rm skewsymmatric form in } \mathbb{C}^{2n}\\
\end{array}
\right.$}$$
we see that $ \Lambda\in Sp(n)$ preserves the hermitian form and the skewsymmetric form. Therefore $ c(Sp(n))\subset U(2n)$ and it is formed by the unitary transformations of $ \mathbb{C}^{2n}$ that preserves the antisymmetric form $ \sigma(v_1,v_2)_\mathbb{C}$.\footnote{$ Sp(1)\cong SU(2)\subset U(2)$. So all the transformations contained in $ c(Sp(1))$ are unimodular.}

$\bullet$\hskip 2pt Let $B$ be a quantum algebra. We define {\em Cayley $B$-quantum algebra} any quantum algebra $C$ that is obtained from a Cayley $\mathbb{K}$-algebra $A$, by ''extending the scalars'' from $\mathbb{K}$ to $B$, i.e., $C\cong B\otimes_\mathbb{K}A$.

The noncommutative $B$-quaternionic algebra $B\otimes_\mathbb{K}\mathbb{H}$ is a Cayley $B$-quantum algebra over $\mathbb{K}=\mathbb{R}$ endowed with the natural topology of Banach space and considering the $\mathbb{K}$-linear morphism $c=c_B\otimes{1\over 2}T:B\otimes_\mathbb{K}\mathbb{H}\to\mathbb{K}$, where $T$ is the trace of $\mathbb{H}$. (Another possibility is to take $c=c_B\otimes N$, where $N$ is the norm of $\mathbb{H}$. In this last case, whether $B$ is an augmented quantum algebra, then $B\otimes_\mathbb{K}\mathbb{H}$ becomes an augmented quantum algebra too.)

$\bullet$\hskip 2pt A {\em quantum $B$-quaternionic manifold} of dimension $n$ and class $Q^k_w$, $0\le k\le \infty,\omega$, is a quantum manifold $M$ of dimension $n$ and class $Q^k_w$ over the $B$-quantum algebra $C\equiv B\otimes_\mathbb{K}\mathbb{H}$. Then the quantum coordinates in an open coordinate subset $U\subset M$ are called {\em $B$-quaternionic coordinates}, $\{q^k\}_{1\le k\le n}$, $q^k:U\to C$.\footnote{As a particular case we can take $ B=\mathbb{K}$. In this case $ C=\mathbb{H}$ and we call such quantum $ \mathbb{K}$-quaternionic manifolds simply {\em quantum quaternionic manifolds}.}

The {\em category $\mathcal{C}^B_{\mathbb{H}}$, of quantum $B$-quaternionic manifolds} of class $Q^k_w$, is defined by considering as morphisms maps of class $Q^k_w$, between quantum $B$-quaternionic manifolds.
\end{example}

\begin{example}[Quantum quaternionic M\"obius strip]\label{quantum-quaternionic-mobius-strip}
Let us denote $I\equiv[-\pi,\pi]\subset\mathbb{R}$, $N\equiv I\times \mathbb{H}$. Let us introduce the following equivalence relation in $N$: $(x,y)\sim(x,y)$ if $x\not=-\pi,\pi$, $(-\pi,-y)\sim(\pi,y)$. Then $N/\sim\equiv M$ is called noncommutative quaternionic M\"obius strip. One has a natural projection $p:M\to S^1$, given by $p([x,y])=x\in S^1$ if $x\not=-\pi,\pi$, and $p([\pi,y])=*\in S^1$, where $*$ is the point of $S^1\equiv I/\{-\pi,\pi\}$, corresponding to $\{-\pi,\pi\}$. One can recover $M$ with two open sets:
$$\left\{\Omega_1\equiv p^{-1}(U_1), \quad U_1\equiv]-\pi,\pi[;\quad
            \Omega_2\equiv p^{-1}(U_2), \quad U_2\equiv S^1\setminus\{0\}\right\}.$$
We put quaternionic coordinates on $\Omega_i$, $i=1,2$, in the following way. On $\Omega_1$, $\{x^1[x,y]=p[x,y]=x\in\mathbb{R},
                     x^2[x,y]=y\in\mathbb{H}\}$. On $\Omega_2$, if $x\not=-\pi,\pi$, $\bar x^1[x,y]=-\pi+x\in\mathbb{R}$,
                                $\bar x^1[-x,y]=\pi-x\in\mathbb{R}$,
                                $\bar x^2[\pm x,y]=y\in\mathbb{H}$;
                     $\bar x^1[-\pi,-y]=\bar x^1[\pi,y]=0$,
                                       $\bar x^2[-\pi,-y]=\bar x^2[\pi,y]=|y|\in c^{-1}(\mathbb{                                    R}^+)\subset\mathbb{H}$, with $c={1\over 2}T$. The change of coordinates is given by:
$$\forall q\in\Omega_1\cap \Omega_2, \quad (p(q)\in S^1\setminus\{*,0\}\equiv U_+\sqcup U_-),\quad\left\{
\begin{array}{l}
\bar x^1|_{U_-}=\pi-x^1\in\mathbb{R}\\
\bar x^1|_{U_+}=-\pi+x^1\in\mathbb{R}\\
\bar x^2=x^2\in\mathbb{H}\\
\end{array}
\right\}.$$
The structure group, i.e. the group of the jacobian matrix, is isomorphic to $\mathbb{Z}_2$. In fact one has:
$$\left\{(\partial x_j.\bar x^k)=
\left(
  \begin{array}{cc}
    (\partial x_1.\bar x^1) & (\partial x_1.\bar x^2) \\
    (\partial x_2.\bar x^1) & (\partial x_2.\bar x^2) \\
  \end{array}
\right)=
\left(
  \begin{array}{cc}
    1 & 0 \\
    0 & 1\\
  \end{array}
\right)\subset GL(2;\mathbb{H})\right\}.$$
Therefore, $M$ is a quantum quaternionic manifold modelled on $\mathbb{R}\times\mathbb{H}\subset\mathbb{H}^2$, hence $\dim_\mathbb{H}M=2$. Furthermore one has the canonical projection $M\to S^1$, therefore $M$ is a regular quantum manifold of dimension $(1\downarrow 2)$. Finally remark that as $M$ is not covered by a global chart, it is a non trivial example of quantum quaternionic manifold. Of course this can be also seen by means of homological arguments. In fact one has $H_1(M;\mathbb{R})\cong H_1(M_C;\mathbb{R})\cong H_1(S^1;\mathbb{R})\cong\mathbb{R}$. Therefore, $M$ is not homotopy equivalent to $\mathbb{R}^5$, as $H_1(\mathbb{R}^5;\mathbb{R})=0$.
\end{example}

\begin{example}[Quaternionic manifolds \cite{SOMMESE, SUDBERG}] The category $\mathcal{C}_\mathbb{H}$ of quaternionic manifolds is a subcategory of $\mathcal{C}_\mathbb{H}^{B\equiv\mathbb{R}}$, where the morphisms are quaternionic affine maps. Therefore any of such morphisms $f\in Hom_{\mathcal{C}_\mathbb{H}}(M,N)$, where $\dim M=4m$, $\dim N=4n$, are locally represented by formulas like the following: $f^k=A^k_jq^j+r^k$, $A^k_j,q^j,r^k\in\mathbb{H}$, $1\le k\le n$, $1\le j\le m$. $A^k_j$ identify $m\times n$ matrices with entries in $\mathbb{H}$, or equivalently, real matrices of the form
$$\left(A^k_j\right)=
\left(
  \begin{array}{ccc}
     \hat A^1_1 & \dots & \hat A^m_1 \\
    \dots & \dots & \dots \\
    \hat A^1_n & \dots & \hat A^m_n \\
  \end{array}
\right),\quad
\hat A^j_i=
\left(
  \begin{array}{cccc}
    a & b & c & d \\
    { -b}&{ a}&{ d}&{ -c}\\
                                   { -c}&{ -d}&{ a}&{ b}\\
                                    { -d}&{ c}&{ b}&{ a}\\
  \end{array}
\right),\quad a,b,c,d\in\mathbb{R}.$$
The set of such matrices is denoted by $M(n,m;\mathbb{H})$. The structure group of a $4n$-dimensional quaternionic manifold is $GL(n;\mathbb{H})$, (that is the subset of $M(n,m;\mathbb{H})$ of invertible matrices). Therefore, quaternionic manifolds are quantum quaternionic manifolds where the local maps $f:U\subset\mathbb{H}^n\to\bar U\subset\mathbb{H}^n$, change of coordinates, are $\mathbb{H}$-linear. Hence $Df(p)\in \mathbb{H}^{n^2}$, $\forall p\in U$. In fact, one has the following commutative diagram:
$$\xymatrix{{ Hom_\mathbb{H}(\mathbb{H}^n;\mathbb{H}^n)}\ar@{=}[d]^{\wr}\ar@{^{(}->}[r]&{ Hom_\mathbb{R}(\mathbb{H}^n;\mathbb{H}^n)}\ar@{=}[r]^{\sim}&{ Hom_\mathbb{R}(\mathbb{H};\mathbb{H})^{n^2}}\ar@{=}[d]^{\wr}\\
{ Hom_\mathbb{H}(\mathbb{H};\mathbb{H})^{n^2}}\ar@{=}[d]^{\wr}&&{ Hom_\mathbb{R}(\mathbb{R}^4;\mathbb{R}^4)^{n^2}}\ar@{=}[d]^{\wr}\\
{ \mathbb{H}^{n^2}}\ar@{=}[r]^{\sim}&{ \mathbb{R}^{4n^2}}\ar@{^{(}->}[r]&{ \mathbb{R}^{4^2n^2}}\\ }$$
On the other hand the tangent space $T_pM$ has a natural structure of $\mathbb{H}$-module iff $M$ is an affine manifold. (As in this case the action of $\mathbb{H}$ on $T_pM\cong\mathbb{H}^n$ does not depend on the coordinates used to obtain the identification of $T_pM$ with $\mathbb{H}^n$.) Hence the category $\mathcal{C}_\mathbb{H}$ is the subcategory of $\mathcal{C}_\mathbb{H}^\mathbb{R}$ of affine quantum quaternionic manifolds. A trivial example of quaternionic manifold is $\mathbb{R}^{4n}\cong\mathbb{H}^n$. If $\{x^i,y^i,u^i,v^i\}_{1\le i\le n}$ are real coordinates on $\mathbb{R}^{4n}$, then the almost quaternionic structure given by
$$ \left\{
\begin{array}{l}
J(\partial x_i)=\partial y_i,\quad J(\partial y_i)=-\partial x_i,\quad J(\partial u_i)=-\partial v_i,\quad J(\partial v_i)=\partial u_i\\
 K(\partial x_i)=\partial u_i,\quad K(\partial y_i)=\partial v_i,\quad K(\partial u_i)=-\partial x_i,\quad J(\partial v_i)=-\partial y_i\\
 \end{array}
 \right\}$$
is called the {\em standard right quaternionic structure} on $\mathbb{R}^{4n}$.\footnote{An {\em  almost complex structure} on a $ C^\infty$ manifold $ M$ is a fiberwise endomorphism $ J$ of the tangent bundle $ TM$ such that $ J^2=-1$. A {\em  complex analytic map} between almost complex manifolds $ (X,J_1)$ and $ (Y,J_2)$ is a $ C^\infty$ map $ \phi:X\to Y$, such that $ T(\phi)\circ J_1=J_2\circ T(\phi)$. An {\em  almost quaternionic structure} on a $ C^\infty$ manifold $ M$ is a pair of two almost complex structures $ J$ and $ K$ such that $ JK+KJ=0$. A {\em  quaternionic map} $ \phi$ between two almost quaternionic manifolds $ (X,J_1,K_1)$ and $ (Y,J_2,K_2)$ is a map $ \phi:X\to Y$ that is complex analytic from $ (X,J_1)$ to $(Y,J_2)$ and from $ (X,K_1)$ to $(Y,K_2)$. A {\em  quaternionic manifold} is a $ C^\infty$ manifold $ M$ endowed with an atlas $ \{\phi_i:U_i\to\mathbb{R}^{4n}\}$, for some $ n$, such that $ \phi_j\circ\phi_i^{-1}:\phi_i(U_i\cap U_j)\to \phi_j(U_i\cap U_j)$ is a quaternionic function with respect to the standard structure on $ \mathbb{R}^{4n}$. (See also \cite{SOMMESE, SUDBERG}.)} A non trivial example of quaternionic manifold is $\mathbb{R}^{4n}$ with the standard quaternionic structure quotiented by a discrete translation group that gives a torus.\end{example}

\begin{example}[Almost quaternionic manifolds]
The category $\widetilde{\mathcal{C}_\mathbb{H}}$ of almost quaternionic manifolds is a subcategory of $\mathcal{C}_\mathbb{H}^{B\equiv\mathbb{R}}$, where the structure group of a $4n$-dimensional quantum quaternionic manifold is $GL(n;\mathbb{H})Sp(1)\subset GL(4n;\mathbb{R})$. The category $\widetilde{\mathcal{C}_\mathbb{H}}$ properly contains $\mathcal{C}_\mathbb{H}$. An example of almost quaternionic manifold,  that is not contained into $\mathcal{C}_\mathbb{H}$, is the quaternionic projective space $\mathbb{H}P^1$. This cannot be a quaternionic manifold, since it does not admit a structure of complex manifold.\end{example}

\begin{remark}
As the centre $Z(C)$ of $C\equiv B\otimes_\mathbb{K}\mathbb{H}$ is isomorphic to $Z(B)$, that is the centre of $B$, we get that, whether $Z(B)$ is Noetherian,  one can apply above Theorem 1.1 and Theorem 1.2 for QPDEs, in order to state the formal quantum-integrability for quantum $B$-quaternionic PDEs. Note that in such a way we obtain as solutions submanifolds that have natural structures of quantum $B$-quaternionic manifolds. Then applying our theorems on the integral bordism groups for quantum PDEs, we can also calculate theorem of existence of global solutions for quantum $B$-quaternionic PDEs.
\end{remark}

\begin{example}[Quantum $B$-quaternionic heat equation]
Let us consider the fiber bundle $\pi:W\equiv C^3\to C^2\equiv M$ with coordinates $(t,x,u)\mapsto (t,x)$. The quantum $B$-quaternionic heat equation is the following QPDE: $\widehat{(HE)}_{C}\subset \hat J{\it D}^2(W)\subset \hat J^2_2(W)$: $u_{xx}-u_t=0$. This is a formally (quantum)integrable QPDE. Hence, for $\widehat{(HE)}_{C}$ we have the existence of local solutions for any initial condition. This means that in the neighborhood of any point $q\in{\widehat{(HE)}}_{C}$ we can built an integral quantum $B$-quaternionic manifold of dimension $2$ over $C$, $V\subset\widehat{(HE)}_{C}$, such that $V\cong \pi_2(V)\subset M$, where $\pi_2$ is the canonical projection $\pi_2:\hat J{\it D}^2(W)\to M$. Then by using a Theorem 5.6 given in \cite{PRA2} we have that the first integral bordism groups of $\widehat{(HE)}_{C}$ is: $\Omega_1^{{\widehat{(HE)}}_{C}}\cong H_1(W;\mathbb{K})\otimes_\mathbb{K}C\cong 0$. Hence we get that any admissible closed integral $1$-dimensional quantum $B$-quaternionic manifold, $N\subset{\widehat{(HE)}}_{C}$ is the boundary of an integral $2$-dimensional quantum $B$-quaternionic manifold $V$, $\partial V\subset N$, $V\subset{\widehat{(HE)}}_{C}$, such that $V$ is diffeomorphic to its projection into $W$ by means of the canonical projection $\pi_{2,0}:\hat J^2_2(W)\to W$.
\end{example}

\begin{example}[Quantum quaternionic heat equation]
As a particular case of above equation one can take $B\equiv\mathbb{R}$. Then one has:
\begin{equation}\label{quantum-quaternionic-heat-equation-definition}
\scalebox{0.8}{${\left\{
\begin{array}{l}
\pi:W\equiv\mathbb{H}^3\to\mathbb{H}^2\equiv M;\quad (t,x,u)\mapsto (t,x)\\
\widehat{(HE)}_\mathbb{H}\subset J{\it D}^2(W)\subset  J^2_2(W):\quad u_{xx}-u_t=0\\
\end{array}
\right\},\quad
\Omega_1^{{\widehat{(HE)}}_\mathbb{H}}\cong H_1(W;\mathbb{R})\otimes_\mathbb{R}\mathbb{H}\cong 0.}$}
\end{equation}
We can see that the set $\mathcal{S}ol(\widehat{(HE)}_\mathbb{H})$ of solutions of $\widehat{(HE)}_\mathbb{H}$ contains also quaternionic manifolds, i.e., affine quantum quaternionic solutions. For example a torus \footnote{Recall \cite{SOMMESE, SUDBERG} that if $ (X,J,K)$ is a quaternionic manifold, then $ X$ with the complex structure $ aJ+bK+c(JK)$, $ a,b,c\in\mathbb{R}$, is an affine complex manifold, hence has zero rational Pontryagin classes. Furthermore, if $ X$ is compact has zero index and Euler characteristic. Moreover, if $ \dim_\mathbb{H}X=1$ and, for some $ a,b,c$, $ X$ is K\"ahler, then it is a torus.} $X\subset\mathbb{H}^2\equiv M$ can be embedded into ${\widehat{(HE)}}_\mathbb{H}$ by means of the second holonomic prolongation of the zero section $u\equiv 0:M\to W$. In fact, ${\widehat{(HE)}}_\mathbb{H}$ is a linear equation. Therefore, $X^{(2)}\equiv D^2u(X)$ is a $1$-dimensional smooth closed compact admissible integral manifold contained into ${\widehat{(HE)}}_\mathbb{H}$, that is the boundary of a $2$-dimensional integral admissible manifold contained into ${\widehat{(HE)}}_\mathbb{H}$ too. This last is also a quaternionic manifold. Moreover, all the regular solutions of ${\widehat{(HE)}}_\mathbb{H}\subset J{\it D}^2(W)$ are quaternionic manifolds, as they are diffeomorphic to $\mathbb{H}^2$. However, no all the regular solutions of ${\widehat{(HE)}}_\mathbb{H}\subset J^2_2(W)$ are necessarily quaternionic manifolds too.\end{example}

We are ready now to state the following theorem.

\begin{theorem}\label{integral-bordism-groups-quantum-hypercomplex-pdes}
Let $B$ be a quantum algebra such that its centre $Z(B)$ is a Noetherian $\mathbb{R}$-algebra. Let $\hat E_k\subset J\hat{\it D}^k(W)$ be a quantum regular QPDE in the category $\mathcal{C}^B_\mathbb{H}$, where $\pi:W\to M$ is a fibre bundle with $dim_CM=m$, $C\equiv B\otimes_\mathbb{R}\mathbb{H}$. If $\dot g_{k+r+1}$ is a bundle of $Z(C)$-modules over $\hat E_k$, and $\hat E_{k+r+1}\to\hat E_{k+r}$ is surjective for $0\le r\le m$, then $\hat E_k$ is formally quantumintegrable. In such a case, and further assuming that $W$ is $p$-connected, $p\in\{0,\dots,m-1\}$, then the integral bordism groups of $\hat E_k\subset\hat J^k_m(W)$ are given by:
$$\Omega_p^{\hat E_k}\cong H_p(W;\mathbb{R})\otimes_\mathbb{R}C,\quad 0\le p\le m-1.$$
All the regular solutions of $\hat E_k\subset \hat J^k_m(W)$ are quantum $B$-quaternionic submanifolds of $\hat E_k$ of dimension $m$, over $C$, identified with $m$-dimensional quantum $B$-quaternionic submanifolds of $W$.
\end{theorem}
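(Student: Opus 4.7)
The plan is to decompose the theorem into three logically independent assertions---formal quantum-integrability, the bordism group isomorphism, and the regular solutions description---and reduce each to results previously established in the Prástaro geometric theory, exploiting the fact that the quaternionic case $\mathbb{H} = \mathbb{A}_2$ is still associative (the non-associativity concerns entering only for $r \ge 3$ in the Cayley-Dickson tower).

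First I would handle formal quantum-integrability by direct chaining of Theorem \ref{delta-poincare-lemma-pdes-category-quantum-hypercomplex-manifolds} and Theorem \ref{criterion-formal-quantum-integrability-pdes-category-quantum-hypercomplex-manifolds}. Since $Z(\mathbb{H}) = \mathbb{R}$, the centre of $C \equiv B \otimes_\mathbb{R} \mathbb{H}$ satisfies $Z(C) \cong Z(B) \otimes_\mathbb{R} Z(\mathbb{H}) \cong Z(B)$, so the Noetherian hypothesis on $Z(B)$ transfers and the $\delta$-Poincaré lemma yields $\delta$-regularity of $\hat E_k$. The remaining hypotheses---that $\dot g_{k+r+1}$ is a bundle of $Z(C)$-modules and that $\hat E_{k+r+1}\to\hat E_{k+r}$ is surjective for $0\le r\le m$---match precisely those of the criterion, giving formal quantum-integrability.

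Second, for the bordism group isomorphism, I would pass to the infinite prolongation $\hat E_\infty$ (whose existence is guaranteed by formal quantum-integrability), where smoothness of global solutions is available. The integral bordism classes of $\hat E_k \subset \hat J^k_m(W)$ are then characterized by characteristic numbers in $C$ via the identification ${}^C\Omega_{p,s}^{\hat E_k} \cong \Omega_{p,s}^{\hat E_k}\otimes_\mathbb{K} C$ recorded in the paper. Using the integral singular spectral sequence of Theorem \ref{integral-spectrum-quantum-pdes}, together with the $p$-connectedness of $W$ and the surjectivity of $\pi_{k,0}:\hat E_k\to W$ that formal integrability furnishes, one identifies the edge-homomorphism image with $H_p(W;\mathbb{R})$ in the range $0\le p\le m-1$. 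Tensoring with $C$ over $\mathbb{R}$ then produces the advertised isomorphism $\Omega_p^{\hat E_k}\cong H_p(W;\mathbb{R})\otimes_\mathbb{R} C$.

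Third, the structural statement about regular solutions is essentially tautological: any $m$-dimensional regular solution $V\subset\hat E_k\subset\hat J^k_m(W)$ is by definition of the form $N^{(k)}$ for some $m$-dimensional $B$-quaternionic submanifold $N\subset W$, and since $W\in \mathrm{Ob}(\mathcal{C}^B_\mathbb{H})$ the induced structure on $N$ and hence (by naturality of the prolongation functor) on $V$ remains quantum $B$-quaternionic. The hard part will be the spectral sequence argument in the second paragraph: one must verify that the degeneracy of the spectral sequence onto the edge, which is well known in the commutative and in the quantum (super) settings, genuinely survives the extension of scalars to the non-commutative associative algebra $C = B\otimes_\mathbb{R}\mathbb{H}$. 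This reduction is legitimate precisely because $\mathbb{H}$ is associative---for higher Cayley-Dickson extensions the argument would require an additional check that the intrinsic $\delta$-complex constructions respect only left/right module structure on $Z(C)$, a subtlety flagged throughout the preceding discussion.
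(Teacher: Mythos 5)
Your proof is correct and follows essentially the same route as the paper: the paper's own argument (together with the Remark preceding the theorem) simply notes that $Z(C)\cong Z(B)$ carries the Noetherian hypothesis over, invokes Theorems \ref{delta-poincare-lemma-pdes-category-quantum-hypercomplex-manifolds} and \ref{criterion-formal-quantum-integrability-pdes-category-quantum-hypercomplex-manifolds} for formal quantum-integrability, and then specializes the bordism-group results of \cite{PRA2} to the category $\mathcal{C}^B_\mathbb{H}$, which is exactly your decomposition with more detail filled in. One small correction of emphasis: your closing worry that the spectral-sequence degeneracy might fail beyond the associative case is not a subtlety the paper acknowledges---its proof of Theorem \ref{delta-poincare-lemma-pdes-category-quantum-hypercomplex-manifolds} explicitly argues that non-associativity of $\mathcal{Q}_r$ for $r\ge 3$ does not affect the intrinsic $\delta$-complex machinery (which lives over the commutative, associative $Z(C)$), and Theorems \ref{theorem-quantum-octonionic-heat-equation} and \ref{theorem-quantum-sedenionic-heat-equation} state the identical bordism formula in the octonionic and sedenionic cases.
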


\begin{proof} It follows directly from above definitions and remarks by specializing Theorem 1.1, Theorem 1.2 and our results in \cite{PRA2}, about integral bordism groups in QPDEs, to the category $\mathcal{C}^B_\mathbb{H}$.\end{proof}

\begin{cor}
Let $\hat E_k\subset J{\it D}^k(W)$ be a quantum regular QPDE in the category $\mathcal{C}^{\mathbb{R}_\mathbb{H}}$, (resp. $\widetilde{\mathcal{C}}_\mathbb{H}$), where $\pi:W\to M$ is a fibre bundle with $dim_{\mathbb{H}}M=m$. If $\dot g_{k+r+1}$ is a bundle of $\mathbb{R}$-modules over $\hat E_k$, and $\hat E_{k+r+1}\to\hat E_{k+r}$ is surjective for $0\le r\le m$, then $\hat E_k$ is formally quantumintegrable. In such a case, further assuming that $W$ is $p$-connected, $p\in\{0,\dots,m-1\}$, then the integral bordism groups of $\hat E_k\subset J^k_m(W)$ are given by:
$$\Omega_p^{\hat E_k}\cong H_p(W;\mathbb{R})\otimes_{\mathbb{R}}\mathbb{H},\quad 0\le p\le m-1.$$
All the regular solutions of $\hat E_k\subset J^k_m(W)$ are quantum quaternionic, (resp. almost quaternionic), submanifolds of $\hat E_k$ of dimension $m$, identified with $m$-dimensional quantum quaternionic, (resp. almost quaternionic), submanifolds of $W$.
\end{cor}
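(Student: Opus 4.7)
The plan is to deduce the corollary directly from Theorem \ref{integral-bordism-groups-quantum-hypercomplex-pdes} by specialising the underlying quantum algebra $B$ to the simplest Noetherian choice, namely $B\equiv\mathbb{R}$. With this specialisation one has $C\equiv B\otimes_{\mathbb{R}}\mathbb{H}=\mathbb{H}$, and the centre $Z(C)=Z(\mathbb{H})\cong\mathbb{R}$ is trivially Noetherian, so all the hypotheses required to invoke Theorem \ref{delta-poincare-lemma-pdes-category-quantum-hypercomplex-manifolds} and Theorem \ref{criterion-formal-quantum-integrability-pdes-category-quantum-hypercomplex-manifolds} are automatically satisfied. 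In particular, quantum-regularity of $\hat E_k$ together with the surjectivity of $\hat E_{k+r+1}\to\hat E_{k+r}$ for $0\le r\le m$ and the fact that $\dot g_{k+r+1}$ is a bundle of $\mathbb{R}$-modules forces formal quantum-integrability, as required.

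Once formal quantum-integrability is established, I would apply verbatim the bordism calculation of Theorem \ref{integral-bordism-groups-quantum-hypercomplex-pdes}, yielding
\begin{equation*}
\Omega_p^{\hat E_k}\cong H_p(W;\mathbb{R})\otimes_{\mathbb{R}}\mathbb{H},\quad 0\le p\le m-1,
\end{equation*}
for $p$-connected $W$. The regular solutions then inherit an $\mathbb{H}$-linear tangent structure from the inclusion $\hat E_k\hookrightarrow\hat J^k_m(W)$ and the identification with $m$-dimensional quantum quaternionic submanifolds of $W$ via $\pi_{k,0}$, as discussed in Example \ref{quantum-quaternionic-manifolds}.

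For the case of the category $\widetilde{\mathcal{C}}_\mathbb{H}$ of almost quaternionic manifolds, the same argument applies, but one must replace the $GL(n;\mathbb{H})$-structure group by the enlarged group $GL(n;\mathbb{H})Sp(1)\subset GL(4n;\mathbb{R})$. Since the intrinsic formal geometric machinery of jet-derivative spaces, symbol modules and the $\delta$-complex is insensitive to a reduction/enlargement of structure group of the tangent bundle (as already remarked in the proofs of Theorem \ref{delta-poincare-lemma-pdes-category-quantum-hypercomplex-manifolds} and Theorem \ref{criterion-formal-quantum-integrability-pdes-category-quantum-hypercomplex-manifolds}, which are coordinate-free), the same integrability and bordism computations remain valid. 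The regular solutions are then $m$-dimensional almost quaternionic submanifolds of $W$, because the $Sp(1)$-twist is preserved under the identification provided by $\pi_{k,0}$.

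The only genuinely delicate point I anticipate is the last step for the almost quaternionic case: one must verify that a regular solution $V\subset\hat E_k$, which a priori inherits only the ambient quantum $\mathbb{R}$-quaternionic structure, is actually closed under the additional $Sp(1)$-action defining membership in $\widetilde{\mathcal{C}}_\mathbb{H}$. I would address this by noting that $V$ is locally the image of $D^ks$ for a section $s$ of $\pi:W\to M$, and since $W\in Ob(\widetilde{\mathcal{C}}_\mathbb{H})$ by hypothesis, the prolongation $D^ks$ is a morphism in $\widetilde{\mathcal{C}}_\mathbb{H}$, hence $V$ carries the requisite almost quaternionic structure away from $\Sigma(V)$. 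The rest is a routine specialisation, and no new argument beyond Theorem \ref{integral-bordism-groups-quantum-hypercomplex-pdes} is needed.
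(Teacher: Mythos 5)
Your proof is correct and follows the same route the paper intends: the corollary is simply Theorem \ref{integral-bordism-groups-quantum-hypercomplex-pdes} specialised to $B=\mathbb{R}$ (so $C=\mathbb{H}$ and $Z(B)=\mathbb{R}$ is trivially Noetherian), and the paper itself supplies no proof beyond this implicit specialisation. Your additional remarks on the $\widetilde{\mathcal{C}}_\mathbb{H}$ case and the $Sp(1)$-twist are a welcome elaboration of a point the paper passes over in silence, but they do not constitute a different argument.
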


Before to pass to the following example on the heat PDE over octonions, let us first recall some useful definitions and results about alternative algebras.\footnote{The algebra $\mathbb{O}$ of octonions is just a distinguished example of alternative algebra.}

\begin{definition}
The {\em associator} of an algebra $A$ is a trilinear map $[,,]:A\times A\times A\to A $, such that $[a,b,c]\equiv (ab)c-a(bc)$.
When $[a,b,c]=0$ we say that the three elements {\em associate}.

A $n$-multilinear mapping $f:\underbrace{A\times\cdots\times A}_{n}\to A$ is alternating if it vanishes whenever two of its arguments are equal.

An {\em alternative algebra} is one where the associator is alternative.
\end{definition}

\begin{proposition}[Properties of alternative algebras]

{\em 1)} Given an algebra $A$ the following propositions are equivalent.

{\em (i)} $A$ is an alternative algebra.

{\em (ii)} The following propositions hold:

{\em (a) (left alternative identity):} $[a,a,b]=0$, $\for a,b\in A$.

{\em (b) (Right alternative identity):} $[a,b,b]=0$, $\for a,b\in A$.

{\em (c) (Flexibility identity):} $a(ba)=a(ba)$, $\for a,b\in A$.

{\em 2)} An alternating associator is always totally skew-symmetric, i.e., $[a_{\sigma(1)},a_{\sigma(2)},a_{\sigma(3)}]=\epsilon(\sigma)[a_1,a_2,a_3]$, where $\epsilon(\sigma)$ is the signature of the permutation $\sigma\in S_3$ and $S_3$ is the group of permutations of three objects.\footnote{The converse holds too when the characteristic of the base field $\mathbb{K}$ of $A$ is not $2$, (e.g. $\mathbb{K}=\mathbb{R}$).}

{\em 3) (Artin's theorem)} In an alternative algebra $A$ the subalgebras generated by any two elements is associative, and vice versa.

{\em 4)} Let $A$ be an alternative algebra, then the sublagebra generated by three elements $a,b,c\in A$, such that $[a,b,c]=0$, is associative.

{\em 5)} Alternative algebras are {\em power-associative}, that is, the sublagebra, generated by a simple element is associative.\footnote{The converse need not hold: the sedenions are power-associative, but not alternative.}

{\em 6) (Moufoang identities)} In any alternative algebra hold the following identities:

{\em (a)} $a(x(ay))=(axa)y$.

{\em (b)} $((xa)y)a=x(aya)$.

{\em (c)} $(ax)(ya)=a(xy)a$.

{\em 7)} If $A$ is a unital alternative algebra, multiplicative inverse are unique whenever they exist. Furthermore one has the identities reported in {\em(\ref{identities-unital-alternative-algebra-and-multiplicative-inverse})}.
\begin{equation}\label{identities-unital-alternative-algebra-and-multiplicative-inverse}
 \left\{
   \begin{array}{l}
b=a^{-1}(ab) \\
{[a^{-1},a,b]}=0\\
(ab)^{-1}=b^{-1}a^{-1}\\
   \end{array}
 \right\}\, \hbox{\rm if $a$ and $b$ are invertible}.
\end{equation}
\end{proposition}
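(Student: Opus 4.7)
The plan is to handle the seven parts in a dependency-ordered sequence, since nearly all of the later statements rest on the linearization techniques used in parts (1), (2), and (6).

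First I would dispatch parts (1) and (2) simultaneously by exploiting the trilinearity of the associator $[a,b,c]=(ab)c-a(bc)$. If the associator is alternating, then (a), (b), (c) of part (1) are immediate by specialization. For the converse, assume (a)--(c) and linearize. From $[a+b,a+b,c]=0$, together with $[a,a,c]=[b,b,c]=0$, I obtain $[a,b,c]+[b,a,c]=0$; analogously, linearizing (b) gives skew-symmetry in the last two arguments, and linearizing (c) (flexibility) gives skew-symmetry in the outer pair. Three pairwise skew-symmetries in a trilinear map over a field of characteristic $\neq 2$ (as is the case for $\mathbb{K}=\mathbb{R}$ or $\mathbb{C}$) force the map to vanish on any triple with two equal entries, so the associator is alternating; this also immediately yields part (2).

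Next I would prove the Moufang identities in part (6). Each identity is equivalent to the vanishing of a certain combination of associators; for instance $a(x(ay))=(axa)y$ is equivalent, after a short rearrangement, to $[a,x,ay]+a[a,x,y]=0$, and the skew-symmetry of the associator together with flexibility reduces this to $[a,x,a]y=0$, which holds by flexibility. The right Moufang identity is symmetric; the central Moufang identity $(ax)(ya)=a(xy)a$ is obtained by combining the two. With the Moufang identities in hand, part (3) (Artin's theorem) follows by induction on word length: any reassociation of a monomial in two generators $a,b$ can be reduced using the left/right alternative identities (which let one collapse repeated adjacent factors) and the Moufang identities (which let one slide factors past each other), so every reassociation produces the same element. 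Part (5) is an immediate corollary, since a subalgebra generated by a single element is contained in one generated by two.

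Part (4) is the most delicate step and I expect it to be the principal obstacle. The idea is to use total skew-symmetry from (2) plus the linearized identities to show that if $[a,b,c]=0$ then every associator of three words in $a,b,c$ vanishes. The induction is on combined word length: one reduces a generic associator $[u,v,w]$ of words by peeling off outer factors using flexibility and the Moufang identities, eventually expressing it as a $\mathbb{K}$-linear combination of associators of triples of subwords. In the base of the induction, each triple contains a repeated letter or else is a permutation of $(a,b,c)$; total skew-symmetry makes the former vanish and the hypothesis $[a,b,c]=0$ handles the latter. Finally, part (7) is an application of what precedes: uniqueness of $a^{-1}$ and the relation $b=a^{-1}(ab)$ follow from power-associativity (which places $a$ and $a^{-1}$ in a common associative subalgebra), the identity $[a^{-1},a,b]=0$ is a rephrasing of that relation, and $(ab)^{-1}=b^{-1}a^{-1}$ is obtained by multiplying $(ab)(b^{-1}a^{-1})$ and applying the central Moufang identity $a(xy)a=(ax)(ya)$ with $x=b$, $y=b^{-1}$, and a final invocation of power-associativity in the subalgebra generated by $a$.
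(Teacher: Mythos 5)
The paper states this proposition without proof — it is recalled as standard background on alternative algebras (with \cite{SCHAFER} cited for ``complementary informations on nonassociative algebras''), so there is no internal proof to compare against; your proposal must be judged on its own. Parts (1), (2), (3), (5) are essentially sound: the linearization argument is the standard one for total skew-symmetry, Artin's theorem by induction on word length given the Moufang identities is the right plan, and (5) is indeed an immediate corollary of (3). The difficulties are concentrated in (4), (6), (7), and I do not think your sketch gets past them.

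For part (6), the reduction you describe is not correct as written. A careful expansion gives
$a(x(ay))-(axa)y=[ax,a,y]-[a,x,ay]$
(not $[a,x,ay]+a[a,x,y]$), and the remaining step cannot be dispatched by ``skew-symmetry together with flexibility.'' Skew-symmetry of the associator only permutes the same three arguments; here the arguments involve composite products $ax$ and $ay$. What is actually needed is the Teichm\"uller pentagon identity
$[ab,c,d]-[a,bc,d]+[a,b,cd]=a[b,c,d]+[a,b,c]d$,
its specializations under the alternative laws, and a further linearization — this is where the genuine work of proving Moufang lives, and it is absent from the proposal. The same issue propagates: part (3) as you frame it leans on (6), so its justification inherits this gap, although the statement of Artin's theorem can also be proved directly without Moufang.

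Part (4) (the generalized Artin theorem: $[a,b,c]=0$ implies the subalgebra generated by $a,b,c$ is associative) is substantially harder than you indicate. The proposed induction on word length — ``peeling off outer factors'' to land on associators of subwords which either repeat a letter or are a permutation of $(a,b,c)$ — does not go through without extra identities relating $[uv,w,z]$ to associators of lower-degree pieces. The standard proof (Bruck--Kleinfeld) introduces the quadrilinear function $f(a,b,c,d)=[ab,c,d]-b[a,c,d]-[b,c,d]a$ and establishes its skew-symmetry before the induction can close. Your sketch gestures at ``a $\mathbb{K}$-linear combination of associators of triples of subwords'' but provides no mechanism to produce that linear combination, which is precisely the missing ingredient.

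For part (7), the appeal to power-associativity to place $a$ and $a^{-1}$ in a common associative subalgebra is wrong: $a^{-1}$ need not lie in the subalgebra generated by $a$ (already $t^{-1}\notin\mathbb{R}[t]$). Artin's theorem does put $a$ and $a^{-1}$ in a common associative subalgebra, which suffices for $a a^{-1}a=a$, but it does not give $[a^{-1},a,b]=0$ for \emph{arbitrary} $b$, since $b$ lies outside that subalgebra. The standard route is via the Moufang identities in operator form ($L_aL_xL_a=L_{axa}$, etc.), from which one shows $L_{a^{-1}}=L_a^{-1}$ and $R_{a^{-1}}=R_a^{-1}$; the claimed identities and $(ab)^{-1}=b^{-1}a^{-1}$ then follow. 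Your proposal conflates this with power-associativity, which is a genuine gap.
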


\begin{example}[Quantum octonionic heat equation]
The octonions form a normed division $\mathbb{R}$-algebra that is non-associative, non-commutative, power-associative and alternative. In Tab. \ref{multiplication-table-trace-norm-formulas-c} it is reported the multiplication table with respect to a basis $\{e_\alpha\}_{0\le\alpha\le 7}$, with $e_0=1\in\mathbb{R}$, ({\em unit octonions}), i.e., any $q\in \mathbb{O}$ has the following linear representation $q=\sum_{0\le\alpha\le 7}x^\alpha\, e_\alpha$, $x^\alpha\in\mathbb{R}$.\footnote{There are 480 isomorphic octonionic algebras generated by different multiplication tables. The group $G_2=Aut(\mathbb{O})$ is a simply connected, compact real Lie group, $\dim_{\mathbb{R}}G_2=4$. One has $\mathbb{O}\cong \mathbb{H}\times\mathbb{H}$, with multiplication $(a,b)(c,d)=(ac-d{\bar b},da+b{\bar c})$. The corresponding representation of the basis is the following: $e_0=(1,0)$, $e_1=(i,0)$, $e_2=(j,0)$, $e_3=(k,0)$, $e_4=(0,1)$, $e_5=(0,i)$, $e_6=(0,j)$, $e_7=(0,k)$.}

\begin{table}
\caption{Multiplication table, trace and norm formulas for octonionic algebra $\mathbb{O}\cong \mathbb{R}^8$ with basis $(e_\alpha)_{0\le\alpha\le 7}$.}
\label{multiplication-table-trace-norm-formulas-c}
\begin{tabular}{|l|l|l|l|l|l|l|l|}
\hline
\hfil{\rm{\footnotesize }}\hfil&\hfil{\rm{\footnotesize $ e_1$}}\hfil&\hfil{\rm{\footnotesize $ e_2$}}\hfil&\hfil{\rm{\footnotesize $ e_3$}}&{\rm{\footnotesize $e_4$}}&{\rm{\footnotesize $ e_5$}}&{\rm{\footnotesize $ e_6$}}\hfil&\hfil{\rm{\footnotesize $ e_7$}}\hfil\\
\hline
{\rm{\footnotesize $ e_1$}}\hfil&\hfil{\rm{\footnotesize $ -1$}}\hfil&\hfil{\rm{\footnotesize $ e_3$}}\hfil&\hfil{\rm{\footnotesize $ -e_2$}}\hfil&\hfil{\rm{\footnotesize  $ e_5$}}\hfil&\hfil{\rm{\footnotesize $ -e_4$}}\hfil&\hfil{\rm{\footnotesize $ -e_7$}}\hfil&\hfil{\rm{\footnotesize $ e_6$}}\hfil\\
\hline
{\rm{\footnotesize $ e_2$}}\hfil&\hfil{\rm{\footnotesize $ -e_3$}}\hfil&\hfil{\rm{\footnotesize $ -1$}}\hfil&\hfil{\rm{\footnotesize $ e_1$}}\hfil&\hfil{\rm{\footnotesize $ e_6$}}\hfil&\hfil{\rm{\footnotesize $ e_7$}}\hfil&\hfil{\rm{\footnotesize $ -e_4$}}\hfil&\hfil{\rm{\footnotesize $ -e_5$}}\\
\hline
{\rm{\footnotesize $ e_3$}}\hfil&\hfil{\rm{\footnotesize $ e_2$}}\hfil&\hfil{\rm{\footnotesize $ -e_1$}}\hfil&\hfil{\rm{\footnotesize $ -1$}}\hfil&\hfil{\rm{\footnotesize $ e_7$}}\hfil&\hfil{\rm{\footnotesize $ -e_6$}}\hfil&\hfil{\rm{\footnotesize $ e_5$}}\hfil&\hfil{\rm{\footnotesize $ -e_4$}}\hfil\\
\hline
{\rm{\footnotesize $ e_4$}}\hfil&\hfil{\rm{\footnotesize $ -e_5$}}\hfil&\hfil{\rm{\footnotesize $ -e_6$}}\hfil&\hfil{\rm{\footnotesize $ -e_7$}}\hfil&\hfil{\rm{\footnotesize $ -1$}}\hfil&\hfil{\rm{\footnotesize $ e_1$}}\hfil&\hfil{\rm{\footnotesize $ e_2$}}\hfil&\hfil{\rm{\footnotesize $ e_3$}}\hfil\\
\hline
\hfil {\rm{\footnotesize $ e_5$}}\hfil &\hfil {\rm{\footnotesize $ e_4$}}\hfil &\hfil {\rm{\footnotesize $ -e_7$}}\hfil &\hfil {\rm{\footnotesize $ e_6$}}\hfil &\hfil {\rm{\footnotesize $ -e_1$}}\hfil &\hfil {\rm{\footnotesize $ -1$}}\hfil &\hfil {\rm{\footnotesize $ -e_3$}}\hfil &\hfil {\rm{\footnotesize $ e_2$}}\hfil \\
\hline
{\rm{\footnotesize $ e_6$}}\hfil&\hfil{\rm{\footnotesize $ e_7$}}\hfil&\hfil{\rm{\footnotesize $ e_4$}}\hfil&\hfil{\rm{\footnotesize $ -e_5$}}\hfil&\hfil{\rm{\footnotesize  $ -e_2$}}\hfil&\hfil{\rm{\footnotesize $ e_3$}}\hfil&\hfil{\rm{\footnotesize $ -1$}}\hfil&\hfil{\rm{\footnotesize $ -e_1$}}\hfil\\
\hline
{\rm{\footnotesize $ e_7$}}\hfil&\hfil{\rm{\footnotesize $ -e_6$}}\hfil&\hfil{\rm{\footnotesize $ e_5$}}&{\rm{\footnotesize $ e_4$}}\hfil&\hfil{\rm{\footnotesize $ -e_3$}}\hfil&\hfil{\rm{\footnotesize $ -e_2$}}\hfil&\hfil{\rm{\footnotesize $ e_1$}}\hfil&\hfil{\rm{\footnotesize $ -1$}}\hfil\\
\hline
\multicolumn{8}{l}{\rm{\footnotesize $e_ie_j=-\delta_{ij}e_0+\epsilon_{ijk}e_k $, $1\le i,j,k\le 7$.}}\\
\multicolumn{8}{l}{\rm{\footnotesize $e_ie_j=-e_je_i$. $(e_ie_j)e_k=-e_i(e_je_k)$.}}\\
\multicolumn{8}{l}{\rm{\footnotesize $x=\sum_{0\le\alpha\le 7}x^\alpha e_\alpha=x^0+\sum_{1\le k\le 7}x^k e_k$.}}\\
\multicolumn{8}{l}{\rm{\footnotesize $\bar x=x^0-\sum_{1\le k\le 7}x^k e_k$.}}\\
\multicolumn{8}{l}{\rm{\footnotesize $\Re(x)\equiv \frac{1}{2}(x+\bar x)$; $\Im(x)\equiv \frac{1}{2}(x-\bar x)$.}}\\
\multicolumn{8}{l}{\rm{\footnotesize Norm: $N(x)\equiv||x||=\sqrt{\bar x x}$.}}\\
\multicolumn{8}{l}{\rm{\footnotesize Square root: $||x||^2=\bar x x=\sum_{0\le\alpha\le 7}(x^\alpha)^2>0$.}}\\
\multicolumn{8}{l}{\rm{\footnotesize Inverse: $x^{-1}=\bar x/||x||^2$; $x^{-1}x=x^{-1}=1$.}}\\
\end{tabular}
\end{table}
An example of quantum octonionic manifold is the {\em quantum octonionic M\"obius strip}, that can be obtained similarly to the quantum quaternionic manifold, just considered in Example \ref{quantum-quaternionic-mobius-strip}. In fact it is enough to copy the construction given in Example \ref{quantum-quaternionic-mobius-strip}, after substitueted $\mathbb{H}$, with $\mathbb{O}$, to get a quantum octonionic manifold. Similarly we can obtain the quantum octonionic heat equation as given in (\ref{quantum-octonionic-heat-equation}).

\begin{equation}\label{quantum-octonionic-heat-equation}
\scalebox{0.8}{${\left\{
\begin{array}{l}
\pi:W\equiv\mathbb{O}^3\to\mathbb{O}^2\equiv M;\quad (t,x,u)\mapsto (t,x)\\
\widehat{(HE)}_{\mathbb{O}}\subset J{\it D}^2(W)\subset  J^2_2(W):\quad u_{xx}-u_t=0\\
\end{array}
\right\},\quad
\Omega_1^{{\widehat{(HE)}}_{\mathbb{O}}}\cong H_1(W;\mathbb{R})\otimes_\mathbb{R}\mathbb{O}\cong 0.}$}
\end{equation}

We can see that the set $\mathcal{S}ol(\widehat{(HE)}_{\mathbb{O}})$ of solutions of $\widehat{(HE)}_{\mathbb{O}}$ contains also octonionic manifolds, i.e., affine quantum octonionic solutions. $X\subset\mathbb{O}^2\equiv M$ can be embedded into ${\widehat{(HE)}}_{\mathbb{O}}$ by means of the second holonomic prolongation of the zero section $u\equiv 0:M\to W$. In fact, ${\widehat{(HE)}}_{\mathbb{O}}$ is a linear equation. Therefore, $X^{(2)}\equiv D^2u(X)$ is a $1$-dimensional smooth closed compact admissible integral manifold contained into ${\widehat{(HE)}}_{\mathbb{O}}$, that is the boundary of a $2$-dimensional integral admissible manifold contained into ${\widehat{(HE)}}_{\mathbb{O}}$ too. This last is also a octonionic manifold. Moreover, all the regular solutions of ${\widehat{(HE)}}_{\mathbb{O}}\subset J{\it D}^2(W)$ are octonionic manifolds, as they are diffeomorphic to $\mathbb{O}^2$. However, no all the regular solutions of ${\widehat{(HE)}}_{\mathbb{O}}\subset J^2_2(W)$ are necessarily octonionic manifolds too.

We are ready now to state one of main results of this paper.

\begin{theorem}\label{theorem-quantum-octonionic-heat-equation}
Let $B$ be a quantum algebra such that its centre $Z(B)$ is a Noetherian $\mathbb{R}$-algebra. Let $\hat E_k\subset J\hat{\it D}^k(W)$ be a quantum regular QPDE in the category $\mathcal{C}^B_\mathbb{O}$, where $\pi:W\to M$ is a fibre bundle with $dim_CM=m$, $C\equiv B\otimes_\mathbb{R}\mathbb{O}$. If $\dot g_{k+r+1}$ is a bundle of $Z(C)$-modules over $\hat E_k$, and $\hat E_{k+r+1}\to\hat E_{k+r}$ is surjective for $0\le r\le m$, then $\hat E_k$ is formally quantumintegrable. In such a case, and further assuming that $W$ is $p$-connected, $p\in\{0,\dots,m-1\}$, then the integral bordism groups of $\hat E_k\subset\hat J^k_m(W)$ are given in {\em(\ref{integral-bordism-groups-quantum-octonionic-heat-equation})}.

\begin{equation}\label{integral-bordism-groups-quantum-octonionic-heat-equation}
\Omega_p^{\hat E_k}\cong H_p(W;\mathbb{R})\otimes_\mathbb{R}C,\quad 0\le p\le m-1.
\end{equation}

All the regular solutions of $\hat E_k\subset \hat J^k_m(W)$ are quantum $B$-octonionic submanifolds of $\hat E_k$ of dimension $m$, over $C$, identified with $m$-dimensional quantum $B$-octonionic submanifolds of $W$.
\end{theorem}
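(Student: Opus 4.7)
The plan is to specialize the general framework of Theorem \ref{integral-bordism-groups-quantum-hypercomplex-pdes}, originally stated in the category $\mathcal{C}^B_\mathbb{H}$, to the category $\mathcal{C}^B_\mathbb{O}$. The key observation is that $C\equiv B\otimes_\mathbb{R}\mathbb{O}$ is a quantum hypercomplex $3$-algebra: although $C$ is non-associative, by part 2(iv) of the properties of quantum hypercomplex algebras it is alternative, and its centre $Z(C)\cong Z(B)$ is Noetherian by hypothesis.

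First, I would apply Theorem \ref{delta-poincare-lemma-pdes-category-quantum-hypercomplex-manifolds} to deduce that $\hat E_k$ is $\delta$-regular. Then, from the surjectivity hypothesis on the projections $\hat E_{k+r+1}\to\hat E_{k+r}$ for $0\le r\le m$, together with the assumption that $\dot g_{k+r+1}$ is a bundle of $Z(C)$-modules, Theorem \ref{criterion-formal-quantum-integrability-pdes-category-quantum-hypercomplex-manifolds} yields the formal quantum-integrability of $\hat E_k$.

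Second, with formal quantum-integrability in hand, I would invoke the integral bordism machinery recalled in Section \ref{quantum-hypercomplex-pdes-section}, in particular the isomorphism ${}^B\Omega_{p,s}^{\hat E_k}\cong \Omega_{p,s}^{\hat E_k}\otimes_\mathbb{K}B$ with $B$ replaced by $C$. Coupling this with the $p$-connectedness of $W$, which identifies the non-singular contribution to $\Omega_{p,s}^{\hat E_k}$ with $H_p(W;\mathbb{R})$ via the canonical projection $\pi_{k,0}:\hat E_k\to W$ (exactly as in the proof of Theorem \ref{integral-bordism-groups-quantum-hypercomplex-pdes}), produces the desired formula $\Omega_p^{\hat E_k}\cong H_p(W;\mathbb{R})\otimes_\mathbb{R}C$. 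The characterisation of regular solutions as $m$-dimensional quantum $B$-octonionic submanifolds of $\hat E_k$ (identified under $\pi_{k,0}$ with such submanifolds of $W$) then follows from the very definition of regular solution as the local image of the $k$-derivative of a $Q^k_w$-section of $\pi$, since such sections inherit the $B$-octonionic structure of $W$.

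The main obstacle will be verifying that all the ingredients of the quaternionic theorem genuinely extend to the non-associative octonionic setting. As emphasised in the proof of Theorem \ref{delta-poincare-lemma-pdes-category-quantum-hypercomplex-manifolds}, the intrinsic, coordinate-free formulation of the Spencer $\delta$-complex, of the prolongations $\hat E_{k+r}$, and of the integral bordism groups never requires full associativity of the underlying algebra; any associator-type terms that could in principle appear are absorbed by the alternativity of $\mathbb{O}$ (and hence of $C$), via Artin's theorem, which renders the subalgebra generated by any two elements associative. This is the only non-routine point of the argument, and it is precisely what allows the passage from the associative cases $r\le 2$ to the alternative case $r=3$ without any modification of the statements.
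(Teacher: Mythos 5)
Your proof takes essentially the same route the paper implicitly takes: it reduces the statement to Theorem \ref{delta-poincare-lemma-pdes-category-quantum-hypercomplex-manifolds} (to get $\delta$-regularity from quantum regularity plus the Noetherian hypothesis on $Z(B)\cong Z(C)$), then to Theorem \ref{criterion-formal-quantum-integrability-pdes-category-quantum-hypercomplex-manifolds} (to get formal quantum integrability from the surjectivity and symbol hypotheses), and finally to the integral bordism machinery recalled in Section \ref{quantum-hypercomplex-pdes-section}, using $p$-connectedness of $W$ to reduce the bordism computation to $H_p(W;\mathbb{R})\otimes_\mathbb{R}C$. The paper itself never writes out an explicit proof for this theorem; the intended argument is exactly the one-line specialization given for the quaternionic analogue, Theorem \ref{integral-bordism-groups-quantum-hypercomplex-pdes}, and your write-up is a faithful (and more detailed) unpacking of it.

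One caveat: you present alternativity of $\mathbb{O}$ and Artin's theorem as \emph{the} mechanism by which the non-associativity of $C=B\otimes_\mathbb{R}\mathbb{O}$ is rendered harmless, calling it ``the only non-routine point'' and ``precisely what allows the passage from the associative cases $r\le 2$ to the alternative case $r=3$.'' That is a more restrictive justification than the one the paper actually offers. In the proof of Theorem \ref{delta-poincare-lemma-pdes-category-quantum-hypercomplex-manifolds} the paper asserts that the \emph{intrinsic, coordinate-free} formulation of the Spencer complex, prolongations, and bordism groups is insensitive to any failure of associativity in $\mathbb{A}_r$, not merely to failures that are mitigated by alternativity. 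This distinction matters because the very next result, Theorem \ref{theorem-quantum-sedenionic-heat-equation}, treats the sedenionic case $r=4$, where $\mathbb{A}_4=\mathbb{S}$ is power-associative but \emph{not} alternative, so Artin's theorem is unavailable. If alternativity were really doing the work you attribute to it, the sedenionic theorem would not follow by the same specialization. So while your argument suffices for the octonionic case at hand, its stated rationale is narrower than the paper's and would not extend down the Cayley--Dickson tower; it is worth rephrasing to appeal to the intrinsic $Z(C)$-linear structure rather than to alternativity per se.
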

\end{example}

\begin{example}[Quantum sedenionic heat equation]
{\em Sedenionic algebra}, $\mathbb{S}$, is a $16$-dimensional non-commutative and non-associative $\mathbb{R}$-algebra, that is power-associative (but not even alternative), where any $q\in\mathbb{S}$ has the following linear representation $q=\sum_{0\le\alpha\le 15}x^\alpha\, e_\alpha$, $x^\alpha\in\mathbb{R}$, where the basis $\{e_\alpha\}_{0\le\alpha\le 15}$, ({\em unit sedenions}), has $e_0=1\in\mathbb{R}$, with multiplication table reported in Tab. \ref{multiplication-table-trace-norm-formulas-d}. $\mathbb{S}$ is not a division algebra since there are zero divisors, i.e., there are non-zero $a,b\in \mathbb{S}$ such that their product is zero: $ab=0$.\footnote{For example $(e_3+e_{10})(e_6-e_{15})=0$. Note that all algebras $\mathbb{A}_r$ in the Cayley-Dickson construction contain zero divisors, when $r\ge 4$. Recall that all the Cayley-Dickson algebras $\mathbb{A}_r$, with $0\le r\le 2$, are associative, division algebras, hence have no zero divisors. [A finite-dimensional unital associative algebra (over a field) is a division algebra iff has no zero divisors.] Let us emphasize that a quantum algebra $A$ is not, in general, a division algebra. Therefore, in general, a quantum hypercomplex algebra $\mathcal{Q}_r$, $r\ge 0$, is not a division algebra.}

\begin{table}
\caption{Multiplication table, trace and norm formulas for sedenionic algebra $\mathbb{S}\cong \mathbb{R}^{16}$ with basis $(e_\alpha)_{0\le\alpha\le 15}$.}
\label{multiplication-table-trace-norm-formulas-d}
\scalebox{0.7}{$\begin{tabular}{|l|l|l|l|l|l|l|l|l|l|l|l|l|l|l|l|l|}
\hline
\hfil{\rm{\footnotesize }}\hfil&\hfil{\rm{\footnotesize $ e_0$}}\hfil&\hfil{\rm{\footnotesize $ e_1$}}\hfil&\hfil{\rm{\footnotesize $ e_2$}}\hfil&\hfil{\rm{\footnotesize $ e_3$}}\hfil&\hfil{\rm{\footnotesize $e_4$}}\hfil&\hfil{\rm{\footnotesize $ e_5$}}\hfil&\hfil{\rm{\footnotesize $ e_6$}}\hfil&\hfil{\rm{\footnotesize $ e_7$}}\hfil&\hfil{\rm{\footnotesize $e_8$}}\hfil&\hfil{\rm{\footnotesize $ e_9$}}\hfil&\hfil{\rm{\footnotesize $ e_{10}$}}\hfil&\hfil{\rm{\footnotesize $ e_{11}$}}\hfil&\hfil{\rm{\footnotesize $e_{12}$}}\hfil&\hfil{\rm{\footnotesize $ e_{13}$}}\hfil&\hfil{\rm{\footnotesize $ e_{14}$}}\hfil&\hfil{\rm{\footnotesize $ e_{15}$}}\hfil\\
\hline
{\rm{\footnotesize  $ e_0$}}\hfil&\hfil{\rm{\footnotesize  $ 1$}}\hfil&\hfil{\rm{\footnotesize  $ e_1$}}\hfil&\hfil{\rm{\footnotesize  $ e_2$}}\hfil&\hfil{\rm{\footnotesize  $ e_3$}}\hfil&\hfil{\rm{\footnotesize $ e_4$}}\hfil&\hfil{\rm{\footnotesize $ e_5$}}\hfil&\hfil{\rm{\footnotesize $ e_6$}}\hfil&\hfil{\rm{\footnotesize $e_7$}}\hfil&\hfil{\rm{\footnotesize $ e_8$}}\hfil&\hfil{\rm{\footnotesize $ e_9$}}\hfil&\hfil{\rm{\footnotesize $ e_{10}$}}\hfil&\hfil{\rm{\footnotesize $e_{11}$}}\hfil&\hfil{\rm{\footnotesize $ e_{12}$}}\hfil&\hfil{\rm{\footnotesize $ e_{13}$}}\hfil&\hfil{\rm{\footnotesize $ e_{14}$}}\hfil&\hfil{\rm{\footnotesize $ e_{15}$}}\hfil\\
\hline
{\rm{\footnotesize $ e_1$}}\hfil&\hfil{\rm{\footnotesize $ e_1$}}\hfil&\hfil{\rm{\footnotesize $ -1$}}\hfil&\hfil{\rm{\footnotesize $ e_3$}}\hfil&\hfil{\rm{\footnotesize $ -e_2$}}\hfil&\hfil{\rm{\footnotesize $ e_5$}}\hfil&\hfil{\rm{\footnotesize $ -e_4$}}\hfil&\hfil{\rm{\footnotesize $ -e_7$}}\hfil&\hfil{\rm{\footnotesize $e_6$}}\hfil&\hfil{\rm{\footnotesize $ e_9$}}\hfil&\hfil{\rm{\footnotesize $ -e_8$}}\hfil&\hfil{\rm{\footnotesize $ -e_{11}$}}\hfil&\hfil{\rm{\footnotesize $e_{10}$}}\hfil&\hfil{\rm{\footnotesize $ -e_{13}$}}\hfil&\hfil{\rm{\footnotesize $ e_{12}$}}\hfil&\hfil{\rm{\footnotesize $ e_{15}$}}\hfil&\hfil{\rm{\footnotesize $-e_{14}$}}\hfil\\
\hline
{\rm{\footnotesize $ e_2$}}\hfil&\hfil{\rm{\footnotesize $ e_2$}}\hfil&\hfil{\rm{\footnotesize $ -e_3$}}\hfil&\hfil{\rm{\footnotesize $ -1$}}\hfil&\hfil{\rm{\footnotesize $ e_1$}}\hfil&\hfil{\rm{\footnotesize $ e_6$}}\hfil&\hfil{\rm{\footnotesize $ e_7$}}\hfil&\hfil{\rm{\footnotesize $ -e_4$}}\hfil&\hfil{\rm{\footnotesize $-e_5$}}\hfil&\hfil{\rm{\footnotesize $ e_{10}$}}\hfil&\hfil{\rm{\footnotesize $ e_{11}$}}\hfil&\hfil{\rm{\footnotesize $ -e_3$}}\hfil&\hfil{\rm{\footnotesize $-e_9$}}\hfil&\hfil{\rm{\footnotesize $ -e_{14}$}}\hfil\hfil&{\rm{\footnotesize $ -e_{15}$}}\hfil&\hfil{\rm{\footnotesize $ e_{12}$}}\hfil&\hfil{\rm{\footnotesize $ e_{13}$}}\hfil\\
\hline
{\rm{\footnotesize $ e_3$}}\hfil&\hfil{\rm{\footnotesize $ e_3$}}\hfil&\hfil{\rm{\footnotesize $ e_2$}}\hfil&\hfil{\rm{\footnotesize $ -e_1$}}\hfil&\hfil{\rm{\footnotesize $ -1$}}\hfil&\hfil{\rm{\footnotesize $ e_7$}}\hfil&\hfil{\rm{\footnotesize $ -e_6$}}\hfil&\hfil{\rm{\footnotesize $ e_5$}}\hfil&\hfil{\rm{\footnotesize $-e_4$}}\hfil&\hfil{\rm{\footnotesize $ e_{11}$}}\hfil&\hfil{\rm{\footnotesize $ -e_{10}$}}\hfil&\hfil{\rm{\footnotesize $ e_9$}}\hfil&\hfil{\rm{\footnotesize $-e_8$}}\hfil&\hfil{\rm{\footnotesize $ -e_{15}$}}\hfil&\hfil{\rm{\footnotesize $ e_{14}$}}\hfil&\hfil{\rm{\footnotesize $ -e_{13}$}}\hfil&\hfil{\rm{\footnotesize $ e_{12}$}}\hfil\\
\hline
 {\rm{\footnotesize $ e_4$}}\hfil &\hfil {\rm{\footnotesize $ e_4$}}\hfil &\hfil {\rm{\footnotesize $ -e_5$}}\hfil &\hfil {\rm{\footnotesize $ -e_6$}}\hfil &\hfil {\rm{\footnotesize $ -e_7$}}\hfil &\hfil {\rm{\footnotesize $ -1$}}\hfil &\hfil {\rm{\footnotesize $ e_1$}}\hfil &\hfil {\rm{\footnotesize $ e_2$}}\hfil&\hfil{\rm{\footnotesize $ e_3$}}\hfil&\hfil{\rm{\footnotesize $ e_{12}$}}\hfil&\hfil{\rm{\footnotesize $ e_{13}$}}\hfil&\hfil{\rm{\footnotesize $e_{14}$}}\hfil&\hfil{\rm{\footnotesize $ e_{15}$}}\hfil&\hfil{\rm{\footnotesize $ -e_8$}}\hfil&\hfil{\rm{\footnotesize $ -e_9$}}\hfil&\hfil{\rm{\footnotesize $ -e_{10}$}}\hfil&\hfil{\rm{\footnotesize $ -e_{11}$}}\hfil\\
\hline
{\rm{\footnotesize  $ e_5$}}\hfil&\hfil{\rm{\footnotesize  $ e_5$}}\hfil&\hfil{\rm{\footnotesize  $ e_4$}}\hfil&\hfil{\rm{\footnotesize  $ -e_7$}}\hfil&\hfil{\rm{\footnotesize  $ e_6$}}\hfil&\hfil{\rm{\footnotesize  $ -e_1$}}\hfil&\hfil{\rm{\footnotesize  $ -1$}}\hfil&\hfil{\rm{\footnotesize  $ e_3$}}\hfil&\hfil{\rm{\footnotesize $e_2$}}\hfil&\hfil{\rm{\footnotesize $ e_{13}$}}\hfil&\hfil{\rm{\footnotesize $ -e_{12}$}}\hfil&\hfil{\rm{\footnotesize $ e_{15}$}}\hfil&\hfil{\rm{\footnotesize $-e_{14}$}}\hfil&\hfil{\rm{\footnotesize $ e_9$}}\hfil&\hfil{\rm{\footnotesize $ -e_8$}}\hfil&\hfil{\rm{\footnotesize $ e_{11}$}}\hfil&\hfil{\rm{\footnotesize $ -e_{10}$}}\hfil\\
\hline
{\rm{\footnotesize  $ e_6$}}\hfil&\hfil{\rm{\footnotesize  $ e_6$}}\hfil&\hfil{\rm{\footnotesize  $ e_7$}}\hfil&\hfil{\rm{\footnotesize  $ e_4$}}\hfil&\hfil{\rm{\footnotesize $ -e_5$}}\hfil&\hfil{\rm{\footnotesize $ -e_2$}}\hfil&\hfil{\rm{\footnotesize $ e_3$}}\hfil&\hfil{\rm{\footnotesize $ -1$}}\hfil&\hfil{\rm{\footnotesize $-e_1$}}\hfil&\hfil{\rm{\footnotesize $ e_{14}$}}\hfil&\hfil{\rm{\footnotesize $ -e_{15}$}}\hfil&\hfil{\rm{\footnotesize $ -e_{12}$}}\hfil&\hfil{\rm{\footnotesize $e_{13}$}}\hfil&\hfil{\rm{\footnotesize $ e_{10}$}}\hfil&\hfil{\rm{\footnotesize $ -e_{11}$}}\hfil&\hfil{\rm{\footnotesize $ -e_3$}}\hfil&\hfil{\rm{\footnotesize $ e_9$}}\hfil\\
\hline
{\rm{\footnotesize $e_7$}}\hfil&\hfil{\rm{\footnotesize $ e_7$}}\hfil&\hfil{\rm{\footnotesize $ -e_6$}}\hfil&\hfil{\rm{\footnotesize $ e_5$}}\hfil&\hfil{\rm{\footnotesize $e_{4}$}}\hfil&\hfil{\rm{\footnotesize $ -e_{3}$}}\hfil&\hfil{\rm{\footnotesize $ -e_{2}$}}\hfil&\hfil{\rm{\footnotesize $ e_{1}$}}\hfil&\hfil{\rm{\footnotesize $-1$}}\hfil&\hfil{\rm{\footnotesize $ e_{15}$}}\hfil&\hfil{\rm{\footnotesize $ e_{14}$}}\hfil&\hfil{\rm{\footnotesize $ -e_{13}$}}\hfil&\hfil{\rm{\footnotesize $-e_{12}$}}\hfil&\hfil{\rm{\footnotesize $ e_{11}$}}\hfil&\hfil{\rm{\footnotesize $ e_{10}$}}\hfil&\hfil{\rm{\footnotesize $ -e_9$}}\hfil&\hfil{\rm{\footnotesize $ -e_8$}}\hfil\\
\hline
{\rm{\footnotesize $ e_8$}}\hfil&\hfil{\rm{\footnotesize $ e_{8}$}}\hfil&\hfil{\rm{\footnotesize $ -e_9$}}\hfil&\hfil{\rm{\footnotesize $ -e_{10}$}}\hfil&\hfil{\rm{\footnotesize $ -e_{11}$}}\hfil&\hfil{\rm{\footnotesize $ -e_{12}$}}\hfil&\hfil{\rm{\footnotesize $ -e_{13}$}}\hfil&\hfil{\rm{\footnotesize $ -e_{14}$}}\hfil&\hfil{\rm{\footnotesize $-e_{15}$}}\hfil&\hfil{\rm{\footnotesize $ -1$}}\hfil&\hfil{\rm{\footnotesize $ e_1$}}\hfil&\hfil{\rm{\footnotesize $ e_2$}}\hfil&\hfil{\rm{\footnotesize $e_3$}}\hfil&\hfil{\rm{\footnotesize $ e_4$}}\hfil&\hfil{\rm{\footnotesize $ e_5$}}\hfil&\hfil{\rm{\footnotesize $ e_6$}}\hfil&\hfil{\rm{\footnotesize $ e_7$}}\hfil\\
\hline
{\rm{\footnotesize $ e_9$}}\hfil&\hfil{\rm{\footnotesize $ e_9$}}\hfil&\hfil{\rm{\footnotesize $ e_{8}$}}\hfil&\hfil{\rm{\footnotesize $ -e_{11}$}}\hfil&\hfil{\rm{\footnotesize $ e_{10}$}}\hfil&\hfil{\rm{\footnotesize $ -e_{13}$}}\hfil&\hfil{\rm{\footnotesize $ e_{12}$}}\hfil&\hfil{\rm{\footnotesize $ e_{15}$}}\hfil&\hfil{\rm{\footnotesize $-e_{14}$}}\hfil&\hfil{\rm{\footnotesize $ -e_{1}$}}\hfil&\hfil{\rm{\footnotesize $-1$}}\hfil&\hfil{\rm{\footnotesize $ -e_3$}}\hfil&\hfil{\rm{\footnotesize $e_{2}$}}\hfil&\hfil{\rm{\footnotesize $ -e_5$}}\hfil&\hfil{\rm{\footnotesize $ e_4$}}\hfil&\hfil{\rm{\footnotesize $ e_7$}}\hfil&\hfil{\rm{\footnotesize $ -e_6$}}\hfil\\
\hline
{\rm{\footnotesize $ e_{10}$}}\hfil&\hfil{\rm{\footnotesize $ e_{10}$}}\hfil&\hfil{\rm{\footnotesize $ e_{11}$}}\hfil&\hfil{\rm{\footnotesize $ e_3$}}\hfil&\hfil{\rm{\footnotesize $ -e_{9}$}}\hfil&\hfil{\rm{\footnotesize $ -e_{14}$}}\hfil&\hfil{\rm{\footnotesize $ -e_{15}$}}\hfil&\hfil{\rm{\footnotesize  $ e_{12}$}}\hfil&\hfil{\rm{\footnotesize $ e_{13}$}}\hfil&\hfil{\rm{\footnotesize $-e_2$}}\hfil&\hfil{\rm{\footnotesize $ e_3$}}\hfil&\hfil{\rm{\footnotesize $ -1$}}\hfil&\hfil{\rm{\footnotesize $ -e_1$}}\hfil&\hfil{\rm{\footnotesize $-e_6$}}\hfil&\hfil{\rm{\footnotesize $ -e_7$}}\hfil&\hfil{\rm{\footnotesize $ e_4$}}\hfil&\hfil{\rm{\footnotesize $ e_5$}}\hfil\\
\hline
{\rm{\footnotesize $ e_{11}$}}\hfil&\hfil{\rm{\footnotesize $ e_{11}$}}\hfil&\hfil{\rm{\footnotesize $ -e_{10}$}}\hfil&\hfil{\rm{\footnotesize $ e_9$}}\hfil&\hfil{\rm{\footnotesize $ e_8$}}\hfil&\hfil{\rm{\footnotesize $ -e_{15}$}}\hfil&\hfil{\rm{\footnotesize $ e_{14}$}}\hfil&\hfil{\rm{\footnotesize $ -e_{13}$}}\hfil&\hfil{\rm{\footnotesize $ e_{12}$}}\hfil&\hfil{\rm{\footnotesize $-e_{3}$}}\hfil&\hfil{\rm{\footnotesize $ -e_{2}$}}\hfil&\hfil{\rm{\footnotesize $ e_1$}}\hfil&\hfil{\rm{\footnotesize $ -1$}}\hfil&\hfil{\rm{\footnotesize $-e_7$}}\hfil&\hfil{\rm{\footnotesize $ e_6$}}\hfil&\hfil{\rm{\footnotesize $ -e_5$}}\hfil&\hfil{\rm{\footnotesize $ e_4$}}\hfil\\
\hline
\hfil {\rm{\footnotesize $ e_{12}$}}\hfil &\hfil {\rm{\footnotesize $ e_{12}$}}\hfil &\hfil {\rm{\footnotesize $ e_{13}$}}\hfil &\hfil {\rm{\footnotesize $ e_{14}$}}\hfil &\hfil {\rm{\footnotesize $ e_{15}$}}\hfil &\hfil {\rm{\footnotesize $ e_{8}$}}\hfil &\hfil {\rm{\footnotesize $ -e_9$}}\hfil &\hfil {\rm{\footnotesize $ e_{10}$}}\hfil&\hfil{\rm{\footnotesize $-e_{11}$}}\hfil&\hfil{\rm{\footnotesize $ -e_4$}}\hfil&\hfil{\rm{\footnotesize $ e_5$}}\hfil&\hfil{\rm{\footnotesize $ e_6$}}\hfil&\hfil{\rm{\footnotesize $e_7$}}\hfil&\hfil{\rm{\footnotesize $ -1$}}\hfil&\hfil{\rm{\footnotesize $ -e_1$}}\hfil&\hfil{\rm{\footnotesize $ -e_2$}}\hfil&\hfil{\rm{\footnotesize $ -e_3$}}\hfil\\
\hline
{\rm{\footnotesize $ e_{13}$}}\hfil&\hfil{\rm{\footnotesize $ e_{13}$}}\hfil&\hfil{\rm{\footnotesize $ -e_{12}$}}\hfil&\hfil{\rm{\footnotesize $ e_{15}$}}\hfil&\hfil{\rm{\footnotesize $ -e_{14}$}}\hfil&\hfil{\rm{\footnotesize $ e_9$}}\hfil&\hfil{\rm{\footnotesize $ e_{8}$}}\hfil&\hfil{\rm{\footnotesize $ e_{11}$}}\hfil&\hfil{\rm{\footnotesize$-e_{10}$ }}\hfil&\hfil{\rm{\footnotesize $ -e_5$}}\hfil&\hfil{\rm{\footnotesize $ -e_4$}}\hfil&\hfil{\rm{\footnotesize $ e_7$\hfil}}\hfil&\hfil{\rm{\footnotesize $-e_6$}}\hfil&\hfil{\rm{\footnotesize $ e_1$}}\hfil&\hfil{\rm{\footnotesize $ -1$}}\hfil&\hfil{\rm{\footnotesize $ -e_3$}}\hfil&\hfil{\rm{\footnotesize $ -e_2$}}\hfil\\
\hline
{\rm{\footnotesize  $ e_{14}$}}\hfil&\hfil{\rm{\footnotesize $ e_{14}$}}\hfil&\hfil{\rm{\footnotesize $ -e_{15}$}}\hfil&\hfil{\rm{\footnotesize $ -e_{12}$}}\hfil&\hfil{\rm{\footnotesize $ e_{13}$}}\hfil&\hfil{\rm{\footnotesize $ e_{10}$}}\hfil&\hfil{\rm{\footnotesize $ -e_{11}$}}\hfil&\hfil{\rm{\footnotesize $ e_{8}$}}\hfil&\hfil{\rm{\footnotesize$e_{9}$}}\hfil&\hfil{\rm{\footnotesize $ -e_6$}}\hfil&\hfil{\rm{\footnotesize $ -e_7$}}\hfil&\hfil{\rm{\footnotesize $ -e_4$}}\hfil&\hfil{\rm{\footnotesize $e_5$}}\hfil&\hfil{\rm{\footnotesize $ e_2$}}\hfil&\hfil{\rm{\footnotesize $ -e_3$}}\hfil&\hfil{\rm{\footnotesize $-1$}}\hfil&\hfil{\rm{\footnotesize $e_{1}$}}\hfil\\
\hline
{\rm{\footnotesize  $ e_{15}$}}\hfil&\hfil{\rm{\footnotesize $ e_{15}$}}\hfil&\hfil{\rm{\footnotesize $ e_{14}$}}\hfil&\hfil{\rm{\footnotesize $ -e_{13}$}}\hfil&\hfil{\rm{\footnotesize $ -e_{12}$}}\hfil&\hfil{\rm{\footnotesize $ e_{11}$}}\hfil&\hfil{\rm{\footnotesize $ e_{10}$}}\hfil&\hfil{\rm{\footnotesize $ -e_{9}$}}\hfil&\hfil{\rm{\footnotesize$e_{8}$}}\hfil&\hfil{\rm{\footnotesize $ -e_7$}}\hfil&\hfil{\rm{\footnotesize $ e_6$}}\hfil&\hfil{\rm{\footnotesize $ -e_5$}}\hfil&\hfil{\rm{\footnotesize $-e_4$}}\hfil&\hfil{\rm{\footnotesize $ e_3$}}\hfil&\hfil{\rm{\footnotesize $ e_2$}}\hfil&\hfil{\rm{\footnotesize $-e_{1}$}}\hfil&\hfil{\rm{\footnotesize $-1$}}\hfil\\
\hline
\multicolumn{8}{l}{\rm{\footnotesize $\mathbb{S}=\mathbb{A}_4$ in the Cayley-Dickson construction.}}\\
\end{tabular}$}
\end{table}
Similarly to the two above examples, we can define and characterize {\em quantum sedenionic manifolds}, and quantum sedenionic PDE's. In particular we get a natural extension, in the category of quantum sedenionic manifolds, of Theorem \ref{theorem-quantum-octonionic-heat-equation}. In fact, we get the following theorem.

\begin{theorem}\label{theorem-quantum-sedenionic-heat-equation}
Let $B$ be a quantum algebra such that its centre $Z(B)$ is a Noetherian $\mathbb{R}$-algebra. Let $\hat E_k\subset J\hat{\it D}^k(W)$ be a quantum regular QPDE in the category $\mathcal{C}^B_\mathbb{S}$, where $\pi:W\to M$ is a fibre bundle with $dim_CM=m$, $C\equiv B\otimes_\mathbb{R}\mathbb{S}$. If $\dot g_{k+r+1}$ is a bundle of $Z(C)$-modules over $\hat E_k$, and $\hat E_{k+r+1}\to\hat E_{k+r}$ is surjective for $0\le r\le m$, then $\hat E_k$ is formally quantumintegrable. In such a case, and further assuming that $W$ is $p$-connected, $p\in\{0,\dots,m-1\}$, then the integral bordism groups of $\hat E_k\subset\hat J^k_m(W)$ are given in {\em(\ref{integral-bordism-groups-quantum-sedenionic-heat-equation})}.

\begin{equation}\label{integral-bordism-groups-quantum-sedenionic-heat-equation}
\Omega_p^{\hat E_k}\cong H_p(W;\mathbb{R})\otimes_\mathbb{R}C,\quad 0\le p\le m-1.
\end{equation}

All the regular solutions of $\hat E_k\subset \hat J^k_m(W)$ are quantum $B$-sedenionic submanifolds of $\hat E_k$ of dimension $m$, over $C$, identified with $m$-dimensional quantum $B$-sedenionic submanifolds of $W$.
\end{theorem}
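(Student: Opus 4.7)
The plan is to reduce the statement to the general quantum hypercomplex framework already developed, specialising to the Cayley--Dickson algebra $\mathbb{A}_4=\mathbb{S}$. The theorem is a direct analogue of Theorem \ref{integral-bordism-groups-quantum-hypercomplex-pdes} (quaternionic case) and Theorem \ref{theorem-quantum-octonionic-heat-equation} (octonionic case); what must be checked is that nothing in the underlying machinery breaks when one passes from $\mathbb{H}$ or $\mathbb{O}$ to $\mathbb{S}$, despite the fact that $\mathbb{S}$ is neither alternative nor free of zero divisors.

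First I would establish formal quantum integrability. Since $Z(B)$ is a Noetherian $\mathbb{R}$-algebra and $C=B\otimes_{\mathbb{R}}\mathbb{S}$ is a finite free extension, the centre $Z(C)$ is also Noetherian as a $\mathbb{K}$-algebra. Applying Theorem \ref{delta-poincare-lemma-pdes-category-quantum-hypercomplex-manifolds} to $\hat E_k$, one obtains that $\hat E_k$ is $\delta$-regular. The hypothesis that $\dot g_{k+r+1}$ is a bundle of $Z(C)$-modules over $\hat E_k$ and that $\hat E_{k+r+1}\to\hat E_{k+r}$ is surjective for $0\le r\le m$ then lets me invoke Theorem \ref{criterion-formal-quantum-integrability-pdes-category-quantum-hypercomplex-manifolds} to conclude formal quantum integrability. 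The crucial remark, already highlighted in the proof of Theorem \ref{delta-poincare-lemma-pdes-category-quantum-hypercomplex-manifolds}, is that the $\delta$-Spencer complex and the surjectivity criterion are formulated intrinsically on the $Z$-module symbols $\dot g_{k+r}$, so neither the failure of alternativity nor the presence of zero divisors in $\mathbb{S}=\mathbb{A}_4$ interferes with these formal constructions.

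Next I would compute the integral bordism groups. Having formal integrability, the infinite prolongation $\hat E_\infty$ admits local integral $m$-dimensional quantum $B$-sedenionic submanifolds through each initial point. Specialising Theorem \ref{integral-bordism-groups-quantum-hypercomplex-pdes} to $\mathcal{C}^B_{\mathbb{S}}$, the same scalar-extension argument used there yields
\begin{equation*}
\Omega_p^{\hat E_k}\cong\,{}^{C}\Omega_p^{\hat E_k}\cong\Omega_p^{\hat E_k,\mathbb{K}}\otimes_{\mathbb{K}}C\cong H_p(W;\mathbb{R})\otimes_{\mathbb{R}}C,\qquad 0\le p\le m-1,
\end{equation*}
where the last isomorphism uses the $p$-connectedness of $W$ together with the characterisation of bordism classes by characteristic numbers valued in $C$ (see the discussion preceding Theorem \ref{integral-bordism-groups-quantum-hypercomplex-pdes}). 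The characterisation of regular solutions as $m$-dimensional quantum $B$-sedenionic submanifolds of $\hat E_k$ (identified with such submanifolds of $W$) then follows by the same representability argument, since regular solutions are by construction the images of $k$-th prolongations of local sections of $\pi:W\to M$.

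The only genuinely new point to verify, and what I expect to be the main obstacle, is that the zero-divisor phenomenon in $\mathbb{S}$ does not corrupt the definition of the $C$-module structure on the symbol bundles $\dot g_{k+r}$ nor the identification ${}^{C}\Omega_p^{\hat E_k}\cong\Omega_p^{\hat E_k}\otimes_{\mathbb{K}} C$. This reduces to checking that the proofs in \cite{PRA2} are written over arbitrary $\mathbb{K}$-algebras of coefficients without using cancellation in the coefficient ring; since those arguments go through tensor products and singular chain complexes with coefficients, they do not require $C$ to be a division algebra, only a locally convex $\mathbb{K}$-algebra with Noetherian centre. Once this is confirmed, the three assertions of the theorem follow by specialising the general statements exactly as in the octonionic case of Theorem \ref{theorem-quantum-octonionic-heat-equation}.
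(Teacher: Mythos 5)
Your proposal is correct and follows essentially the same approach as the paper, which (for the quaternionic and octonionic analogues, Theorem \ref{integral-bordism-groups-quantum-hypercomplex-pdes} and Theorem \ref{theorem-quantum-octonionic-heat-equation}) simply specialises Theorem \ref{delta-poincare-lemma-pdes-category-quantum-hypercomplex-manifolds}, Theorem \ref{criterion-formal-quantum-integrability-pdes-category-quantum-hypercomplex-manifolds}, and the bordism results of \cite{PRA2} to the relevant category. Your explicit verification that neither the loss of alternativity nor the presence of zero divisors in $\mathbb{S}$ disturbs the $Z(C)$-module structure on the symbols, the Noetherian hypothesis (since $Z(C)\cong Z(B)$), or the scalar-extension isomorphism ${}^{C}\Omega_p^{\hat E_k}\cong\Omega_p^{\hat E_k}\otimes_{\mathbb{K}}C$, is exactly the point the paper leaves implicit, and it is the right thing to check.
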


\end{example}

\section{Quantum hypercomplex singular (super) PDE's}\label{quantum-hypercomplex-singular-pdes-section}

In this section we shall consider singular quantum super PDE's
extending our previous theory of singular PDE's,\footnote{See Refs.\cite{PRA3, PRA17, PRA18-0}. See also \cite{AG-PRA2} where some interesting applications are considered.} i.e., by
considering singular quantum (super) PDE's as singular quantum
sub-(super)manifolds of jet-derivative spaces in the category
$\mathfrak{Q}$ or $\mathfrak{Q}_S$ or $\mathfrak{Q}_{hyper}$. In fact, our previous formal
theory of quantum (super) PDE's works well on quantum smooth or
quantum analytic submanifolds, since these regularity conditions are
necessary to develop such a theory. However, in many mathematical
problems and physical applications, it is necessary to work with
less regular structures, so it is useful to formulate a general
geometric theory for such more general quantum PDE's in the category
$\mathfrak{Q}_S$ or $\mathfrak{Q}_{hyper}$. Therefore, we shall assume that quantum singular
super PDE's are subsets of jet-derivative spaces where are presents
regular subsets, but also other ones where the conditions of
regularity are not satisfied. So the crucial point to investigate is
to obtain criteria that allow us to find existence theorems for
solutions crossing ''singular points'' and study their stability
properties. In some previous works we have considered quantum singular PDE's in the categories $\mathfrak{Q}$ and $\mathfrak{Q}_S$. Here we shall specialize on quantum singular PDE's in the category $\mathfrak{Q}_{S-hyper}$ of quantum hypercomplex supermanifolds. There the fundamental non-commutative algebras considered are of the type $\mathcal{Q}_r\equiv B\bigotimes_{\mathbb{R}}\mathbb{A}_r$, where $B$ is a quantum superalgebra in the sense of A. Pr\'astaro, and $\mathbb{A}_r$ is a Cayley-Dickson algebra, as just considered in the prevoius sections.

The main result of this section is Theorem
\ref{main-quantum-singular1} that relates singular integral bordism
groups of singular qunatum PDE's to global solutions passing through
singular points. Some example are explicitly considered.

Let us, now, first begin with a generalization of algebraic
formulation of quantum super PDE's, starting with the following
definitions. (See also Refs.\cite{PRA13, PRA19, PRA20, PRA21}.)

\begin{definition}
The {\em general category of quantum hypercomplex superdifferential equations},
${\frak Q}_{S-hyper}^{\frak E}$, is defined by the following: {\em 1)}
$\hat{\frak B}\in Ob({\frak Q}_{S-hyper}^{\frak E})$ iff $\hat{\frak B}$ is
a filtered quantum hypercomplex superalgebra $\hat{\frak B}\equiv \{ \hat{\frak
B}_i\} , \hat{\frak B}_i\subset \hat{\frak B}_{i+1}$, such that in
the differential calculus in the category ${\frak
Q}^{FG}_{S-hyper}(\hat{\frak B})$ over $\hat{\frak B}$ is defined a natural
operation ${\it C}$ that satisfies ${\it C}\hat \Omega ^1 \wedge
\hat\Omega^\bullet={\it C}\hat\Omega^\bullet$ , where
$\hat\Omega^i\equiv \hat{\frak B}\wedge \cdots_i\cdots
\wedge\hat{\frak B}$ are the representative objects of the functor
$\hat D_i$ in the category ${\frak Q}^{FG}_S(\hat{\frak B})$ over
$\hat{\frak B}$, where $\hat D_i \equiv\hat D\cdots_i\cdots\hat D$,
being $\hat D(P)$ the $\hat{\frak B}$-module of all quantum
superdifferentiations of algebra $\hat{\frak B}$ with values in
module $P$. Furthermore, $\hat\Omega^\bullet\equiv\bigoplus_{i\ge
0}\hat\Omega^i$, $\hat\Omega^0\equiv A$. {\em 2)} $f\in Hom({\frak
Q}_{S-hyper}^{\frak E})$ iff $f$ is a homomorphism of filtered quantum hypercomplex
superalgebras preserving operation ${\it C}$.
\end{definition}

\begin{remark}
In practice we shall take $\hat{\frak B}\equiv \{ \hat{\frak
B}_i\equiv Q^\infty _w(M_i;A)\}$, where $M_i$ is a quantum hypercomplex
supermanifold and $A$ is a quantum hypercomplex superalgebra. Then, we have a
canonical inclusion: $j_i:M_i\rightarrow Sp(\hat{\frak B}_i),
x\mapsto j_i(x)\equiv e_x\equiv$ evaluation map at $x\in M_i$. To
the inclusion $\hat{\frak B}_i\subset \hat{\frak B}_{i+1}$
corresponds the quantum smooth map $M_{i+1}\rightarrow M_i$. So we
set $M_\infty =\mathop{lim}\limits_{\leftarrow }M_i$. One has
${\overline M}_\infty =Sp(\hat{\frak B}_\infty )$. However, as
$M_\infty $ contains all the ''nice'' points of $Sp(\hat{\frak
B}_\infty )$, we shall use the space $M_\infty $ to denote an object
of the {\em category of quantum hypercomplex superdifferential equations}.
\end{remark}

\begin{definition}
The {\em category of quantum hypercomplex superdifferential equations}
$\underline{\frak Q}^{\frak E}_{S-hyper}$ is defined by the Frobenius full
quantum superdistribution $\widehat C(X)\subset \widehat TX\equiv
Hom_{Z}(A;TX)$, which is locally the same as
 $\widehat{\mathbf{E}}_\infty$, i.e., the
Cartan quantum superdistribution of $\hat E_\infty $ for some
quantum hypercomplex super PDE $\hat E_k\subset J\hat D^k(W)$. We set: $s\dim
X\equiv \dim\widehat C(X)=(m+n|m+n)$, i.e., the {\it Cartan quantum
superdimension} of $X\in Ob(\underline{\frak Q}^{\frak E}_{S-hyper})$. $f\in
Hom(\underline{\frak Q}^{\frak E}_{S-hyper})$ iff it is a quantum
supersmooth map $f:X\rightarrow Y$, where $X,Y\in
Ob(\underline{\frak Q}^{\frak E}_{S-hyper})$, such that conserves the
corresponding Frobenius full superdistributions: $\widehat
T(f):\widehat C(X)\rightarrow \widehat C(Y)$,
           $f\in Hom_{\underline{\frak Q}^{\frak E}_{S-hyper}}(X,Y)$,
           $sdimX=(m+n|m+n)$, $sdimY=(m'+n'|m'+n')$, $srankf=(r|s)=\dim(\hat T(f)_x
           (\widehat C(X)_x))$, $x\in X$.
Then the fibers $f^{-1}(y)$, $y\in im(f)\subset Y$, are
$(m+n-r|m+n-s)$-quantum superdimensional objects of
$\underline{\frak Q}^{\frak E}_{S-hyper}$. {\em Isomorphisms} of
$\underline{\frak Q}^{\frak E}_{S-hyper}$: quantum supermorphisms with
fibres consisting of separate points. {\em Covering maps} of
$\underline{\frak Q}^{\frak E}_{S-hyper}$: quantum supermorphims with
zero-quantum superdimensional fibres.
\end{definition}
\begin{example}[Some quantum hypercomplex singular PDE's]\label{examples-quantum-singular-PDEs}
\begin{table}[t]\centering
\caption{Some quantum hypercomplex singular PDE's defined by differential polynomials.}
\label{some-quantum-singular-pdes-defined-by-differential-polynomials}
\scalebox{0.8}{$\begin{tabular}{|l|l|}
\hline
{\rm{\footnotesize Name}}&{\rm{\footnotesize Singular PDE}}\\
\hline
{\rm{\footnotesize PDE with node and triple point}}&{\rm{\footnotesize $p_1\equiv(u^1_x)^4+(u^2_y)^4-(u^1_x)^2=0$}}\\
{\rm{\footnotesize $\hat R_1\subset \hat J{\it D}(E)$}}&{\rm{\footnotesize $p_2\equiv(u^2_x)^6+(u^1_y)^6-u^2_xu^1_y=0$}}\\
\hline
{\rm{\footnotesize PDE with cusp and tacnode}}&{\rm{\footnotesize $q_1\equiv(u^1_x)^4+(u^2_y)^4-(u^1_x)^3+(u^2_y)^2=0$}}\\
{\rm{\footnotesize such that $\hat S_1\subset \hat J{\it D}(E)$.}}&{\rm{\footnotesize such that $q_2\equiv(u^2_x)^4+(u^1_y)^4-(u^2_x)^2(u^1_y)-(u^2_x)(u^1_y)^2=0$.}}\\
\hline
{\rm{\footnotesize PDE with conical double point,}}&{\rm{\footnotesize $r_1\equiv(u^1)^2-(u^1_x)(u^2_y)^2=0$}}\\
{\rm{\footnotesize double line and pinch point}}&{\rm{\footnotesize $r_2\equiv(u^2)^2-(u^2_x)^2-(u^1_y)^2=0$}}\\
{\rm{\footnotesize $\hat T_1\subset J{\it D}(F)$}}&{\rm{\footnotesize $r_3\equiv(u^3)^3+(u_y^3)^3+(u^2_x)(u^3_y)=0$}}\\
\hline
\multicolumn{2}{l}{\rm{\footnotesize $\pi:E\equiv A^4\to
A^2$, $(x,y,u^1,u^2)\mapsto(x,y)$. $\bar\pi:F\equiv A^5\to A^2$, $(x,y,u^1,u^2,u^3)\mapsto(x,y)$.}}\\
\multicolumn{2}{l}{\rm{\footnotesize $\mathfrak{a}\equiv<p_1,p_2>\subset
\hat{\frak B}_1, \mathfrak{b}\equiv<q_1,q_2>\subset \hat{\frak B}_1,\mathfrak{c}\equiv<r_1,r_2,r_3>\subset\hat{\frak P}_1$.}}\\
\multicolumn{2}{l}{\rm{\footnotesize Here $A$ is a quantum hypercomplex algebra.}}\\
\end{tabular}$}
\end{table}

In Tab. \ref{some-quantum-singular-pdes-defined-by-differential-polynomials} we report some quantum singular PDE's having  some
algebraic singularities.  For the first two equations these are
quantum singular PDE's of first order defined on the quantum fiber
bundle $\pi:E\to M$, with $E\equiv A^4$, $M\equiv A^2$, where $A$ is
a quantum algebra. Then $\hat J{\it D}(E)\cong
B_1^{4,4}=A^4\times\widehat{A}^4$. Furthermore, for the third
equation one has the quantum fiber bundle $\bar\pi:F\to M$, with
$F\equiv A^5$, $M\equiv A^2$, and $\hat J{\it D}(F)\cong
B_1^{5,6}=A^5\times\widehat{A}^6$. We follow our usual notation
introduced in some previous works on the same subject. In particular
for a given quantum (super)algebra $A$, we put
\begin{equation}\label{dot-tensor-products}
\left\{\begin{array}{l}
\dot T^r_0(H)\equiv
\underbrace{H\otimes_Z\cdots\otimes_ZH}_r,\quad
r\ge0\\
\mathop{\widehat{A}}\limits^{r}\equiv Hom_Z(\dot T^r_0(A);A),\quad
r\ge0\\
\mathop{\widehat{A}}\limits^{0}\equiv Hom_Z(\dot T^0_0(A);A)\equiv
Hom_Z(A;A)\equiv \widehat{A}\\
\end{array}\right.
\end{equation}
with $Z$ the centre of $A$ and $H$ any $Z$-module.
Furthermore, we denote also by $\dot S^r_0(H)$ and $\dot
\Lambda^r_0(H)$ the corresponding symmetric and skewsymmetric
submodules of $\dot T^r_0(H)$. To the ideals $\mathfrak{a}\equiv
<p_1,p_2>\subset \hat{\frak B}_1$,
$\mathfrak{b}\equiv<q_1,q_2>\subset \hat{\frak B}_1$ and
$\mathfrak{c}\equiv<r_1,r_2,r_3>\subset \hat{\frak P}_1$, where
$\hat{\frak B}_1\equiv Q^\infty_w(\hat J{\it D}(E),B_2)$, with
$B_2\equiv A\times \widehat{A}\times\mathop{\widehat{A}}\limits^2$, and $\hat{\frak P}_1\equiv
Q^\infty_w(\hat J{\it D}(F),B_1)$, one associates the corresponding
algebraic sets $\hat R_1=\{q\in B_1^{4,4}|f(q)=0, \forall f\in
\mathfrak{a}\}\subset B_1^{4,4}$, $\hat S_1=\{q\in B_1^{4,4}|f(q)=0,
\forall f\in \mathfrak{b}\}\subset B_1^{4,4}$ and $\hat T_1=\{q\in
B_1^{5,6}|f(q)=0, \forall f\in \mathfrak{c}\}\subset B_1^{5,6}$.

Let us consider in some details, for example, the first equation in
Tab.5. There the node and the triple point refer to the singular
points in the planes $(u^1_x,u^2_y)$ and $(u^2_x,u^1_y)$
respectively, with respect to the $\mathbb{R}$-restriction. However, the equation $\hat R_1$ has a
set $\Sigma(\hat R_1)\subset \hat R_1$ of singular points that is larger than one reported in {\em(\ref{singular-points-eq-1-tab3})}.
\begin{equation}\label{singular-points-eq-1-tab3}
    \Sigma(\hat R_1)_0=\left\{
      q_0=(x,y,u^1,u^2,0,0,0,0)\right\}\cong A^4
    \subset \Sigma(\hat R_1)\subset\hat R_1.
\end{equation}
In fact the jacobian $(j(F)_{ij})$, $i=1,2$, $j=1,\cdots,8$, with
$(F_i)\equiv(p_1,p_2):\hat J{\it D}(E)\to B_2$, is given by the
following matrix with entries in the quantum algebra $B_2$:
\begin{equation}\label{jacobian}
(j(F)_{ij})=\left(
  \begin{array}{cccccccc}
    0 & 0 & 0 & 0 & 2u^1_x[2(u^1_x)^2-1] & 0 & 0 & 4(u^2_y)^3 \\
    0 & 0 & 0 & 0 & 0 & 6(u^1_y)^5-u_x^2 &6(u^2_x)^5-u_y^1  & 0 \\
  \end{array}
\right).
\end{equation}
Since, in general, the quantum hypercomplex algebra $A$ can have non-empty the set $Z_{ero}(A)$ of zero-divisors,\footnote{Let us emphasize that $Z_{ero}(A)=\bigcup_i\mathfrak{p}_i$, where $\mathfrak{p}_i$ is an {\em associated prime ideal} of $A$, i.e., a prime ideal that is the annihilator of some element of $A$. (Let us recall that an ideal $\mathfrak{p}$ of $A$ is called {\em prime} if $\mathfrak{p}\varsubsetneq A$ and whenever two ideals $\mathfrak{a},\, \mathfrak{b}\subset A$, are such that $\mathfrak{a}\mathfrak{b}\subset \mathfrak{p}$, then at least one of $\mathfrak{a}$ and $\mathfrak{b}$ is contained in $A$.) The $\mathbb{R}$-algebra $A$ is called {\em tame} if $Z_{ero}(A)=\bigoplus_{1\le i\le r<\infty}S_i$, where $S_i$ is a subspace of $A$. Note that even if $Z_{ero}(A)=\varnothing$, $Z_{ero}(A^2)\not=\varnothing$, since $(0,a)(b,0)=(0,0)=0\in A^2$.} in order $\hat Y_1\equiv \hat R_1\setminus \Sigma(\hat R_1)$ should represent $\hat R_1$ without singular points, i.e., in order to apply the implicit quantum function theorem (see Theorem 1.38 in \cite{PRA3}), it is enough to take the points $q\in\hat R_1$, where there are $2\times 2$ minors in {\em(\ref{jacobian})} with invertible determinant. In other words, we shall add the condition that at least one of determinants $\det_i$, $1\le i\le 4$, reported in {\em(\ref{determinants-minors-jacobian-first-example})} should be invertible elements of the quantum algebra $\mathop{\widehat{A}}\limits^s\equiv Hom_{Z(A)}(\dot T^s_0(A);A)$, for suitable $s\in\mathbb{N}$.\footnote{See the implicit quantum function theorem. (Theorem 1.38 in \cite{PRA3}.)}

\begin{equation}\label{determinants-minors-jacobian-first-example}
\begin{array}{l}
  \det_1\equiv\det\left(
                    \begin{array}{cc}
                      2u^1_x[2(u^1_x)^2-1] & 0\\
                      0 & 6(u^1_y)^5-u^2_x \\
                    \end{array}
                  \right)=2u^1_x[2(u^1_x)^2-1][6(u^1_y)^5-u^2_x]\in\mathop{\widehat{A}}\limits^3\\
  \det_2\equiv\det\left(
                    \begin{array}{cc}
                      2u^1_x[2(u^1_x)^2-1] & 0\\
                      0 & 6(u^2_x)^5-u^1_y \\
                    \end{array}
                  \right)=2u^1_x[2(u^1_x)^2-1][6(u^2_x)^5-u^1_y]\in\mathop{\widehat{A}}\limits^3\\
 \det_3\equiv\det\left(
                    \begin{array}{cc}
                     0&4(u^2_y)^3\\
                     6(u^1_y)^5-u^2_x&0\\
                    \end{array}
                  \right)=-[6(u^1_y)^5-u^2_x]4(u^2_y)^3\in\mathop{\widehat{A}}\limits^2\\
  \det_4\equiv\det\left(
                    \begin{array}{cc}
                     0&4(u^2_y)^3\\
                     6(u^2_x)^5-u^1_y&0\\
                    \end{array}
                  \right)=-[6(u^2_x)^5-u^1_y]4(u^2_y)^3\in\mathop{\widehat{A}}\limits^2.\\
  \end{array}
\end{equation}
Therefore, we must identify the sets ${}_i\hat X_1\equiv\{q\in\hat J{\it D}(E)\, |\,  \det_i(q)\in\mathop{\widehat{A}}\limits^{s(i)}(G(A))\}$, $1\le i\le 4$, with $s(1)=s(2)=3$ and $s(3)=s(4)=2$. Here $G(A)$ denotes the abelian group of unities in $A$ and $\mathop{\widehat{A}}\limits^{s(i)}(G(A))\equiv  Hom_{Z(A)}(\dot T^{s(i)}_0(A);G(A))\subset \mathop{\widehat{A}}\limits^{s(i)}$. Then taking into account that $\mathop{\widehat{A}}\limits^{s(i)}(G(A))$ is an open set in the quantum hypercomplex algebra $\mathop{\widehat{A}}\limits^{s(i)}$,\footnote{This a direct consequence of Lemma 3.32 in \cite{PRA18}. It is useful to emphasize that the abelian group $G(A)$ of a quantum (hypercomplex) algebra $A$, has no zero divisors, $Z_{ero}(A)=\varnothing$, hence it is a division algebra. (Left or right divisors can never be units.) Furthermore, any $b\in G(A)$, with $b\not=1$, cannot be idempotent, since an idempotent element must be a zero divisor: $b^2=b\, \Rightarrow\, b(b-1)=0$.} we get that $\hat X_1\equiv\bigcup_{1\le i\le 4}\det^{-1}_i(\mathop{\widehat{A}}\limits^{s(i)}(G(A)))=\bigcup_{1\le i\le 4}{}_i\hat X_1\subset \hat J{\it D}(E)$ is an open submanifold of $\hat J{\it D}(E)$. We call $\hat Y_1\equiv\hat R_1\bigcap\hat X_1$ the {\em regularized quantum PDE} corresponding to $\hat R_1$. Since $\hat X_1$ is an open submanifold of $\hat J{\it D}(E)$, it follows that $\hat Y_1$ is a quantum submanifold of $\hat J{\it D}(E)$ of dimension as reported in formulas {\em(\ref{isomorphisms-eq-1-tab3})}.
Let us define the following subsets of $\hat R_1$:
\begin{equation}\label{subsets-eq-1-tab3}
    \left\{
    \begin{array}{l}
      \hat Y_1\equiv\hat R_1\setminus \Sigma(\hat R_1)\subset \hat R_1\\
      {}_2\hat R_1\equiv\left\{q\in\hat R_1|u^1_y(q)=0,u^2_x(q)=0\right\}\subset\hat R_1\\
{}_3\hat R_1\equiv\left\{q\in\hat R_1|u^1_x(q)=0,u^2_y(q)=0\right\}\subset\hat R_1.\\
\end{array}
    \right.
\end{equation}
One has ${}_2\hat R_1\bigcap{}_3\hat R_1\not=\varnothing$, $\hat Y_1\bigcap{}_2\hat R_1\not=\varnothing$,  $\hat Y_1\bigcap{}_3\hat R_1\not=\varnothing$. Furthermore the set of singular points $\Sigma({}_2\hat R_1)$ (resp. $\Sigma({}_3\hat R_1)$) of ${}_2\hat R_1$ (resp. ${}_3\hat R_1$) is contained in $\Sigma(\hat R_1)$ and contains $\Sigma({}_2\hat R_1)_0\equiv{}_2\hat R_1\bigcap\Sigma(\hat R_1)_0\cong A^4$ (resp. $\Sigma({}_3\hat R_1)_0\equiv{}_3\hat R_1\bigcap\Sigma(\hat R_1)_0\cong A^4$). We can write:
\begin{equation}\label{split-subsets-eq-1-tab3}
\hat R_1=\hat Y_1\bigcup\Sigma(\hat R_1)\cong{}_2\hat R_1\times{}_3\hat R_1\cong[\hat Y_2\bigcup\Sigma({}_2\hat R_1)]\times[\hat Y_3\bigcup\Sigma({}_3\hat R_1)]\subset\hat J{\it D}(E),
\end{equation}
where $\hat Y_2\equiv {}_2\hat R_1\setminus\Sigma({}_2\hat R_1)$ (resp. $\hat Y_3\equiv {}_3\hat R_1\setminus\Sigma({}_3\hat R_1$). Then we can see that $\hat Y_1$ is a formally quantum integrable and completely quantum
integrable quantum PDE of first order. (For the theory of formal
integrability of quantum PDE's, see Refs.\cite{PRA7, PRA11, PRA19,
PRA20, PRA21}.) In fact $\hat Y_1$ and its prolongations $(\hat
Y_1)_{+r}\subset \hat J{\it D}^{r+1}(E)$, are subbundles of $\hat
J{\it D}^{r+1}(E)\to\hat J{\it D}^{r}(E)$, $r\ge 0$. One can also
see that the canonical maps $\pi_{r+1,r}:(\hat Y_1)_{+r}\to(\hat
Y_1)_{+(r-1)}$, are surjective mappings. For example, for $r=1$, one
has the following isomorphisms:
\begin{equation}\label{isomorphisms-eq-1-tab3}
    \left\{
    \begin{array}{l}
    \dim_{B_1}\hat J{\it D}(E)=(4,4)\\
    \\
      \hat Y_1\cong A^4\times\widehat{A}{}^2\Rightarrow\dim_{B_1}\hat Y_1=(4,2)\\
\hat J{\it D}^2(E)\cong
A^4\times\widehat{A}{}^4\times(\mathop{\widehat{A}}\limits^{2})^8\Rightarrow\dim_{B_2}\hat J{\it D}^2(E)=(4,4,8)\\
      (\hat Y_1)_{+1}\cong
      A^4\times\widehat{A}{}^2\times(\mathop{\widehat{A}}\limits^{2})^4
      \Rightarrow\dim_{B_2}(\hat Y_1)_{+1}=(4,2,4)\\
      Hom_Z(\dot S^2_0(T_pM);vT_{\bar
      q}E)\cong(\mathop{\widehat{A}}\limits^{2})^8\Rightarrow\dim_{B_2}Hom_Z(\dot S^2_0(T_pM);vT_{\bar
      q}E)=(0,0,8)\\
      ((\hat g_1)_{+1})_{q\in\hat Y_1}\cong (\mathop{\widehat{A}}\limits^{2})^4 \Rightarrow\dim_{B_2}
((\hat g_1)_{+1})_{q\in\hat Y_1}=(0,0,4)\\
\\
\left[\dim_{B_2}(\hat Y_1)_{+1}\right]=\left[\dim_{B_2}\hat
Y_1\right]+\left[\dim_{B_2}((\hat
g_1)_{+1})_{q\in\hat Y_1}\right].\\
\end{array}
    \right.
\end{equation}
Therefore, $(\hat Y_1)_{+1}\to(\hat Y_1)$, is surjective, and by
iterating this process, we get that also the mappings $(\hat
Y_1)_{+r}\to(\hat Y_1)_{+(r-1)}$, $r\ge 0$, are surjective. We put
$(\hat Y_1)_{+(-1)}\equiv E$. Thus $\hat Y_1$ is a quantum regular
quantum PDE, and under the hypothesis that $A$ has a Noetherian
centre, it follows that $\hat Y$ is quantum $\delta-$regular too.
Then, from Theorem \ref{delta-poincare-lemma-pdes-category-quantum-hypercomplex-manifolds}  and Theorem \ref{criterion-formal-quantum-integrability-pdes-category-quantum-hypercomplex-manifolds}, it follows that $\hat Y_1$ is
formally quantum integrable. Since it is quantum analytic, it is
completely quantum integrable too.

\end{example}

\begin{definition}\label{quantum-extended-crystal-singular-super-PDE}
We define {\em quantum extended crystal hypercomplex singular super PDE}, a
singular quantum hypercomplex super PDE $\hat E_k\subset \hat J^k_{m|n}(W)$ that
splits in irreducible components $\hat A_i$, i.e., $\hat
E_k=\bigcup_i \hat A_i$, where each $\hat A_i$ is a quantum extended
crystal hypercomplex super PDE. Similarly we define {\em quantum extended
$0$-crystal hypercomplex singular PDE}, (resp. {\em quantum $0$-crystal hypercomplex singular
PDE}), a quantum extended crystal hypercomplex singular PDE where each component
$\hat A_i$ is a quantum extended $0$-crystal hypercomplex PDE, (resp. quantum
$0$-crystal hypercomplex PDE).
\end{definition}

\begin{definition}{\em(Algebraic singular solutions of quantun singular hypercomplex super PDE's)}.\label{singular-algebraic-solution}
Let $\hat E_k\subset \hat J^k_{m|n}(W)$ be a quantum singular super
PDE, that splits in irreducible components $\hat A_i$, i.e., $\hat
E_k=\bigcup_i\hat A_i$. Then, we say that $\hat E_k$ admits an {\em
algebraic singular solution} $V\subset \hat E_k$, if $V\bigcap \hat A_r\equiv V_r$ is a solution  (in the usual sense) in $\hat A_r$ for
at least two different components, say $\hat A_i$, $\hat
A_j$, $i\not=j$, and such that one of following conditions are
satisfied: {\em(a)} ${}_{(ij)}\hat E_k\equiv \hat A_i\bigcap \hat
A_j\not=\varnothing$; {\em(b)} ${}^{(ij)}\hat E_k\equiv \hat
A_i\bigcup \hat A_j$ is a connected set, and ${}_{(ij)}\hat
E_k=\varnothing$. Then we say that the algebraic singular solution $V$
is in the case {\em(a)}, {\em weak}, {\em singular} or {\em smooth},
if it is so with respect to the equation ${}_{(ij)}\hat E_k$. In the
case {\em(b)}, we can distinguish the following situations:
{\em(weak solution):} There is a discontinuity in $V$, passing from
$V_i$ to $V_j$; {\em(singular solution):} there is not discontinuity
in $V$, but the corresponding tangent spaces $TV_i$ and $TV_j$ do
not belong to a same $n$-dimensional Cartan sub-distribution of
$\hat J^k_{m|n}(W)$, or alternatively $TV_i$ and $TV_j$ belong to a
same $(m|n)$-dimensional Cartan sub-distribution of $\hat
J^k_{m|n}(W)$, but the kernel of the canonical projection
$(\pi_{k,0})_*:T\hat J^k_{m|n}(W)\to TW$, restricted to $V$ is
larger than zero; {\em(smooth solution):} there is not discontinuity
in $V$ and the tangent spaces $TV_i$ and $TV_j$ belong to a same
$(m|n)$-dimensional Cartan sub-distribution of $\hat J^k_{m|n}(W)$
that projects diffeomorphically on $W$ via the canonical projection
$(\pi_{k,0})_*:T\hat J^k_{m|n}(W)\to TW$. Then we say that a
solution passing through a critical zone {\em
bifurcate}.\footnote{Note that the bifurcation does
 not necessarily imply that the tangent planes in the points of $V_{ij}\subset
 V$ to the components $V_i$ and $V_j$, should be different.}
\end{definition}

\begin{definition}{\em(Integral bordism for quantum singular hypercomplex super PDE's)}.\label{integral-bordism-quantum-singular-super-PDE}
Let $\hat E_k\subset \hat J^k_{m|n}(W)$ be a quantum super PDE on
the fiber bundle $\pi:W\to M$, $\dim_B W=(m|n,r|s)$, $\dim_A M=m|n$,
$B=A\times E$, $E$ a quantum hypercomplex superalgebra that is also a $Z$-module,
with $Z=Z(A)$ the centre of $A$. Let $N_1, N_2\subset \hat
E_k\subset \hat J^k_{m|n}(W)$ be two
 $(m-|n-1)$-dimensional, (with respect to $A$), admissible closed integral
quantum hypercomplex supermanifolds. We say that $N_1$ {\em algebraic integral
bords} with $N_2$, if $N_1$ and $N_2$ belong to two different
irreducible components, say $N_1\subset \hat A_i$, $N_2\subset \hat
A_j$, $i\not=j$, such that there exists an algebraic singular
solution $V\subset \hat E_k$ with $\partial V=N_1\varnothing N_2$.

In the integral bordism group $\Omega_{m-1|n-1}^{\hat E_k}$ (resp.
$\Omega_{m-1|n-1,s}^{\hat E_k}$, resp. $\Omega_{m-1|n-1,w}^{\hat
E_k}$) of a quantum singular hypercomplex super PDE $\hat E_k\subset \hat
J^k_{m|n}(W)$, we call {\em algebraic class} a class
$[N]\in\Omega_{m-1|n-1}^{\hat E_k}$, (resp. $[N]\in\Omega_{m-1|n-1,
s}^{\hat E_k}$, resp. $[N]\in\Omega_{m-1|n-1}^{\hat E_k}$,), with
$N\subset A_j$, such that there exists a closed
$(m-1|n-1)$-dimensional, (with respect to $A$), admissible integral
quantum hypercomplex supermanifolds $X\subset \hat A_i\subset \hat E_k$,
algebraic integral bording with $N$, i.e., there exists a smooth
(resp. singular, resp. weak) algebraic singular solution $V\subset
\hat E_k$, with $\partial V=N\varnothing X$.
\end{definition}

\begin{theorem}[Singular integral bordism group of quantum hypercomplex singular
super PDE]\label{main-quantum-singular1} Let $\hat
E_k\equiv\bigcup_i \hat A_i\subset \hat J^k_{m|n}(W)$ be a quantum
singular super PDE. Then under suitable conditions, {\em algebraic
singular solutions integrability conditions}, we can find (smooth)
algebraic singular solutions bording assigned admissible closed
smooth $(m-1|n-1)$-dimensional, (with respect to $A$), integral
quantum hypercomplex supermanifolds $N_0$ and $N_1$ contained in some component
$\hat A_i$ and $\hat A_j$, $i\not= j$.
\end{theorem}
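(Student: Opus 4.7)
The plan is to reduce the statement to the integrability and bordism results already established for each irreducible component $\hat A_i$ of $\hat E_k$, and then to assemble an algebraic singular solution bording $N_0$ with $N_1$ by gluing component-wise solutions across the singular locus. First I would make precise the ``algebraic singular solutions integrability conditions'' as follows: every component $\hat A_i$ relevant for $N_0,N_1$ is a quantum regular, $\delta$-regular QPDE in $\mathfrak{Q}_{S-hyper}$ satisfying the hypotheses of Theorem \ref{criterion-formal-quantum-integrability-pdes-category-quantum-hypercomplex-manifolds}, the centre of the underlying quantum hypercomplex superalgebra $C=B\otimes_{\mathbb{R}}\mathbb{A}_r$ is Noetherian, $W$ is suitably $p$-connected for the relevant $p$, and for each pair $(\hat A_i,\hat A_j)$ with $N_0\subset \hat A_i$, $N_1\subset \hat A_j$, either the intersection ${}_{(ij)}\hat E_k$ contains an admissible closed $(m-1|n-1)$-dimensional integral quantum hypercomplex supermanifold, or else the connected union ${}^{(ij)}\hat E_k$ admits a compatible transition across the critical zone.

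Next, by Theorem \ref{integral-bordism-groups-quantum-hypercomplex-pdes} applied separately to each formally quantum-integrable component, the singular integral bordism groups satisfy
\begin{equation*}
\Omega^{\hat A_i}_{m-1|n-1,s}\cong H_{m-1|n-1}(W;\mathbb{R})\otimes_{\mathbb{R}}C,
\end{equation*}
so bordism classes are detected by characteristic numbers with values in $C$. In case (a) of Definition \ref{integral-bordism-quantum-singular-super-PDE} I would pick an admissible closed integral supermanifold $X\subset {}_{(ij)}\hat E_k$; by the surjectivity of the characteristic-number classification, $X$ can be altered by a bord in each component so that $[N_0]=[X]$ in $\Omega^{\hat A_i}_{m-1|n-1,s}$ and $[X]=[N_1]$ in $\Omega^{\hat A_j}_{m-1|n-1,s}$. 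Formal quantum integrability then produces integral quantum hypercomplex supermanifolds $V_1\subset \hat A_i$ and $V_2\subset \hat A_j$ with $\partial V_1=N_0\sqcup X$ and $\partial V_2=X\sqcup N_1$, and the union $V=V_1\cup_X V_2\subset \hat E_k$ is the sought algebraic singular solution. In case (b), where ${}_{(ij)}\hat E_k=\varnothing$ but ${}^{(ij)}\hat E_k$ is connected, I would proceed analogously by selecting representatives in each component and interposing a weak (or singular) transition through the critical zone, essentially transcribing the construction used in the classical and quantum singular settings of \cite{PRA3, PRA17, PRA18-0}.

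The hard part will be controlling the tangent geometry at the matching locus. To secure a smooth algebraic singular solution in the sense of Definition \ref{singular-algebraic-solution}, one must verify that the Cartan $\mathbf{E}_\infty$-subdistributions of $V_1$ and $V_2$ along $X$ lie in a common $(m|n)$-dimensional Cartan sub-distribution of $\hat J^k_{m|n}(W)$ projecting diffeomorphically onto $W$; this is a codimension-bounded linear matching condition that will fail generically, and without additional hypotheses the construction yields only a singular or weak algebraic solution. Including the transversality of the Cartan distributions of $\hat A_i$ and $\hat A_j$ along $X$ as part of the integrability conditions restores smoothness. Everything else is then a direct translation of our earlier integral-bordism machinery into the category $\mathfrak{Q}_{S-hyper}$; the translation is legitimized by the same observation used in the proofs of Theorems \ref{delta-poincare-lemma-pdes-category-quantum-hypercomplex-manifolds} and \ref{criterion-formal-quantum-integrability-pdes-category-quantum-hypercomplex-manifolds}, namely that the intrinsic geometric formalism of quantum PDEs is insensitive to the non-associativity introduced by the Cayley--Dickson factor $\mathbb{A}_r$ for $r\ge 3$.
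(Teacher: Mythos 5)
Your proposal follows essentially the same strategy as the paper: both use the bordism group calculations for the formally quantum-integrable irreducible components and glue component-wise solutions across an intermediate Cauchy manifold $X\subset{}_{(ij)}\hat E_k$. The paper packages this as two lemmas — the first invokes Theorem 2.1 of \cite{PRA10} to identify $\Omega_{m-1|n-1,w}^{\hat A_i}\cong\Omega_{m-1|n-1,w}^{\hat A_j}\cong\Omega_{m-1|n-1,w}^{{}_{(ij)}\hat E_k}\cong\Omega_{m-1|n-1,s}^{\hat A_i}\cong\cdots\cong H_{m-1|n-1}(W;A)$ and reads off that a weak or singular algebraic singular solution with $\partial V=N_0\sqcup N_1$ exists iff $N_1\in[N_0]$, the second glues $V_i\cup_Y V_j$ through $Y\subset{}_{(ij)}\hat E_k$ and concedes the result is "singular in general," just as you observe.

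One step deserves correction: you write that ``$X$ can be altered by a bord in each component so that $[N_0]=[X]$\dots and $[X]=[N_1]$.'' Bording $X$ to a bordant manifold does not change its class, so no alteration of $X$ can reconcile $[N_0]$ with $[N_1]$ if these differ in the common group $H_{m-1|n-1}(W;A)$. The correct statement — and what the paper's first lemma isolates — is that the equality $N_1\in[N_0]$ is itself one of the ``algebraic singular solutions integrability conditions''; once that holds, the isomorphism $\Omega_{m-1|n-1,s}^{{}_{(ij)}\hat E_k}\cong H_{m-1|n-1}(W;A)$ lets you choose $X$ in the shared class and then apply formal integrability in each component. With that adjustment the argument is sound, and your explicit transversality condition on the Cartan $(m|n)$-distributions along the matching locus is a reasonable unpacking of what the paper leaves implicit in its catch-all hypothesis.
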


\begin{proof}
In fact, we have the following lemmas.
\begin{lemma}\label{lemma-main-quantum-singular1}
Let $\hat E_k\equiv\bigcup_i \hat A_i\subset \hat J^k_{m|n}(W)$ be a
quantum singular super PDE with ${}_{(ij)}\hat E_k\equiv \hat
A_i\bigcap \hat A_j\not=\varnothing$. Let us assume that $\hat
A_i\subset \hat J^k_{m|n}(W)$, $\hat A_j\subset \hat J^k_{m|n}(W)$
and ${}_{(ij)}\hat E_k\subset \hat J^k_{m|n}(W)$ be formally
integrable and completely integrable quantum hypercomplex super PDE's with
nontrivial symbols. Then, one has the following isomorphisms:
\begin{equation}\label{singular-bordism-groups-quantum-singular-PDE}
\left\{\begin{array}{ll}
          \Omega_{m-1|n-1,w}^{\hat A_i}&\cong\Omega_{m-1|n-1,w}^{\hat A_j}\cong\Omega_{m-1|n-1,w}^{{}_{(ij)}\hat E_k}\\
         &\cong \Omega_{m-1|n-1,s}^{\hat A_i}\cong\Omega_{m-1|n-1,s}^{\hat A_j}\\
        & \cong\Omega_{m-1|n-1,s}^{{}_{(ij)}\hat E_k}.\\
       \end{array}
  \right.
\end{equation}

So we can find a weak or singular algebraic singular solution
$V\subset \hat E_k$ such that $\partial V=N_0\varnothing N_1$, $N_0\subset
\hat A_i$,  $N_1\subset \hat A_j$, iff $N_1\in[N_0]$.
\end{lemma}

\begin{proof}
In fact, under the previous hypotheses one has that we can apply
Theorem 2.1 in \cite{PRA10} to each component $\hat A_i$, $\hat A_j$
and ${}_{(ij)}\hat E_k$ to state that all their weak and singular
integral bordism groups of dimension $(m-1|n-1)$ are isomorphic to
$H_{m-1|n-1}(W;A)$.
\end{proof}

\begin{lemma}\label{lemma-main-quantum-singular1}
Let $\hat E_k=\bigcup_i\hat A_i$ be a quantum $0$-crystal hypercomplex singular
PDE. Let ${}^{(ij)}\hat E_k\equiv \hat A_i\bigcup \hat A_j$ be
connected, and ${}_{(ij)}\hat E_k\equiv \hat A_i\bigcap \hat
A_j\not=\varnothing$. Then $\Omega_{m-1|n-1,s}^{{}^{(ij)}\hat
E_k}=0$.\footnote{But, in general, one has
$\Omega_{m-1|n-1}^{{}^{(ij)}\hat E_k}\not=0$.}
\end{lemma}

\begin{proof}
In fact, let $Y\subset{}_{(ij)}\hat E_k$ be an admissible closed
$(m-1|n-1)$-dimensional closed integral quantum hypercomplex supermanifold, then
there exists a smooth solution $V_i\subset \hat A_i$ such that
$\partial V_i=N_0\varnothing Y$ and a solution $V_j\subset \hat A_j$ such
that $\partial V_j=Y\varnothing N_1$. Then, $V=V_i\bigcup_Y V_j$ is an
algebraic singular solution of $\hat E_k$. This solution is singular
in general.
\end{proof}

After above lemmas the proof of the theorem can be considered done
besides the algebraic singular solutions integrability conditions.
\end{proof}

\begin{example}[Quantum sedenionic d'Alembert equation]\label{quantum-sedenionic-d-alembert-equation}
Above examples about heat equations in the categories of quantum hypercomplex manifolds are linear equations, however the general theory works too also for non-linear equations. For example, we can consider the {\em quantum sedenionic d'Alembert equation} are reported in {\em(\ref{quantum-sedenionic-d-alembert-equation-definition})}.
\begin{equation}\label{quantum-sedenionic-d-alembert-equation-definition}
\left\{
\begin{array}{l}
\pi:W\equiv\mathbb{S}^3\to\mathbb{S}^2\equiv M;\quad (x,y,u)\mapsto (x,y)\\
\widehat{(d'A)}_\mathbb{S}\subset J\hat{\it D}^2(W)\subset  \hat J^2_2(W):\quad F\equiv uu_{xy}-u_xu_y=0\\
\end{array}
\right\}.
\end{equation}
In this case, we can apply the same machinary, for the quantum smooth submanifold of $\hat X_2\subset J\hat{\it D}^2(W)$, where $\hat X_2$ is the open quantum hypercomplex submanifold of $J\hat{\it D}^2(W)$, identified with the condition $u\not=0$. In fat, there the jacobian $$(\partial\xi_j.F)=\left(
                                         \begin{array}{cccccccc}
                                           0 & 0 & u_{xy} & -u_{y} & -u_{x} & 0 & u & 0 \\
                                         \end{array}
                                       \right)$$
has rank $1$. Here $(\xi^j)=(x,y,u,u_x,u_y,u_{xx},u_{xy},u_{yy})$ are quantum hypercomplex coordinates on $J\hat{\it D}^2(W)$. Thus, in the case, in order to regularize $\widehat{(d'A)}_\mathbb{S}$, it is enough to consider $\widehat{(d'A)}_\mathbb{S}\bigcap\hat X_2$, $\hat X_2\equiv u^{-1}(0)$, with $u:J\hat{\it D}^2(W)\to A$. (Compare with singular quantum hypercomplex PDE's considered in Example \ref{examples-quantum-singular-PDEs}.)

\end{example}

\section{Quantum hypercomplex exotic PDE's}\label{quantum-exotic-pdes-section}

In this section we extend to PDE's in the category $\mathfrak{Q}_{hyper}$, a previous definition, and some results, about ''exotic PDE's'' given in the category of commutative manifolds \cite{PRA19, PRA20, PRA21, PRA21-1}. This allows us the opportunity to discuss about a classification of smooth solutions of PDE's in the category $\mathfrak{Q}_{hyper}$ and to prove a smooth generalized version of the quantum generalized Poincar\'e conjecture, (previously proved in \cite{PRA15} in the category $\mathfrak{Q}_S$ of quantum supermanifolds). There it is proved that a closed quantum supermanifold $M$, of dimension $(m|n)$, with respect to a quantum superalgebra $A$, homotopy equivalent to the quantum $(m|n)$dimensional supersphere, $\hat S^{m|n}$, is homeomorphic (but not necassarily diffeomorphic) to $\hat S^{m|n}$, if $M$ is classic regular, with classic limit a $m$-dimensional manifold $M_C$, identified by means of a fiber structure $\bar\pi_C:M\to M_C$, such that the homotopy equivalence is realized by means of a couple of mappings $(f,f_C)$ such that the diagram (\ref{commutative-diagram-homotopy-equivalence-quantum-m-n-supersphere}) is commutative.
\begin{equation}\label{commutative-diagram-homotopy-equivalence-quantum-m-n-supersphere}
\xymatrix{M\ar[d]_{\bar\pi_C}\ar[r]^{f}&\hat S^{m|n}\ar[d]^{\pi_C}\\
M_C\ar[r]_{f_C}&S^m\\}
\end{equation}
Thus in this section we will assume quantum manifolds with classic regular quantum manifold structures $\bar\pi_C:M\to M_C$, $\dim_A M=\dim_{\mathbb{R}}M_C=n$, that are compact, closed and homotopy equivalent to the quantum $n$-sphere $\hat S^n$, (with respect to the same quantum (hypercomplex) algebra $A$), are homeomorphic to $\hat S^n$ too.

\begin{definition}[Quantum homotopy $n$-sphere]\label{quantum-homotopy-n-sphere}
We call quantum homotopy $n$-sphere (with respect to a quantum algebra $A$) a smooth, compact, closed $n$-dimensional quantum manifold $\hat\Sigma^n$, that is homeomorphic to $\hat S^n$, with classic regular structure $\bar\pi_C:M\to M_C$, where $M_C$ is a homotopy $n$-sphere, and such that the homotopy equivalence between $M$ and $\hat S^n$ is realized by a commutative diagram {\em(\ref{commutative-diagram-homotopy-equivalence-quantum-m-n-supersphere})}.
\end{definition}

\begin{remark}\label{remark-quantum-spheres-classic-limits}
Let $\hat\Sigma^{n}_1$ and $\hat\Sigma^{n}_2$ be two quantum diffeomorphic, quantum homotopy $n$-spheres: $\hat\Sigma^{n}_1\cong\hat\Sigma^{n}_2$. Then the corresponding classic limits $\hat\Sigma^{n}_{1,C}$ and $\hat\Sigma^{n}_{2,C}$ are diffeomorphic too: $\hat\Sigma^{n}_{1,C}\cong\hat\Sigma^{n}_{2,C}$. This remark is the natural consequence of the fact that quantum diffeomorphisms here considered respect the fiber bundle structures of quantum homotopy $n$-spheres with respect their classic limits: $\pi_C:\hat\Sigma^n\to\hat\Sigma^n_C$. Therefore quantum diffeomorphisms between quantum homotopy $n$-spheres are characterized by a couple $(f,f_C):(\hat\Sigma_1^n,\hat\Sigma_{1,C}^n)\to(\hat\Sigma_2^n,\hat\Sigma_{2,C}^n)$ of mappings related by the commutative diagram in {\em(\ref{commutative-diagram-a-lemma-quantum-spheres-classic-limits-a})}.
\begin{equation}\label{commutative-diagram-a-lemma-quantum-spheres-classic-limits-a}
 \xymatrix{\hat\Sigma^{n}_1\ar[d]_{\pi_{1,C}}\ar[r]^{f}&\hat\Sigma^{n}_2\ar[d]^{\pi_{2,C}}\\
 \hat\Sigma^{n}_{1,C}\ar[r]_{f_C}&\hat\Sigma^{n}_{2,C}\\}
\end{equation}
There $f$ is a quantum diffeomorphism between quantum manifolds and $f_C$ is a diffeomorphism between manifolds. Note that such diffeomorphisms of quantum homotopy $n$-spheres allow to recognize that $\hat\Sigma^n_1$ has also $\hat\Sigma^{n}_{2,C}$ as classic limit, other than $\hat\Sigma^{n}_{1,C}$. (See commutative diagram in {\em(\ref{commutative-diagram-b-lemma-quantum-spheres-classic-limits-a})}.)
\begin{equation}\label{commutative-diagram-b-lemma-quantum-spheres-classic-limits-a}
 \xymatrix{&\hat\Sigma^{n}_1\ar[ddl]^(0.7){\pi_{1,C}}\ar[ddr]_(0.7){\pi'_{1,C}}\ar[dr]^{f}&\\
 \hat\Sigma^{n}_2\ar[ur]^{f^{-1}}\ar[d]_{\pi'_{2,C}}\ar[rr]^{1_{\hat\Sigma^n_2}}&&\hat\Sigma^{n}_2\ar[d]^{\pi_{2,C}}\\
 \hat\Sigma^{n}_{1,C}\ar[rr]_{f_C}&&\hat\Sigma^{n}_{2,C}\\}
\end{equation}

This clarifies that the classic limit of a quantum homotopy $n$-sphere is unique up to diffeomorphisms.
\end{remark}
Let us also emphasize that (co)homology properties of quantum homotopy $n$-spheres are related to the ones of $n$-spheres, since we here consider classic regular objects only.

\begin{lemma}\label{cohomology-quantum-homotopy-n-spheres}
In {\em(\ref{cohomology-properties-quantum-homotopy-spheres})} are reported the cohomology spaces for quantum homotopy $n$-spheres.

\begin{equation}\label{cohomology-properties-quantum-homotopy-spheres}
  H^p(\hat\Sigma^{n};\mathbb{Z})\cong  H^p(\hat S^{n};\mathbb{Z})\cong  H^p(S^{n};\mathbb{Z})=\left\{\begin{array}{ll}
                                                                                                       0 & p\not=0,\, n\\
                                                                                                       \mathbb{Z}& p=0,\, n.\\
                                                                                                     \end{array}
  \right.
\end{equation}
\end{lemma}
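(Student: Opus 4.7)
The plan is to decompose the claimed chain of isomorphisms into three independent parts and treat each by a topological argument, appealing to the structural features of quantum homotopy $n$-spheres recorded in Definition \ref{quantum-homotopy-n-sphere} and in Remark \ref{remark-quantum-spheres-classic-limits}.

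First I would establish $H^p(\hat\Sigma^n;\mathbb{Z})\cong H^p(\hat S^n;\mathbb{Z})$. By Definition \ref{quantum-homotopy-n-sphere} the quantum homotopy $n$-sphere $\hat\Sigma^n$ is homeomorphic to the quantum $n$-sphere $\hat S^n$, and moreover the pair $(f,f_C)$ appearing in diagram (\ref{commutative-diagram-homotopy-equivalence-quantum-m-n-supersphere}) realizes a homotopy equivalence $f:\hat\Sigma^n\to\hat S^n$ compatible with the classic limit. Since singular cohomology with integer coefficients is a homotopy (in fact topological) invariant, this immediately yields the first isomorphism.

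Next, to obtain $H^p(\hat S^n;\mathbb{Z})\cong H^p(S^n;\mathbb{Z})$, I would exploit the classic regular fiber structure $\pi_C:\hat S^n\to S^n$. Locally a classic regular quantum manifold of dimension $n$ over the quantum (hypercomplex) algebra $A$ is modeled on $A^n$ with projection induced by the continuous $\mathbb{K}$-linear classic limit $c:A\to\mathbb{K}$, so the typical fiber of $\pi_C$ is locally homeomorphic to $(\ker c)^n\subset A^n$. Because $\ker c$ is a closed linear subspace of the locally convex Hausdorff topological $\mathbb{K}$-vector space $A$, it is itself a locally convex vector space, hence star-shaped about the origin and therefore contractible. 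A locally trivial fibration with contractible fibers over a paracompact base (here $S^n$) is a weak homotopy equivalence, so $\pi_C^{\ast}$ is an isomorphism on integral singular cohomology.

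The third equality in (\ref{cohomology-properties-quantum-homotopy-spheres}) is the classical cohomology computation for the ordinary $n$-sphere and requires no further comment. The main obstacle is the middle step: one must verify that the classic regular structure genuinely produces a locally trivial fibration with contractible fibers. This rests on the local model of a classic regular quantum manifold and on the fact, built into the definition of a quantum algebra in Pr\'astaro's formulation, that $c:A\to\mathbb{K}$ is a continuous $\mathbb{K}$-linear retraction whose kernel is a closed complemented subspace of $A$. Once this structural point is secured, homotopy invariance of singular cohomology closes the argument and the chain of isomorphisms in (\ref{cohomology-properties-quantum-homotopy-spheres}) follows.
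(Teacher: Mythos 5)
Your first step (passing from $\hat\Sigma^n$ to $\hat S^n$ via the homeomorphism/homotopy equivalence in Definition \ref{quantum-homotopy-n-sphere}) agrees with the last sentence of the paper's proof. But your middle step---that $\pi_C\colon\hat S^n\to S^n$ is a locally trivial fibration with contractible fibers, hence a weak homotopy equivalence---is wrong, and the paper itself explicitly refutes it. Theorem \ref{homotopy-groups-quantum-n-sphere} states $\pi_k(\hat S^n)\not\cong\pi_k(S^n)$ in general, and the corollary following its proof says flatly that quantum homotopy spheres cannot be homotopy equivalent to $S^n$ unless $A=\mathbb{R}$. The concrete counterexample worked out there is $A=\mathbb{C}$, $n=1$: $\hat S^1=\mathbb{C}\cup\{\infty\}\cong S^2$, yet $\pi_1(S^2)=0\neq\mathbb{Z}=\pi_1(S^1)$, so $\pi_C$ is not even a retraction, let alone a homotopy equivalence. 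The structural picture is that $\pi_C$ is not locally trivial: over the finite part $\mathbb{R}^n\subset S^n$ the fiber is indeed $(\ker c)^n$, but over the compactifying point $\infty$ the fiber collapses to a single point, so the map is a quotient with jumping fibers rather than a fibration. Your appeal to ``fibration with contractible fibers over a paracompact base'' simply does not apply.

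The paper's actual proof avoids the classic-limit projection altogether. It computes $H_p(\hat S^n;\mathbb{Z})$ intrinsically and inductively, via a reduced Mayer--Vietoris sequence applied to the quantum-disk decomposition $\hat S^n=\hat D^n_+\cup\hat D^n_-$ with $\hat D^n_+\cap\hat D^n_-=\hat S^{n-1}$. Contractibility of each $\hat D^n_\pm$ gives $\widetilde H_p(\hat S^n;\mathbb{Z})\cong\widetilde H_{p-1}(\hat S^{n-1};\mathbb{Z})$, and iterating down to $\hat S^0=S^0$ yields the homology groups. Cohomology then follows from the universal coefficient theorem, and the passage to $\hat\Sigma^n$ is by homotopy invariance. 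To repair your argument you should replace the second step with exactly this kind of cell/Mayer--Vietoris computation on $\hat S^n$ itself; the third equality (with ordinary $H^p(S^n;\mathbb{Z})$) is then a statement about the answer coinciding numerically, not a consequence of any map $\hat S^n\to S^n$ inducing an isomorphism.
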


\begin{proof}
Let us first calculate the homology groups in integer coefficients $\mathbb{Z}$, of quantum $n$-spheres. In (\ref{homology-properties-quantum-n-spheres}) are reported the homology spaces for quantum $n\not=0$-spheres.
\begin{equation}\label{homology-properties-quantum-n-spheres}
  H_p(\hat S^{n};\mathbb{Z})
  \cong  H_p(S^{n};\mathbb{Z})=\left\{\begin{array}{ll}
  0 & p\not=0,\, n\\
  \mathbb{Z}& p=0,\, n.\\
  \end{array}
  \right.
\end{equation}
Furthermore, for $n=0$ we get
$$ H_p(\hat S^{0};\mathbb{Z})
  \cong  H_p(S^{0};\mathbb{Z})=\left\{\begin{array}{ll}
  0 & p\not=0\\
  \mathbb{Z}\bigoplus\mathbb{Z}& p=0.\\
  \end{array}\right.$$
  Above formulas can be obtained by the reduced Mayer-Vietoris sequence applied to the triad $(\hat S^n,\hat D^n_+,\hat D^n_-)$ since we can write $\hat S^n=\hat D^n_+\bigcup\hat D^n_-$, where $\hat D^n_+$ and $\hat D^n_-$ are respectively the north quantum $n$-disk and south quantum $n$-disk that cover $\hat S^n$. Taking into account that $\hat D^n_+\bigcap\hat D^n_-=\hat S^{n-1}$, we get the long exact sequence (\ref{reduced-quantum-n-sphere-mayer-vietoris-exact-sequence}).

  \begin{equation}\label{reduced-quantum-n-sphere-mayer-vietoris-exact-sequence}
   \scalebox{0.7}{$ \xymatrix{\cdots\ar[r]&\widetilde{ H}_{p}(\hat S^{n-1};\mathbb{Z})\ar[r]&\widetilde{ H}_{p}(\hat D_+^n;\mathbb{Z})\bigoplus \widetilde{ H}_{p}(\hat D_-^n;\mathbb{Z})\ar[r]&\widetilde{H}_{p}(\hat S^n;\mathbb{Z})\ar[d]_{\partial}\\
    &\widetilde{ H}_{p-1}(\hat S^{n};\mathbb{Z})\ar[d]_{\partial}&\ar[l]\widetilde{ H}_{p-1}(\hat D_+^n;\mathbb{Z})\bigoplus \widetilde{\hat H}_{p-1}(\hat D_-^n;\mathbb{Z})&\ar[l]\widetilde{ H}_{p-1}(\hat S^{n-1};\mathbb{Z})\\
    &\widetilde{ H}_{p-2}(\hat S^{n-1};\mathbb{Z})\ar[r]&\widetilde{ H}_{p-2}(\hat D_+^n;\mathbb{Z})\bigoplus \widetilde{ H}_{p-2}(\hat D_-^n;\mathbb{Z})\ar[r]&\widetilde{H}_{p-2}(\hat S^{n};\mathbb{Z})\ar[d]_{\partial}\\
    &\vdots\ar[d]_{\partial}&&\\
    &\widetilde{ H}_{0}(\hat S^{n-1};\mathbb{Z})\ar[r]&\widetilde{ H}_{0}(\hat D_+^n;\mathbb{Z})\bigoplus \widetilde{ H}_{0}(\hat D_-^n;\mathbb{Z})\ar[r]&\widetilde{ H}_{0}(\hat S^n;\mathbb{Z})\ar[r]&0.\\}$}
  \end{equation}
Taking into account that $\widetilde{ H}_{0}(\hat D_-^n;\mathbb{Z})=0$ e get $\widetilde{ H}_{p}(\hat S^n;\mathbb{Z})\cong\widetilde{H}_{p-1}(\hat S^{n-1};\mathbb{Z})$ and $\widetilde{ H}_{0}(\hat S^n;\mathbb{Z})\cong 0$.
Therefore, we get
\begin{equation}\label{reduced-homology-properties-quantum-n-spheres}
  \widetilde{ H}_p(\hat S^{n};\mathbb{Z})
  \cong  \left\{\begin{array}{ll}
  \mathbb{Z} & \hbox{\rm if $p=n$}\\
  0& \hbox{\rm if $p\not=n$}\\
  \end{array}
  \right\}\Rightarrow\, \left\{\begin{array}{l}
   H_p(\hat S^{0};\mathbb{Z}) =\left\{ \begin{array}{ll}
  \mathbb{Z}\bigoplus\mathbb{Z} & \hbox{\rm if $p=0$}\\
  0& \hbox{\rm if $p\not=0$}\\
  \end{array}\right. \\
   H_p(\hat S^{n};\mathbb{Z}) =\left\{ \begin{array}{ll}
  \mathbb{Z} & \hbox{\rm if $p=0,\, n$}\\
  0& \hbox{\rm if $p\not=0,\, n$.}\\
  \end{array}\right. \\
  \end{array}
  \right.
\end{equation}

Therefore we get formulas (\ref{homology-properties-quantum-n-spheres}). To conclude the proof we shall consider that $ H^p(\hat S^n;\mathbb{Z})\cong Hom_{\mathbb{Z}}( H_p(\hat S^n;\mathbb{Z});\mathbb{Z})$. Furthermore, quantum homotopy $n$-spheres have same (co)homology of quantum spheres since are homotopy equivalent to these last ones.

\end{proof}

\begin{lemma}\label{quantum-euler-characteristic-quantum-spheres-classic-limits-b}
The (quantum) Euler characteristic numbers for quantum homotopy $n$-spheres are reported in {\em(\ref{euler-characteristic-properties-quantum-homotopy-spheres})}.

\begin{equation}\label{euler-characteristic-properties-quantum-homotopy-spheres}
\hat \chi(\hat\Sigma^{n})= \hat \chi(\hat S^{n})=\chi(S^{n})=(-1)^0\beta_0+(-1)^n\beta_n=1+(-1)^n=\left\{\begin{array}{ll}
                                                                                                       0 & \hbox{\rm $n=$ odd}\\
                                                                                                       2& \hbox{\rm $n=$ even.}\\
                                                                                                     \end{array}
  \right.
\end{equation}
\end{lemma}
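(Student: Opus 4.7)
The plan is to derive the stated formula directly from the previous Lemma \ref{cohomology-quantum-homotopy-n-spheres}, since the quantum Euler characteristic $\hat\chi$ is defined, as in the classical case, as the alternating sum of Betti numbers and these are dictated by the (co)homology groups already computed. Because quantum homotopy $n$-spheres have been set up as classic regular quantum manifolds whose (co)homology, with integer coefficients, coincides with that of $\hat S^n$ (and hence with that of $S^n$), the computation reduces to an entirely classical one.

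First I would recall the definition $\hat\chi(\hat\Sigma^n)\equiv\sum_{p\ge 0}(-1)^p\beta_p$, where $\beta_p\equiv\RANKP_{\mathbb{Z}}H^p(\hat\Sigma^n;\mathbb{Z})$. Then I would invoke Lemma \ref{cohomology-quantum-homotopy-n-spheres} to conclude that $\beta_0=\beta_n=1$ and $\beta_p=0$ for $p\ne 0,n$. This immediately gives
\begin{equation*}
\hat\chi(\hat\Sigma^n)=(-1)^0\beta_0+(-1)^n\beta_n=1+(-1)^n,
\end{equation*}
which evaluates to $0$ for $n$ odd and $2$ for $n$ even.

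To justify that the quantum Euler characteristic agrees with the topological one on the underlying cell structure, I would note that the fibration $\bar\pi_C\colon\hat\Sigma^n\to \hat\Sigma^n_C$ and the homotopy equivalence $(f,f_C)$ in diagram \eqref{commutative-diagram-homotopy-equivalence-quantum-m-n-supersphere} transfer the CW/handle decomposition of $S^n$ (two cells, one in dimension $0$ and one in dimension $n$) to $\hat\Sigma^n$ up to homotopy, hence $\hat\chi(\hat\Sigma^n)=\chi(\hat\Sigma^n_C)=\chi(S^n)$. The same identification has already been implicitly used to prove Lemma \ref{cohomology-quantum-homotopy-n-spheres}, so it is consistent to invoke it here.

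The only potential obstacle is not analytical but rather notational and conceptual: one must check that the $\hat\chi$-invariant defined for objects of $\mathfrak{Q}_{hyper}$ (where the coefficient ring is a noncommutative, in general nonassociative, quantum hypercomplex algebra) indeed reduces, by tensoring back to $\mathbb{Z}$ via the classic limit functor, to the ordinary topological Euler characteristic of the classic limit. Once this is observed — which follows from the classic regularity assumption built into Definition \ref{quantum-homotopy-n-sphere} and from Remark \ref{remark-quantum-spheres-classic-limits} — the computation collapses to the classical case and no further work is required.
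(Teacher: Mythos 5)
Your proposal is correct but takes a slightly different route from the paper's, and the difference is worth noting. You start from the integer Betti numbers $\beta_p=\RANKP_{\mathbb{Z}}H^p(\hat\Sigma^n;\mathbb{Z})$, which you read off from Lemma \ref{cohomology-quantum-homotopy-n-spheres}, and then argue separately (via the classic-regularity fibration $\bar\pi_C$ and the homotopy equivalence in diagram (\ref{commutative-diagram-homotopy-equivalence-quantum-m-n-supersphere})) that the quantum invariant $\hat\chi$ collapses to the ordinary topological $\chi$ of the classic limit. The paper instead avoids this bridging step: it computes $\hat\chi(\hat S^n)$ directly from its own definition as $\sum_p(-1)^p\dim_A H_p(\hat S^n;A)$, exhibiting the quantum-cell decomposition $\hat S^n=\hat e^n\cup\hat e^0$ (one $0$-cell and one $n$-cell over $A$), noting $H_0(\hat S^n;A)\cong H_n(\hat S^n;A)\cong A$ so each has $\dim_A=1$, and thereby getting $1+(-1)^n$ without ever invoking the $\mathbb{Z}$-coefficient groups; homotopy invariance then transfers the result to $\hat\Sigma^n$. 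Your version is a perfectly acceptable shortcut once Lemma \ref{cohomology-quantum-homotopy-n-spheres} is in hand, and you are right to flag that the step $\hat\chi=\chi$ is where the real content lies — but it is precisely that step that the paper's choice of definition ($\dim_A$ of $A$-coefficient homology) settles by direct computation rather than by appeal to the classic-limit functor. If you want your argument to match the paper's in rigor, replace your opening identification $\hat\chi\equiv\sum(-1)^p\RANKP_{\mathbb{Z}}H^p(\cdot;\mathbb{Z})$ with the paper's $\dim_A H_p(\cdot;A)$ and run the two-cell computation; otherwise the claimed reduction to $\chi(S^n)$ is being asserted rather than derived.
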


\begin{proof}
We have considered that $\hat S^{n}$ admits the following quantum-cell decomposition: $\hat S^n=\hat e^n\bigcup \hat e^0$, where $\hat e^n=\hat D^n$ is a $n$-dimensional quantum cell, with respect to the quantum algebra $A$, and $\hat e^0=\hat D^0$ is the $0$-dimensional quantum cell with respect to $A$. Therefore we can consider the {\em quantum homological Euler characteristic} $\hat\chi(\hat S^{n})$ of $\hat S^{n}$, given by formulas (\ref{quantum-homology-euler-characteristic-properties-quantum-n-spheres}).
\begin{equation}\label{quantum-homology-euler-characteristic-properties-quantum-n-spheres}
\begin{array}{ll}
\hat\chi(\hat S^{n})&=(-1)^0\dim_A H_0(\hat S^n;A)+(-1)^n\dim_A H_n(\hat S^n;A)\\
&=(-1)^0\dim_A A+(-1)^n\dim_A A\\
&=1+(-1)^n\\
&=\left\{\begin{array}{ll}
0 & \hbox{\rm $n=$ odd}\\
2& \hbox{\rm $n=$ even.}\\
\end{array}\right.\\
\end{array}
\end{equation}
 So the homological quantum Euler characteristic of the quantum $n$-sphere is the same of the homological Euler characteristic of the usual $n$-sphere. Furthermore, since quantum homotopy $n$-spheres are homotopy equivalent to quantum $n$-spheres, it follows that the quantum Euler characteristic of a quantum homotopy $n$-sphere is equal to the one of $\hat S^n$.\footnote{In general, quantum characteristic numbers for quantum manifolds with quantum algebras $A$, are $A$-valued characteristic forms. (For details see \cite{PRA0500, PRA2, PRA3, PRA12, PRA13, PRA14, PRA15, PRA16}.) However, taking into account the canonical ring homomorphism $\epsilon:\mathbb{R}\to Z(A)\subset A$, we can consider also above characteristic numbers for quantum homotopy $n$-spheres as belonging to the corresponding quantum algebra $A$.}
 \end{proof}

\begin{theorem}\label{theorem-quantum-spheres-classic-limits-b}
Let $\hat\Theta_{n}$ be the set of equivalence classes of quantum diffeomorphic quantum homotopy $n$-spheres over a quantum (hypercomplex) algebra $A$ (and with Noetherian centre $Z(A)$).\footnote{Quantum diffeomorphisms are meant in the sense specified in Remark \ref{remark-quantum-spheres-classic-limits}.} In $\hat\Theta_{n}$ it is defined an additive commutative and associative composition map such that $[\hat S^n]$ is the zero of the composition. Then one has the exact commutative diagram reported in {\em(\ref{short-exact-sequence-lemma-quantum-spheres-classic-limits-b})}.\footnote{For the definition of the groups $\Theta_n$, see \cite{PRA20}. Let us emphasize here that $\Theta_n$ is a finite abelian group (Kervaire-Milnor). These are particular cases of finitely generated abelian groups $G$ that admit a finite direct sum decomposition like in (\ref{cyclic-decomposition-finitely-generated-abelian-group}).
\begin{equation}\label{cyclic-decomposition-finitely-generated-abelian-group}
  G\cong \mathbb{Z}_{p_1}\bigoplus\cdots\bigoplus\mathbb{Z}_{p_s}\bigoplus\mathbb{Z}^r.  \end{equation}
The partial sum $T\equiv\oplus_{1\le i\le s}\mathbb{Z}_{p_i}$ is called {\em torsion subgroup} of $G$. It is a finite group and consists of all elements of $G$ of finite order. The quotient $G/T\cong\mathbb{Z}^r$ is called {\em free part} of $G$. The number $r$ of summands $\mathbb{Z}$ in $G/T$ is called the {\em rank} (or {\em Betti number}) of $G$. It does not depend on the particular direct sum decomposition (\ref{cyclic-decomposition-finitely-generated-abelian-group}). In fact, $\RANKP(G)$ is the maximal number of linearly independent elements in $G$. The numbers $p_j$ ({\em torsion coefficients} in $G$) that occur in (\ref{cyclic-decomposition-finitely-generated-abelian-group}) are not unique. However, they can be chosen as powers of prime numbers $p_j=q_j^{\rho_j}$, $q_j$ prime, $\rho_j>0$, and then they are unique (independent of the decomposition) up to permutation coefficients. Two finitely generated abelian groups are isomorphic iff they have the same rank and the same system of torsion coefficients. $\Theta_n$ as a finite abelian group has rank $r=0$. Let us emphasize that the {\em fundamental theorem of finite abelian group} states that any such a group $H$ can be written in a direct product of cyclic groups: $H=\mathbb{Z}_{p_1}\bigoplus\cdots\bigoplus\mathbb{Z}_{p_s}$, such that $(p_i,\cdots,p_s)$ are powers of primes, or $p_i$ divides $p_{i+1}$. For example we get the isomorphism: $\mathbb{Z}_{15}\cong\mathbb{Z}_3\bigoplus\mathbb{Z}_5$, but $\mathbb{Z}_8\not\cong\mathbb{Z}_4\bigoplus\mathbb{Z}_2\not\cong \mathbb{Z}_2\bigoplus\mathbb{Z}_2\bigoplus\mathbb{Z}_2$. (In other words $\mathbb{Z}_{pq}\cong\mathbb{Z}_p\bigoplus\mathbb{Z}_q$ iff $p$ and $q$ are coprime.)
Let us emphasize that after above representation of finite abelian groups, we can for abuse of notation denote $\dim_{\mathbb{Z}}H={\rm order}(H)$. (A notation for the order of a group $H$ is also $|H|$.) This is justified since the integral group ring $\mathbb{Q}H$ of $H$ is just a vector space of dimension equal to $|H|$. (See Tab. 4 in \cite{PRA20}.)}

\begin{equation}\label{short-exact-sequence-lemma-quantum-spheres-classic-limits-b}
 \xymatrix{&&&0\ar[d]&\\
 0\ar[r]&\hat\Upsilon_n\ar@{^{(}->}[r]&\hat\Theta_n\ar[r]^{j_C}&\Theta_n\ar[d]\ar[r]&0\\
 &&&\hat\Theta_n/\hat\Upsilon_n\ar[d]&\\
 &&&0&\\}
\end{equation}
where $\Theta_n$ is the set of equivalence classes for diffeomorphic homotopy $n$-spheres and $j_C$ is the canonical mapping $j_C:[\hat\Sigma^n]\mapsto[\hat\Sigma^n_C]$. One has the canonical isomorphisms:
\begin{equation}\label{canonical-isomorphisms-in-short-exact-sequence-lemma-quantum-spheres-classic-limits-b}
   \mathbb{Z}\bigotimes_{\hat\Upsilon_n} \mathbb{Z} \hat\Theta_n\cong\mathbb{Z} \Theta_n,\, \hbox{\rm as right $\hat\Theta_n$-modules}.
\end{equation}
\end{theorem}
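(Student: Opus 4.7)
The plan is to first equip $\hat\Theta_n$ with the group structure by lifting the classical connected-sum operation to the quantum category, then to identify $j_C$ as a surjective homomorphism whose kernel is $\hat\Upsilon_n$, and finally to derive the tensor-product isomorphism \eqref{canonical-isomorphisms-in-short-exact-sequence-lemma-quantum-spheres-classic-limits-b} as a formal consequence of the resulting short exact sequence of abelian groups. Concretely, given $[\hat\Sigma^n_1],[\hat\Sigma^n_2]\in\hat\Theta_n$ with classic-regular projections $\bar\pi_{i,C}:\hat\Sigma^n_i\to\hat\Sigma^n_{i,C}$, pick points $p_i\in\hat\Sigma^n_{i,C}$ and open disks $p_i\in D^n_i\subset\hat\Sigma^n_{i,C}$ over which $\bar\pi_{i,C}$ is trivialized. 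The preimages $\bar\pi_{i,C}^{-1}(D^n_i)$ are quantum disk-bundles; excising their interiors and gluing the resulting collar boundaries along an orientation-reversing quantum diffeomorphism compatible with the two trivializations produces a quantum homotopy $n$-sphere $\hat\Sigma^n_1\,\widehat{\#}\,\hat\Sigma^n_2$ whose classic limit is precisely the classical connected sum $\hat\Sigma^n_{1,C}\#\hat\Sigma^n_{2,C}$. Standard isotopy-uniqueness arguments for tubular neighborhoods, applied fibre-wise over the classic limit, show that $\widehat{\#}$ is well defined on $\hat\Theta_n$ and is commutative and associative; the identity element is $[\hat S^n]$ because $\hat\Sigma^n\,\widehat{\#}\,\hat S^n$ just replaces an excised quantum disk by a quantum hemisphere and recovers $\hat\Sigma^n$ up to quantum diffeomorphism.

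Next I would check that $j_C:\hat\Theta_n\to\Theta_n$ is a well-defined group homomorphism. Well-definedness was essentially established in Remark \ref{remark-quantum-spheres-classic-limits}: a quantum diffeomorphism $(f,f_C)$ between $\hat\Sigma^n_1$ and $\hat\Sigma^n_2$ forces $f_C:\hat\Sigma^n_{1,C}\to\hat\Sigma^n_{2,C}$ to be a diffeomorphism of the underlying classical homotopy spheres. The homomorphism property follows from the fact that $\widehat{\#}$ was defined precisely to cover the classical connected sum on the base. For surjectivity of $j_C$, given any homotopy $n$-sphere $M_C$ representing a class in $\Theta_n$ one constructs a quantum lift by taking the trivial quantum bundle $M_C\times A^{\dim_A M_C}$ (using the canonical ring map $\epsilon:\mathbb{R}\to Z(A)$ to lift the $\mathbb{R}$-linear transition functions of $TM_C$ to $A$-linear ones); the resulting quantum manifold $\hat\Sigma^n$ is classic-regular by construction, and the homotopy-equivalence diagram \eqref{commutative-diagram-homotopy-equivalence-quantum-m-n-supersphere} is produced from the classical one by the same lifting device, so $j_C[\hat\Sigma^n]=[M_C]$. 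Setting $\hat\Upsilon_n\equiv\ker j_C$, the first isomorphism theorem yields $\hat\Theta_n/\hat\Upsilon_n\cong\Theta_n$, which is exactly the content of the commutative diagram \eqref{short-exact-sequence-lemma-quantum-spheres-classic-limits-b}.

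For the final assertion, I would apply the group-ring functor to the short exact sequence $0\to\hat\Upsilon_n\hookrightarrow\hat\Theta_n\twoheadrightarrow\Theta_n\to 0$. Because $\hat\Theta_n$ is abelian, $\hat\Upsilon_n$ is automatically normal, and the standard identification of a group ring of a quotient as coinvariants gives
\[
\mathbb{Z}\,\Theta_n\ \cong\ \mathbb{Z}[\hat\Theta_n/\hat\Upsilon_n]\ \cong\ \mathbb{Z}\otimes_{\mathbb{Z}\hat\Upsilon_n}\mathbb{Z}\hat\Theta_n,
\]
with the right $\hat\Theta_n$-module structure on both sides induced by right multiplication in $\mathbb{Z}\hat\Theta_n$; this is precisely \eqref{canonical-isomorphisms-in-short-exact-sequence-lemma-quantum-spheres-classic-limits-b}. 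The principal technical obstacle is the surjectivity of $j_C$: one must verify that the naive trivial lift really does land inside $\hat\Theta_n$, i.e. that it is genuinely homotopy equivalent to $\hat S^n$ in the classic-regular sense of Definition \ref{quantum-homotopy-n-sphere} and not merely at the level of underlying spaces; this requires checking compatibility of the lifted homotopy with the fibre projection $\bar\pi_C$, which in turn relies on the functoriality of the lift $\epsilon_*:\mathrm{Diff}(M_C)\to\mathrm{Diff}_{\mathfrak{Q}_{hyper}}(M_C\times A^n)$ implicit in the quantum-hypercomplex structure.
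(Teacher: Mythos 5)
Your proof follows the same route as the paper: define a composition on $\hat\Theta_n$ by a quantum connected sum covering the classical one, show $[\hat S^n]$ is the neutral element, identify $\hat\Upsilon_n\equiv\ker j_C$, and read off the short exact sequence. The paper's own proof is quite terse — it writes down the quantum fibered connected sum and the cancellation $M\sharp\hat S^n\cong M$, asserts the surjectivity of $j_C$ with only a pointer to Remark \ref{remark-quantum-spheres-classic-limits}, and does not argue for the group-ring isomorphism (\ref{canonical-isomorphisms-in-short-exact-sequence-lemma-quantum-spheres-classic-limits-b}) at all. Your contribution is therefore to supply two things the paper leaves implicit: an explicit surjectivity construction, and the identification $\mathbb{Z}\Theta_n\cong\mathbb{Z}[\hat\Theta_n/\hat\Upsilon_n]\cong\mathbb{Z}\otimes_{\mathbb{Z}\hat\Upsilon_n}\mathbb{Z}\hat\Theta_n$ via the standard coinvariants description of the group ring of a quotient; the latter is clean and correct and is a genuine improvement over the paper's silence on this point.

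However, the surjectivity step as you wrote it is shaky. The object ``the trivial quantum bundle $M_C\times A^{\dim_A M_C}$'' is not a closed compact $n$-dimensional quantum manifold over $A$ homotopy equivalent to $\hat S^n$, and ``lifting the $\mathbb{R}$-linear transition functions of $TM_C$ to $A$-linear ones'' conflates the tangent bundle with the manifold's own (nonlinear) transition maps, so it does not by itself produce a quantum atlas for a quantum homotopy $n$-sphere in the sense of Definition \ref{quantum-homotopy-n-sphere}. What one needs is a quantum manifold whose classic limit under $\bar\pi_C$ is $M_C$ and whose homotopy equivalence to $\hat S^n$ fits into the commutative diagram (\ref{commutative-diagram-homotopy-equivalence-quantum-m-n-supersphere}); a trivial product does not satisfy the boundedness/compactness and homotopy requirements. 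To be fair, the paper itself does not justify surjectivity either (Remark \ref{remark-quantum-spheres-classic-limits} only shows quantum diffeomorphisms descend to the classic limit, not that every class in $\Theta_n$ is hit), so you have correctly located the actual content to be proved, but the construction you offer does not close the gap. The remaining parts — well-definedness of $\widehat\#$, homomorphism property of $j_C$, and the group-ring tensor identity — are consonant with the paper's argument.
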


\begin{proof}
After above Remark \ref{remark-quantum-spheres-classic-limits} we can state that the mapping $j_C$ is surjective. In other words we can write
$$\hat\Theta_n=\bigcup_{[\hat\Sigma^n_C]\in\Theta_n}(\hat\Theta_n)_{[\hat\Sigma^n_C]}.$$
The fiber $(\hat\Theta_n)_{[\hat\Sigma^n_C]}$ is given by all classes $[\hat\Sigma^n]$ such their classic limits are diffeomorphic, hence belong to the same class in $\Theta_n$. Furthermore one has $\ker(j_C)=j_C^{-1}([S^n])\equiv\hat\Upsilon_n\subset\hat\Theta_n$. Therefore, we can state that $\hat\Theta_n$ is an extension of $\Theta_n$ by $\hat\Upsilon_n$. Such extensions are classified by $H^2(\Theta_n;\hat\Upsilon_n)$.\footnote{In Tab. \ref{homologies-ciclic-group} are reported useful formulas to explicitly calculate these groups.}

\begin{table}[h]
\caption{Homology of finite cyclic group $\mathbb{Z}_i$ of order $i$.}
\label{homologies-ciclic-group}
\begin{tabular}{|c|c|}
  \hline
  \hfil{\rm{\footnotesize $ r$}}\hfil& \hfil{\rm{\footnotesize $H_r(\mathbb{Z}_i;\mathbb{Z})$}}\hfil\\
  \hline
 \hfil{\rm{\footnotesize $0$}}\hfil& \hfil{\rm{\footnotesize $\mathbb{Z}$}}\hfil\\
 \hline
  \hfil{\rm{\footnotesize $r$ odd}}\hfil& \hfil{\rm{\footnotesize $\mathbb{Z}_i$}}\hfil\\
\hline
  \hfil{\rm{\footnotesize $r>0$ even}}\hfil& \hfil{\rm{\footnotesize $0$}}\hfil\\
\hline
\end{tabular}
\end{table}
The composition map in $\hat\Theta_n$ is defined by {\em quantum fibered connected sum}, i.e., a connected sum on quantum manifolds that respects the connected sum on their corresponding classic limits. More precisely let $M\to M_C$ and $N\to N_C$ be connected $n$-dimensional classic regular quantum manifolds. We define quantum fibered connected sum of $M$ and $N$ the classic regular $n$-dimensional quantum manifold $M\sharp N\to M_C\sharp N_C$, where
\begin{equation}\label{quantum-fibered-connected-sum}
 \left\{
 \begin{array}{ll}
   M\sharp N&=(M\setminus\hat D^n)\bigcup(\hat S^{n-1}\times\hat D^1)\bigcup(N\setminus\hat D^n)\\
   &\\
   M_C\sharp N_C&=(M_C\setminus D^n)\bigcup( S^{n-1}\times D^1)\bigcup(N_C\setminus D^n).\\
 \end{array}
 \right.
\end{equation}
Then the additive composition law is $+:\hat\Theta^n\times\hat\Theta^n\to\hat\Theta^n$, $[M]+[N]=[M\sharp N]$. $[\hat S^n]$ is the zero of this addition. In fact, since $\overline{\hat S^n\setminus\hat D^n}\bigcup_{\hat S^{n-1}}(\hat S^{n-1}\times\hat D^1)\cong\hat D^n$, we get
\begin{equation*}
    \begin{array}{ll}
      M\sharp\hat S^n&\cong\overline{M\setminus\hat D^n}\bigcup_{\hat S^{n-1}}(\overline{\hat S^n\setminus\hat D^n}\bigcup_{\hat S^{n-1}}(\hat S^{n-1}\times\hat D^1))\\
      &\\
      &\cong\overline{M\setminus\hat D^n}\bigcup_{\hat S^{n-1}}\hat D^n\cong M.\\
    \end{array}
\end{equation*}
Analogous calculus for $M_C$ completes the proof.
\end{proof}

In the following remark we will consider some examples and further results to better understand some relations between quantum homotopy spheres and their classic limits.

\begin{example}[Quantum homotopy $7$-sphere]\label{quantum-homotopy-7-spheres-with-quantum-algebra-a} Let us calculate the extension classes
\begin{equation}\label{short-exact-sequence-lemma-quantum-7-spheres-classic-limits-c}
 \xymatrix{0\ar[r]&\hat\Upsilon_7\ar@{^{(}->}[r]&\hat\Theta_7\ar[r]^{j_C}&\Theta_7\ar[r]&0}
\end{equation}
 These are given by $H^2(\Theta_7;\hat\Upsilon_7)\cong H^2(\mathbb{Z}_{28};\hat\Upsilon_7)$. We get
 \begin{equation}\label{calculation-short-exact-sequence-lemma-quantum-7-spheres-classic-limits-c}
    H^2(\mathbb{Z}_{28};\hat\Upsilon_7) =Hom_{\mathbb{Z}}(H_2(\mathbb{Z}_{28};\mathbb{Z});\hat\Upsilon_7)
    =Hom_{\mathbb{Z}}(0;\hat\Upsilon_7)\bigoplus Ext_{\mathbb{Z}}(H_1(\mathbb{Z}_{28};\mathbb{Z});\hat\Upsilon_7).
 \end{equation}
We shall prove that $Ext_{\mathbb{Z}}(H_1(\mathbb{Z}_{28};\mathbb{Z});\hat\Upsilon_7)=\hat\Upsilon_7/28\cdot \hat\Upsilon_7$.\footnote{We have used the fact that $H_2(\mathbb{Z}_{28};\mathbb{Z})=0$.} Let us look in some detail to this $\mathbb{Z}$-module. By using
the projective resolution of $\mathbb{Z}_{28}$ given in {\em(\ref{projective-resolution-z28})},
\begin{equation}\label{projective-resolution-z28}
    \xymatrix{0\ar[r]&\mathbb{Z}\ar[r]^{\mu=.28}&\mathbb{Z}\ar[r]^{\epsilon}&\mathbb{Z}_{28}\ar[r]&0}
\end{equation}
we get the exact sequence {\em(\ref{projective-resolution-z28-derived})}.

\begin{equation}\label{projective-resolution-z28-derived}
    \xymatrix{0\ar[r]&Hom_{\mathbb{Z}}(\mathbb{Z}_{28};\hat\Upsilon_7)\ar@{=}[d]\ar[r]^{\epsilon_*}&
    Hom_{\mathbb{Z}}(\mathbb{Z};\hat\Upsilon_7)\ar@{=}[d]^{\wr}\ar[r]^{\mu_*}&Hom_{\mathbb{Z}}(\mathbb{Z};\hat\Upsilon_7)\ar@{=}[d]^{\wr}\\
    0\ar[r]&Hom_{\mathbb{Z}}(\mathbb{Z}_{28};\hat\Upsilon_7)\ar[r]^(0.6){\epsilon_*}&
    \hat\Upsilon_7\ar[r]^{\mu_*}&\hat\Upsilon_7\\}
\end{equation}
Therefore we get
$$Ext_{\mathbb{Z}}(\mathbb{Z}_{28};\hat\Upsilon_7)=\hat\Upsilon_7/\IM(\mu_*).$$
In order to see what is $\IM(\mu_*)$ let us consider that $\mu_*$ is defined by the commutative diagram in {\em(\ref{commutative-diagram-multiplication-28})}.
\begin{equation}\label{commutative-diagram-multiplication-28}
    \xymatrix{\mathbb{Z}\ar[r]^{\alpha}&\hat\Upsilon_7\\
    \mathbb{Z}\ar[u]^{\mu=.28\, }\ar@/_1pc/[ur]_{\mu_*(\alpha)}\\}
\end{equation}
Since $\alpha$ is identified by means of the image of $1\in\mathbb{Z}$, i.e., $\alpha(1)\in\hat\Upsilon_7$, similarly also $\mu_*(\alpha)$ is determined by image of $1\in\mathbb{Z}$, i.e., $\mu_*(\alpha)(1)=\alpha(\mu(1))=\alpha(28)=28\cdot\alpha(1)\in\hat\Upsilon_7$. Therefore $\IM(\mu^*)=28.\hat\Upsilon_7$. Thus we get $Ext_{\mathbb{Z}}(\mathbb{Z}_{28};\hat\Upsilon_7)=\hat\Upsilon_7/28\cdot\hat\Upsilon_7$. The particular structure of this module, depends on the particular quantum algebra considered. For example, take $A=\mathbb{C}$. Since $\hat S^7\to S^7$, is just the fiber bundle $S^{14}\to S^7$, we can easily see that $\hat\Upsilon_7=\Theta_{14}=\mathbb{Z}_2$. In this case the extensions {\em(\ref{short-exact-sequence-lemma-quantum-7-spheres-classic-limits-c})} are classified by $Ext_{\mathbb{Z}}(\mathbb{Z}_{28},\mathbb{Z}_2)=\mathbb{Z}_2/(28\cdot\mathbb{Z}_2)=\mathbb{Z}_{2}/(14\cdot(2\cdot\mathbb{Z}_2))=
\mathbb{Z}_{2}/(14\cdot 0)=\mathbb{Z}_{2}$.\footnote{$\mathbb{Z}_2$ is a field of characteristic $2$, hence $2\cdot\mathbb{Z}_2=0$.} Therefore we get that all the extensions in {\em(\ref{short-exact-sequence-lemma-quantum-7-spheres-classic-limits-c})} are in correspondence one-to-one with $\Theta_{14}$.
Let us explicate in some more details this result. Since $\hat S^7\cong S^{14}$ and $\Theta_{14}=\mathbb{Z}_2$, it follows that other $S^{14}$ there is another class of homotopy $14$-spheres, non-diffeomorphic to $S^{14}$. Let us denote one of these spheres by $\Sigma^{14}$. Let us denote by $f:\Sigma^{14}\to S^{14}$ the homotopy map existing between $S^{14}$ and $\Sigma^{14}$. Then one has also a continuous fibration $\pi=\pi_C\circ f:\Sigma^{14}\to S^{14}\to S^{7}$. Since $C^s(\Sigma^{14},S^7)$ is dense in $C^r(\Sigma^{14},S^7)$, for $0\le s<r$, we can approximate $\pi$ by a smooth mapping, yet denoted $\pi$. Therefore we can consider the identification $\hat\Upsilon_7=\mathbb{Z}_2$. Furthermore, since there exists homeomorphisms $\phi:S^7\to\Sigma^7$, where $\Sigma^7$ are homotopy $7$-spheres belonging to different classes in $\Theta_7$, we get also continuous fibered structures $\Sigma^{14}\to \Sigma^7$, that again can be approximated by smooth mappings. In this way we can have the identification $\hat\Theta_7/\hat\Upsilon_7\cong\Theta_7\cong\mathbb{Z}_{28}$, and all the possible diffeomorphic classes of quantum homotopy $7$-spheres, with respect to the quantum algebra $A=\mathbb{C}$ are $\hat\Theta_7=\mathbb{Z}_2\rtimes_{\alpha}\mathbb{Z}_{28}$, where $\alpha$ are homomorphisms $\alpha:\mathbb{Z}_{28}\to Aut(\mathbb{Z}_{2})$. On the other hand, one has $Aut(\mathbb{Z}_{2})=\mathbb{Z}_{2}^{*}=\{1\}$, where $\mathbb{Z}_{2}^{*}$ is the group of units of $\mathbb{Z}_{2}$, that coincide with all numbers $0\le p\le 2$ coprime to $2$. Therefore the quantum homotopy $7$-spheres diffeomorphic classes are $2\times 28=56$, i.e., the order of $\hat\Theta_7=\mathbb{Z}_2\bigoplus\mathbb{Z}_{28} $.
\end{example}

\begin{example}[Quantum homotopy $n$-spheres for the limit case $A=\mathbb{R}$]\label{quantum-homotopy-n-spheres-with-real-numbers-quantum-algebra}
In the limit case where the quantum algebra is $A=\mathbb{R}$, then the classic limit of a quantum homotopy $n$-sphere $\hat\Sigma^n$ is just $\hat\Sigma^n_C=\hat\Sigma^n$, hence $\pi_C=id_{\hat\Sigma^n}$. Furthermore $\hat\Theta_n=\Theta_n$ and $\hat\Upsilon_n=0=[S^n]\in\Theta_n$. In particular if $n=\{1,2,3,4,5,6\}$, we get $\hat\Theta_n=\Theta_n=\hat\Upsilon_n=0$. (For the smooth case $n=4$ see \cite{PRA21-1}.)
\end{example}

\begin{theorem}[Homotopy groups of quantum $n$-sphere]\label{homotopy-groups-quantum-n-sphere}
Quantum homotopy $n$-spheres cannot have, in general, the same homotopy groups of $n$-spheres:\footnote{In other words, quantum homotopy $n$-spheres are not homotopy equivalent to the $n$-sphere.}
\begin{equation}\label{non-isomorphism-homotopy-groups-quantum-n-sphere-sphere}
    \pi_{k}(\hat\Sigma^n)\cong\pi_{k}(\hat S^n)\not=\pi_{k}(S^n).
\end{equation}

Furthermore, $S^n$ can be identified with a contractible subspace, yet denoted $S^n$, of $\hat S^n$. There exists a mapping $\hat S^n\to S^n$, but this is not a retraction, and the inclusion $S^n\hookrightarrow\hat S^n$, cannot be a homotopy equivalence.\footnote{For any convenience, in Tab. \ref{homotopy-groups-n-sphere} are reported some homotopy groups for $S^n$.}
\end{theorem}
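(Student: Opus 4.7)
The plan is to establish the theorem in five steps, combining the definitional homotopy equivalence of quantum homotopy spheres with the classic-limit structure $\pi_C\colon \hat S^n\to S^n$, and exploiting the concrete example $A=\mathbb{C}$, $n=7$, already worked out in Example~\ref{quantum-homotopy-7-spheres-with-quantum-algebra-a}.

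First I would observe that the isomorphism $\pi_k(\hat\Sigma^n)\cong\pi_k(\hat S^n)$ is immediate: by Definition~\ref{quantum-homotopy-n-sphere} every quantum homotopy $n$-sphere is, through the commutative diagram~(\ref{commutative-diagram-homotopy-equivalence-quantum-m-n-supersphere}), homotopy equivalent to $\hat S^n$, and homotopy groups are invariants of homotopy equivalence. To establish the inequality $\pi_k(\hat S^n)\neq\pi_k(S^n)$ in general, I would exhibit the complex case $A=\mathbb{C}$, where Example~\ref{quantum-homotopy-7-spheres-with-quantum-algebra-a} gives the topological identification $\hat S^7\cong S^{14}$. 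Then $\pi_7(\hat S^7)=\pi_7(S^{14})=0\neq\mathbb{Z}=\pi_7(S^7)$, proving the non-isomorphism stated in~(\ref{non-isomorphism-homotopy-groups-quantum-n-sphere-sphere}). More generally, for any quantum algebra $A$ with $\dim_{\mathbb{R}} A>1$, the classic-limit fibration $\pi_C\colon\hat S^n\to S^n$ has positive-dimensional fibre, so the long exact homotopy sequence forces $\pi_\bullet(\hat S^n)$ to differ from $\pi_\bullet(S^n)$ in some degree.

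Next, I would construct the embedding $S^n\hookrightarrow\hat S^n$ using the canonical ring homomorphism $\epsilon\colon\mathbb{R}\to Z(A)\subset A$, which induces $\mathbb{R}^{n+1}\hookrightarrow A^{n+1}$ and restricts to an inclusion of the real unit sphere inside the quantum unit sphere. In the model case this is the inclusion $S^7\hookrightarrow S^{14}$, which is nullhomotopic because $\pi_7(S^{14})=0$; more generally the image is nullhomotopic whenever $\pi_n(\hat S^n)=0$. This gives the claim that $S^n$ may be identified with a subspace of $\hat S^n$ that is contractible inside $\hat S^n$. The map $\hat S^n\to S^n$ of the statement is then the classic-limit projection $\pi_C$. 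To show it is not a retraction, I would argue by contradiction: a retraction would provide a section $s\colon S^n\to\hat S^n$ with $(\pi_C)_*\circ s_*=\mathrm{id}_{\pi_n(S^n)}$, forcing $s_*\colon\mathbb{Z}=\pi_n(S^n)\to\pi_n(\hat S^n)$ to be injective, which is impossible since $\pi_n(\hat S^n)=0$ in the example.

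Finally, to show the inclusion $S^n\hookrightarrow\hat S^n$ is not a homotopy equivalence, I would note that it is nullhomotopic by the previous step, hence induces the zero map on $\pi_n$, whereas a homotopy equivalence would have to send the generator of $\pi_n(S^n)=\mathbb{Z}$ to a generator of $\pi_n(\hat S^n)$; alternatively, $S^n$ is not contractible while its image in $\hat S^n$ is, ruling out a homotopy inverse. The main obstacle will be making the generality of the non-isomorphism $\pi_k(\hat S^n)\neq\pi_k(S^n)$ precise beyond the exhibited example: one must keep track, via the long exact sequence of the fibration $F\to\hat S^n\to S^n$, of which homotopy group detects the non-triviality of the fibre, and this can depend on the specific quantum (hypercomplex) algebra $A$. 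For the statement as given ``cannot have, in general, the same homotopy groups'', producing a single witness from Example~\ref{quantum-homotopy-7-spheres-with-quantum-algebra-a} suffices, and this is what closes the proof.
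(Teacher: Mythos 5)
Your proposal is correct and follows essentially the same strategy as the paper's proof: embed $S^n$ into $\hat S^n$ via the ring morphism $\epsilon\colon\mathbb{R}\to Z(A)$, observe the inclusion is nullhomotopic since $\hat S^n\setminus\{\infty\}\cong A^n$ is contractible, rule out $\pi_C$ being a retraction via the induced maps on homotopy groups, and anchor everything on the concrete identification of $\hat S^n$ with a higher-dimensional sphere when $A=\mathbb{C}$. The only substantive difference is your choice of witness: you invoke $\hat S^7\cong S^{14}$ from Example~\ref{quantum-homotopy-7-spheres-with-quantum-algebra-a}, whereas the paper uses the lowest-dimensional case $\hat S^1\cong S^2$, which keeps the arithmetic ($\pi_1(S^1)=\mathbb{Z}$ versus $\pi_1(S^2)=0$, and $\pi_2(S^2/S^1)\cong\mathbb{Z}\oplus\mathbb{Z}$) simpler; the paper also dwells more explicitly on the CW-pair/homotopy-extension-property machinery before producing the counterexample, which you streamline. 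One small caution: your side remark that the ``classic-limit fibration'' has a positive-dimensional fibre and thus ``forces'' a discrepancy via the long exact sequence is not quite rigorous as stated, because $\pi_C$ collapses a whole fibre over $\infty$ to a point and so is not a fibration in the usual sense; you correctly fall back on the explicit witness, which is all the statement ``cannot, in general'' requires, and this matches the paper's actual argument.
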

\begin{table}[h]
\caption{Homotopy groups of $n$-sphere.}
\label{homotopy-groups-n-sphere}
\begin{tabular}{|c|c|}
  \hline
  \hfil{\rm{\footnotesize $ k$}}\hfil& \hfil{\rm{\footnotesize $\pi_k(S^n)$}}\hfil\\
  \hline
 \hfil{\rm{\footnotesize $k<n$}}\hfil& \hfil{\rm{\footnotesize $0$}}\hfil\\
 \hline
  \hfil{\rm{\footnotesize $n$ }}\hfil& \hfil{\rm{\footnotesize $\mathbb{Z}$}}\hfil\\
\hline
\multicolumn{2}{|c|}{\rm{\footnotesize Examples for $k>n$.}}\\
\hline
\multicolumn{2}{|l|}{\rm{\footnotesize $\pi_k(S^0)=0,\, k\ge 0$.}}\\
\multicolumn{2}{|l|}{\rm{\footnotesize $\pi_k(S^1)=0,\, k>1$.}}\\
\multicolumn{2}{|l|}{\rm{\footnotesize $\pi_3(S^2)=\mathbb{Z},\, \pi_4(S^2)=\pi_5(S^2)=\mathbb{Z}_2,\, \pi_6(S^2)=\mathbb{Z}_{12},\, \pi_7(S^2)=\mathbb{Z}_{2}$.}}\\
\multicolumn{2}{|l|}{\rm{\footnotesize $\pi_4(S^3)=\pi_5(S^3)=\mathbb{Z}_2,\, \pi_6(S^3)=\mathbb{Z}_{12},\, \pi_7(S^3)=\mathbb{Z}_{2}$.}}\\
\multicolumn{2}{|l|}{\rm{\footnotesize $\pi_5(S^4)=\pi_6(S^4)=\mathbb{Z}_2,\, \pi_7(S^4)=\mathbb{Z}\times\mathbb{Z}_{12}$.}}\\
\multicolumn{2}{|l|}{\rm{\footnotesize $\pi_6(S^5)=\pi_7(S^5)=\mathbb{Z}_2,\, \pi_8(S^5)=\mathbb{Z}_{24}$.}}\\
\multicolumn{2}{|l|}{\rm{\footnotesize $\pi_7(S^6)=\pi_8(S^6)=\mathbb{Z}_2$.}}\\
\multicolumn{2}{|l|}{\rm{\footnotesize $\pi_8(S^7)=\mathbb{Z}_2$.}}\\
\hline
\end{tabular}
\end{table}

\begin{proof}
Since must necessarily be $\pi_k(\hat \Sigma^n)\cong \pi_k(\hat S^n)$, $k\ge 0$, it is enough prove theorem for $\hat S^n$. We shall first recall some useful definitions and results of Algebraic Topology, here codified as lemmas.
\begin{definition}
A pair $(X,A)$ has the {\em homotopy extension property} if a homotopy $f_t:A\to Y$, $t\in I$, can be extended to  homotopy $f_t:X\to Y$ such that $f_0:X\to Y$ is a given map.
\end{definition}
\begin{lemma}
If $(X,A)$ is a CW pair, then it has the homotopy extension property.
\end{lemma}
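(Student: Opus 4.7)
The plan is to establish the homotopy extension property (HEP) for a CW pair $(X,A)$ by producing a retraction $r:X\times I\to (X\times\{0\})\cup(A\times I)$; once such a retraction exists, any pair $(f_0,f_t)$ consisting of a map $f_0:X\to Y$ and a homotopy $f_t:A\to Y$ with $f_t|_A(a)=f_0(a)$ at $t=0$ assembles into a continuous map $g:(X\times\{0\})\cup(A\times I)\to Y$, and the composite $g\circ r:X\times I\to Y$ is the required extended homotopy. So the whole proof reduces to constructing $r$.

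The base case I would handle first is the model pair $(D^n,S^{n-1})$. For this I would write down the explicit deformation retraction of $D^n\times I$ onto $(D^n\times\{0\})\cup(S^{n-1}\times I)$ obtained by radial projection from the point $(0,2)\in D^n\times\mathbb{R}$; this is the standard ``umbrella'' retraction and is elementary. This immediately gives HEP for each pair $(e^n_\alpha,\partial e^n_\alpha)$ attached to the lower skeleton.

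Next I would proceed by induction on the skeleta $X^{(n)}$ of $X$ relative to $A$. Set $A_{-1}=A$ and $A_n=A\cup X^{(n)}$. Assuming a retraction $r_{n-1}:X\times I\to (X\times\{0\})\cup(A_{n-1}\times I)$ has been built, I would build $r_n$ by using the pushout presentation
\begin{equation*}
A_n \;\cong\; A_{n-1}\,\cup_{\,\sqcup_\alpha S^{n-1}_\alpha}\,\sqcup_\alpha D^n_\alpha,
\end{equation*}
together with the model-case retraction on each cell $D^n_\alpha\times I$, glued along its boundary to the identity on $(A_{n-1}\times I)$. The universal property of the pushout (plus the fact that products with the compact interval $I$ preserve this pushout) yields a continuous retraction $r_n$ defined on $X\times I$ with image in $(X\times\{0\})\cup(A_n\times I)$.

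The main obstacle is the passage to the limit. One has to verify that the ``union'' $r=\bigcup_n r_n$ is continuous on $X\times I$ when $X$ is infinite-dimensional. I would handle this by invoking the weak (CW) topology: a map out of $X\times I$ is continuous iff its restriction to each $X^{(n)}\times I$ is, and one checks that the $r_n$ are compatible so that they do assemble in this sense. A convenient way is to define $r$ so that on the $n$-th stage it sends everything into $(X\times\{0\})\cup(A_n\times I)$ during a time subinterval $[1-2^{-n+1},1-2^{-n}]$ and then leaves it fixed, making continuity automatic at each finite skeleton, and continuity at the ``limit time'' $t=1$ automatic because only finitely many cells meet any compact set (by CW closure-finiteness). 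This finishes the construction of $r$ and hence the proof.
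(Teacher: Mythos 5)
The overall strategy you propose---reduce the HEP to the existence of a retraction $r:X\times I\to (X\times\{0\})\cup(A\times I)$, handle the model pair $(D^n,S^{n-1})$ by the explicit umbrella retraction from $(0,2)$, proceed by induction over the relative skeleta, and pass to the colimit via the weak topology---is the standard (Hatcher-style) proof, and the paper itself quotes this lemma as a known fact without supplying a proof. However, the inductive step as you have written it runs in the wrong direction. You set $A_n=A\cup X^{(n)}$, so $Y_n:=(X\times\{0\})\cup(A_n\times I)$ is an \emph{increasing} chain with $Y_{-1}$ the space we want to retract onto and $\bigcup_n Y_n=X\times I$. Starting from a retraction $r_{n-1}:X\times I\to Y_{n-1}$ and ``building $r_n:X\times I\to Y_n$'' is manufacturing a weaker statement from a stronger one (a retraction onto $Y_{n-1}\subset Y_n$ is already, after composing with the inclusion, a map into $Y_n$, but it is not a retraction onto $Y_n$ and doesn't need to be). Worse, the ``union'' $r=\bigcup_n r_n$ that you propose to take at the end would have image $\bigcup_n Y_n = X\times I$, so $r$ would simply be the identity; this does not prove anything. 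The time-reparametrization with intervals $[1-2^{-n+1},1-2^{-n}]$ inherits the same confusion: at stage $n+1$ everything is allegedly already inside $Y_n\subset Y_{n+1}$, so that stage does nothing.

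The fix is to reverse the role of domain and codomain in the induction. Using the umbrella retraction on each $n$-cell and the pushout presentation you cite (and the fact that $-\times I$ preserves this pushout), construct, for each $n\ge 0$, a retraction $\rho_n:Y_n\to Y_{n-1}$ that is the identity on $Y_{n-1}$. Then the finite composites $R_n=\rho_0\circ\rho_1\circ\cdots\circ\rho_n:Y_n\to Y_{-1}$ satisfy $R_{n+1}\vert_{Y_n}=R_n$, because $\rho_{n+1}\vert_{Y_n}=\mathrm{id}$; since $X\times I$ carries the colimit (weak) topology determined by the increasing subcomplexes $Y_n$, these $R_n$ glue to a single continuous retraction $R:X\times I\to Y_{-1}=(X\times\{0\})\cup(A\times I)$, which is exactly what you need. (The time-interval device you invoke is only needed if one wants the stronger conclusion that $Y_{-1}$ is a \emph{strong deformation} retract of $X\times I$; for the bare retraction, and hence for HEP, the compatible-composites argument above suffices.) With this correction the argument is complete.
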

\begin{lemma}\label{contractibility-and-homotopy-equivalence}
If the pair $(X,A)$ satisfies the homotopy extension property and $A$ is contractible, then the quotient map $q:X\to X/A$ is a homotopy equivalence.
\end{lemma}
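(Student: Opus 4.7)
The plan is to build an explicit homotopy inverse to $q:X\to X/A$ out of a contraction of $A$, using the homotopy extension property to promote that contraction to a self-homotopy of $X$ that descends to a homotopy on $X/A$.

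Since $A$ is contractible, choose a homotopy $f_t:A\to A$, $t\in I$, with $f_0=\mathrm{id}_A$ and $f_1\equiv a_0$ for some $a_0\in A$. By the homotopy extension property applied to the pair $(X,A)$ and to the map $\mathrm{id}_X:X\to X$, the homotopy $f_t$ (viewed as a homotopy into $X$ via the inclusion $A\hookrightarrow X$) extends to a homotopy $F_t:X\to X$ with $F_0=\mathrm{id}_X$ and $F_t|_A=f_t$. In particular $F_1(A)=\{a_0\}$ is a single point, so $F_1$ is constant on the fiber of the quotient $q:X\to X/A$ over $[A]$; hence there is a unique continuous factorization $F_1=\bar F_1\circ q$ with $\bar F_1:X/A\to X$. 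I propose $\bar F_1$ as the candidate homotopy inverse of $q$.

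First, $\bar F_1\circ q=F_1$ and the extended homotopy $F_t$ itself gives $F_1\simeq F_0=\mathrm{id}_X$, so $\bar F_1\circ q\simeq\mathrm{id}_X$ in $X$. For the other direction, the essential observation is that $F_t(A)=f_t(A)\subseteq A$ for every $t\in I$, so $q\circ F_t:X\to X/A$ collapses $A$ to the single point $[A]$ at every time $t$. By the universal property of the quotient $q$, each $q\circ F_t$ descends through $q$ to a continuous map $\bar G_t:X/A\to X/A$ with $\bar G_t\circ q=q\circ F_t$; evaluated at the endpoints, $\bar G_0=\mathrm{id}_{X/A}$ and $\bar G_1=q\circ\bar F_1$. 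The family $\bar G_t$ assembles into a continuous homotopy $X/A\times I\to X/A$ because $q\times\mathrm{id}_I:X\times I\to (X/A)\times I$ is a quotient map (e.g.\ $I$ is locally compact Hausdorff, or one appeals to the CW-structure of the pair), and $q\circ F$ is continuous and $A\times I$-invariant. Thus $\mathrm{id}_{X/A}\simeq q\circ\bar F_1$, showing $q$ is a homotopy equivalence with inverse $\bar F_1$.

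The only non-routine step is ensuring that the pointwise factorizations $\bar G_t$ fit together into a genuine \emph{continuous} homotopy on $X/A$; this is where one must invoke the fact that $q\times\mathrm{id}_I$ is a quotient map. Everything else is a direct application of the hypotheses: contractibility of $A$ supplies $f_t$, the homotopy extension property supplies $F_t$, and the invariance $F_t(A)\subseteq A$ is automatic from $F_t|_A=f_t$.
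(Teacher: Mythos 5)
The paper does not actually prove this lemma: it appears inside the proof of Theorem~\ref{homotopy-groups-quantum-n-sphere} under the preamble ``We shall first recall some useful definitions and results of Algebraic Topology, here codified as lemmas,'' so it is quoted as a known fact (it is Proposition~0.17 in Hatcher's textbook) with no argument supplied. Your proof is the standard textbook argument for that fact, and it is correct: you use contractibility to get $f_t$ on $A$, the homotopy extension property to extend it to $F_t:X\to X$ with $F_0=\mathrm{id}_X$, observe $F_1$ collapses $A$ to a point and hence factors through $q$ as $\bar F_1$, get $\bar F_1\circ q=F_1\simeq\mathrm{id}_X$ directly, and get $q\circ\bar F_1\simeq\mathrm{id}_{X/A}$ by noting $F_t(A)\subseteq A$ so that $q\circ F_t$ descends to a homotopy $\bar G_t$ on $X/A$. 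You are also right to flag the only genuinely delicate point — continuity of the assembled homotopy $\bar G:X/A\times I\to X/A$ — and the justification you give (that $q\times\mathrm{id}_I$ is a quotient map because $I$ is locally compact Hausdorff) is exactly the standard one. There is nothing in the paper to compare against, but your argument fills the gap correctly and in the expected way.
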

Let us consider that we can represent $S^n$ into $\hat S^n$ by a continuous mapping $s:S^n\to\hat S^n$, defined by means of the commutative diagram in (\ref{commutative-diagram-section-quantum-n-sphere}).
\begin{equation}\label{commutative-diagram-section-quantum-n-sphere}
  \xymatrix{\hat S^n\ar[d]_{\pi_C}\ar@{=}[r]&A^n\bigcup\{\infty\}\\
  S^n\ar@{=}[r]&\mathbb{R}^n\bigcup\{\infty\}\ar[u]_{s\equiv(\epsilon^n,id_{\infty})}\\}
\end{equation}
where $\epsilon^n:\mathbb{R}^n\to A^n$ is induced by the canonical ring homomorphism $\epsilon:\mathbb{R}\to A$. $s$ is a section of $\pi$: $\pi\circ s=id_{S^n}$. Let us yet denote by $S^n$ the image of $s$. So we can consider the canonical couple $(\hat S^n,S^n)$ as a CW pair, hence it has the homotopy extension property. $S^n$ is not a contractible subcomplex of $\hat S^n$, so in general the quotient map $\hat q:\hat S^n\to \hat S^n/S^n$ is not a homotopy equivalence.
We have the following lemma.

\begin{lemma}
The couple $(S^n,\infty)$ can be deformed into $(\hat S^n,\infty)$ to the base point $\{\infty\}$.
\end{lemma}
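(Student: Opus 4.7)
The plan is to construct an explicit based homotopy $H : S^n \times I \to \hat S^n$ from the inclusion $s = (\epsilon^n,\mathrm{id}_{\infty}) : S^n \hookrightarrow \hat S^n$ to the constant map at $\infty$, while keeping the base point $\infty$ fixed throughout. The guiding idea is that, although $A^n$ is contractible as a locally convex topological vector space, we cannot simply contract it to the origin $0 \in A^n$ because such a homotopy would be discontinuous at $\infty$ (the interior collapses, but $\infty$ must stay fixed). Instead, I would push everything \emph{outward} so that the image escapes every compact set of $A^n$ and therefore converges to the compactification point $\infty$ in $\hat S^n$.

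Concretely, first identify $\hat S^n = A^n \cup \{\infty\}$ and $S^n = \mathbb{R}^n \cup \{\infty\}$ with $s(x)=\epsilon^n(x)$ for $x\in\mathbb{R}^n$ and $s(\infty)=\infty$. Fix any nonzero $v\in A^n$, which exists since $A$ is a nontrivial quantum hypercomplex algebra. Define
\begin{equation*}
H_t(x)=\begin{cases} \epsilon^n(x)+\dfrac{t}{1-t}\,v, & x\in\mathbb{R}^n,\ t\in[0,1),\\[4pt] \infty, & x\in\mathbb{R}^n,\ t=1,\\[4pt] \infty, & x=\infty,\ t\in[0,1]. \end{cases}
\end{equation*}
By construction $H_0=s$, $H_1\equiv\infty$, and $H_t(\infty)=\infty$ for every $t$, so $H$ has the right endpoints and respects the base point.

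The remaining step is to check continuity of $H$ on the whole of $S^n\times I$. On the open region $\mathbb{R}^n\times[0,1)$ this is immediate, since $H$ is a continuous $A^n$-valued function there. At a point $(\infty,t_0)$ with $t_0<1$, the seminorm of $\epsilon^n(x)+\tfrac{t}{1-t}v$ diverges as $|x|\to\infty$ with $t$ bounded away from $1$, so its image leaves every compact subset of $A^n$; by the defining neighborhood basis of $\infty\in\hat S^n$, this gives $H_t(x)\to\infty$. At $(x_0,1)$ with $x_0\in\mathbb{R}^n$, the triangle inequality yields $|H_t(x)|\geq \tfrac{t}{1-t}|v|-|\epsilon^n(x)|\to\infty$ as $(x,t)\to(x_0,1)$, again forcing convergence to $\infty$ in $\hat S^n$.

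The main technical obstacle is the continuity at the ``corner'' $(\infty,1)\in S^n\times I$, where both $|x|$ and $\tfrac{t}{1-t}$ can diverge simultaneously and one must ensure that the resulting image still escapes every compact set of $A^n$. Any neighborhood of $\infty$ in $\hat S^n$ has the form $\hat S^n\setminus K$ with $K\subset A^n$ compact; using the continuity of the seminorms on $A^n$ and choosing bounds $r,\varepsilon$ so that $|x|>r$ or $t>1-\varepsilon$, one checks that $H_t(x)\notin K$ on a suitable product neighborhood of $(\infty,1)$. Once this corner estimate is settled, $H$ is the desired based null-homotopy and the lemma follows.
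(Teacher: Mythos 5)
Your approach is genuinely different from the paper's. The paper picks a point $p\in\hat S^n\setminus S^n$ and observes that $\hat S^n\setminus\{p\}\approx A^n$ is contractible and contains $S^n$, so the inclusion is null-homotopic; you instead build an explicit based homotopy that translates $\mathbb{R}^n$ off to infinity. The construction is attractive, but there is a genuine gap in the hypothesis on $v$: requiring only $v\neq 0$ is not enough, and continuity at the corner $(\infty,1)$ actually fails for some such $v$.

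To see this, suppose $v\in\epsilon^n(\mathbb{R}^n)$, say $v=\epsilon^n(w)$ for some $w\neq 0$. Take $x_k=-\tfrac{t_k}{1-t_k}\,w$ with $t_k\uparrow 1$. Then $(x_k,t_k)\to(\infty,1)$ in $S^n\times I$, yet $H_{t_k}(x_k)=\epsilon^n(x_k)+\tfrac{t_k}{1-t_k}\,v=0$ for every $k$, so $H_{t_k}(x_k)\not\to\infty$ and $H$ is discontinuous at the corner. Your reverse-triangle estimate only forces divergence when $|\epsilon^n(x)|$ is dominated by $\tfrac{t}{1-t}|v|$, which is precisely what can fail when $x\to\infty$ and $t\to 1$ at comparable rates; you need the two terms to lie in transverse directions so that cancellation is impossible. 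The correct hypothesis is $v\in A^n\setminus\epsilon^n(\mathbb{R}^n)$, which exists exactly because $A$ properly contains $\epsilon(\mathbb{R})$ --- consistent with the fact that the lemma is false for $A=\mathbb{R}$, where $\hat S^n=S^n$. With that choice, if $(x_k,t_k)\to(\infty,1)$ with $t_k<1$ and $H_{t_k}(x_k)\to y\in A^n$, then dividing by $c_k=\tfrac{t_k}{1-t_k}\to\infty$ gives $\epsilon^n(x_k)/c_k\to -v$; since $\epsilon^n(\mathbb{R}^n)$ is a closed subspace of $A^n$, this would force $-v\in\epsilon^n(\mathbb{R}^n)$, a contradiction. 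This condition on $v$ is the explicit counterpart of the paper's requirement that $p$ lie outside $S^n$.
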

\begin{proof}
In fact, let $p\in\hat S^n\setminus S^n$. Then the inclusion $i:S^n\hookrightarrow\hat S^n$ is nullhomotopic since $\hat S^n\setminus\{\infty\}\thickapprox A^n$ (homeomorphism).
\end{proof}
Since $S^n$ is contractible into $\hat S^n$, to the point $\infty\in\hat S^n$, the quotient map $\hat q:\hat S^n\to \hat S^n/S^n$ can be deformed into quotient mapping $\hat q_t$ over deformed quotient spaces $X_t\equiv\hat S^n/S^n_t$, with $S^n_t\equiv f_t(S^n)\subset\hat S^n$, for some homotopy $f:I\times S^n\to \hat S^n$, such that $X_0=\hat S^n/S^n$, $X_1=\hat S^n$ and $\hat q_1=id_{\hat S^n}$. (See diagram (\ref{diagram-deformation-quotient-mappings}).)

\begin{equation}\label{diagram-deformation-quotient-mappings}
\xymatrix{\hat S^n\ar[rdd]_(0.5){\hat q_1}\ar[rd]^(0.4){\hat q_t}\ar[r]^(0.4){\hat q_0=\hat q}&\hat S^n/S^n\equiv X_0\ar@{.}[d]\\
&\hat S^n/S^n_t\equiv X_t\ar@{.}[d]\\
&\hat S^n/\{\infty\}=\hat S^n\equiv X_1}
\end{equation}
But this does not assure that $\hat q$ is a homotopy equivalence.\footnote{Rally $S^n$ is contractible in $\hat S^n$, but is not a contractible sub-complex of $\hat S^n$. This clarifies the meaning of Lemma \ref{contractibility-and-homotopy-equivalence}. For example, in the case $A=\mathbb{C}$, one has that $\hat S^1/S^1$ is not homotopy equivalent to $S^2$. In fact $\pi_2(S^2)=\mathbb{Z}$ and $\pi_2(S^2/S^1)\cong\pi_2(S^2\vee S^2)\cong H_2(S^2\vee S^2;\mathbb{Z})=\mathbb{Z}\bigoplus\mathbb{Z}$.} Let us, now, consider also some further lemmas.
\begin{lemma}
If $(X,A)$ is a CW pair and we have attaching maps $f,\, g:A\to X_0$ that are homotopic, then $X_0\bigcup_fX_1\backsimeq X_0\bigcup_gX_1$ $\hbox{\rm rel $X_0$}$ (homotopy equivalence).
\end{lemma}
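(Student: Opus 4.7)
The plan is to construct explicit maps $\phi: X_0 \cup_f X_1 \to X_0 \cup_g X_1$ and $\psi: X_0 \cup_g X_1 \to X_0 \cup_f X_1$ that are the identity on $X_0$, and to verify that they are mutually inverse homotopy equivalences rel $X_0$. The only tool I need, beyond the given homotopy $H: A \times I \to X_0$ with $H_0 = f$, $H_1 = g$, is the homotopy extension property for the CW pair $(X_1, A)$ (which is inherited from the CW pair $(X,A)$).

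First I would apply HEP to $(X_1, A)$ with initial data the inclusion $j_g: X_1 \hookrightarrow X_0 \cup_g X_1$ and homotopy $\iota_g \circ \bar H$ on $A \times I$, where $\iota_g: X_0 \hookrightarrow X_0 \cup_g X_1$ is the canonical inclusion and $\bar H(a,t) = H(a, 1-t)$. This yields a homotopy $\tilde H: X_1 \times I \to X_0 \cup_g X_1$ with $\tilde H_0 = j_g$ and $\tilde H(a,t) = H(a, 1-t)$ for $a \in A$. Then $\tilde H_1|_A = \iota_g \circ f$, so $\tilde H_1$ together with $\iota_g$ match on $A$ via $f$ and define a continuous map $\phi: X_0 \cup_f X_1 \to X_0 \cup_g X_1$ which, by construction, restricts to the identity on $X_0$. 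Exchanging the roles of $f$ and $g$ (and using $H$ in the forward direction) gives $\psi: X_0 \cup_g X_1 \to X_0 \cup_f X_1$ with $\psi|_{X_0} = \mathrm{id}$.

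Next I would show $\psi \circ \phi \simeq \mathrm{id}$ rel $X_0$ and $\phi \circ \psi \simeq \mathrm{id}$ rel $X_0$. For the first composition, tracking a point $x \in X_1$ along the concatenated homotopies produces a curve in $X_0 \cup_f X_1$ which, when restricted to $A$, traces the loop $H * \bar H$ in $X_0$; this loop is nullhomotopic rel endpoints in $X_0$ by a standard reparametrization. I would then apply HEP to $(X_1 \times I, A \times I \cup X_1 \times \partial I)$ (or equivalently, apply HEP twice in succession) to upgrade this pointwise nullhomotopy of the trace loops into a homotopy from $\psi \circ \phi$ to $\mathrm{id}_{X_0 \cup_f X_1}$ that is stationary on $X_0$. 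The symmetric argument handles $\phi \circ \psi$.

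The delicate point, and the main obstacle, is precisely the ``rel $X_0$'' requirement: the naive homotopies produced by HEP move $X_0$, and one must arrange the nullhomotopy of the loop $H * \bar H$ in $X_0$ to be chosen coherently across $X_1$, so that the final homotopy of $\psi \circ \phi$ with the identity fixes $X_0$ pointwise. Handling this requires invoking HEP for the pair $(X_1 \times I, (X_1 \times \partial I) \cup (A \times I))$, which again follows from the CW hypothesis. Once this coherent choice is made, the rest is routine verification of continuity and of the relative homotopy conditions.
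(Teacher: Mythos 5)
The paper states this lemma without proof, as background from Algebraic Topology; it is essentially Hatcher's Proposition~0.18 (and the CW pair in the hypothesis should read $(X_1, A)$ rather than $(X,A)$, a misprint reproduced in the paper). Your argument is correct but takes a genuinely different route from the standard textbook proof. The usual argument forms the auxiliary space $Z = X_0 \sqcup_H (X_1 \times I)$, obtained by gluing $X_1 \times I$ to $X_0$ along $A \times I$ via the homotopy $H$, and observes that both $X_0 \cup_f X_1$ (sitting in $Z$ at $t=0$) and $X_0 \cup_g X_1$ (at $t=1$) are strong deformation retracts of $Z$: since $(X_1, A)$ is a CW pair, $X_1 \times \{i\} \cup A \times I$ is a strong deformation retract of $X_1 \times I$, and as such retractions fix $A \times I$ pointwise, the induced retractions of $Z$ fix $X_0$, so the rel~$X_0$ conclusion is automatic. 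You instead construct explicit maps $\phi$, $\psi$ using HEP and verify directly that they are mutually homotopy inverse rel~$X_0$. The tradeoff is clear: your approach is more concrete but requires the extra HEP bookkeeping on the larger pair $(X_1 \times I,\ X_1 \times \partial I \cup A \times I)$, which you correctly identify as the crux; the standard approach sidesteps that entirely by never explicitly writing down a homotopy inverse, obtaining the rel~$X_0$ condition for free from the strong deformation retractions. Both are valid; the standard one is more economical.
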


\begin{lemma}
If $(X,A)$ satisfies the homotopy extension property and the inclusion $A\hookrightarrow X$ is a homotopy equivalence, then $A$ is a deformation retract of $X$.
\end{lemma}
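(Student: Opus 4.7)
The plan is a two-stage application of the homotopy extension property (HEP). In the first stage, I would upgrade an arbitrary homotopy inverse $g:X\to A$ of the inclusion $i:A\hookrightarrow X$ into a genuine retraction; in the second stage, I would promote the homotopy $i\circ g\simeq 1_X$ into one that is stationary on $A$, yielding the required deformation retraction.

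First stage. Since $i$ is a homotopy equivalence, pick $g:X\to A$ with homotopies $H:gi\simeq 1_A$ and $K:ig\simeq 1_X$. Viewing $H$ as a partial homotopy defined on $A$ with initial map $gi=g|_A$, the hypothesis HEP for the pair $(X,A)$ (with target $A$, which is allowed for a CW pair) extends $H$ to a homotopy $G:X\times I\to A$ with $G_0=g$. Setting $r\equiv G_1:X\to A$ one has $r|_A=H_1=1_A$, so $r$ is a retraction with $r\simeq g$. Composing with $i$ gives $i\circ r\simeq i\circ g\simeq 1_X$ via some homotopy $L:X\times I\to X$ with $L_0=ir$ and $L_1=1_X$.

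Second stage. The obstruction is that the restriction $\ell\equiv L|_{A\times I}$ is a self-homotopy of $i:A\to X$ (since $L_0|_A=ir|_A=i$ and $L_1|_A=i$), not necessarily the constant one. I would cancel $\ell$ by concatenating $L$ with its own restriction reversed: explicitly, consider the map $\Phi$ defined on $X\times\{0,1\}\cup A\times I$ by $\Phi(x,0)=ir(x)$, $\Phi(x,1)=x$, and $\Phi(a,t)=\ell(a,1-t)\cdot \ell(a,t)$ re-parametrised so that it equals $i(a)$ at $t=0,1$ (i.e., the loop $\ell|_a$ composed with its reverse, which is null-homotopic rel endpoints in the track of $\ell$ itself). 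Applying HEP to the CW pair $(X\times I,\,X\times\{0,1\}\cup A\times I)$—which has HEP whenever $(X,A)$ does, and whose homotopy extension here is the standard trick that lets one glue $L$ with a reflection of $L$ along the $A$-direction—produces a new homotopy $\widetilde L:X\times I\to X$ with $\widetilde L_0=ir$, $\widetilde L_1=1_X$ and $\widetilde L_t|_A=i$ for all $t\in I$. This exhibits $A$ as a deformation retract of $X$ via the retraction $r$ and deformation $\widetilde L$.

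The main obstacle is the second stage: the naive homotopy $L$ moves points of $A$, and one must exploit the pair-HEP for $(X\times I,\,X\times\partial I\cup A\times I)$ (equivalently, the fact that $(X,A)$ being a cofibration implies so is its product with $I$) to absorb the self-homotopy $\ell$ of $i$. The routine verification that this double-cylinder pair inherits HEP from $(X,A)$ is the only technical point; once granted, the stationary deformation $\widetilde L$ follows formally.
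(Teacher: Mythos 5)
The paper does not prove this lemma; it is quoted as a standard fact of algebraic topology (it is Corollary 0.20 in Hatcher). So your proposal must be judged against the standard argument.

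Your first stage is correct: applying HEP to the homotopy $H:gi\simeq 1_A$ with initial map $g:X\to A$ gives a retraction $r:X\to A$ with $i r\simeq 1_X$, and this is exactly how every treatment begins. The gap is in the second stage. You define $\Phi$ only on $X\times\partial I\cup A\times I$ and then claim that HEP for the pair $(X\times I,\,X\times\partial I\cup A\times I)$ "produces" an extension $\widetilde L$ of $\Phi$ to $X\times I$. But HEP does not let you extend an arbitrary continuous map from the subspace to the whole space; it only lets you extend a homotopy of a map that is already given on the whole space. If you try to apply HEP with initial data $L:X\times I\to X$, you would need a homotopy (rel the overlap $A\times\partial I$) from $L|_{A\times I}=\ell$ to your prescribed $\bar\ell\cdot\ell$, and that does not exist in general: $[\bar\ell\cdot\ell]=1$ in the loop space $\Omega_i\,\mathrm{Map}(A,X)$, but $[\ell]$ need not be trivial. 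So the extension of $\Phi$ to $X\times I$ is not justified and this is a genuine hole, not a routine verification.

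The missing step is to first build the extension explicitly on all of $X\times I$, before invoking HEP. Concretely, one replaces $L$ by the concatenation $L'=(\,\overline{L}\circ(ir\times 1_I)\,)\ast L:X\times I\to X$, i.e. first run the reverse of $L$ precomposed with $ir$ (this is a homotopy from $L_1\circ ir=ir$ to $L_0\circ ir=ir\circ ir=ir$), and then run $L$. One checks that $L'_0=ir$, $L'_1=1_X$, and the restriction $L'|_{A\times I}$ is precisely $\bar\ell\cdot\ell$ because $ir|_A=i$. Since $\bar\ell\cdot\ell$ admits the explicit null-homotopy rel endpoints (the one supported in the track of $\ell$, as you observe), one now has a legitimate partial homotopy of $L'$ on $X\times\partial I\cup A\times I$ — constant on the two $X$-faces, the null-homotopy on $A\times I$ — and HEP for $(X\times I,\,X\times\partial I\cup A\times I)$ applies to the map $L'$ to yield $\widetilde L$ with $\widetilde L_0=ir$, $\widetilde L_1=1_X$, and $\widetilde L_t|_A=i$. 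So your plan is morally right, and the phrase about "gluing $L$ with a reflection of $L$ along the $A$-direction" hints at the correct idea, but as written the construction of the extended homotopy $L'$ is absent, and that construction is the crux of the second stage.
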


\begin{lemma}
A map $f:X\to Y$ is a homotopy equivalence iff $X$ is a deformation retract of the mapping cylinder $M_f$.
\end{lemma}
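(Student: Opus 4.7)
The plan is to factorize $f:X\to Y$ through the mapping cylinder $M_f=(X\times I)\sqcup Y/\!\sim$, where $(x,1)\sim f(x)$, and exploit the fact that the bottom copy $Y$ is always a strong deformation retract of $M_f$. Denote by $i_X:X\hookrightarrow M_f$ the inclusion at the top $X\times\{0\}$, by $i_Y:Y\hookrightarrow M_f$ the inclusion at the bottom, and by $r:M_f\to Y$ the standard deformation retraction sliding the cylinder down onto $Y$. The key identities are $r\circ i_X=f$, $r\circ i_Y=\mathrm{id}_Y$, and $i_Y\circ r\simeq\mathrm{id}_{M_f}$; in particular $r$ and $i_Y$ are mutually homotopy inverse, so $M_f\simeq Y$ unconditionally.

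For the direction $(\Leftarrow)$: if $X$ is a deformation retract of $M_f$ then the inclusion $i_X:X\hookrightarrow M_f$ is a homotopy equivalence. Since $f=r\circ i_X$ and $r$ is already a homotopy equivalence, $f$ is a composition of two homotopy equivalences, hence itself a homotopy equivalence. This direction is essentially formal once the factorization is in place.

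For the direction $(\Rightarrow)$: assume $f$ is a homotopy equivalence. From $f=r\circ i_X$ and the fact that $r$ admits the homotopy inverse $i_Y$, we get $i_X\simeq i_Y\circ f$, which is a composition of two homotopy equivalences and is therefore itself a homotopy equivalence. I then apply the immediately preceding lemma in the excerpt, namely: \emph{if $(Z,A)$ satisfies the homotopy extension property and the inclusion $A\hookrightarrow Z$ is a homotopy equivalence, then $A$ is a deformation retract of $Z$.} Taking $(Z,A)=(M_f,X)$ concludes the argument and yields that $X$ is a deformation retract of $M_f$.

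The one point requiring attention, and the main obstacle, is the verification that the pair $(M_f,X)$ satisfies the homotopy extension property; this is what licenses the invocation of the previous lemma. The standard argument is to exhibit a retraction $M_f\times I\to (M_f\times\{0\})\cup(X\times I)$ built from the evident retraction $I\times I\to(I\times\{0\})\cup(\{0\}\times I)$ applied fibrewise along the cylinder coordinate; equivalently, one notes that $i_X$ is a closed cofibration because it arises from the mapping cylinder factorization. Once HEP is in hand, the two directions close up cleanly and the lemma follows.
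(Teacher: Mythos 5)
The paper states this lemma without proof, as one of several standard auxiliary facts from algebraic topology cited inside the proof of the theorem on homotopy groups of the quantum $n$-sphere; there is therefore no paper proof to compare against. Your argument is the standard one and it is correct: you factor $f=r\circ i_X$ through the mapping cylinder, use that $r$ and $i_Y$ are mutually inverse homotopy equivalences, and then for the forward direction deduce $i_X\simeq i_Y\circ f$ is a homotopy equivalence and appeal to the immediately preceding lemma (HEP plus inclusion being a homotopy equivalence forces a deformation retract). You correctly identify the one nontrivial point, the verification that $(M_f,X)$ has the HEP, and your sketch is sound: the fibrewise application of the retraction $I\times I\to (I\times\{0\})\cup(\{0\}\times I)$ on the cylinder part glues consistently with the trivial retraction $Y\times I\to Y\times\{0\}$ along the seam $X\times\{1\}$, because the chosen square retraction fixes $(1,0)$ and collapses $\{1\}\times I$ to it, matching the identification $(x,1)\sim f(x)$. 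No gap.
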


Let us emphasize that we have a natural continuous mapping $\pi_C:\hat S^n\to S^n$, i.e., the surjection between the quantum $n$-sphere and its classic limit, identified by the commutative diagram (\ref{commutative-diagram-section-quantum-n-sphere}). The inclusion $i:S^n\hookrightarrow\hat S^n$ cannot be a deformation retract (and neither a strong deformation retract), otherwise $i$ should be a homotopy equivalence.\footnote{It is enough to consider the counterexample when $A=\mathbb{C}$ and $\hat S^1=\mathbb{C}\bigcup\{\infty\}=\mathbb{R}^2\bigcup\{\infty\}=S^2$. Then $S^1$ cannot be homotopy equivalent to $\hat S^1=S^2$, since $\pi_1(S^1)=\mathbb{Z}$ and $\pi_1(S^2)=0$.}
However, $\pi_C:\hat S^n\to S^n$, cannot be neither a retraction, otherwise their homotopy groups should be related by the split short exact sequence (\ref{split-short-exact-sequence-quantum-homotopy-groups-n-sphere}),
\begin{equation}\label{split-short-exact-sequence-quantum-homotopy-groups-n-sphere}
  \xymatrix{0\ar[r]&\pi_k(S^n,\infty)\ar@<1ex>[r]^{i_*}&\ar@<1ex>[l]^{r_*}\pi_k(\hat S^n,\infty)\ar[r]&\pi_k(\hat S^n,S^n,\infty)\ar[r]&\infty \\}
\end{equation}
hence we should have the splitting given in (\ref{splittings-homotopy-groups}). (For details on relations between homotopy groups and retractions see, e.g. \cite{PRA3}.)

\begin{equation}\label{splittings-homotopy-groups}
   \pi_k(\hat S^n,\infty)\cong \IM(i_*)\bigoplus\ker(r_*)\cong\pi_k(S^n,\infty)\bigoplus\ker(r_*).
\end{equation}
But this cannot work. In fact, in the case $A=\mathbb{C}$, we should have the commutative diagram (\ref{commutative-diagram-section-quantum-n-sphere-complex-case}) with exact horizontal lines.
\begin{equation}\label{commutative-diagram-section-quantum-n-sphere-complex-case}
  \xymatrix{0\ar[r]&\pi_1(S^1,\infty)\ar@{=}[d]\ar@<1ex>[r]^{i_*}&\ar@<1ex>[l]^{r_*}\pi_1(\hat S^1,\infty)\ar@{=}[d]\ar[r]&\pi_1(\hat S^1,S^1,\infty)\ar@{=}[d]\ar[r]&\infty \\
  0\ar[r]&\mathbb{Z}\ar[r]&0\ar[r]&\pi_1(\hat S^1,S^1,\infty)\ar[r]&0\\}
\end{equation}
This should imply that $\pi_1(S^1,\infty)=0$, instead that $\mathbb{Z}$, hence the bottom horizontal line in (\ref{commutative-diagram-section-quantum-n-sphere-complex-case}) cannot be an exact sequence, hence $\pi_C:\hat S^1\cong S^2\to S^1$ cannot be a retraction !
\end{proof}

\begin{cor}
Quantum homotopy spheres cannot be homotopy equivalent to $S^n$, except in the case that the quantum algebra $A$ reduces to $\mathbb{R}$.
\end{cor}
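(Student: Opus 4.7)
The plan is to reduce the statement for quantum homotopy $n$-spheres $\hat\Sigma^n$ to the corresponding statement for the quantum $n$-sphere $\hat S^n$, and then invoke Theorem \ref{homotopy-groups-quantum-n-sphere} directly. By Definition \ref{quantum-homotopy-n-sphere} the space $\hat\Sigma^n$ is, by hypothesis, homotopy equivalent to $\hat S^n$ via the fiber-respecting map in diagram \eqref{commutative-diagram-homotopy-equivalence-quantum-m-n-supersphere}. Therefore homotopy equivalence is a relation on the pair $(\hat\Sigma^n,S^n)$ if and only if it holds for the pair $(\hat S^n,S^n)$, by composition of homotopy equivalences. Consequently the corollary reduces to the assertion that $\hat S^n$ is not homotopy equivalent to $S^n$ unless the quantum algebra $A$ coincides with $\mathbb{R}$.

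Next I would invoke Theorem \ref{homotopy-groups-quantum-n-sphere}: quantum homotopy $n$-spheres do not in general have the homotopy groups of $S^n$, because the map $\pi_C\colon\hat S^n\to S^n$ fails to be a retraction and the inclusion $s\colon S^n\hookrightarrow\hat S^n$ of \eqref{commutative-diagram-section-quantum-n-sphere} fails to be a homotopy equivalence. Since a homotopy equivalence induces isomorphisms on all homotopy groups, the inequality of homotopy groups forces the non-existence of a homotopy equivalence $\hat\Sigma^n\simeq S^n$, completing the main implication.

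Finally, I would address the exceptional case $A=\mathbb{R}$. In the setting of Example \ref{quantum-homotopy-n-spheres-with-real-numbers-quantum-algebra}, the classical limit coincides with the manifold itself, so $\hat S^n=S^n$ and trivially $\hat\Sigma^n\simeq S^n$, consistent with the exclusion clause in the statement. This also clarifies why the obstruction vanishes exactly in the classical limit: the splitting \eqref{splittings-homotopy-groups} which fails for $A\neq \mathbb{R}$ becomes the tautological splitting when $\pi_C$ is the identity.

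The main obstacle in this plan is purely presentational rather than substantive: one must be careful that the homotopy equivalence assumed in Definition \ref{quantum-homotopy-n-sphere} is realized by a genuine \emph{map of pairs} compatible with the classical projections $\pi_C$, so that one can freely compose it with a hypothetical equivalence $\hat\Sigma^n\simeq S^n$ and transfer the contradiction arising from the non-retraction property of $\pi_C\colon\hat S^n\to S^n$. Once this compatibility is spelled out, the corollary is an immediate corollary of Theorem \ref{homotopy-groups-quantum-n-sphere} together with functoriality of $\pi_k(-)$ under homotopy equivalence.
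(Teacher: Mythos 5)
Your proposal is correct and takes essentially the same route the paper does: the corollary is treated as an immediate consequence of Theorem \ref{homotopy-groups-quantum-n-sphere}, since $\hat\Sigma^n\simeq\hat S^n$ by definition and the theorem shows $\hat S^n$ has homotopy groups distinct from those of $S^n$ (outside the case $A=\mathbb{R}$), so no homotopy equivalence to $S^n$ can exist. Your extra care about the fiber-respecting structure of the defining homotopy equivalence is a fair point of rigor, but it is already built into Definition \ref{quantum-homotopy-n-sphere}, and the paper leaves this step implicit.
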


Quantum homotopy groups for quantum supermanifolds are introduced in \cite{PRA15}. In Tab. \ref{quantum-homotopy-groups-definitions-properties} are resumed their definitions and properties.

\begin{table}[t]\centering
\caption{Quantum homotopy groups: definitions and some properties.}
\label{quantum-homotopy-groups-definitions-properties}
\begin{tabular}{|l|l|}
\hline
\hfil{\rm{\footnotesize Definition}}\hfil&\hfil{\rm{\footnotesize Sum-law}}\hfil\\
\hline
\hfil{\rm{\footnotesize $\hat\pi_n(X,x_0)=[(\hat S^n,s_0),(X,x_0)]$}}\hfil&\hfil{\rm{\footnotesize $f+g:\hat S^n\to\hat S^n\vee \hat S^n\mathop{\to}\limits^{f\vee g}X$}}\hfil\\
\hline
\hfil{\rm{\footnotesize $\hat\pi_n(X,A,x_0)=[(\hat D^n,\hat S^{n-1},s_0),(X,A,x_0)]$}}\hfil&\hfil{\rm{\footnotesize $f+g:\hat D^n\to\hat D^n\vee \hat D^n\mathop{\to}\limits^{f\vee g}X$}}\hfil\\
\hline
\multicolumn{2}{|c|}{\rm{\footnotesize Some Properties.}}\\
\hline
\multicolumn{2}{|l|}{\rm{\footnotesize If $X$ is path connected one can simply write $\hat\pi_n(X)$, since it does not depend on $x_0$.}}\\
\multicolumn{2}{|l|}{\rm{\footnotesize If $A$ is path connected one can simply write $\hat\pi_n(X,A)$, since it does not depend on $x_0$.}}\\
\hline
\multicolumn{2}{|l|}{\rm{\footnotesize $\hat\pi_n(X,x_0)=\hat\pi_n(X,x_0,x_0)$ is abelian for $n\ge 2$.}}\\
\hline
\multicolumn{2}{|l|}{\rm{\footnotesize $\hat\pi_n(\Pi_\alpha X_\alpha)=\Pi_\alpha\hat\pi_n(X_\alpha)$, ($X_\alpha$ path-connected).}}\\
\hline
\multicolumn{2}{|l|}{\rm{\footnotesize $p:(\widetilde{X},\widetilde{x_0})\to(X,x_0)$ covering in $\mathfrak{Q}_{hyper}$:
 $p_*:\hat\pi_n(\widetilde{X},\widetilde{x_0})\cong  \hat\pi_n(X,x_0)$, $n\ge 2$.}}\\
\multicolumn{2}{|l|}{\rm{\footnotesize $\bullet$\hskip 2ptWhenever $X$ has a contractible universal cover $\hat\pi_n(X,x_0)=0$, $n\ge 2$.}}\\
\multicolumn{2}{|l|}{\rm{\footnotesize $\bullet$\hskip 2pt $A\to\hat S^1$, $\hat\pi_n(\hat S^1)=0$, $n\ge 2$. }}\\
\multicolumn{2}{|l|}{\rm{\footnotesize $\bullet$\hskip 2pt  $A^n\to\hat T^n=\underbrace{\hat S^1\times\hat S^1}_{n}$, $\hat\pi_k(\hat T^n)=0$, $k\ge 1$.}}\\
\hline
\multicolumn{2}{l}{\rm{\footnotesize $[f]=0\in\hat\pi_n(X,x_0)$ iff $\exists\, F_t(\hat S^n)\subset X,\, F_t(s_0)=x_0,\, F_0(\hat S^n)=x_0,\, F_1(\hat S^n)=f$.}}\\
\multicolumn{2}{l}{\rm{\footnotesize $[f]=0\in\hat\pi_n(X,x_0)$ iff $\exists\, F_t(\hat D^n)\subset X,\, F_t(\hat S^{n-1})\subset A,\, F_0(\hat D^n)=x_0,\, F_1(\hat D^n)=f$.}}\\
\end{tabular}
\end{table}

\begin{theorem}[Quantum homotopy groups of quantum $n$-sphere]\label{quantum-homotopy-groups-quantum-n-sphere}
Quantum homotopy $n$-spheres have quantum homotopy groups isomorphic to homotopy groups of $n$-spheres:
\begin{equation}\label{non-isomorphism-homotopy-groups-quantum-n-sphere-sphere}
    \hat\pi_{k}(\hat\Sigma^n)=\hat\pi_k(\hat S^n)\cong\pi_{k}(S^n).
\end{equation}
\end{theorem}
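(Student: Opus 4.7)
The plan is to exploit the fact that in the category $\mathfrak{Q}_{hyper}$, any morphism between classic regular quantum manifolds is encoded by a commutative square of the form (\ref{commutative-diagram-homotopy-equivalence-quantum-m-n-supersphere}), i.e., by a compatible pair $(f,f_C)$ of a quantum map and its classic limit. Consequently the ``classic limit'' assignment $f \mapsto f_C$ is a functor from $\mathfrak{Q}_{hyper}$ (restricted to classic regular objects) to $\mathfrak{T}op$, and this functor is what will induce the claimed isomorphism.

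First I would settle the equality $\hat\pi_k(\hat\Sigma^n)=\hat\pi_k(\hat S^n)$. By Definition \ref{quantum-homotopy-n-sphere} the quantum homotopy sphere $\hat\Sigma^n$ is quantum homotopy equivalent to $\hat S^n$ via a pair $(f,f_C)$ satisfying (\ref{commutative-diagram-homotopy-equivalence-quantum-m-n-supersphere}). A routine mimicking of the classical argument (composition of homotopies, horizontal composition with the quantum homotopy inverse) shows that quantum homotopy equivalences induce isomorphisms on the functors $[(\hat S^k,s_0),(-,x_0)]$, so $f_*:\hat\pi_k(\hat\Sigma^n)\cong\hat\pi_k(\hat S^n)$.

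For the isomorphism $\hat\pi_k(\hat S^n)\cong\pi_k(S^n)$ I would construct two mutually inverse maps. The map $\Phi:\hat\pi_k(\hat S^n)\to\pi_k(S^n)$, $[f]\mapsto[f_C]$, is well-defined since a quantum homotopy $F:I\times\hat S^k\to\hat S^n$ in $\mathfrak{Q}_{hyper}$ descends by functoriality to a classical homotopy $F_C:I\times S^k\to S^n$. The inverse map $\Psi:\pi_k(S^n)\to\hat\pi_k(\hat S^n)$ is defined by lifting: using the canonical ring homomorphism $\epsilon:\mathbb{R}\to Z(A)\subset A$ and the resulting natural inclusion $S^n\hookrightarrow\hat S^n$ discussed in diagram (\ref{commutative-diagram-section-quantum-n-sphere}), together with the fact that $\hat S^n=A^n\cup\{\infty\}$ is fibered over $S^n=\mathbb{R}^n\cup\{\infty\}$ with classically contractible algebra fibers, any $g:S^k\to S^n$ admits a canonical quantum lift $\tilde g=g\otimes_{\mathbb{R}}\mathrm{id}_A:\hat S^k\to\hat S^n$ that realizes $g$ as its classic limit. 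Checking $\Phi\circ\Psi=\mathrm{id}$ is immediate; the non-trivial direction $\Psi\circ\Phi=\mathrm{id}$ requires producing, for any quantum map $f$ with classic limit $f_C$, a quantum homotopy between $f$ and the canonical lift $\widetilde{f_C}$.

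The main obstacle is precisely this last step, i.e., exhibiting the quantum homotopy $f\backsimeq\widetilde{f_C}$ as a morphism in $\mathfrak{Q}_{hyper}$ (and not only as a continuous deformation in the underlying topological space). The way I would overcome it is by observing that the vertical fiber of $\pi_C:\hat S^n\to S^n$ over a non-singular point is modelled on a quantum hypercomplex affine space, which is a contractible $Z(A)$-module; an obstruction-theoretic argument over the CW structure of $S^k$ (using the section $s$ of $\pi_C$ from (\ref{commutative-diagram-section-quantum-n-sphere}) to trivialize cells) then provides a fiberwise straight-line homotopy in $\mathfrak{Q}_{hyper}$ joining $f$ to $\widetilde{f_C}$ cell-by-cell. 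Coherence of these local homotopies along attaching maps follows from the universal property of the tensor product with $A$, which ensures that lifts of homotopic classical maps are quantum homotopic. This yields the bijection $\Phi$, and the group structure of $\hat\pi_k$ (induced by the wedge of quantum spheres as in Tab.~\ref{quantum-homotopy-groups-definitions-properties}) is preserved since the wedge respects classic limits: $(\hat S^k\vee\hat S^k)_C\cong S^k\vee S^k$.
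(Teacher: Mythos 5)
Your approach is genuinely different from the paper's, and unfortunately it has gaps that make it fail.

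The paper proves the theorem structurally: it observes that $\hat S^n$ has a quantum CW decomposition into one quantum $0$-cell and one quantum $n$-cell, and then ``reproduces the analogous proofs for commutative spheres by substituting cells with quantum cells,'' citing quantum versions of the homotopy-excision theorem (Blakers--Massey) and the Freudenthal suspension theorem (Lemma \ref{quantum-freudenthal-suspension-theorem}). The work is thereby pushed into establishing these two lemmas in the quantum CW category, and the homotopy groups then fall out exactly as in the classical computation. You instead try to build an explicit pair of inverse bijections $\Phi:\hat\pi_k(\hat S^n)\to\pi_k(S^n)$ via the classic-limit assignment $f\mapsto f_C$, and $\Psi:\pi_k(S^n)\to\hat\pi_k(\hat S^n)$ by lifting.

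There are concrete gaps on both sides of your construction. First, the map $\Phi$ is not well defined: a class in $\hat\pi_k(\hat S^n)$ is represented by an \emph{arbitrary} pointed map $f:(\hat S^k,s_0)\to(\hat S^n,x_0)$ in $\mathfrak{Q}_{hyper}$, and such a map need not be compatible with the projections $\pi_C$, so there is no induced $f_C:S^k\to S^n$. Remark \ref{remark-quantum-spheres-classic-limits} makes this explicit by \emph{restricting} to morphisms ``here considered'' that respect the fiber structure; that restriction is a choice, not a property of all morphisms in $\mathfrak{Q}_{hyper}$. Second, $\Psi$ as written does not make sense either: $g:S^k\to S^n$ is a nonlinear map, so ``$g\otimes_{\mathbb{R}}\mathrm{id}_A$'' is meaningless, and the scalar-extension trick only applies to $\mathbb{R}$-linear data, not to an arbitrary continuous sphere map. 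Third, the obstruction-theoretic homotopy $f\simeq\widetilde{f_C}$ you invoke would in effect require something close to a deformation retraction of $\hat S^n$ onto $S^n$ over the complement of the point at infinity, and the paper explicitly warns (Theorem \ref{homotopy-groups-quantum-n-sphere} and the remark after Theorem \ref{quantum-homotopy-groups-quantum-n-sphere}) that $\pi_C$ is not a retraction and $S^n$ is not a deformation retract of $\hat S^n$; the fiber of $\pi_C$ over $\infty$ is a single point and the local structure there is not a contractible affine $Z(A)$-module, so the fiberwise straight-line homotopy does not extend. The clean way to avoid all of this is exactly what the paper does: stay entirely inside the quantum CW category, where $\hat S^k$ plays the role that $S^k$ plays classically, and never invoke the (badly behaved) topological comparison between $\hat S^n$ and $S^n$.
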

\begin{proof}
In fact, we can prove for examples that $\hat\pi_{k}(\hat S^n)\cong 0$, for $k<n$, and $\hat\pi_{n}(\hat S^n)\cong \mathbb{Z}$. For this it is enough to reproduce the anoalogous proofs for the commutative spheres, by substituting cells with quantum cells.
For example we can have the following quantum versions of analogous propositions for commutative CW complexes.

\begin{lemma}
Let $X$ be a quantum CW-complex admitting a decomposition in two quantum subcomplexes $X=A\bigcup B$, such that $A\bigcap B=C\not=\varnothing$. If $(A,C)$ is $m$-connected and $(B,C)$ is $n$-connected, $m,\, n\ge0$, then the mappings $\hat\pi_k(A,C)\to\hat\pi_k(X,B)$ induced by inclusion is an isomorphism for $k<m+n$, and a surjection for $k=m+n$.
\end{lemma}

\begin{lemma}[Quantum Freudenthal suspension theorem]\label{quantum-freudenthal-suspension-theorem}
The quantum suspension map $\hat\pi_k(\hat S^n)\to\hat\pi_{k+1}(\hat S^{n+1})$ is an isomorphism for $k<2n-1$, and a surjection for $k=2n-1$.\footnote{This holds also for quantum suspension $\hat\pi_k(X)\to\hat\pi_{k+1}(\hat S X$, for an $(n-1)$-connected quantum CW-complex $X$.}
\end{lemma}
As a by-product we get the isomorphism $\hat\pi_n(\hat S^n)\cong\mathbb{Z}$.

\end{proof}

\begin{remark}
Let us emphasize that Theorem \ref{quantum-homotopy-groups-quantum-n-sphere} does not allow to state that $S^n$ is a deformation retract of $\hat S^n$, as one could conclude by a wrong application of the Whitehead's theorem, reported in the following lemma.

\begin{lemma}[Whitehead's theorem]\label{whitehead-theorem}
If a map $f:X\to Y$ between connected CW complexes induces isomorphisms $f_*:\pi_n(X)\to\pi_*(Y)$ for all $n$, then $f$ is a homotopy equivalence. Furthermore, if $f$ is the inclusion of a subcomplex $f:X\hookrightarrow Y$, then $X$ is a deformation retract of $Y$.
\end{lemma}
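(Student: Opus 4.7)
The plan is to follow the classical two-step strategy. First I would replace $f:X\to Y$ by the inclusion $X\hookrightarrow M_f$ of $X$ into its mapping cylinder $M_f$. Because $M_f$ deformation retracts onto $Y$ along the cylinder lines, and the inclusion $X\hookrightarrow M_f$ induces the same map on homotopy groups as $f$, it suffices to establish the ``Furthermore'' clause: whenever $X$ is a subcomplex of $Y$ whose inclusion induces isomorphisms on all homotopy groups, $X$ is a deformation retract of $Y$. One must also impose the standard CW structure on $M_f$ so that $(M_f,X)$ is a CW pair; this is routine.

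Assume then that $X\subset Y$ is a subcomplex and the inclusion induces $\pi_n(X)\cong\pi_n(Y)$ for every $n\ge 0$. From the long exact sequence of the pair
\begin{equation*}
\cdots\to\pi_{n+1}(Y,X)\to\pi_n(X)\to\pi_n(Y)\to\pi_n(Y,X)\to\pi_{n-1}(X)\to\cdots,
\end{equation*}
the hypothesis forces $\pi_n(Y,X)=0$ for all $n\ge 1$. I would then build a strong deformation retraction $H:Y\times I\to Y$ of $Y$ onto $X$ by induction on the relative skeleta $X\subseteq Y^{(0)}\cup X\subseteq Y^{(1)}\cup X\subseteq\cdots$. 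At the inductive step, for each $n$-cell $e^n$ of $Y\setminus X$ with characteristic map $\Phi:(D^n,S^{n-1})\to(Y,X)$, the vanishing of $\pi_n(Y,X)$ supplies (via the compression lemma) a homotopy rel $S^{n-1}$ from $\Phi$ to a map into $X$. These cellwise compressions are then assembled, using the homotopy extension property of the CW pair $(Y^{(n)}\cup X,\, Y^{(n-1)}\cup X)$, into a deformation of $Y^{(n)}\cup X$ onto $X$ relative to $Y^{(n-1)}\cup X$.

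The main obstacle is the telescoping step: one must concatenate the skeleton-level homotopies on time intervals of the form $[1-2^{-n},\, 1-2^{-n-1}]$ and check that the result is a continuous homotopy on all of $Y\times I$, not merely on each finite skeleton. This follows from the CW topology because any point of $Y$ lies in a closed cell of some finite dimension $n$, on which the telescope is constant after time $1-2^{-n-1}$; so continuity at time $t=1$ is verified cell by cell, where only finitely many stages act nontrivially. Once this is in place, $H(\cdot,1):Y\to X$ is a retraction and $H$ is a homotopy from $\mathrm{id}_Y$ to this retraction rel $X$, which combined with the mapping cylinder reduction proves that the original $f$ is a homotopy equivalence.
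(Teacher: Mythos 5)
The paper states this lemma without proof, treating it as the classical Whitehead theorem, so there is no in-paper argument to compare against; you are supplying a proof where the paper simply cites a standard fact. Your proposal follows the standard textbook argument: pass to the mapping cylinder to reduce to the inclusion of a subcomplex, deduce $\pi_n(Y,X)=0$ for all $n\ge 1$ from the long exact sequence of the pair, then build the strong deformation retraction by compressing relative skeleta one dimension at a time and telescoping. This is correct, and it matches the toolkit the paper itself displays --- in particular the compression lemma is exactly the device the paper records in quantum form immediately afterwards. One point deserves more careful wording: the characteristic map $\Phi$ of an $n$-cell of $Y\setminus X$ sends $S^{n-1}$ into the relative $(n-1)$-skeleton $Y^{(n-1)}\cup X$, not into $X$, so $\Phi$ is not a map of pairs $(D^n,S^{n-1})\to(Y,X)$ as written; one must first apply the homotopy constructed at stage $n-1$ to push the attaching sphere into $X$, and only then invoke the vanishing of $\pi_n(Y,X)$. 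Equivalently, it is cleaner to apply the compression lemma once to the identity map of pairs $\mathrm{id}\colon(Y,X)\to(Y,X)$, which packages the cellwise induction, the HEP bookkeeping, and the telescope into a single cited statement. With that adjustment the cellwise step and the continuity check of the telescope at $t=1$ are both sound, and the mapping-cylinder reduction (with cellular approximation of $f$ so that $(M_f,X)$ is a CW pair) completes the proof of the first clause.
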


In fact, in the case $i:S^n\hookrightarrow\hat S^n$ we are talking about different CW structures. One for $S^n$ is the usual one, the other, for $\hat S^n$ is the quantum CW structure. In order to easily understand the difference let us refer again to the case $A=\mathbb{C}$. Here one has $\pi_1(S^1)=\mathbb{Z}=\hat\pi_1(\hat S^1)$, but $\hat\pi_1(\hat S^1)=[\hat S^1,\hat S^1]=[S^2,S^2]=\pi_2(S^2)$. Furthermore, $\pi_1(\hat S^1)=[S^1,S^2]=0\not=\pi_1(S^1)$. Therefore, $\hat S^1$, with respect to the usual CW complex structure, has its first homotopy group zero, hence different from the first homotopy group of its classic limit $S^1$. In fact $S^1$ is not a deformation retract of $S^2=\hat S^1$. (Therefore there is not contradiction with the Whitehead's theorem.)
\end{remark}
Moreover, it is useful to formulate the quantum version of the Whitehead's theorem and some related lemmas. These can be proved by reproducing analogous proofs by substituting CW complex structure with quantum CW complex structure in quantum manifolds.

\begin{theorem}[Quantum Whitehead theorem]\label{quantum-whitehead-theorem}
If a map $f:X\to Y$ between connected quantum CW complexes induces isomorphisms $f_*:\hat\pi_n(X)\to\hat\pi_*(Y)$ for all $n$, then $f$ is a homotopy equivalence. Furthermore, if $f$ is the inclusion of a quantum subcomplex $f:X\hookrightarrow Y$, then $X$ is a quantum deformation retract of $Y$.
\end{theorem}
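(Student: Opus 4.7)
The plan is to mimic the classical argument, replacing every appearance of a cell $D^n$ or sphere $S^{n-1}$ by its quantum counterpart $\hat D^n$, $\hat S^{n-1}$, and the usual homotopy groups $\pi_n$ by the quantum homotopy groups $\hat\pi_n$ introduced in Tab.~\ref{quantum-homotopy-groups-definitions-properties}. The key observation is that the quantum cells and quantum spheres have the \emph{quantum homotopy extension property} with respect to quantum CW pairs, by the same skeletal induction as in the commutative setting, because the extension and lifting arguments are intrinsic to the combinatorial attaching data and do not see the non-commutative (nor the non-associative) nature of the underlying quantum hypercomplex algebra.

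\textbf{Step 1 (Reduction to an inclusion).} Replace $f:X\to Y$ by the inclusion $X\hookrightarrow M_f$ of $X$ into the quantum mapping cylinder $M_f\equiv (X\times\hat I)\sqcup Y/(x,1)\sim f(x)$, where $\hat I=\hat D^1$. The standard deformation of $M_f$ onto $Y$ is a quantum homotopy equivalence, and $f$ factors as $X\hookrightarrow M_f\simeq Y$. Hence it suffices to prove the second assertion: if $(Y,X)$ is a quantum CW pair and the inclusion induces isomorphisms on all $\hat\pi_n$, then $X$ is a quantum deformation retract of $Y$.

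\textbf{Step 2 (Vanishing of relative quantum homotopy groups).} From the long exact sequence of the pair $(Y,X)$ in quantum homotopy
\begin{equation*}
\xymatrix{\cdots\ar[r]&\hat\pi_n(X)\ar[r]^{i_*}&\hat\pi_n(Y)\ar[r]&\hat\pi_n(Y,X)\ar[r]^(.45){\partial}&\hat\pi_{n-1}(X)\ar[r]&\cdots}
\end{equation*}
the hypothesis that $i_*$ is an isomorphism in every degree forces $\hat\pi_n(Y,X)=0$ for all $n\ge 1$. The long exact sequence for the quantum pair follows, as in the classical case, from the quantum homotopy extension property for $(Y,X)$.

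\textbf{Step 3 (Quantum compression lemma).} I would next prove the key technical lemma: any map of quantum pairs $g:(\hat D^n,\hat S^{n-1})\to (Y,X)$ is homotopic rel $\hat S^{n-1}$ to a map $\bar g:\hat D^n\to X$. This follows because $[g]\in\hat\pi_n(Y,X)=0$, together with the characterisation of the zero class recorded in Tab.~\ref{quantum-homotopy-groups-definitions-properties}, which gives exactly such a compressing homotopy.

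\textbf{Step 4 (Inductive construction of the retraction).} Build the deformation retraction $r_t:Y\to Y$ by induction on the quantum skeleta $Y^{(n)}$, starting with $r_t=\mathrm{id}$ on $X$. Suppose $r_t$ has been defined on $X\cup Y^{(n-1)}$ with $r_1(X\cup Y^{(n-1)})\subseteq X$. For each quantum $n$-cell $\hat e^n\subset Y\setminus X$ with attaching map $\varphi:\hat S^{n-1}\to X\cup Y^{(n-1)}$, the composite $r_1\circ\varphi$ lands in $X$, so the characteristic map $\Phi:\hat D^n\to Y$ together with $r_1\circ\varphi$ defines a map $(\hat D^n,\hat S^{n-1})\to(Y,X)$. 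Apply Step 3 to compress it into $X$ rel $\hat S^{n-1}$, then use the quantum homotopy extension property of the quantum CW pair $(X\cup Y^{(n)},X\cup Y^{(n-1)})$ to extend the resulting homotopy to all of $Y$. Passing to the limit over $n$ (and rescaling the time parameter on $[1-2^{-n},1-2^{-n-1}]$ as in Whitehead's original argument), one obtains a homotopy $r_t:Y\to Y$ with $r_0=\mathrm{id}_Y$, $r_t|_X=\mathrm{id}_X$, and $r_1(Y)\subseteq X$, i.e.\ a quantum deformation retraction, which in particular realises $f$ as a quantum homotopy equivalence.

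\textbf{Main obstacle.} The only delicate point is verifying that Step 3 and the skeletal induction in Step 4 actually close up in $\mathfrak{Q}_{hyper}$: the compression of a map $\hat D^n\to Y$ into $X$ has to be produced by an \emph{honest} $Q^k_w$-homotopy, not merely a continuous one, and the quantum homotopy extension property has to hold for quantum CW pairs built from cells over a possibly non-associative algebra $\mathcal{Q}_r=B\otimes_{\mathbb R}\mathbb A_r$. The needed regularity is, however, already built into the definition of $\hat\pi_n$ adopted in Tab.~\ref{quantum-homotopy-groups-definitions-properties}, and the non-associativity of $\mathcal{Q}_r$ does not enter the argument, exactly as observed in the proofs of Theorem~\ref{delta-poincare-lemma-pdes-category-quantum-hypercomplex-manifolds} and Theorem~\ref{criterion-formal-quantum-integrability-pdes-category-quantum-hypercomplex-manifolds}: the construction is intrinsic and combinatorial at the level of attaching maps, so the classical proof carries over verbatim.
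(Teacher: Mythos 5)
Your proposal is correct and is essentially the paper's own approach, just spelled out in more detail: the paper declares that Theorem~\ref{quantum-whitehead-theorem} (together with the Quantum compression and extension lemmas) ``can be proved by reproducing analogous proofs by substituting CW complex structure with quantum CW complex structure in quantum manifolds,'' and your four-step mapping-cylinder/compression/skeletal-induction argument is precisely that classical Whitehead proof transposed to quantum cells. No divergence to report.
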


\begin{lemma}[Quantum compression lemma]\label{quantum-compression-lemma}
Let $(X,A)$ be a quantum CW pair and let $(Y,B)$ be any quantum pair with $B\not=\varnothing$. Let us assume that for each $n$ $\hat\pi_n(Y,B,y_{0})=0$, for all $y_0\in B$, and $X\setminus A$ has quantum cells of dimension $n$. Then, every map $f:(X,A)\to(Y,B)$ is homotopic ${\rm rel}_A$ to a map $X\to B$.\footnote{When $n=0$, the condition $\hat\pi_n(Y,B,y_{0})=0$, for all $y_0\in B$, means that $(Y,B)$ is $0$-connected. Let us emphasize that there is not difference between $0$-connected and quantum $0$-connected. In fact $[\hat S^0,Y]=\hat\pi_0(Y)=\pi_0(Y)=[S^0,Y]$, since $\hat S^0=S^0=(\{a\},\{b\})$, i.e., is a set of two points. However, after Theorem \ref{quantum-homotopy-groups-quantum-n-sphere}, there is not difference between the notion of {\em quantum $p$-connected} (i.e., $\hat\pi_k=0$, $k\le p$), quantum (homotopy) $n$-sphere, and {\em $p$-connected} (i.e., $\pi_k=0$, $k\le p$), (homotopy) $n$-sphere. In other words, a quantum homotopy $n$-sphere is quantum $(n-1)$-connected as well as its classic limit is $(n-1)$-connected.}
\end{lemma}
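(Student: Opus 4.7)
The plan is to reproduce the classical proof of the compression lemma within the category $\mathfrak{Q}_{hyper}$, replacing each appeal to ordinary CW structure and ordinary homotopy groups by its quantum counterpart, just as Theorem \ref{quantum-whitehead-theorem} is obtained from the Whitehead theorem. First I would argue cell by cell. Let $\{\hat e^n_\alpha\}$ enumerate the quantum cells of $X\setminus A$, ordered by increasing dimension, with characteristic maps $\Phi^n_\alpha:(\hat D^n,\hat S^{n-1})\to(X,X^{(n-1)})$. Assuming inductively that $f$ has already been deformed rel $A$ to a map $f_{n-1}$ sending $A\cup X^{(n-1)}$ into $B$, for each top-dimensional cell $\hat e^n_\alpha$ of $X^{(n)}$ the composite $f_{n-1}\circ\Phi^n_\alpha:(\hat D^n,\hat S^{n-1})\to (Y,B)$ represents an element of $\hat\pi_n(Y,B,y_0)$ for some $y_0\in B$. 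By the hypothesis $\hat\pi_n(Y,B,y_0)=0$, this element is zero, so by the quantum analogue of the standard characterization of vanishing relative homotopy classes (last two lines of Tab.~\ref{quantum-homotopy-groups-definitions-properties}) there is a homotopy $F^\alpha_t:\hat D^n\to Y$ with $F^\alpha_0=f_{n-1}\circ\Phi^n_\alpha$, $F^\alpha_t(\hat S^{n-1})\subset B$ for all $t$, and $F^\alpha_1(\hat D^n)\subset B$.

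Next I would assemble these cellwise deformations into a global homotopy of $f_{n-1}$. For this, the key ingredient is the quantum version of the homotopy extension property for the quantum CW pair $(X,A\cup X^{(n-1)})$: the collection $\{F^\alpha_t\}$ together with the constant homotopy on $A\cup X^{(n-1)}$ (which is already mapped into $B$) defines a homotopy on the quantum subcomplex $A\cup X^{(n-1)}\cup(\partial$-neighbourhood$)$, and this extends to a homotopy of $f_{n-1}$ on all of $X^{(n)}$ rel $A\cup X^{(n-1)}$. The result $f_n$ sends $A\cup X^{(n)}$ into $B$. Performing these homotopies consecutively on the time intervals $[1-2^{-n+1},1-2^{-n}]$ produces a single homotopy of $f$ rel $A$ to a map $f_\infty:X\to Y$ with $f_\infty(X)\subset B$; by construction the homotopy is stationary on each quantum cell after finitely many steps, so continuity in $\mathfrak{Q}_{hyper}$ is automatic on each compact quantum subcomplex.

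Two technical points I would verify carefully before assembling the argument. First, I would check that the quantum CW pair $(X,A)$ has the quantum homotopy extension property, by an inductive quantum cell attaching argument parallel to the classical one (it suffices to show that $X\times\{0\}\cup A\times I$ is a quantum retract of $X\times I$, built quantum cell by quantum cell using the retraction $\hat D^n\times I\to \hat D^n\times\{0\}\cup\hat S^{n-1}\times I$, which exists because the underlying topological structure of a quantum $n$-cell is still $D^n\times$model-algebra). Second, I would verify that the equivalence ``$[g]=0\in\hat\pi_n(Y,B,y_0)$ iff $g$ is homotopic rel $\hat S^{n-1}$ to a map $\hat D^n\to B$'' holds in the quantum category; this follows from the definition of the sum law in Tab.~\ref{quantum-homotopy-groups-definitions-properties} by the standard collar argument, adapted to quantum cells.

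The main obstacle I expect is not the cellwise compression step, which is essentially formal once the quantum homotopy extension property is in hand, but rather justifying the \emph{simultaneous} extension across all cells of a fixed dimension when $X$ is not assumed to be a finite quantum CW complex. In the classical case one invokes local finiteness of the CW topology; here I would need the analogous fact that the quantum CW topology on $X$ is coherent with its quantum skeleta, so that a family of homotopies defined independently on each quantum $n$-cell fits together into a continuous homotopy on $X^{(n)}$. If this coherence property is part of the definition of quantum CW complex in the Pr\'astaro framework, the argument closes; otherwise one must restrict to quantum CW pairs satisfying a local finiteness condition, and the lemma should be stated with that hypothesis.
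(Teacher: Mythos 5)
Your proposal is correct and matches the paper's (only implicitly stated) approach: the paper gives no explicit proof, saying only that the quantum Whitehead theorem and related lemmas ``can be proved by reproducing analogous proofs by substituting CW complex structure with quantum CW complex structure in quantum manifolds,'' and your cell-by-cell compression argument, together with the quantum homotopy extension property and the characterization of vanishing classes in $\hat\pi_n(Y,B,y_0)$, is precisely that reproduction. Your closing caveat about coherence of the quantum CW topology with skeleta when $X$ is infinite is a legitimate technical point that the paper does not address; it is safely assumed in the Pr\'astaro framework, but flagging it is appropriate.
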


\begin{lemma}[Quantum extension lemma]\label{quantum-extension-lemma}
Let $(X,A)$ be a quantum CW pair and let $f:A\to Y$ be a mapping with $Y$ a path-connected quantum manifold. Let us assume that $\hat\pi_{n-1}(Y)=0$, for all $n$, such that $X\setminus A$ has quantum cells of dimension $n$. Then, $f$ can be extended to a map $f:X\to Y$.
\end{lemma}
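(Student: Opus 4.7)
The plan is to extend $f$ one quantum skeleton at a time, using the hypothesis on quantum homotopy groups to push the extension over each quantum cell. Let $X^{(k)}$ denote the $k$-th quantum skeleton of the pair $(X,A)$, i.e., $X^{(k)}\equiv A\cup(\text{all quantum cells of }X\setminus A\text{ of dimension }\le k)$, so that $X^{(-1)}=A$ and $X=\bigcup_{k\ge -1}X^{(k)}$. I would construct a compatible family of extensions $f_k:X^{(k)}\to Y$ of $f$ by induction on $k$, so that $f_k|_{X^{(k-1)}}=f_{k-1}$, and then pass to the colimit to obtain the desired extension $\bar f:X\to Y$.

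\textbf{Base step.} For $k=0$, the quantum $0$-cells of $X\setminus A$ are points; since $Y$ is path-connected the map $f$ extends over them by sending each such point to an arbitrarily chosen base point $y_0\in Y$, yielding $f_0:X^{(0)}\to Y$. (If $X\setminus A$ has no $0$-cells set $f_0=f$.)

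\textbf{Inductive step.} Assume $f_{n-1}:X^{(n-1)}\to Y$ has been constructed. Each quantum $n$-cell $\hat e^n_\alpha$ of $X\setminus A$ is attached via a quantum attaching map $\hat\phi_\alpha:\hat S^{n-1}\to X^{(n-1)}$, and $X^{(n)}$ is obtained from $X^{(n-1)}$ by gluing the quantum $n$-disks $\hat D^n_\alpha$ along these maps. For each $\alpha$, the composite
\[
f_{n-1}\circ\hat\phi_\alpha:\hat S^{n-1}\longrightarrow Y
\]
represents a class in $\hat\pi_{n-1}(Y,y_0)$. By hypothesis, for every $n$ for which $X\setminus A$ has quantum $n$-cells we have $\hat\pi_{n-1}(Y)=0$, so this class is trivial and $f_{n-1}\circ\hat\phi_\alpha$ is quantum-nullhomotopic. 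By the standard characterization of triviality of the quantum homotopy class (see the table preceding Theorem \ref{quantum-homotopy-groups-quantum-n-sphere}), this is equivalent to the existence of an extension $\bar F_\alpha:\hat D^n_\alpha\to Y$ with $\bar F_\alpha|_{\hat S^{n-1}}=f_{n-1}\circ\hat\phi_\alpha$. Defining $f_n$ to equal $f_{n-1}$ on $X^{(n-1)}$ and $\bar F_\alpha$ on each $\hat e^n_\alpha$ gives a well-defined extension $f_n:X^{(n)}\to Y$ (the pushout/weak-topology property of the quantum CW-structure guarantees continuity).

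\textbf{Passage to the colimit.} The maps $\{f_k\}$ are compatible by construction, hence induce a map $\bar f:X=\varinjlim_k X^{(k)}\to Y$ extending $f$, again using the weak topology inherent to the quantum CW structure. The main obstacle in turning this sketch into a full proof is purely bookkeeping: one must verify that the quantum CW structure on $X$ supports the same pushout/colimit formalism as in the commutative category, so that continuity of $\bar f$ follows automatically from continuity of each $f_k$. This is a direct translation of the classical argument, since the only geometric input needed is the characterization of $\hat\pi_{n-1}(Y)=0$ in terms of extendability of maps $\hat S^{n-1}\to Y$ over $\hat D^n$, which is exactly the definition recalled above.
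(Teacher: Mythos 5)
Your proposal is correct and follows essentially the same argument as the paper: induction over quantum skeleta, using the vanishing of $\hat\pi_{n-1}(Y)$ to nullhomotope $f_{n-1}\circ\hat\phi_\alpha$ and thereby extend over each quantum $n$-cell. You merely fill in details the paper leaves implicit (the base step via path-connectedness of $Y$ and the passage to the colimit using the quantum CW weak topology).
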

\begin{proof}
The proof can be done inductively. Let us assume that $f$ has been extended over the quantum $(n-1)$-skeleton. Then, an extension over quantum $n$-cells exists iff the composition of the quantum cell's attaching map $\hat S^{n-1} \to \hat X^{n-1}$ with $f:\hat X^{n-1}\to Y$ is null homotopic.
\end{proof}
As a by-product of above results we get also the following theorems that relate quantum homotopy groups and quantum relative homotopy groups.

\begin{theorem}[Quantum exact long homotopy sequence]\label{quantum-exact-long-homotopy-sequence}
One has the exact sequence (\ref{quantum-exact-long-homotopy-sequence-a}).
\begin{equation}\label{quantum-exact-long-homotopy-sequence-a}
   \xymatrix{\cdots&\hat\pi_n(A,x_0)\ar[r]^{\hat i_*}&\hat\pi_n(X,x_0)\ar[r]^{\hat j_*}&\hat\pi_n(X,A,x_0)\ar[d]^{\hat \partial}\\
   &\hat\pi_{0}(X,x_0)&\ar[l]\cdots&\ar[l]\hat\pi_{n-1}(A,x_0)\\ }
\end{equation}
where $\hat i_*$ and $\hat j_*$ are induced by the inclusions $\hat i:(A,x_0)\hookrightarrow(X,x_0)$ and $\hat j:(X,x_0,x_0)\hookrightarrow(X,A,x_0)$ respectively. Furthermore, $\hat\partial$ comes from the following composition $(\hat S^{n-1},s_0)\hookrightarrow(\hat D^n,\hat S^{n-1},s_0)\to(X,A,x_0)$, hence $\hat\partial[f]=[f_{|\hat S^{n-1}}]$.
\end{theorem}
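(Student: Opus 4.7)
The plan is to mimic, step by step, the classical proof of the long exact sequence of a pair in homotopy theory, replacing the commutative models $(S^{n},s_{0})$ and $(D^{n},S^{n-1},s_{0})$ with the quantum models $(\hat S^{n},s_{0})$ and $(\hat D^{n},\hat S^{n-1},s_{0})$, and using systematically the quantum homotopy extension/compression machinery (Lemma \ref{quantum-compression-lemma}, Lemma \ref{quantum-extension-lemma}) together with the description of quantum homotopy groups recalled in Tab.\ \ref{quantum-homotopy-groups-definitions-properties}. The unifying technical fact I will exploit is the quantum cofibration property of the pair $(\hat D^{n},\hat S^{n-1})$: any quantum homotopy defined on $\hat S^{n-1}\times I\cup \hat D^{n}\times\{0\}$ extends to $\hat D^{n}\times I$, which transports to our setting the usual ``straight-line'' reparametrization arguments of pointed maps of pairs.

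First I would check that the three homomorphisms are well defined and functorial. Namely, $\hat i_{*}[f]=[\hat i\circ f]$ and $\hat j_{*}[f]=[\hat j\circ f]$ are manifestly homotopy invariant; for $\hat\partial[f]=[f_{|\hat S^{n-1}}]$ one has to verify that a quantum homotopy $F_{t}:(\hat D^{n},\hat S^{n-1},s_{0})\to(X,A,x_{0})$ restricts to a quantum homotopy of pointed maps $\hat S^{n-1}\to A$, which is immediate from the definitions. One then verifies that $\hat\partial$ is a homomorphism with respect to the sum law of Tab.\ \ref{quantum-homotopy-groups-definitions-properties}; this uses that the pinch map $\hat D^{n}\to\hat D^{n}\vee\hat D^{n}$ restricts on $\hat S^{n-1}$ to the pinch $\hat S^{n-1}\to\hat S^{n-1}\vee\hat S^{n-1}$.

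Next I would establish exactness at the three positions. Exactness at $\hat\pi_{n}(X,x_{0})$: the composition $\hat j_{*}\circ\hat i_{*}$ sends $[f]$ to the class of a pair-map that factors through $A$, hence can be contracted in $(X,A,x_{0})$ to the constant map by the straight-line quantum homotopy inside $A$, so $\IM(\hat i_{*})\subseteq\ker(\hat j_{*})$. Conversely, if $\hat j_{*}[f]=0$, a null-quantum-homotopy in $(X,A)$ compresses $f$ into $A$ by the quantum compression lemma, giving a lift to $\hat\pi_{n}(A,x_{0})$. Exactness at $\hat\pi_{n}(X,A,x_{0})$: for $[f]\in\hat\pi_{n}(X,x_{0})$, $\hat\partial\hat j_{*}[f]$ is represented by the restriction of a map $(\hat D^{n},\hat S^{n-1})\to(X,x_{0})$ sending $\hat S^{n-1}$ to $x_{0}$, hence is null-quantum-homotopic; conversely, if $\hat\partial[g]=0$, i.e.\ $g_{|\hat S^{n-1}}$ bounds a quantum homotopy $G_{t}:\hat S^{n-1}\to A$ to $x_{0}$, the quantum homotopy extension property for the pair $(\hat D^{n},\hat S^{n-1})$ extends $G_{t}$ to a pair-homotopy of $g$ to a map landing in $(X,x_{0},x_{0})$, producing the required preimage in $\hat\pi_{n}(X,x_{0})$. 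Exactness at $\hat\pi_{n-1}(A,x_{0})$: $\hat i_{*}\hat\partial[f]$ equals the class of $f_{|\hat S^{n-1}}$ now viewed in $X$, which bounds the disk $f:\hat D^{n}\to X$, hence is trivial; conversely, a null-quantum-homotopy in $X$ of a map $h:\hat S^{n-1}\to A$ assembles into a quantum map $(\hat D^{n},\hat S^{n-1})\to(X,A)$ whose $\hat\partial$-image is $[h]$.

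The main obstacle I expect is that some of the ``straight-line'' reparametrizations used classically rely on the commutative linear structure of $D^{n}$ and $S^{n-1}$; in the quantum hypercomplex setting one must make sure that the corresponding operations on $\hat D^{n}$ and $\hat S^{n-1}$ are intrinsic, i.e.\ depend only on the quantum CW-structure and not on the non-associative algebra $\mathcal{Q}_{r}$ underlying them. This is handled exactly as in Theorem \ref{delta-poincare-lemma-pdes-category-quantum-hypercomplex-manifolds} and Theorem \ref{criterion-formal-quantum-integrability-pdes-category-quantum-hypercomplex-manifolds}: the relevant homotopies are constructed in an intrinsic way through the quantum CW-attaching maps, so non-associativity of $\mathcal{Q}_{r}$ plays no role. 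Once this observation is made, the three exactness verifications above go through verbatim, and the long exact sequence closes off at $\hat\pi_{0}(X,x_{0})$ as stated.
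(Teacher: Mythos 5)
Your proposal is correct and follows exactly the route the paper intends: the paper gives no explicit proof, merely presenting the theorem as a ``by-product'' of the quantum compression/extension lemmas and having already announced that such statements ``can be proved by reproducing analogous proofs by substituting CW complex structure with quantum CW complex structure in quantum manifolds.'' You have simply carried out that substitution in full, writing out the three exactness verifications and flagging correctly that the only point needing care---independence of the homotopy constructions from the non-associativity of the underlying algebra $\mathcal{Q}_r$---is handled by the same intrinsic-formulation argument used in Theorems \ref{delta-poincare-lemma-pdes-category-quantum-hypercomplex-manifolds} and \ref{criterion-formal-quantum-integrability-pdes-category-quantum-hypercomplex-manifolds}.
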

\begin{theorem}[Quantum Hurewicz theorem]\label{quantum-hurewicz-theorem}
The exact commutative diagram in {\em(\ref{commutative-diagram-quantum-hurewicz-theorem})} relates (quantum) homotopy groups and (quantum) homology groups for (quantum) homotopy $n$-spheres, $n\ge 2$. The morphisms $a$ and $b$ are isomorphisms for $p\le n$ and epimorphisms for $p=n+1$.\footnote{Compare with analogous theorem in \cite{PRA15} for quantum supermanifolds.}
\begin{equation}\label{commutative-diagram-quantum-hurewicz-theorem}
   \xymatrix{0\ar[r]&\pi_p(S^n)\ar[d]_{a}\ar@<0.5ex>[r]&\ar@<0.5ex>[l]\hat\pi_p(\hat S^n)\ar[d]_{b}\ar[r]&0\\
   0\ar[r]&H_p(S^n;\mathbb{Z})\ar[d]\ar@<0.5ex>[r]&\ar@<0.5ex>[l] H_p(\hat S^n;\mathbb{Z})\ar[d]\ar[r]&0\\
   &0&0&\\}
\end{equation}
\end{theorem}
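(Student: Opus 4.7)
My plan is to identify the horizontal arrows of the diagram with isomorphisms already at our disposal, and then to transport the classical Hurewicz theorem for $S^n$ across those isomorphisms to obtain the corresponding statement for $\hat S^n$ (and hence, by homotopy-invariance, for every quantum homotopy $n$-sphere). First I would note that Theorem~\ref{quantum-homotopy-groups-quantum-n-sphere} already yields the top horizontal isomorphism $\pi_p(S^n)\cong\hat\pi_p(\hat S^n)$ for every $p$; for $p<n$ both sides vanish, and for $p=n$ both are $\mathbb{Z}$. Similarly, the reduced Mayer--Vietoris computation carried out in Lemma~\ref{cohomology-quantum-homotopy-n-spheres} (together with the identification $\hat S^n\simeq \hat\Sigma^n$ used there) gives the bottom horizontal isomorphism $H_p(S^n;\mathbb{Z})\cong H_p(\hat S^n;\mathbb{Z})$. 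Because $\hat\Sigma^n\simeq\hat S^n$ in the homotopy category, both pairs of quantum groups are invariant under the equivalence, so it suffices to prove the statement for the canonical model $\hat S^n$.

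Next I would construct the vertical Hurewicz homomorphism $b:\hat\pi_p(\hat S^n)\to H_p(\hat S^n;\mathbb{Z})$ in analogy with the classical one: send the quantum homotopy class $[f:(\hat S^p,s_0)\to(\hat S^n,x_0)]$ to $f_*(\mu_{\hat S^p})$, where $\mu_{\hat S^p}\in H_p(\hat S^p;\mathbb{Z})\cong\mathbb{Z}$ is the fundamental class produced by the quantum CW-decomposition $\hat S^p=\hat e^0\cup\hat e^p$ used in Lemma~\ref{quantum-euler-characteristic-quantum-spheres-classic-limits-b}. This definition is well-posed because quantum-homotopic maps induce the same map on $H_\bullet(\cdot;\mathbb{Z})$, and it is additive with respect to the pinch-map sum on $\hat\pi_p$ since the fundamental class behaves additively under the quantum wedge. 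The map $a:\pi_p(S^n)\to H_p(S^n;\mathbb{Z})$ is the classical Hurewicz map.

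The key commutativity is then the identity $b\circ\epsilon^n_*=\epsilon^n_*\circ a$, where $\epsilon^n:S^n\hookrightarrow\hat S^n$ is the section of $\pi_C$ constructed in diagram~(\ref{commutative-diagram-section-quantum-n-sphere}), combined with the fact (from Theorem~\ref{quantum-homotopy-groups-quantum-n-sphere}, with the interpretation of $\hat\pi_p$ as $[\hat S^p,\hat S^n]$) that every quantum homotopy class $[f]\in\hat\pi_p(\hat S^n)$ has a representative of the form $f=\epsilon^n\circ f_0$ with $f_0:S^p\to S^n$; naturality of the two Hurewicz maps under the inclusion $\epsilon$ gives the square. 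With commutativity in hand, the classical Hurewicz theorem applied to $S^n$ (which is $(n-1)$-connected) tells us that $a$ is an isomorphism for $p\le n$ and an epimorphism for $p=n+1$, and the two horizontal isomorphisms transfer this property verbatim to $b$.

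The hard part, and the step I would develop most carefully, is the verification that the quantum Hurewicz map $b$ is well-defined, additive, and natural in the required sense; in particular, one has to check that the fundamental class of $\hat S^p$ is genuinely primitive under the quantum pinch map $\hat S^p\to\hat S^p\vee\hat S^p$, and that the representability of every quantum homotopy class by a map factoring through $\epsilon^n$ is compatible with these structures. Once these compatibilities are established, exactness of the rows of~(\ref{commutative-diagram-quantum-hurewicz-theorem}) is automatic from the two horizontal isomorphisms being mutually inverse via $\epsilon^n_*$ and $\pi_{C*}$ on the relevant range $p\le n+1$, and the statement of the theorem follows.
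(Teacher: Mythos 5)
The paper itself never writes out a proof for this theorem: it is stated immediately after the remark that the Whitehead theorem and its companion lemmas ``can be proved by reproducing analogous proofs by substituting CW complex structure with quantum CW complex structure,'' and the footnote simply points to the analogue in \cite{PRA15}. So the route the paper has in mind is to re-run the classical Hurewicz argument inside the quantum CW category, using the quantum skeleta of $\hat S^n$ and the quantum long exact homotopy sequence. Your proposal instead tries to \emph{reduce} to the classical theorem by transporting it across the two abstract isomorphisms $\pi_p(S^n)\cong\hat\pi_p(\hat S^n)$ and $H_p(S^n;\mathbb{Z})\cong H_p(\hat S^n;\mathbb{Z})$. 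That is a genuinely different strategy, and in principle a legitimate one, but the crucial step in it does not hold up as written.

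The problem is the sentence asserting that every class $[f]\in\hat\pi_p(\hat S^n)=[\hat S^p,\hat S^n]$ has a representative $f=\epsilon^n\circ f_0$ with $f_0:S^p\to S^n$. This is ill-typed: a map out of $\hat S^p$ cannot literally equal $\epsilon^n\circ f_0$ when $f_0$ has domain $S^p$, and there is no reason for a quantum-continuous map $\hat S^p\to\hat S^n$ to factor through the tiny subspace $\epsilon^n(S^n)\subset\hat S^n$. Theorem~\ref{quantum-homotopy-groups-quantum-n-sphere} gives the isomorphism $\hat\pi_p(\hat S^n)\cong\pi_p(S^n)$ via the quantum Freudenthal suspension computation, not via $\epsilon^n_*$; in fact the paper explicitly warns (in Theorem~\ref{homotopy-groups-quantum-n-sphere} and the following remark) that $\epsilon^n:S^n\hookrightarrow\hat S^n$ is \emph{not} a homotopy equivalence and $\pi_C$ is \emph{not} a retraction. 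So you cannot use $\epsilon^n$ to certify the commutativity of the square, and the ``naturality'' argument collapses.

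There is, however, an easy repair that salvages your overall plan and avoids $\epsilon^n$ entirely: once $b$ is defined by $[f]\mapsto f_*(\mu_{\hat S^p})$, argue case by case. For $p<n$ both $\hat\pi_p(\hat S^n)$ and $H_p(\hat S^n;\mathbb{Z})$ vanish (Theorem~\ref{quantum-homotopy-groups-quantum-n-sphere} and Lemma~\ref{cohomology-quantum-homotopy-n-spheres}), so $b$ is trivially an isomorphism; for $p=n$ both sides are $\mathbb{Z}$, $b$ sends the generator $[\mathrm{id}_{\hat S^n}]$ to the fundamental class $\mu_{\hat S^n}$, hence is an isomorphism; and for $p=n+1$ the target $H_{n+1}(\hat S^n;\mathbb{Z})=0$, so $b$ is automatically an epimorphism. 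The horizontal isomorphisms can then be chosen (or checked) to carry generators to generators, making the square commute, and the two zeros at the ends of each row record that the horizontal arrows are isomorphisms. This gives the theorem without the factorization claim. If you want to be closer to the paper's own intent, you would instead reproduce the classical Hurewicz proof verbatim with quantum cells and the quantum long exact sequence of Theorem~\ref{quantum-exact-long-homotopy-sequence}, but the case-splitting argument above is shorter and equally rigorous given the auxiliary results the paper has already established.
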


The following propositions are also stated as direct results coming from Theorem \ref{quantum-homotopy-groups-quantum-n-sphere} and analogous propositions for topologic spaces.

\begin{proposition}\label{quantum-homotopic-properties-spheres}
The following propositions are equivalent for $i\le n-1$.

{\rm 1)} $S^i\to S^n$ is homotopic to a constant map.

{\rm 2)} $S^i\to S^n$ extends to a map $D^{i+1}\to S^n$.

{\rm 3)} $\hat S^i\to \hat S^n$ is homotopic to a constant map.

{\rm 4)} $\hat S^i\to \hat S^n$ extends to a map $\hat D^{i+1}\to \hat S^n$.
\end{proposition}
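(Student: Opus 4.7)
The plan is to establish the two horizontal equivalences $(1)\Leftrightarrow(2)$ and $(3)\Leftrightarrow(4)$ by a uniform cone/cofibration argument, and then tie the commutative and quantum statements together via Theorem \ref{quantum-homotopy-groups-quantum-n-sphere}, which asserts $\hat\pi_k(\hat S^n)\cong\pi_k(S^n)$ for all $k\ge 0$.

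First I would prove $(1)\Leftrightarrow(2)$. The direction $(2)\Rightarrow(1)$ is immediate: if a map $f:S^i\to S^n$ admits an extension $F:D^{i+1}\to S^n$, then because $D^{i+1}$ is contractible there is a homotopy $F_t:D^{i+1}\to S^n$ from $F$ to a constant; restricting to $S^i=\partial D^{i+1}$ gives a null-homotopy of $f$. Conversely, given a null-homotopy $H:S^i\times I\to S^n$ with $H(-,0)=f$ and $H(-,1)=\mathrm{const}$, the map $H$ descends to the quotient cone $C(S^i)=S^i\times I/(S^i\times\{1\})\cong D^{i+1}$, producing the required extension. Equivalently, this is the cofibration property of the pair $(D^{i+1},S^i)$.

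Next I would prove $(3)\Leftrightarrow(4)$ by repeating the same argument inside $\mathfrak{Q}_{hyper}$, using the quantum cone $\hat C(\hat S^i)\equiv\hat S^i\times I/(\hat S^i\times\{1\})\cong\hat D^{i+1}$, and invoking the Quantum Extension Lemma (Lemma \ref{quantum-extension-lemma}) to see that the pair $(\hat D^{i+1},\hat S^i)$ enjoys the homotopy extension property in the category of quantum CW--complexes. The direction $(4)\Rightarrow(3)$ again follows from the contractibility of $\hat D^{i+1}$; the converse uses the quotient identification above.

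Finally, I would link the two blocks. By Theorem \ref{quantum-homotopy-groups-quantum-n-sphere} one has $\hat\pi_i(\hat S^n)\cong\pi_i(S^n)$, and for $i\le n-1$ both groups vanish. Thus every continuous map $S^i\to S^n$ is null-homotopic and every $Q^k_w$--map $\hat S^i\to\hat S^n$ is null-homotopic; so in the stated range each of the four conditions holds universally, and they are in particular logically equivalent. For the equivalence at the level of \emph{individual} maps one uses that the canonical section $s:S^n\hookrightarrow\hat S^n$ of the classic projection (cf.\ diagram \eqref{commutative-diagram-section-quantum-n-sphere}), together with the isomorphism of homotopy sets supplied by Theorem \ref{quantum-homotopy-groups-quantum-n-sphere}, yields a bijection $[\hat S^i,\hat S^n]\cong[S^i,S^n]$ that matches null-homotopy classes on both sides.

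The main obstacle is the last step: one must verify that the identification $\hat\pi_i(\hat S^n)\cong\pi_i(S^n)$ is compatible with the geometric notion of extension to the quantum disk $\hat D^{i+1}$, rather than only at the level of abstract homotopy classes. This is handled by the quantum Freudenthal/Hurewicz machinery (Lemma \ref{quantum-freudenthal-suspension-theorem} and Theorem \ref{quantum-hurewicz-theorem}), which guarantees that null-homotopies can be realised by quantum--cellular maps into $\hat D^{i+1}$, so no obstructions arise in the indicated range $i\le n-1$.
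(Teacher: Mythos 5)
Your argument follows the same route as the paper: the paper establishes $(1)\Leftrightarrow(2)$ and $(3)\Leftrightarrow(4)$ by invoking the fact that $S^n$ is $(n-1)$-connected and $\hat S^n$ is quantum $(n-1)$-connected (via Lemma \ref{equivalent-conditions-for-n-connected-space}, whose standard proof is exactly your cone/cofibration argument), and then links the two blocks through Theorem \ref{quantum-homotopy-groups-quantum-n-sphere}. Your write-up simply spells out the quotient-cone construction and the quantum Extension Lemma that the paper leaves implicit, so it is a correct, slightly more detailed rendering of the paper's own proof.
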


\begin{proof}
1) and 2) follow from the fact that $S^n$ is $(n-1)$-connected, and 3) and 4) from the fact that $\hat S^n$ is quantum $(n-1)$-connected. Furthermore, let us recall the following related result of Algebraic Topology.\footnote{There exists also a relative version of Lemma \ref{equivalent-conditions-for-n-connected-space}, saying that $\pi_i(X,A,x_0)=0$, for all $x_0\in A$,  is equivalent to one of the following propositions. (a1) Every map $(D^i,\partial D^i)\to(X,A)$ is homotopic ${\rm rel}\, \partial D^i$, to a map $D^i\to A$. (a2) Every map $(D^i,\partial D^i)\to(X,A)$ is homotopic through such maps to a map $D^i\to A$. (a3) Every map $(D^i,\partial D^i)\to(X,A)$ is homotopic through such maps to a constant map $D^i\to A$.}
\begin{lemma}\label{equivalent-conditions-for-n-connected-space}
The following propositions are equivalent.

{\rm (i)} The space $X$ is $n$-connected.

{\rm (ii)} Every map $f:S^i\to X$ is homotopic to a constant map.

{\rm (ii)} Every map $f:S^i\to X$ extends to a map $D^{i+1}\to X$.
\end{lemma}
\end{proof}

\begin{example}[Quantum-complex cupola]\label{quantum-complex-cupola}
Let us consider again $A=\mathbb{C}$, that offers beautiful examples, (even if commutative), easy to understand. Then, from points {\rm 3)} and {\rm 4)} in Proposition \ref{quantum-homotopic-properties-spheres} we get that $\hat S^1\to \hat S^2$ is homotopic to a constant map and extends to a map $\hat D^{2}\to \hat S^2$. These agree with points {\rm 1)} and {\rm 2)} in Proposition \ref{quantum-homotopic-properties-spheres}, as shown in the commutative diagram {\em(\ref{example-commutative diagram-quantum-homotopic-properties-spheres})}. In the {\em quantum-complex cupola} all the building is nullhomotopic to the base point $\{\infty\}\in \hat D^3$ identified with a $6$-cell. By using Proposition \ref{quantum-homotopic-properties-spheres} similar building can be obtained with any quantum algebra.
\end{example}
\begin{equation}\label{example-commutative diagram-quantum-homotopic-properties-spheres}
\scalebox{0.7}{$    \xymatrix{&&&S^0\ar@{=}[d]&&&\\
&&&\hat S^0\ar[dl]\ar[dr]&&&\\
&&S^2\ar@/_2pc/[ddddll]\ar@/_1pc/[ddddl]\ar@{=}[d]\ar[rr]&&S^4\ar@{=}[d]\ar@/^1pc/[ddddr]\ar@/^2pc/[ddddrr]&&\\
&&\hat S^1\ar@{=}[d]\ar[rr]&&\hat S^2\ar@{=}[d]&&\\
  &&\partial\hat D^2\ar@{^{(}->}[d]&&\partial\hat D^3\ar@{^{(}->}[d]&&\\
   &&\hat D^2\ar@{=}[d]\ar[rr]&&\hat D^3\ar@{=}[d]&& \\
S^3\ar@{=}[r]&\partial D^4\ar@{^{(}->}[r]&D^4\ar[rr]&&D^6&\ar@{_{(}->}[l]\partial D^6& \ar@{=}[l]S^5\\}$}
\end{equation}
Let us, now, consider quantum PDE's with respect to quantum homotopy $n$-spheres.

\begin{definition}[Quantum hypercomplex exotic PDE's]\label{quantum-hypercomplex-exotic-pdes}
Let $\hat E_k\subset \hat J^k_n(W)$ be a $k$-order PDE on the fiber bundle $\pi:W\to M$ in the category $\mathfrak{Q}_{hyper}$, with $\dim_AM=n$ and $\dim_BW=(n,m)$, where $B=A\times E$ and $E$ is also a $Z(A)$-module. We say that $\hat E_k$ is a {\em quantum exotic PDE} if it admits Cauchy integral manifolds $N\subset \hat E_k$, $\dim N=n-1$, such that one of the following two conditions is verified.

{\em(i)} $\hat\Sigma^{n-2}\equiv\partial N$ is a quantum exotic sphere of dimension $(n-2)$, i.e. $\hat\Sigma^{n-2}$ is homeomorphic to $\hat S^{n-2}$, ($\Sigma^{n-2}\thickapprox \hat S^{n-2}$) but not diffeomorphic to $\hat S^{n-2}$, ($\hat \Sigma^{n-2}\not\cong \hat S^{n-2}$).

{\em(ii)} $\varnothing=\partial N$ and $N\thickapprox \hat S^{n-1}$, but $N\not\cong \hat S^{n-1}$.\footnote{The following Refs. \cite{CERF, HIRSCH1, HIRSCH2, KAWA, KERVAIRE-MILNOR, KIRBY-SIEBENMAN, KLING, MAZUR, MILNOR1, MILNOR3, MOISE1, MOISE2, NASH, OHKAWA, SCHOEN-YAU, SMALE1, SMALE2, SMALE3, SULLIVAN, TOGNOLI} are important background for differential structures and exotic spheres.}
\end{definition}

\begin{definition}[Quantum hypercomplex exotic-classic PDE's]\label{quantum-hypercomplex-exotic-classic-pdes}
Let $\hat E_k\subset \hat J^k_n(W)$ be a $k$-order PDE as in Definition \ref{quantum-hypercomplex-exotic-pdes}. We say that $\hat E_k$ is a {\em quantum exotic-classic PDE} if it is a quantum exotic PDE, and the classic limit of the corresponding Cauchy quantum exotic manifolds are also exotic homotopy spheres.
\end{definition}

 From above results we get also the following one.

\begin{lemma}\label{lemma-quantum-hypercomplex-exotic-classic-pdes}
A quantum PDE $\hat E_k\subset \hat J^k_n(W)$, where $n$ is such that $\Theta_{n-1}=0$, cannot be a quantum exotic-classic PDE, in the sense of Definition \ref{quantum-hypercomplex-exotic-classic-pdes}.
\end{lemma}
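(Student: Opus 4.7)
My plan is a direct proof by contradiction. Assume that, despite $\Theta_{n-1}=0$, the PDE $\hat E_k\subset\hat J^k_n(W)$ is a quantum exotic-classic PDE. By Definitions \ref{quantum-hypercomplex-exotic-pdes} and \ref{quantum-hypercomplex-exotic-classic-pdes}, $\hat E_k$ then admits a Cauchy integral manifold $N\subset\hat E_k$ of dimension $n-1$ satisfying either (i) or (ii), and the quantum exotic sphere associated to $N$ has classic limit which is itself an exotic homotopy sphere in the smooth category. The plan is to show that, in either case, the exotic-classic hypothesis forces a nontrivial element in $\Theta_{n-1}$.

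First I handle case (ii), which is the cleanest. Here $\partial N=\varnothing$ and $N$ is a quantum homotopy $(n-1)$-sphere in the sense of Definition \ref{quantum-homotopy-n-sphere}, in particular classic regular via a fiber projection $\bar\pi_C:N\to N_C$ fitting into a diagram of the form (\ref{commutative-diagram-homotopy-equivalence-quantum-m-n-supersphere}). The exotic-classic requirement means that $N_C$ is a smooth closed $(n-1)$-manifold which is a homotopy $(n-1)$-sphere but not diffeomorphic to $S^{n-1}$, i.e.\ $[N_C]\neq 0$ in $\Theta_{n-1}$. This directly contradicts $\Theta_{n-1}=0$, and rules out case (ii).

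Case (i) is the main obstacle, because the direct exoticness of $\hat\Sigma^{n-2}=\partial N$ a priori constrains only $\Theta_{n-2}$, not $\Theta_{n-1}$. The plan is to pass from the boundary to an auxiliary closed $(n-1)$-manifold built from the classic limit $N_C$. Concretely, $N_C$ is a smooth $(n-1)$-manifold with boundary $\hat\Sigma^{n-2}_C$, which by the commutative diagram (\ref{commutative-diagram-homotopy-equivalence-quantum-m-n-supersphere}) is a homotopy $(n-2)$-sphere. I would cap off $\hat\Sigma^{n-2}_C$ by a smooth disk $D^{n-1}$, obtaining a closed smooth $(n-1)$-manifold $M\equiv N_C\cup_{\hat\Sigma^{n-2}_C}D^{n-1}$, and then argue (using Smale's $h$-cobordism theorem, as will be recalled in Theorem \ref{integral-h-cobordism-in-Ricci-flow-pdes}, together with the surgery-theoretic description of $\Theta_{n-1}$) that $M$ is a homotopy $(n-1)$-sphere whose diffeomorphism class in $\Theta_{n-1}$ is precisely the Kervaire--Milnor obstruction to $\hat\Sigma^{n-2}_C$ being standard. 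Under $\Theta_{n-1}=0$ this obstruction vanishes, so $\hat\Sigma^{n-2}_C\cong S^{n-2}$, contradicting the exotic-classic hypothesis on $\partial N$.

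Combining the two cases, no Cauchy integral manifold $N$ can realize the exotic-classic property once $\Theta_{n-1}=0$, so $\hat E_k$ fails to be quantum exotic-classic, which is the claim. The hard part of this plan is the second paragraph: making the capping/$h$-cobordism argument tight enough to conclude that exoticness of $\hat\Sigma^{n-2}_C$ is detected inside $\Theta_{n-1}$ rather than merely inside $\Theta_{n-2}$; this is where the hypothesis on $n$ really enters, and it is the step I expect to require the most care.
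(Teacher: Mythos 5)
The paper itself provides no written proof for this lemma; it is stated as an immediate consequence of the preceding definitions, so you have been asked to supply the argument from scratch. Your treatment of case (ii) is correct and complete: if $\partial N=\varnothing$ and $N$ is a quantum exotic $(n-1)$-sphere whose classic limit $N_C$ is also exotic, then $[N_C]\neq 0$ in $\Theta_{n-1}$, contradicting the hypothesis.

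Your case (i) argument, however, contains a genuine gap which the proposed capping construction cannot close. To attach $D^{n-1}$ to $N_C$ along $\hat\Sigma^{n-2}_C$ and obtain a \emph{smooth} closed manifold, you would need a diffeomorphism $\partial D^{n-1}=S^{n-2}\to\hat\Sigma^{n-2}_C$; such a map exists precisely when $\hat\Sigma^{n-2}_C\cong S^{n-2}$, which is the very conclusion you are trying to reach. Gluing along a homeomorphism via the Alexander trick produces only a topological manifold, and re-smoothing is not automatic. Independently of this, Definition \ref{quantum-hypercomplex-exotic-pdes}(i) places no constraint on the homotopy type of the interior of $N$, only on its boundary, so even if the union $M=N_C\cup D^{n-1}$ made smooth sense there would be no reason for it to be a homotopy $(n-1)$-sphere. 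Finally, the surgery-theoretic obstruction you invoke does not live where you need it to: the Kervaire--Milnor group governing whether an exotic $(n-2)$-sphere bounding a parallelizable manifold is standard is $bP_{n-1}\subseteq\Theta_{n-2}$, not a subgroup of $\Theta_{n-1}$.

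The underlying point is that condition (i) of Definition \ref{quantum-hypercomplex-exotic-pdes} constrains $\Theta_{n-2}$ rather than $\Theta_{n-1}$, and the two do not control each other. For instance $\Theta_{12}=0$ while $\Theta_{11}\neq 0$, so for $n=13$ the hypothesis $\Theta_{n-1}=0$ holds but case (i) is not ruled out. Accordingly the lemma is safe only in the regime the paper actually uses it, namely $n-1\le 6$, where $\Theta_{n-2}=0$ comes for free (cf.\ Lemma \ref{lemma-quantum-spheres-classic-limits-d} and the subsequent examples, all of which involve closed Cauchy data and therefore case (ii) anyway). A tight version of your proof should therefore either (a) restrict attention to case (ii), noting that the applications in the paper only ever use closed Cauchy manifolds, or (b) add the hypothesis $\Theta_{n-2}=0$ and dispose of case (i) directly from the definition, without any capping argument.
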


\begin{lemma}\label{lemma-quantum-spheres-classic-limits-d}
For $n\in\{1,2,3,4,5,6\}$, one has the isomorphism reported in {\em(\ref{isomorphisms-lemma-quantum-spheres-classic-limits-d})}.

\begin{equation}\label{isomorphisms-lemma-quantum-spheres-classic-limits-d}
   \hat\Theta_n\cong\hat\Upsilon_n.
\end{equation}
In correspondence of such dimensions on $n$ we cannot have quantum exotic-classic PDE's.
\end{lemma}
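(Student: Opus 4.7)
The plan is to exploit the short exact sequence of Theorem~\ref{theorem-quantum-spheres-classic-limits-b}, namely
\begin{equation*}
\xymatrix{0\ar[r]&\hat\Upsilon_n\ar@{^{(}->}[r]&\hat\Theta_n\ar[r]^{j_C}&\Theta_n\ar[r]&0,}
\end{equation*}
which reduces the first assertion to a vanishing statement about the classical group $\Theta_n$. Specifically, if $\Theta_n=0$, then $j_C$ is the zero morphism and the inclusion $\hat\Upsilon_n\hookrightarrow\hat\Theta_n$ becomes a surjection, hence an isomorphism.

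To obtain that $\Theta_n=0$ for $n\in\{1,2,3,4,5,6\}$, I would invoke Example~\ref{quantum-homotopy-n-spheres-with-real-numbers-quantum-algebra}, where it is recalled that in the classical (commutative) range $n\le 6$ the Kervaire--Milnor computation gives $\Theta_n=0$; equivalently, every homotopy $n$-sphere is diffeomorphic to $S^n$ in these dimensions (including the recent smooth outcome for $n=4$ cited there from \cite{PRA21-1}). Plugging this into the exact sequence yields immediately $\hat\Theta_n\cong\hat\Upsilon_n$, settling~(\ref{isomorphisms-lemma-quantum-spheres-classic-limits-d}).

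For the second assertion, I would appeal to Lemma~\ref{lemma-quantum-hypercomplex-exotic-classic-pdes}: a quantum PDE $\hat E_k\subset\hat J^k_n(W)$ with $\Theta_{n-1}=0$ cannot be a quantum exotic-classic PDE, since under Definition~\ref{quantum-hypercomplex-exotic-classic-pdes} the classic limits of the Cauchy quantum exotic $(n-1)$-manifolds would have to realize a nonzero class of $\Theta_{n-1}$ (and, in case (i) of Definition~\ref{quantum-hypercomplex-exotic-pdes}, a nonzero class of $\Theta_{n-2}$), contradicting the vanishing. For $n\in\{1,\dots,6\}$ we have $n-1,\,n-2\le 5\le 6$, so both $\Theta_{n-1}=0$ and $\Theta_{n-2}=0$ by the Kervaire--Milnor range, which rules out both cases (i) and (ii) of Definition~\ref{quantum-hypercomplex-exotic-pdes} being realized at the classical level.

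The proof is essentially a direct application of the already-established exact sequence together with a table look-up for $\Theta_n$, so there is no serious obstacle; the only point requiring some care is keeping the two cases of Definition~\ref{quantum-hypercomplex-exotic-pdes} separate and checking that in the full range $n\le 6$ both $\Theta_{n-1}$ and $\Theta_{n-2}$ vanish, so that no exotic classic limit of either the boundary or the Cauchy hypersurface itself is available.
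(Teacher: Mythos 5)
Your proof is correct and takes essentially the same route as the paper: both derive $\hat\Theta_n\cong\hat\Upsilon_n$ from the short exact sequence of Theorem~\ref{theorem-quantum-spheres-classic-limits-b} together with the vanishing $\Theta_n=0$ for $n\le 6$. You are in fact more thorough on the second assertion than the paper's one-line proof, since you explicitly track both cases of Definition~\ref{quantum-hypercomplex-exotic-pdes} by checking $\Theta_{n-1}=0$ and $\Theta_{n-2}=0$ separately before invoking Lemma~\ref{lemma-quantum-hypercomplex-exotic-classic-pdes}.
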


\begin{proof}
Isomorphisms in (\ref{isomorphisms-lemma-quantum-spheres-classic-limits-d}), follow directly from above lemmas, and the fact that $\Theta_n=0$ for $n\in\{1,2,3,4,5,6\}$. (See Refs. \cite{PRA20, PRA21}.)
\end{proof}

\begin{example}[The quantum hypercomplex Ricci flow equation]\label{the-quantum-hypercomplex-ricci-flow-equation}
Let us recall that in \cite{PRA15} A. Pr\'astaro proved the generalized Poincar\'e conjecture in the category of quantum supermanifolds for classic regular, closed compact quantum supermanifolds $M$, of dimension $(m|n)$, homotopy equivalent to $\hat S^{m|n}$, when the quantum superalgebra $A$ has a Noetherian centre $Z(A)$. More precisely one has proved that $M$ is homeomorphic to $\hat S^{m|n}$ and its classic limit $M_C$ is homeomorphic to $S^m$. As a by-product it follows that
the quantum Ricci flow equation is a quantum exotic PDE for quantum $n$-dimensional Riemannian manifolds. Furthermore, the quantum Ricci flow equation cannot be quantum exotic-classic for $n<7$. (See \cite{PRA4, PRA14, PRA16, PRA17}.) (For complementary informations on the Ricci flow equation see also the following Refs. \cite{HAMIL1, HAMIL2, HAMIL3, HAMIL4, HAMIL5, PER1, PER2}.)
\end{example}

\begin{example}[The quantum hypercomplex Navier-Stokes equation]\label{the-quantum-hypercomplex-navier-stokes-equation}
The quantum Navier-Stokes equation can be encoded on the quantum extension of the affine fiber bundle $\pi:W\equiv M\times\mathbf{I}\times\mathbb{R}^2\to M$, $(x^\alpha,\dot x^i,p,\theta)_{0\le\alpha\le 3,1\le i\le 3}\mapsto(x^\alpha)$. (See Refs. \cite{PRA1, PRA6, PRA7, PRA9, PRA11, PRA14} for the Navier-Stokes equation in the category of commutative manifolds and \cite{PRA2-3, PRA3} for its quantum extension on quantum manifolds.) Therefore, Cauchy manifolds are $3$-dimensional space-like manifolds. For such dimension do not exist exotic spheres. Therefore, the Navier-Stokes equation cannot be a quantum exotic-classic PDE. Similar considerations hold for PDE's of the classical continuum mechanics.
\end{example}

\begin{example}[The quantum hypercomplex $n$-d'Alembert equation]\label{the-quantum-hypercomplex-n-d'alembert-equation}
The quantum $n$-d'Alembert equation on $A^n$ cannot be a quantum exotic-classic PDE for quantum $n$-dimensional Riemannian manifolds of dimension $n<7$ in the category $\mathfrak{Q}_{hyper}$. (See Example \ref{quantum-sedenionic-d-alembert-equation}, and \cite{PRA21} for exotic d'Alembert equations in the category of commutative manifolds.)
\end{example}

\begin{example}[The quantum hypercomplex Einstein equation]\label{the-quantum-hypercomplex-einstein-equation}
The quantum Einstein equation in the category $\mathfrak{Q}_{hyper}$, cannot be a quantum exotic-classic PDE for quantum $n$-dimensional space-times of dimension $n< 7$. Similar considerations hold for generalized quantum Einstein equations like quantum Einstein-Maxwell equation, quantum Einstein-Yang-Mills equation and etc, in the category $\mathfrak{Q}_{hyper}$.
\end{example}

\begin{theorem}[Integral bordism groups in quantum hypercomplex exotic PDE's in the category $\mathfrak{Q}_{hyper}$ and stability]\label{bordism-groups-in-quantum-hypercomplex-exotic-pdes-stability}
Let $\hat E_k\subset \hat J^k_n(W)$ be a quantum exotic formally integrable and completely integrable PDE on the fiber bundle $\pi:W\to M$, in the category $\mathfrak{Q}_{hyper}$, such that $\hat g_k\not=0$ and $\hat g_{k+1}\not=0$.\footnote{The fiber bundle $\pi:W\to M$ is as in Definition \ref{quantum-hypercomplex-exotic-pdes}, hence $\dim_AM=0$, $\dim_BW=(n,m)$, with $E$ endowed with a $Z(A)$-module structure too.}
Then there exists a topologic spectrum $\Xi_s$ such that for the singular integral $p$-(co)bordism groups can be expressed by means of suitable homotopy groups as reported in {\em(\ref{singular-integral-p-co-bordism-groups-exotic-pdes})}.
\begin{equation}\label{singular-integral-p-co-bordism-groups-exotic-pdes}
    \left\{
    \begin{array}{l}
      \Omega_{p,s}^{\hat E_k}=\mathop{\lim}\limits_{r\to\infty}\pi_{p+r}(\hat E_k^+\wedge\Xi_r)\\
      \Omega^{p,s}_{\hat E_k}=\mathop{\lim}\limits_{r\to\infty}[S^r\hat E_k^+,\Xi_{p+r}]\\
      \end{array}
    \right\}_{p\in\{0,1,\cdots,n-1\}}.
\end{equation}

Furthermore, the singular integral bordism group for admissible smooth closed compact Cauchy manifolds, $N\subset \hat E_k$, is given in {\em(\ref{singular-integral-bordism-group-n-1})}.
\begin{equation}\label{singular-integral-bordism-group-n-1}
    \Omega_{n-1,s}^{\hat E_k}\cong H_{n-1}(W;A).
\end{equation}
In the {\em quantum homotopy equivalence full admissibility hypothesis}, i.e., by considering admissible only $(n-1)$-dimensional smooth Cauchy integral manifolds identified with quantum homotopy spheres, and assuming that the space of conservation laws is not trivial,
one has $\Omega^{\hat E_k}_{n-1,s}=0$. Then $\hat E_k$ becomes a quantum extended $0$-crystal PDE. Then, there exists a global singular attractor, in the sense that all Cauchy manifolds, identified with quantum homotopy $(n-1)$-spheres, bound singular manifolds.

Furthermore, if in $W$ we can embed all the quantum homotopy $(n-1)$-spheres, and all such manifolds identify admissible smooth $(n-1)$-dimensional Cauchy manifolds of $\hat E_k$), then two of such Cauchy manifolds bound a smooth solution iff they are diffeomorphic and one has the following bijective mapping: $\Omega^{\hat E_k}_{n-1}\leftrightarrow\hat\Theta_{n-1}$.

Moreover, if in $W$ we cannot embed all quantum homotopy $(n-1)$-spheres, but only $\hat S^{n-1}$, then in the {\em quantum sphere full admissible hypothesis}, i.e., by considering admissible only quantum $(n-1)$-dimensional smooth Cauchy integral manifolds identified with $\hat S^{n-1}$, then $\Omega^{\hat E_k}_{n-1}=0$. Therefore $\hat E_k$ becomes a quantum $0$-crystal PDE and there exists a global smooth attractor, in the sense that two of such smooth Cauchy manifolds, identified with $\hat S^{n-1}$ bound quantum smooth manifolds. Instead, two Cauchy manifolds identified with quantum exotic $(n-1)$-spheres bound by means of quantum singular solutions only.

All above quantum smooth or quantum singular solutions are unstable. Quantum smooth solutions can be stabilized.
\end{theorem}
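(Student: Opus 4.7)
The plan is to reduce each assertion of the theorem to tools that have already been established in earlier sections of the paper, combined with the classification of quantum homotopy spheres collected in Theorem \ref{theorem-quantum-spheres-classic-limits-b} and Theorem \ref{quantum-homotopy-groups-quantum-n-sphere}. First I would apply Theorem \ref{integral-spectrum-quantum-pdes} directly: that result was stated in the category $\mathfrak{Q}$ but, by the same intrinsic argument used in Theorem \ref{delta-poincare-lemma-pdes-category-quantum-hypercomplex-manifolds}, the construction of the integral singular spectrum depends only on the formal geometric data of $\hat E_k$ and not on associativity of the underlying algebra, so one obtains the homotopy-theoretic representation \eqref{singular-integral-p-co-bordism-groups-exotic-pdes} on the nose.

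Next, for the computation of $\Omega^{\hat E_k}_{n-1,s}$ I would specialize Theorem \ref{integral-bordism-groups-quantum-hypercomplex-pdes} to $p=n-1$: since $\hat E_k$ is assumed formally and completely quantum integrable with non-trivial symbols $\hat g_k,\hat g_{k+1}$, the identification $\Omega_{n-1,s}^{\hat E_k}\cong H_{n-1}(W;A)$ follows by the standard spectral-sequence argument of Theorem \ref{integral-spectrum-quantum-pdes}, coupled with the universal-coefficient style isomorphism ${}^A\Omega^{\hat E_k}_{p,s}\cong\Omega^{\hat E_k}_{p,s}\otimes_{\mathbb{K}}A$ recalled just before the statement. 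Under the \emph{quantum homotopy equivalence full admissibility hypothesis}, every admissible Cauchy manifold $N$ is a quantum homotopy $(n-1)$-sphere, hence has the homology of $\hat S^{n-1}$ by Lemma \ref{cohomology-quantum-homotopy-n-spheres}; invoking the non-triviality of the space of conservation laws in the sense of \cite{PRA3} one kills the relevant characteristic numbers in $H_{n-1}(W;A)$ and concludes $\Omega^{\hat E_k}_{n-1,s}=0$. This is precisely the definition of a quantum extended $0$-crystal PDE, giving the asserted global singular attractor.

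For the refined smooth statement I would separate two regimes. If $W$ embeds every class of $\hat\Theta_{n-1}$, a smooth bordism $V\subset\hat E_k$ between $N_0,N_1$ projects via $\pi_{k,0}$ onto a smooth cobordism in $W$; by the $h$-cobordism-type rigidity that underlies the classification in Theorem \ref{theorem-quantum-spheres-classic-limits-b} (together with Remark \ref{remark-quantum-spheres-classic-limits}, so that both the classic limit and the quantum fiber structure are preserved), the smooth cobordism exists if and only if $N_0\cong N_1$ as quantum manifolds, producing the bijection $\Omega^{\hat E_k}_{n-1}\leftrightarrow\hat\Theta_{n-1}$. If instead only $\hat S^{n-1}$ embeds in $W$ as an admissible Cauchy manifold, the same reasoning forces the smooth bordism class of $\hat S^{n-1}$ to be a single element, hence $\Omega^{\hat E_k}_{n-1}=0$ and $\hat E_k$ is a quantum $0$-crystal PDE; quantum exotic Cauchy spheres, which cannot appear smoothly in $W$ in this regime, can still be joined only through algebraic singular solutions, as in the construction of Theorem \ref{main-quantum-singular1}.

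Finally, instability and stabilizability of smooth solutions follow by transporting the stability analysis developed in \cite{PRA14, PRA15, PRA16, PRA18} for quantum (super)PDEs to the category $\mathfrak{Q}_{hyper}$: the intrinsic formal nature of that argument, already exploited in the proof of Theorem \ref{delta-poincare-lemma-pdes-category-quantum-hypercomplex-manifolds}, is insensitive to the non-associativity of the Cayley-Dickson factor. The main obstacle I expect is the rigidity step in the third paragraph, namely ruling out smooth $\hat E_k$-bordisms between non-diffeomorphic quantum homotopy $(n-1)$-spheres: this is where one genuinely uses the $h$-cobordism theorem for quantum homotopy spheres, lifted through the fibration $\bar\pi_C:\hat\Sigma^{n-1}\to\hat\Sigma^{n-1}_C$ of Definition \ref{quantum-homotopy-n-sphere}, and it is the only place where the structure of the group $\hat\Theta_{n-1}$ (rather than mere homological triviality) intervenes.
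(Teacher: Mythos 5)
Your overall strategy mirrors the paper's: reduce each claim to earlier theorems on quantum bordism groups plus the classification of quantum homotopy spheres from Theorem \ref{theorem-quantum-spheres-classic-limits-b}. Equation (\ref{singular-integral-p-co-bordism-groups-exotic-pdes}) via Theorem \ref{integral-spectrum-quantum-pdes} (equivalently, the extension of Theorem 4.14 in \cite{PRA1}) is exactly what the paper does, and your treatment of the admissibility hypotheses, the $0$-crystal structure, and the bijection $\Omega^{\hat E_k}_{n-1}\leftrightarrow\hat\Theta_{n-1}$ also tracks the paper closely, in places with more detail (the $h$-cobordism rigidity step and the instability remark, which the paper's proof actually leaves untouched).

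The step that does not work as you have written it is the derivation of (\ref{singular-integral-bordism-group-n-1}). You propose to specialize Theorem \ref{integral-bordism-groups-quantum-hypercomplex-pdes} to $p=n-1$, but that theorem computes the regular bordism groups $\Omega^{\hat E_k}_p$ (not the singular groups $\Omega^{\hat E_k}_{p,s}$) and only under the additional hypothesis that $W$ is $p$-connected; under that hypothesis the right-hand side $H_{n-1}(W;\mathbb{R})\otimes_\mathbb{R}C$ vanishes, so specializing it cannot reproduce the nontrivial identification $\Omega^{\hat E_k}_{n-1,s}\cong H_{n-1}(W;A)$ for general $W$. Nor does the spectral sequence of Theorem \ref{integral-spectrum-quantum-pdes} yield this directly, since its $E^2$ page is built from the homology of $\hat E_k$, not of $W$, and the collapse to $H_{n-1}(W;A)$ is precisely what needs an argument. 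The paper's proof instead invokes Theorem 5.5 and Theorem 5.8 of \cite{PRA2} and Theorem 2.2 of \cite{PRA18} (or Theorem 2.3 of \cite{PRA18-0}), which compute the singular (and weak) integral bordism groups under exactly the hypotheses of the statement (formal and complete integrability with $\hat g_k\not=0$, $\hat g_{k+1}\not=0$) and with no connectedness assumption on $W$. You should replace your appeal to Theorem \ref{integral-bordism-groups-quantum-hypercomplex-pdes} with those results; the rest of your argument then goes through along the lines the paper intends.
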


\begin{proof}
The relations (\ref{singular-integral-p-co-bordism-groups-exotic-pdes}) can be proved by a direct extension of analogous characterizations of integral bordism groups of PDE's in the category of commutative manifolds. (See Theorem 4.14 in \cite{PRA1}.) Furthermore, under the hypotheses of theorem we can apply Theorem 5.5 and Theorem 5.8 in \cite{PRA2} and Theorem 2.2 in \cite{PRA18} (or Theorem 2.3 in \cite{PRA18-0}). Thus we get directly (\ref{singular-integral-bordism-group-n-1}). Furthermore, under the quantum homotopy equivalence full admissibility hypothesis, all admissible quantum smooth $(n-1)$-dimensional Cauchy manifolds of $\hat E_k$, are identified with all possible quantum homotopy $(n-1)$-spheres. Moreover, all such Cauchy manifolds have same quantum integral characteristic numbers. (The proof is similar to the one given for Ricci flow PDE's in \cite{PRA15}.) Therefore, all such Cauchy manifolds belong to the same singular integral bordism class, hence $\Omega^{\hat E_k}_{n-1,s}=0$. Thus in such a case $\hat E_k$ becomes an quantum extended $0$-crystal PDE. When all quantum homotopy $(n-1)$-spheres can  be embedded in $W$ and so that in each smooth integral bordism class of $\Omega^{\hat E_k}_{n-1}$ are contained quantum homotopy $(n-1)$-spheres.\footnote{It is useful to emphasize that the possibility to embed quantum homotopy $(n-1)$-spheres in $W$, are related to the dimension of $W$. For example, in the cases where quantum algebras $A$ and $E$ are finite dimensional, the Whitney's embeddings theorem assures that embeddings exist if $\dim W\ge 2(n-1)+1=2n-1$. Furthermore, by considering that such embeddings should be compatible with $n$-dimensional submanifolds of $W$, we should also require that $\dim W\ge 2n+1$.} Then, since two quantum homotopy $(n-1)$-spheres bound a quantum smooth solution of $\hat E_k$ iff they are diffeomorphic, it follows that one has the bijection (but not isomorphism) $\Omega^{\hat E_k}_{n-1}\cong\hat\Theta_{n-1}$, where $\hat\Theta_{n-1}$ is the set group of equivalent classes of quantum diffeomorphic quantum homotopy $(n-1)$-spheres.\footnote{See Lemma \ref{theorem-quantum-spheres-classic-limits-b}. Therefore $\hat\Theta_{n-1}/\hat\Upsilon_{n-1}\cong\Theta_{n-1}$, where $\Theta_{n-1}$ is the corresponding set group of equivalent classes of diffeomorphic homotopy $(n-1)$-spheres.} In the quantum sphere full admissibility hypothesis we get $\Omega^{\hat E_k}_{n-1}=0$ and $\hat E_k$ becomes a quantum $0$-crystal PDE.

Let us assume now, that in $W$ we can embed only $\hat S^{n-1}$ and not all quantum exotic $(n-1)$-spheres. Then smooth Cauchy $(n-1)$-manifolds identified with quantum exotic $(n-1)$-spheres are necessarily integral manifolds with Thom-Boardman singularities, with respect to the canonical projection $\pi_{k,0}:\hat E_k\to W$. So solutions passing through such Cauchy manifolds are necessarily singular solutions. In such a case smooth solutions bord Cauchy manifolds identified with $\hat S^{n-1}$, and two diffeomorphic Cauchy manifolds identified with two quantum exotic $(n-1)$-spheres belonging to the same class in $\hat\Theta_{n-1}$, cannot bound quantum smooth solutions. Finally, if also $\hat S^{n-1}$ cannot be embedded in $W$, then there are not quantum smooth solutions bording smooth Cauchy $(n-1)$-manifolds in $\hat E_k$, identified with $\hat S^{n-1}$ or $\hat\Sigma^{n-1}$ (i.e., quantum exotic $(n-1)$-sphere). In other words $\Omega^{\hat E_k}_{n-1}$ is not defined in such a case !
\end{proof}

We are ready to state the main result of this paper that extends to the category of quantum manifolds Theorem 4.59 in \cite{PRA20} and Theorem 4.7 in \cite{PRA21-1}, given for homotopy spheres in the category of smooth manifolds.

\begin{theorem}[Integral h-cobordism in quantum hypercomplex Ricci flow PDE's]\label{integral-h-cobordism-in-Ricci-flow-pdes}
The quantum Ricci flow equation for quantum $n$-dimensional Riemannian manifolds, admits that starting from a quantum $n$-dimensional sphere $\hat S^n$, we can dynamically arrive, into a finite time, to any quantum $n$-dimensional homotopy sphere $M$. When this is realized with a smooth solution, i.e., solution with characteristic flow without singular points, then $\hat S^n\cong M$. The other quantum homotopy spheres $\hat\Sigma^n$, that are homeomorphic to $\hat S^n$ only, are reached by means of singular solutions.

For $1\le n\le 6$,  quantum hypercomplex Ricci flow PDE's cannot be quantum exotic-classic ones. In particular, the case $n=4$, is related to the proof that the smooth Poincar\'e  conjecture is true.
\end{theorem}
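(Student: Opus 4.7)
The plan is to reduce the statement to Theorem \ref{bordism-groups-in-quantum-hypercomplex-exotic-pdes-stability} applied to the quantum hypercomplex Ricci flow equation, and then extract the finite-time and diffeomorphism statements from the characterization of its integral bordism groups. First, I would verify the standing hypotheses: the quantum Ricci flow PDE $\widehat{(RF)} \subset \hat J^k_n(W)$, built as in Example \ref{the-quantum-hypercomplex-ricci-flow-equation} and the references therein (in particular \cite{PRA15, PRA16, PRA17}), is formally quantum integrable and completely quantum integrable with $\hat g_k \ne 0$, $\hat g_{k+1} \ne 0$; this is where Theorem \ref{delta-poincare-lemma-pdes-category-quantum-hypercomplex-manifolds} and Theorem \ref{criterion-formal-quantum-integrability-pdes-category-quantum-hypercomplex-manifolds} (requiring the Noetherian centre assumption on $Z(A)$) enter, and I would invoke them essentially verbatim.

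Next, I would take as Cauchy data closed quantum $n$-dimensional Riemannian manifolds that are quantum homotopy $n$-spheres in the sense of Definition \ref{quantum-homotopy-n-sphere}. Under the \emph{quantum homotopy equivalence full admissibility hypothesis} of Theorem \ref{bordism-groups-in-quantum-hypercomplex-exotic-pdes-stability}, the singular integral bordism group $\Omega^{\widehat{(RF)}}_{n,s}$ vanishes, so any two such Cauchy manifolds co-bound by an algebraic/singular solution of $\widehat{(RF)}$; specializing one of the two Cauchy data to $\hat S^n$ gives the dynamical reachability of an arbitrary quantum homotopy sphere $M$ from $\hat S^n$. To upgrade this to \emph{finite time}, I would use the quantum analogue of Hamilton's and Perelman's extension/compactness arguments alluded to in Example \ref{the-quantum-hypercomplex-ricci-flow-equation}: the characteristic flow of $\widehat{(RF)}$ on a compact quantum Riemannian manifold either extends for all time with a definite convergence rate (smooth regime) or develops a first singularity in finite time (which is precisely what produces the Thom--Boardman singular stratum $\Sigma(V)$ of the integral solution $V$). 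In both situations the bording solution $V \subset \widehat{(RF)}$ can be parametrised on a finite interval of the flow parameter.

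For the distinction smooth vs.\ singular, I would apply the bijection $\Omega^{\widehat{(RF)}}_{n} \leftrightarrow \hat\Theta_n$ given by Theorem \ref{bordism-groups-in-quantum-hypercomplex-exotic-pdes-stability} together with Theorem \ref{theorem-quantum-spheres-classic-limits-b}: a bording solution $V$ without Thom--Boardman points projects diffeomorphically via $\pi_{k,0}$ on $W$, so the two boundary components represent the same class in $\hat\Theta_n$, i.e.\ $M \cong \hat S^n$; conversely a quantum homotopy sphere $\hat\Sigma^n$ with $[\hat\Sigma^n] \ne [\hat S^n] \in \hat\Theta_n$ can only be reached through a solution with $\Sigma(V) \ne \varnothing$, which is exactly a singular (Thom--Boardman) solution in the sense of Section \ref{quantum-hypercomplex-singular-pdes-section}. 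Finally, the case $1 \le n \le 6$ follows directly by combining Lemma \ref{lemma-quantum-spheres-classic-limits-d} (which gives $\hat\Theta_n \cong \hat\Upsilon_n$, since $\Theta_n = 0$ in that range) with Definition \ref{quantum-hypercomplex-exotic-classic-pdes} and Lemma \ref{lemma-quantum-hypercomplex-exotic-classic-pdes}, so that no classic limit of an admissible Cauchy datum can be an exotic homotopy sphere; the case $n=4$ is then simply the statement that the smooth four-dimensional Poincar\'e conjecture (see \cite{PRA21-1}) makes $\Theta_4 = 0$ and hence $\hat\Upsilon_4 \cong \hat\Theta_4$.

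The main obstacle I anticipate is the \emph{finite-time} part: everything else is a bookkeeping application of the machinery already built in Sections \ref{quantum-hypercomplex-pdes-section}--\ref{quantum-exotic-pdes-section}, but to guarantee finiteness of the flow parameter one needs a quantum Hamilton--Perelman short/long-time existence theory and a controlled analysis of neckpinch-type singularities in the quantum hypercomplex category; these are the steps where one must genuinely work, rather than merely translate commutative results.
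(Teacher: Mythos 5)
Your proposal takes a genuinely different route from the paper. The paper's own proof of Theorem \ref{integral-h-cobordism-in-Ricci-flow-pdes} is a one-sentence citation: it declares the result a ``direct consequence'' of Theorem 3.23 in \cite{PRA15} (the generalized Poincar\'e conjecture for quantum supermanifolds) and of Theorem 4.59 in \cite{PRA20} and Theorem 4.7 in \cite{PRA21-1} (the commutative-manifold analogues, where the author's claimed proof of the smooth four-dimensional Poincar\'e conjecture lives). No attempt is made in the paper to re-derive the statement from the Section \ref{quantum-exotic-pdes-section} machinery. You, by contrast, reconstruct the argument internally: invoking Theorems \ref{delta-poincare-lemma-pdes-category-quantum-hypercomplex-manifolds} and \ref{criterion-formal-quantum-integrability-pdes-category-quantum-hypercomplex-manifolds} for the standing formal/complete-integrability hypotheses, applying Theorem \ref{bordism-groups-in-quantum-hypercomplex-exotic-pdes-stability} under the quantum homotopy equivalence full admissibility hypothesis to obtain $\Omega^{\widehat{(RF)}}_{n,s}=0$ (bording) and the bijection $\Omega^{\widehat{(RF)}}_{n}\leftrightarrow\hat\Theta_n$ (which separates the smooth and singular cases), and closing with Lemmas \ref{lemma-quantum-spheres-classic-limits-d}, \ref{lemma-quantum-hypercomplex-exotic-classic-pdes} and Definition \ref{quantum-hypercomplex-exotic-classic-pdes} for the $1\le n\le 6$ clause. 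This buys transparency and actually exercises the machinery built in the present paper, whereas the paper's proof buys brevity at the cost of opacity --- the reader must go to the cited works to see any argument at all.

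Your self-criticism about the finite-time claim is apt, and in fact it applies with equal force to the paper's proof: the bordism-group machinery of Section \ref{quantum-exotic-pdes-section} establishes only the \emph{existence} of a bording solution $V$ between $\hat S^n$ and a given quantum homotopy $n$-sphere, not a bound on its time parameter. Nothing in Theorem \ref{bordism-groups-in-quantum-hypercomplex-exotic-pdes-stability}, nor in Theorems \ref{theorem-quantum-spheres-classic-limits-b}, \ref{quantum-homotopy-groups-quantum-n-sphere}, controls the length of the characteristic-flow interval, so the words ``in a finite time'' must be imported wholesale from a quantum Hamilton--Perelman short/long-time existence and singularity-formation theory. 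That is exactly what the paper offloads to Theorem 3.23 of \cite{PRA15}, so you have correctly located where the genuine analytic content sits; what remains if you want a self-contained proof is to state and prove that quantum analogue, which is not done in the present paper.
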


\begin{proof}
This is a direct consequence of Theorem 3.23 in \cite{PRA15} on the generalized Poincar\'e conjecture for quantum supermanifolds, Theorem 4.59 in \cite{PRA20} and Theorem 4.7 in \cite{PRA21-1} for homotopy spheres, where it is proved also the smooth Poincar\'e conjecture (i.e. in dimension four) for commutative manifolds.
\end{proof}

\end{document}